\numberwithin{equation}{section}
\theoremstyle{plain} 
\newtheorem{theorem}{Theorem}[section]
\newtheorem{corollary}[theorem]{Corollary}
\newtheorem{lemma}[theorem]{Lemma}
\newtheorem{proposition}[theorem]{Proposition}
\theoremstyle{definition} 
\newtheorem{definition}[theorem]{Definition}
\theoremstyle{definition} 
\newtheorem*{ex*}{Example}
\theoremstyle{remark} 
\theoremstyle{remark} 
\newtheorem{remark}[theorem]{Remark}
\newtheorem*{remark*}{Remark}
\numberwithin{equation}{section}
\newcommand{\beqa}{\begin{eqnarray}}
\newcommand{\eeqa}{\end{eqnarray}}
\newcommand{\bseq}{\begin{subequations}}
\newcommand{\eseq}{\end{subequations}}
\newcommand{\N}{\{1,2,\dots\}}
\newcommand{\dd}{\partial}
\renewcommand{\dd}{{\,\operatorname{d}}}
\newcommand{\res}[1]{\underset{#1}{\operatorname{Res}}}
\newcommand{\supp}{\operatorname{supp}}
\newcommand{\BH}{\operatorname{BH}}
\newcommand{\PU}{\operatorname{PU}}
\newcommand{\Ea}{\operatorname{Ea}}
\newcommand{\Be}{\operatorname{Be}}
\newcommand{\Pin}{\operatorname{Pin}}
\newcommand{\Ne}{\operatorname{EN}}
\newcommand{\Ca}{\operatorname{Ca}}
\newcommand{\EN}{\operatorname{EN}}
\newcommand{\al}{\alpha}
\newcommand{\Ga}{\Gamma}
\newcommand{\si}{\sigma}
\newcommand{\ka}{\kappa}
\newcommand{\la}{\lambda}
\newcommand{\de}{\delta}
\newcommand{\be}{\beta}
\newcommand{\De}{\Delta}
\renewcommand{\th}{\theta}
\renewcommand{\Psi}{\overline{\Phi}}
\newcommand{\ffrown}{\text{\raisebox{3pt}[0pt][0pt]{$\frown$}}}
\renewcommand{\O}{\underset{\ffrown}{<}}
\newcommand{\OG}{\underset{\ffrown}{>}}
\newcommand{\OO}{\mathrel{\text{\raisebox{2pt}{$\O$}}}}
\newcommand{\OOG}{\mathrel{\text{\raisebox{2pt}{$\OG$}}}}
\newcommand{\ii}[1]{\,\mathbf{I}\{#1\}}
\newcommand{\PP}{\operatorname{\mathsf{P}}} 
\newcommand{\E}{\operatorname{\mathsf{E}}}
\newcommand{\lc}{\mathsf{L\!C}}
\newcommand{\Z}{\mathbb{Z}}
\newcommand{\R}{\mathbb{R}}
\newcommand{\C}{\mathcal{C}}
\newcommand{\F}{\mathcal{F}}
\newcommand{\EE}{\mathcal{E}}
\newcommand{\G}{\mathcal{F}}
\renewcommand{\H}[1]{\mathcal{H}_+^{#1}}
\renewcommand{\F}[1]{\mathcal{F}_+^{#1}}
\newcommand{\vp}{\varepsilon}
\newcommand{\ta}{{\tilde{a}}}
\newcommand{\tb}{{\tilde{b}}}
\newcommand{\tp}{{\tilde{p}}}
\newcommand{\tq}{{\tilde{q}}}
\newcommand{\tPi}{{\tilde{\Pi}}}
\newcommand{\tG}{{\tilde{G}}}
\newcommand{\tvp}{{\tilde{\vp}}}
\newcommand{\tbe}{{\tilde{\beta}}}
\newcommand{\tsi}{{\tilde{\si}}}
\newcommand{\tS}{{\tilde{S}}}
\renewcommand{\le}{\leqslant}
\renewcommand{\ge}{\geqslant}
\newcommand{\No}{\operatorname{N}}
\newcommand{\Po}{\operatorname{Pois}}
\newcommand{\fl}[1]{\lfloor#1\rfloor}
\newcommand{\ce}[1]{\lceil#1\rceil}
\renewcommand{\Re}{\operatorname{\mathsf{Re}}}
\renewcommand{\Re}{\operatorname{\mathfrak{Re}}}
\begin{document}

\begin{frontmatter}

\title{On the Bennett-Hoeffding inequality}
%\title{Refinement of the Bennett-Hoeffding inequality}
\runtitle{On the Bennett-Hoeffding bound}
%\date{\today}

\begin{aug}
\author{\fnms{Iosif} \snm{Pinelis}\ead[label=e1]{ipinelis@math.mtu.edu}}
\runauthor{Iosif Pinelis}

%\affiliation{Michigan Technological University}

\address{Department of Mathematical Sciences\\
Michigan Technological University\\
Houghton, Michigan 49931, USA\\
E-mail: \printead[ipinelis@math.mtu.edu]{e1}}
\end{aug}

\begin{abstract}
The well-known Bennett-Hoeffding bound for sums of independent random variables is refined, by taking into account truncated third moments, and at that also improved by using, instead of the class of all increasing exponential functions,  the much larger class of all generalized moment functions $f$ such that $f$ and $f''$ are increasing and convex. It is shown that the resulting bounds have certain optimality properties. Comparisons with related known bounds are given. The results can be extended in a standard manner to (the maximal functions of) (super)martingales.  
\end{abstract}

\begin{keyword}[class=AMS]
\kwd[Primary ]{60E15}
%\kwd{60G42}
\kwd{60G50}
\kwd[; secondary ]{60E07}
\kwd{60E10}
\kwd{60G42}
\kwd{60G48}
\kwd{60G51}
\end{keyword}

\begin{keyword}
\kwd{probability inequalities}
\kwd{sums of independent random variables}
\kwd{martingales}
\kwd{supermartingales}
\kwd{upper bounds}
\kwd{generalized moments}
\kwd{L\'evy processes}
\end{keyword}

\end{frontmatter}

%\settocdepth{chapter}

%\tableofcontents 
{\small\tableofcontents} 

%\settocdepth{subsubsection}

%\renewcommand{\baselinestretch}{1.5}

\theoremstyle{plain} 
\numberwithin{equation}{section}

%?? $m\ne0$, as in Pinelis-Utev?

\eject

\section{Introduction}\label{intro}

%?? quote Bennett on importance?

Let $X_1,\dots,X_n$ be independent real-valued zero-mean random variables (r.v.'s) such that $X_i\le y$ almost surely (a.s.) for some $y>0$ and all $i$. Let $S:=X_1+\dots+X_n$ and assume that $\si:=\sqrt{\sum_i\E X_i^2}\in(0,\infty)$. The Bennett-Hoeffding \cite{bennett,hoeff} inequality states that 
\begin{equation}\label{eq:hoeff}
	\PP(S\ge x)\le\BH(x):=\BH_{\si^2,y}(x):=\exp\Big\{-\frac{\si^2}{y^2}\,\psi\Big(\frac{xy}{\si^2}\Big)\Big\}
\end{equation}
for all $x\ge0$, 
where 
\begin{equation}\label{eq:psi}
	\psi(u):=(1+u)\ln(1+u)-u; 
\end{equation}
see e.g.\ \cite{bennett} concerning the importance of such bounds. 
Inequality \eqref{eq:hoeff} has been generalized to include cases when the $X_i's$ are not independent and/or are not real-valued; see e.g.\ %\cite{pin-sakh,ann_prob}; there have been a number of other generalizations as well, e.g.\  
\cite{bouch-lug-mas,bousquet02,bousquet03,cohen,de la pena,dzhap,freedman,janson,klein-priv,klein-rio,massart,pin-sakh,pin92,ann_prob,rous,shorack-wellner,talag}.

Inequality \eqref{eq:hoeff} is based on an upper bound on the exponential moments of $S$:
\begin{equation}\label{eq:BHexp}
	\E e^{\la S}\le\BH_{\exp}(\la):=\exp\Big\{\frac{e^{\la y}-1-\la y}{y^2}\,\si^2\Big\}\quad\text{for all $\la>0$;}
\end{equation}
that is, 
\begin{equation}\label{eq:BH}
	\BH(x)=\inf_{\la>0}e^{-\la x}\BH_{\exp}(\la). 
\end{equation}

Attempts at refining the Bennett-Hoeffding inequality by taking moments higher than the second ones into consideration were made in \cite{fuk-nagaev,fuk,nag,vol}; however, in contrast with the Bennett-Hoeffding bounds, the bounds given in \cite{fuk-nagaev,fuk,nag,vol} were not the best possible in their own terms. Such best possible, exact bounds refining the Bennett-Hoeffding ones were obtained by Pinelis and Utev \cite[Theorems~2 and 6]{pin-utev2}. In particular, \cite[Theorem~2]{pin-utev2} implies that 
%By taking truncated third moments into consideration, Pinelis and Utev (1985) \cite{pin-utev2} refined these bounds as follows:
\begin{equation}\label{eq:PUexp}
	\E e^{\la S}\le\PU_{\exp}(\la):=\exp\Big\{\frac{\la^2}2\,(1-\vp)\si^2+\frac{e^{\la y}-1-\la y}{y^2}\,\vp\si^2\Big\}\quad
	\forall\la>0,
\end{equation}
where 
\begin{equation*}
	\vp:=\frac{\be^+_3}{\si^2y}, \quad 
	\be^+_3:=\sum_i\E(X_i)_+^3,%\ii{X_i>0},    
\end{equation*}
$x_+:=0\vee x=\max(0,x)$ and $x_+^\al:=(x_+)^\al$, 
whence for all $x\ge0$
\begin{align}
	\PP(S\ge x)\le\PU(x)&:=\inf_{\la>0}e^{-\la x}\PU_{\exp}(\la). \label{eq:PU}
%	\\
%	CHECK!!!!\quad &=\exp\frac{xy-\frac12
%   \big((1-\vp)w-\vp+xy+2\big)\ln\frac{(1-\vp)w}\vp}{y^2}, \label{eq:PUexpr}\\
%   &\quad\text{where }w:=L\Big(\frac\vp{1-\vp}\,\exp\frac{\vp+xy}{1-\vp}\Big), \notag
\end{align} 
%and $L$ is (the principal branch of) the Lambert product-log function, so that for all $z\ge0$ the value $w=L(z)$ is the only real root of the equation $w e^w=z$. 

%A bound similar to \eqref{eq:PUexp} follows from a result in \cite{klein-priv} for stochastic integrals in place of $S$; however, the bound in \cite{klein-priv} has $1$ in place of $1-\vp$ in \eqref{eq:PUexp}; cf.\ Remark~\ref{rem:klein} below. 

%[?? Into the comparison section? (Most of this?)] 
Note that $\vp\in(0,1)$. Hence and because $\frac{\la^2}2<\frac{e^{\la y}-1-\la y}{y^2}$ for all $\la>0$ and $y>0$, the Pinelis-Utev upper bounds $\PU_{\exp}(\la)$ and $\PU(x)$ are always less than the Bennett-Hoeffding upper bounds $\BH_{\exp}(\la)$ and $\PU(x)$, respectively. 
Moreover, the $\PU$ bounds may be significantly less than the $\BH$ ones; this happens when $\vp$ is much less than $1$, which in particular is the case when $X_1,\dots,X_n$ form the initial segment of an infinite sequence of i.i.d.\ r.v.'s $X_1,X_2,\dots$ with finite $\E X_1^2$ and $\E(X_1)_+^3$, $n$ is large, and $y$ is of the order of $\sqrt n$ (such a situation occurs in proofs of non-uniform Berry-Esseen type bounds). 

Note also that the mentioned Theorem~2 in \cite{pin-utev2} is 
formally more general than inequality \eqref{eq:PUexp}, in that \cite[Theorem~2]{pin-utev2} is given in terms of $\be^+_p:=\sum_i\E (X_i)_+^p$ for any $p\in[2,3]$, rather than $\be^+_3$. However, the exact upper bound in \cite[Theorem~2]{pin-utev2} on $\E e^{\la S}$ with $p\in[2,3]$ is no less than that with $p=3$, since $\be^+_3\le\be^+_p\,y^{3-p}$ for all $p\in[2,3]$. Thus, nothing will be lost by taking $p$ to be just $3$. 

As pointed out in \cite{hoeff,pin-utev2}, the exponential bounds $\BH_{\exp}(\la)$ and $\PU_{\exp}(\la)$ are each exact in its own terms. That is, $\BH_{\exp}(\la)$ is the exact upper bound on $\E e^{\la S}$ with $\la$, $y$, and $\si$ fixed; and $\PU_{\exp}(\la)$ is the exact upper bound on $\E e^{\la S}$ with $\la$, $y$, $\si$, and $\vp$ fixed. 
%Moreover, it can be shown (cf.\ \cite{pin98}) that $\BH(x)$ and $\PU(x)$ are the best possible upper bounds on $\PP(S\ge x)$ that can be obtained based on comparison inequalities \eqref{eq:BHexp} and \eqref{eq:PUexp}, respectively. 

If $\vp$ is small indeed, then the bounds $\PU_{\exp}(\la)$ and $\PU(x)$ are close to the corresponding exponential bounds for the normal distribution, $e^{\la^2\si^2/2}$ and $e^{-x^2/(2\si^2)}$. 
However, even for a standard normal r.v.\ $Z$, the best exponential upper bound, $e^{-x^2/2}$, on the tail probability $\PP(Z\ge x)$ is ``missing'' a factor of the order of $1/x$ for large $x>0$, since $\PP(Z\ge x)\sim\frac1{x\sqrt{2\pi}}e^{-x^2/2}$ as $x\to\infty$. 
This deficiency of exponential bounds is caused by the fact that the class of all increasing exponential functions is too small. 

Apparently the first step towards removing this deficiency was made by Eaton~\cite{eaton1,eaton2}, who proved that for all functions $f$ in a rich class $\G_{\textsf{Ea}}$ containing all functions of the form $\R\ni x\mapsto(|x|-t)_+^3$ for $t>0$ one has
\begin{equation}\label{eq:EaF}
	\E f(S)\le\E f(Z)
\end{equation}
if $X_i=a_i\eta_i$ for all $i$, where the $\eta_i$'s are independent (not necessarily identically distributed) zero-mean r.v.'s such
that $|\eta_i|\le1$ a.s.\ for all $i$, and $a_1^2+\dots+a_n^2=1$. 
It is easy to see that inequality \eqref{eq:EaF} for all $f$ in the Eaton class $\G_{\textsf{Ea}}$ implies the same inequality for all symmetrized exponential functions of the form $\R\ni x\mapsto\cosh\la x$, with any $\la\in\R$. 
In view of the central limit theorem, it is clear that the upper bound $\E f(Z)$ in \eqref{eq:EaF} on $\E f(S)$ is exact for each $f\in\G_{\textsf{Ea}}$. 
Moreover, then the inequality
\begin{equation}\label{eq:ea}
	\PP(|S|\ge x)\le\Ea(x):=\inf_{t\in(0,x)}\E(|Z|-t)_+^3/(x-t)^3
\end{equation}
for $x>0$ provides the best possible upper bound $\Ea(x)$ on $\PP(|S|\ge x)$ based on comparison inequality \eqref{eq:EaF}. 
Eaton showed that the bound $\Ea(x)$ is majorized by a function which is asymptotic to $c_3\PP(|Z|\ge x)$ as $x\to\infty$, where 
$c_3:=\frac{2e^3}9\approx4.46$. Thus, the ``missing'' factor of the order of $1/x$ was restored, for the bounded $X_i$'s. Tables for the bound $\Ea$ and related bounds were given in \cite{dufour-hallin}. 
Eaton ~\cite{eaton2} also conjectured that $\PP(|S|\ge x)\le2c_3\frac1x\,e^{-x^2/2}/\sqrt{2\pi}$ for $x>\sqrt2$. 
The stronger form of this conjecture, 
\begin{equation}\label{eq:pin94}
	\PP(S\ge x)\le c\PP(Z\ge x)
\end{equation}
for all $x\in\R$ with $c=c_3$
was proved by Pinelis~\cite{pin94}, along with multidimensional extensions and applications to the Hotelling-type tests. 
\big(More exactly, in \cite{pin94} a two-tail version of inequality \eqref{eq:pin94} was given. The right-tail inequality \eqref{eq:pin94} can be proved quite similarly; alternatively, it follows from general results of \cite{pin98}.\big) 
Various generalizations and improvements of inequality \eqref{eq:pin94} as well as related results were given by Pinelis \cite{pin98,pin99,pin-eaton,binom,normal,asymm}
and Bentkus \cite{bent-liet02,bent-jtp,bent-ap,bent-skew} (with co-authors). 
For Rademacher $\eta_i$'s, a version of \eqref{eq:pin94} with a better constant factor $c$, which is about 1\% off the best possible one, was given in \cite{pin-houdre}; related inequalities were obtained in \cite{BGH,pin-MIA}.

Pinelis \cite{pin98} provided a general device allowing one to extract the optimal tail comparison inequality from a generalized moment comparison. To state that result, consider the Eaton-type classes of functions $f\colon\R\to\R$:
\begin{equation}\label{eq:H}
\H\al:=\{f\colon
f(u)=\textstyle{\int\nolimits_{-\infty}^\infty} (u-t)_+^\al\,\mu(dt)\quad \forall u\in\R\},\quad\al\ge0, 
\end{equation}
where $\mu\ge0$ is a Borel measure, and 
$0^0:=0$; of course, when used with functions or classes of functions (as, for example, in the symbol $\H\al$), the subscript ${}_+$ will have a meaning different from that in the definition $x_+:=0\vee x$. 

It is easy to see \cite[Proposition 1(ii)]{pin99} that
\begin{equation}\label{eq:F-al-beta}
0\le\beta<\al\quad\text{implies}\quad\H\al\subseteq\H\beta.
\end{equation}

\begin{proposition}\label{prop:F-al}
\emph{\cite{binom}}\ 
For natural $\al$, one has $f\in\H\al$ if and only if $f$ has finite derivatives $f^{(0)}:=f,f^{(1)}:=f',\dots,f^{(\al-1)}$ on $\R$ such that $f^{(\al-1)}$ is convex on $\R$ and $f^{(j)}(-\infty+)=0$ for $j=0,1,\dots,\al-1$. 
\end{proposition}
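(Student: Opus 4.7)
The plan is to verify the two implications separately, using differentiation under the integral sign for the forward direction and Cauchy's repeated-integration formula for the reverse.

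For the ``only if'' part, starting from $f(u)=\int(u-t)_+^{\al}\,\mu(dt)$ and iterating differentiation under the integral sign, I expect to obtain
\[
f^{(k)}(u)=\frac{\al!}{(\al-k)!}\int(u-t)_+^{\al-k}\,\mu(dt),\qquad k=0,1,\dots,\al-1.
\]
To justify this, for $u$ restricted to any compact interval $[u_*,u_1]$ with $u_1<u_0$, I would use the elementary envelope $(u-t)_+^{\al-k-1}\le(u_0-t)_+^{\al}/(u_0-u_1)^{k+1}$, whose integrability follows from the standing finiteness $\int(u_0-t)_+^{\al}\,\mu(dt)=f(u_0)<\infty$. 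Dominated convergence then validates the differentiation identity at every stage. Convexity of $f^{(\al-1)}(u)=\al!\int(u-t)_+\,\mu(dt)$ is immediate from integrating convex functions of $u$ against the nonnegative $\mu$, and the decay $f^{(k)}(-\infty+)=0$ for $k=0,\dots,\al-1$ follows from monotone convergence as $(u-t)_+^{\al-k}\downarrow 0$.

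For the ``if'' direction, set $h:=f^{(\al-1)}$. Convexity of $h$ together with $h(-\infty+)=0$ forces $h$ to be nonnegative and nondecreasing: letting $t\to-\infty$ in the convexity inequality $h(u)\le\frac{v-u}{v-t}h(t)+\frac{u-t}{v-t}h(v)$ for $t<u<v$ yields $h(u)\le h(v)$, and hence $h(u)\ge\lim_{t\to-\infty}h(t)=0$. Consequently the right derivative $h'_+$ is a nonnegative, nondecreasing, right-continuous function with $h'_+(-\infty+)=0$ (otherwise $h$ would grow at least linearly near $-\infty$, contradicting $h(-\infty+)=0$), so $h'_+$ is the cumulative distribution function of a $\si$-finite Borel measure $\nu\ge0$, and Tonelli's theorem gives $h(u)=\int_{-\infty}^u h'_+(s)\,ds=\int(u-t)_+\,\nu(dt)$. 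For $\al\ge2$, using $f^{(j)}(-\infty+)=0$ for $j=0,\dots,\al-2$, successively integrating $f^{(\al-1)}=h$ from $-\infty$ and applying Cauchy's iterated-integration formula produce
\[
f(u)=\frac{1}{(\al-2)!}\int_{-\infty}^u(u-s)^{\al-2}\,h(s)\,ds.
\]
Substituting the representation of $h$, interchanging integrals by Tonelli, and evaluating the inner beta-type integral $\int_t^u(u-s)^{\al-2}(s-t)\,ds=(u-t)^{\al}\,(\al-2)!/\al!$ then produce $f(u)=\frac{1}{\al!}\int(u-t)_+^{\al}\,\nu(dt)$, exhibiting $f\in\H\al$ with $\mu:=\nu/\al!$; the case $\al=1$ is already covered by applying the first step to $f$ itself.

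The main technical obstacle is the justification of differentiation under the integral in the ``only if'' direction: because $\mu$ can concentrate mass arbitrarily near $-\infty$, the intermediate integrand $(u-t)_+^{\al-k-1}$ is not itself dominated by $(u-t)_+^{\al}$, which forces one to absorb the missing $k+1$ powers into a fixed gap $u_0-u_1>0$ as in the envelope bound above. The ``if'' direction, by contrast, reduces to standard convex analysis, Cauchy's iterated-integration formula, and Tonelli's theorem.
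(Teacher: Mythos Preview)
Your proof is correct. Note, however, that the paper does not actually prove this proposition: it is simply cited from \cite{binom}, so there is no ``paper's own proof'' to compare against. Your argument---differentiation under the integral with the envelope $(u-t)_+^{\al-k-1}\le(u_0-t)_+^{\al}/(u_0-u_1)^{k+1}$ for the forward direction, and the convex-analysis representation of $h=f^{(\al-1)}$ followed by Cauchy's repeated-integration formula for the reverse---is a standard and valid self-contained proof of the result.
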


It follows from \eqref{eq:F-al-beta} and Proposition \ref{prop:F-al} that, for every $t\in\R$, every $\al\ge0$, every $\beta\ge\al$, and every $\la>0$, the functions $u\mapsto(u-t)_+^\beta$ and $u\mapsto e^{\la(u-t)}$
belong to $\H\al$.  

The next theorem follows immediately from results of \cite{pin98,pin99}; in particular, see \cite[Theorem~3.11]{pin98} (and its proof) and \cite[Theorem~4]{pin99}.
\begin{theorem}
\label{th:comparison} 
Suppose that $0\le\beta\le\al$, $\xi$ and $\eta$ are real-valued r.v.'s, and the tail function $u\mapsto\PP(\eta\ge u)$ is log-concave on $\R$. Then the comparison inequality 
\begin{equation}\label{eq:comp-al}
\E f(\xi)\le\E f(\eta)\quad\text{for all }f\in\H\al
\end{equation}
implies
\begin{equation}\label{eq:comp-beta}
\E f(\xi)\le c_{\al,\beta}\,\E f(\eta)\quad\text{for all }f\in\H\beta
\end{equation}
and, in particular, for all real $x$,
\begin{align}
\PP(\xi\ge x) \le 
%\inf\{\E f(\eta)\colon f\in\H\al, f(x)=1\}
%%{f\in\H\al}\,\frac{\E f(\eta)}{f(x)} 
%\label{eq:comp-prob1} \\
%=
P_\al(\eta;x)&:=\inf_{t\in(-\infty,x)}\,\frac{\E(\eta-t)_+^\al}{(x-t)^\al} 
\label{eq:comp-prob2} \\
&\le c_{\al,0}\,\PP(\eta\ge x),
\label{eq:comp-prob3} 
\end{align}
where 
\begin{equation*}%\label{eq:c(al,beta)}
c_{\al,\beta}:=\frac{\Gamma(\al+1)(e/\al)^\al}{\Gamma(\beta+1)(e/\beta)^\beta}
\end{equation*}
for $\beta>0$; $c_{\al,0}:=\Gamma(\al+1)(e/\al)^\al$.
Moreover, the constant $c_{\al,\beta}$ is the best possible in \eqref{eq:comp-beta} and \eqref{eq:comp-prob3}. %Furthermore, $P_\al(\eta;x)$ is the best upper bound on $\PP(\xi\ge x)$ based on the generalized moment comparison \eqref{eq:comp-al}. 
\end{theorem}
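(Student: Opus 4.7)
The plan is to treat the tail bound \eqref{eq:comp-prob2} first and then refine to the generalized moment inequalities \eqref{eq:comp-beta} and \eqref{eq:comp-prob3}. For \eqref{eq:comp-prob2}, I would apply the hypothesis \eqref{eq:comp-al} to the function $g_{s,x}(u) := (u-s)_+^\al/(x-s)^\al$, which is a positive scalar multiple of $u \mapsto (u-s)_+^\al$ and therefore lies in $\H\al$ by Proposition~\ref{prop:F-al}. Since $g_{s,x}(u) \ge \ii{u \ge x}$ pointwise, one obtains $\PP(\xi \ge x) \le \E g_{s,x}(\xi) \le \E g_{s,x}(\eta) = \E(\eta-s)_+^\al/(x-s)^\al$, and taking the infimum over $s \in (-\infty, x)$ yields \eqref{eq:comp-prob2}. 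This step uses neither the log-concavity hypothesis nor the exponent $\beta$.

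For \eqref{eq:comp-beta}, I would use the integral representation \eqref{eq:H} of $f \in \H\beta$ to reduce matters to dominating $(u-t)_+^\beta$ from above by a function in $\H\al$. The key is the sharp pointwise bound
\begin{equation*}
(u-t)_+^\beta \le A_{s,t}^{-1}(u-s)_+^\al \qquad (u \in \R,\ s<t),
\end{equation*}
where $A_{s,t} := \al^\al(t-s)^{\al-\beta}/[(\al-\beta)^{\al-\beta}\beta^\beta]$; it is obtained by minimizing $(u-s)^\al/(u-t)^\beta$ over $u > t$, with minimizer $u^* := t + \beta(t-s)/(\al-\beta)$. Integrating this inequality against the measure $\mu$ appearing in the representation of $f$ yields a function $\tilde f \in \H\al$ dominating $f$, so \eqref{eq:comp-al} applied to $\tilde f$ gives $\E f(\xi) \le \E \tilde f(\xi) \le \E \tilde f(\eta)$.

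The main technical obstacle is to exploit the log-concavity of $\bar F(u) := \PP(\eta \ge u)$ to arrange that $\E\tilde f(\eta) \le c_{\al,\beta}\E f(\eta)$. The constant $c_{\al,\beta}$ is calibrated against the exponential extremal case: if $\eta$ is exponentially distributed with rate $\la$ then $\E(\eta-s)_+^\al = \Gamma(\al+1)\bar F(s)/\la^\al$, and minimizing $A_{s,t}^{-1}\E(\eta-s)_+^\al$ over $d := t-s$ is attained at $d^* = (\al-\beta)/\la$ with minimum value $c_{\al,\beta}\E(\eta-t)_+^\beta$, the identity $c_{\al,\beta} = \Gamma(\al+1)(e/\al)^\al/[\Gamma(\beta+1)(e/\beta)^\beta]$ emerging from the algebra. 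For a general log-concave $\bar F$, the global tangent-line inequality $\bar F(u) \le \bar F(t_0)e^{-\la_0(u-t_0)}$ with $\la_0 := -(\ln \bar F)'(t_0)$ allows the $\eta$-integrals to be reduced to exponential-type ones; however, a single tangent line does not immediately produce the desired inequality pointwise in $t$, and one must combine the calibration of the offset $d$ with the monotonicity of the hazard rate $s \mapsto -(\ln\bar F)'(s)$, operating inside the integral against $\mu$, to close the argument. This delicate reduction to the exponential case is the heart of the proof in \cite[Theorem~3.11]{pin98}.

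Finally, \eqref{eq:comp-prob3} is obtained either as the limit $\beta \downarrow 0$ of \eqref{eq:comp-beta} (with $c_{\al,\beta} \to c_{\al,0}$ and $\E(\eta-t)_+^\beta \to \bar F(t)$) or directly by combining \eqref{eq:comp-prob2} with the same log-concavity argument applied to $P_\al(\eta;x)$. Optimality of $c_{\al,\beta}$ is confirmed by taking $\eta$ exponentially distributed and concentrating $\xi$ near the critical point $u^* := t + \beta d^*/(\al-\beta)$ with $d^* := (\al-\beta)/\la$, at which the dominating inequality above holds with equality, making the entire chain of estimates asymptotically tight.
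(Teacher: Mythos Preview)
The paper does not give a self-contained proof of Theorem~\ref{th:comparison}; it simply states that the theorem ``follows immediately from results of \cite{pin98,pin99}; in particular, see \cite[Theorem~3.11]{pin98} (and its proof) and \cite[Theorem~4]{pin99}.'' Your outline is therefore not being compared against an in-paper argument but against the approach of those cited works, and on that score you have correctly identified the skeleton: the Markov-type bound for \eqref{eq:comp-prob2}, the sharp pointwise domination $(u-t)_+^\beta\le A_{s,t}^{-1}(u-s)_+^\al$ with the optimizer $u^*=t+\beta(t-s)/(\al-\beta)$, the exponential distribution as the extremal case saturating $c_{\al,\beta}$, and the role of log-concavity in reducing the general case to the exponential one.

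Where your write-up stops short is exactly where you say it does: the ``delicate reduction to the exponential case.'' You describe a tangent-line bound $\bar F(u)\le\bar F(t_0)e^{-\la_0(u-t_0)}$ and then gesture at combining the offset $d$ with hazard-rate monotonicity, but you do not actually carry out the step that turns $\E\tilde f(\eta)$ into $c_{\al,\beta}\E f(\eta)$. In \cite{pin98} this is handled not by a single tangent line but by an identity/inequality relating $\E(\eta-t)_+^\al$ to an integral of $\bar F$ and then using log-concavity inside that integral; the offset $s=s(t)$ is chosen adaptively so that the resulting ratio is bounded by $c_{\al,\beta}$ uniformly in $t$. Since you yourself defer this step to \cite[Theorem~3.11]{pin98}, and since the paper under review does the same, your proposal is aligned with the paper's treatment --- but as a standalone proof it is incomplete at precisely the point you flagged.
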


A similar result for the case when $\al=1$ and $\beta=0$ 
is contained in the book by Shorack and Wellner (1986) \cite{shorack-wellner}, pages 797--799. 

%Note that $c_{\al,0}\sim\sqrt{2\pi\al}$ and $c_{\al,\beta}=c_{\al,0}/c_{\beta,0}\sim\sqrt{\al/\beta}$ as $\al,\beta\to\infty$. 

\begin{definition}\label{def:lc}
For any r.v.\ $\eta$, let the function $\R\ni x\mapsto\PP^\lc(\eta\ge x)$ be defined as the least log-concave majorant over $\R$ of the tail function $\R\ni x\mapsto\PP(\eta\ge x)$ of the r.v.\ $\eta$. 
\end{definition}

\begin{remark}\label{rem:PLC} One has $\PP^\lc(a+b\eta\ge x)=\PP^\lc(\eta\ge\frac{x-a}b)$ for all $x\in\R$ and all real constants $a$ and $b$ such that $b>0$. 
\end{remark}

\begin{remark}\label{comparison-remark}
As follows from \cite[Remark 3.13]{pin98}, a useful point is that the requirement of the log-concavity of the tail function $\R\ni u\mapsto\PP(\eta\ge u)$ in Theorem \ref{th:comparison}
can be removed --- at least as far as \eqref{eq:comp-prob3} is concerned --- by replacing $\PP(\eta\ge x)$ in \eqref{eq:comp-prob3} with $\PP^\lc(\eta\ge x)$. 
However, then the optimality of $c_{\al,\beta}$ is not guaranteed. 
\end{remark}

Detailed studies of various cases and aspects of the %optimal 
bound $P_\al(\eta;x)$ defined in \eqref{eq:comp-prob2} were presented in \cite{dufour-hallin,pin98,bent-64pp}.

Note that 
$c_{3,0}=c_3=2e^3/9$, 
which is the constant factor mentioned above, after inequality \eqref{eq:ea}. 

Going back to the Bennett-Hoefding and Pinelis-Utev bounds defined in \eqref{eq:BHexp} and \eqref{eq:PUexp}, observe that they have a transparent probabilistic interpretation:  
\begin{align}
	\BH_{\exp}(\la)&=\E\exp\big\{\la y\tPi_{\si^2/y^2}\big\} \quad\text{and} \label{eq:BHexp-expr} \\
	\PU_{\exp}(\la)&=\E\exp\big\{\la\big(\Ga_{(1-\vp)\si^2}+y\tPi_{\vp\si^2/y^2}\big)\big\} \label{eq:PUexp-expr}
\end{align}
for all $\la$, where the following definition is employed. 

\begin{definition}\label{def:Ga,tPi}
For any $a>0$ and $\th>0$, let  
$\Ga_{a^2}$ and $\Pi_{\th}$ stand for any independent r.v.'s such that 
\begin{equation*}
\text{$\Ga_{a^2}\sim\No(0,a^2)$ and $\Pi_{\th}\sim\Po(\th)$};	
\end{equation*}
that is, $\Ga_{a^2}$ has the normal distribution with parameters $0$ and $a^2$, and $\Pi_{\th}$
has the Poisson distribution with parameter $\th$; at that, let $\Ga_0$ and $\Pi_0$ be defined as the constant zero r.v. 
Let also 
\begin{equation*}
	\tPi_{\th}:=\Pi_{\th}-\E\Pi_{\th}=\Pi_{\th}-\th.
\end{equation*} 
\end{definition}

Thus, \eqref{eq:BHexp} and \eqref{eq:PUexp} can be viewed as the generalized moment comparison inequalities 
\begin{align}
	\E f(S)&\le\E f(y\tPi_{\si^2/y^2}) \quad\text{and}\label{eq:Bef}\\
	\E f(S)&\le\E f\big(\Ga_{(1-\vp)\si^2}+y\tPi_{\vp\si^2/y^2}\big),\label{eq:PUf}
\end{align}
over the class of all increasing exponential functions $\R\ni x\mapsto f(x)=e^{\la x}$, $\la>0$. 
Note that, of the total variance $\si^2$ of the r.v.\ $\Ga_{(1-\vp)\si^2}+y\tPi_{\vp\si^2/y^2}$ in \eqref{eq:PUf}, the part of the variance equal $(1-\vp)\si^2$ is apportioned to the light-tail centered-Gaussian component $\Ga_{(1-\vp)\si^2}$, while the rest of the variance, $\vp\si^2$, is apportioned to the heavy-tail centered-Poisson component $y\tPi_{\vp\si^2/y^2}$.  

Bentkus~\cite{bent-liet02,bent-ap} extended inequality \eqref{eq:Bef} to all $f$ of the form $f(x)\equiv(x-t)_+^2$; hence, recalling \eqref{eq:H}, one has \eqref{eq:Bef} for all $f\in\H2$. Moreover, it follows by \eqref{eq:comp-prob2}, \eqref{eq:comp-prob3}, and Remark~\ref{comparison-remark} that for all $x\ge0$ 
\begin{equation}\label{eq:Be}
	\PP(S\ge x)\le\Be(x):=P_2(y\tPi_{\si^2/y^2};x)\le c_{2,0}\,\PP^\lc(y\tPi_{\si^2/y^2}\ge x);
\end{equation}
note also that $c_{2,0}=e^2/2$. 
Similar results for stochastic integrals were obtained in \cite{klein-priv}. 
Since the class $\H2$ contains all increasing exponential functions, the Bentkus bound $\Be(x)$ is an improvement of the Bennett-Hoeffding bound $\BH(x)$ given by \eqref{eq:hoeff}. 
% and \eqref{eq:BH}: 
%\begin{equation}\label{eq:Be<BH}
%	\Be(x)\le\BH(x)
%\end{equation}
%for all $x\ge0$. 

 %In fact, it is quite a significant improvement.

In this paper, we shall similarly improve the Pinelis-Utev exponential bounds given by \eqref{eq:PUexp} and \eqref{eq:PU}, which, as was mentioned, in turn refine and improve the corresponding Bennett-Hoeffding bounds. This will require proofs of a significantly higher level of difficulty, with some substantially new ideas.  

\section{Statements of the main results}\label{results}
We shall show that the generalized moment comparison inequality \eqref{eq:PUf} takes place for all $f$ in $\H3$ and, in fact, for all $f$ in the slightly larger class 
\begin{align} \notag%\label{eq:F3}
\F3&:=\{f\in\C^2\colon \text{$f$ and $f''$ are nondecreasing and convex}\} \notag\\
&=\{f\in\C^2\colon \text{$f$, $f'$, $f''$, $f'''$ are nondecreasing}\}, \label{eq:F3}
\end{align} 
where $\C^2$ denotes the class of all twice continuously differentiable functions 
$f\colon\R\to\R$ and $f'''$ denotes the right derivative of the convex function $f''$. 
For example, functions $x\mapsto a+b\,x+c\,(x-t)_+^\al$ and 
$x\mapsto a+b\,x+c\,e^{\la x}$ belong to $\F3$ for all $a\in\R$, $b\ge0$, $c\ge0$, $t\in\R$, $\al\ge3$, and $\la\ge0$. 
It is easy to see that $\H3\subseteq\F3$.

\begin{remark*}% \label{rem:Jensen's}
If a function $f\colon\R\to\R$ is convex and a r.v.\ $X$ has a finite expectation, then, by Jensen's inequality, $\E f(X)$ always exists in $(-\infty,\infty]$. This remark will be used in this paper (sometimes tacitly) for functions $f$ in the class $\F3$, as well as for other convex functions. 
\end{remark*} 

Let $X_1,\dots,X_n$ be independent r.v.'s, with the sum $S:=X_1+\dots+X_n$. 
Also, recall now Definiton~\ref{def:Ga,tPi}. 

\begin{theorem}\label{th:main}
Let $\si$, $y$, and $\be$ be any (strictly) positive real numbers  such that 
\begin{equation}\label{eq:vp}
	\vp:=\frac{\be}{\si^2y}\in(0,1).     
\end{equation}
Suppose that  
\begin{equation}\label{eq:ineqs}
	\sum_i\E X_i^2\le\si^2,\quad\sum_i\E(X_i)_+^3\le\be,\quad\E X_i\le0,\quad\text{and $X_i\le y$ a.s., %for all $i$.
	}
\end{equation}
%and $X_i\le y$ a.s., 
for all $i$.  
Then  
\begin{equation}\label{eq:PUfF3}
	\E f(S)\le\E f\big(\Ga_{(1-\vp)\si^2}+y\tPi_{\vp\si^2/y^2}\big)%\quad\text{for all }f\in\F3,
\end{equation}
for all $f\in\F3$. 

%, and the r.v's $\Ga_{(1-\vp)\si^2}$ and $\tPi_{\vp\si^2/y^2}$ are as described in the Introduction, before inequalities \eqref{eq:Bef} and \eqref{eq:PUf}. 
\end{theorem}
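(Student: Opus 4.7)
My plan splits the proof into a reduction to a one-variable comparison, followed by that comparison itself, which is the substantive step.

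\textbf{Closure and hybridization.} The class $\F3$ is closed under convolution with an independent r.v.\ $W$: if $\tilde g(x):=\E g(x+W)$, then differentiating under the integral sign gives $\tilde g^{(k)}(x)=\E g^{(k)}(x+W)$ for $k\le 3$ (interpreting $\tilde g^{(3)}$ as the right derivative of the convex function $\tilde g^{(2)}$), and each inherits nondecreasingness from $g^{(k)}$. Next, choose $\sigma_i^2\ge\E X_i^2$ and $\beta_i\ge\E(X_i)_+^3$ so that $\sum_i\sigma_i^2=\sigma^2$, $\sum_i\beta_i=\beta$, and $\vp_i:=\beta_i/(\sigma_i^2 y)\in(0,1)$; degenerate cases ($\sigma_i=0$ or $\vp_i\in\{0,1\}$) are handled by a perturbation and a limit. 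Let $Y_i\sim\Ga_{(1-\vp_i)\sigma_i^2}+y\tPi_{\vp_i\sigma_i^2/y^2}$ be mutually independent and independent of the $X_i$'s. By stability of the normal and Poisson families, the variances $(1-\vp_i)\sigma_i^2$ and Poisson intensities $\vp_i\sigma_i^2/y^2$ sum correctly, so $\sum_i Y_i$ has the distribution on the right-hand side of \eqref{eq:PUfF3}. A telescoping hybrid argument---replace $X_k$ by $Y_k$ one index at a time, absorbing the other coordinates into the function $h_k(x):=\E f(x+W_k)$, which still lies in $\F3$ by the closure property---reduces everything to the single-index inequality
\begin{equation}\label{eq:single}
\E g(X_i)\le\E g(Y_i)\quad\text{for every }g\in\F3\text{ and every }i,
\end{equation}
under the per-summand hypotheses $X_i\le y$, $\E X_i\le0$, $\E X_i^2\le\sigma_i^2$, $\E(X_i)_+^3\le\beta_i$.

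\textbf{The one-variable comparison.} The real work is in establishing \eqref{eq:single}. I would exploit the three monotone derivatives of $g$ by writing $g'''$ as the distribution function of a nonnegative measure plus a constant and integrating back three times, thereby representing $g$---modulo a constant, a linear piece (controlled by $\E X_i\le 0$) and an exponential reservoir absorbing the behaviour at $-\infty$---as a nonnegative mixture of the cubic kernels $x\mapsto(x-t)_+^3$. Together with the exponential case already settled by the Pinelis--Utev inequality \eqref{eq:PUexp}, this reduces \eqref{eq:single} to showing $\E(X_i-t)_+^3\le\E(Y_i-t)_+^3$ for every $t\in\R$. For each fixed $t$, a Tchebyshev--Markov extremal analysis, under the support bound $X_i\le y$ together with the three moment constraints, confines the worst-case $X_i$ to an atom at the upper endpoint $y$ plus at most two lower atoms, making the left-hand side finite-dimensional.

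\textbf{Main obstacle.} Matching the resulting finite-atom extremum on the left against $\E(Y_i-t)_+^3$ is where I expect the real difficulty, and where the ``substantively new ideas'' promised at the end of the introduction must enter. On the right one has an infinitely divisible law whose centered-Poisson component places atoms at the arithmetic progression $ky-\vp_i\sigma_i^2/y$ ($k\in\{0,1,2,\dots\}$) and whose Gaussian component smears each of these into a continuous tail. The challenge is to absorb the Gaussian smoothing---most naturally by an integration-by-parts that rewrites $\E(Y_i-t)_+^3$ as the integral of a kernel in $\F3$ against the law of the centered-Poisson piece---so that the cubic tail extremum on the left can be compared, term by term or through a further monotonicity argument, with the right-hand side. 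Isolating the right such kernel, and exploiting the log-concavity and infinite divisibility of $Y_i$, looks to me like the technical heart of the paper.
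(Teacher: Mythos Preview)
Your hybridization reducing to a single-summand comparison is essentially what the paper does, and your instinct that the extremal $X_i$ in the inequality $\E(X_i-t)_+^3\le\E(Y_i-t)_+^3$ is a two-point law is correct: the paper's Lemma~\ref{prop:ab} pins it down explicitly as some $X_{a,b}$, with $(a,b)$ depending on the sign of $t$.

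Two gaps remain. The minor one is your reduction from $\F3$ to the cubic kernels $(x-t)_+^3$: an ``exponential reservoir'' cannot help, since every derivative of $e^{\la x}$ vanishes at $-\infty$ and so absorbs nothing. What works (Lemma~\ref{lem:F3}) is a \emph{quadratic} correction: for each $z<0$ one exhibits $h_z\in\H3$ such that $g_z:=h_z+Au+Bu^2+C$ agrees with $g$ on $[z,\infty)$ and dominates $g$ everywhere, with $A,B\ge0$; one then uses \emph{both} moment conditions $\E X_i\le0=\E Y_i$ and $\E X_i^2\le\si_i^2=\E Y_i^2$, and sends $z\to-\infty$. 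Your linear piece alone, controlled only by $\E X_i\le0$, is not enough.

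The major gap is the one you flag yourself, and your speculations (integration by parts against the Poisson piece, log-concavity, infinite divisibility of $Y_i$) are not the route the paper takes; I do not see how to make them work either. The paper's idea is an \emph{interpolation in the two-point parameter}. After rescaling to $y=1$ and reducing to $X=X_{a_0,b_0}$ with $a_0b_0=\si^2$ and $\E(X_{a_0,b_0})_+^3=\be$, one embeds both endpoints into a one-parameter family
\[
\eta_b:=X_{a(b),b}+\Ga_{(1-\vp)\tau(b)}+\tPi_{\vp\tau(b)},\qquad b\in[\vp,b_0],
\]
with $\vp=\be/\si^2$, $a(b):=b(b-\vp)/\vp$, and $\tau(b):=\si^2-a(b)b$. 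At $b=b_0$ one has $\tau=0$ and recovers $X$; at $b=\vp$ the two-point part degenerates to $0$ and one recovers $Y$. Differentiating $b\mapsto\E(\eta_b-w)_+^3$ via the generator of the L\'evy process $t\mapsto\Ga_{(1-\vp)t}+\tPi_{\vp t}$ (Lemma~\ref{lem:der}) and then conditioning on the L\'evy piece reduces the sign question to a single explicit polynomial inequality $\tG(a,b,t)\ge0$ in three real variables, which is verified piecewise (eight cases, dispatched by Mathematica's \texttt{Reduce}). This interpolation-plus-generator device is the ``substantively new idea''; nothing in your outline points toward it.
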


The proof of Theorem~\ref{th:main} will be given in Section~\ref{proofs}, where all the necessary proofs are deferred to. 

Note that the condition $\vp\in(0,1)$ in \eqref{eq:vp} does not at all diminish generality, since it is easy to see that 
$\sum_i\E(X_i)_+^3<\si^2y$ for any positive $\si$ and $y$ and any r.v.'s $X_1,\dots,X_n$ such that $\sum_i\E X_i^2\le\si^2$, $\E X_i\le0$,  
and $X_i\le y$ a.s., for all $i$; so, one can always choose $\be$ to be in the interval $\big(\sum_i\E(X_i)_+^3,\si^2y\big)$, and then one will have $\vp\in(0,1)$.  

\begin{proposition}\label{prop:F3larger}
Let the class ${\F3}_{,1}$ of functions be defined by removing $f$ from the list ``$f,f',f'',f'''$'' in \eqref{eq:F3}; similarly define the class %${\F3}_{,2}$ by removing $f'$ from the same list. , and also the class 
${\F3}_{,12}$ by removing both $f$ and $f'$ from the same list; thus, each of these two new classes is larger than the class $\F3$. 
%Let the classes ${\F3}_{,1}$; ${\F3}_{,2}$; and ${\F3}_{,12}$ be defined, respectively, by removing $f$; $f'$; and $f,f'$ from the list ``$f,f',f'',f'''$'' in \eqref{eq:F3}; thus, each of these three classes is larger than the class $\F3$. 
\begin{enumerate}[(i)]
	\item If the condition ``$\E X_i\le0$ $\forall i$'' in Theorem~\ref{th:main} is replaced by  
``$\E X_i=0$ $\forall i$'', then inequality \eqref{eq:PUfF3} will hold for all $f$ in the larger class ${\F3}_{,1}$. 
	\item If the conditions ``$\E X_i\le0$ $\forall i$'' and $\sum_i\E X_i^2\le\si^2$ 
	in Theorem~\ref{th:main} are both replaced by the equalities  
``$\E X_i=0$ $\forall i$'' and $\sum_i\E X_i^2=\si^2$, then \eqref{eq:PUfF3} will hold for all $f$ in the larger class ${\F3}_{,12}$. 
%	\item If only the condition $\sum_i\E X_i^2\le\si^2$ 
%	in Theorem~\ref{th:main} is replaced by the equality  
%$\sum_i\E X_i^2=\si^2$, then \eqref{eq:PUfF3} will \emph{not} hold for some $f$ in the larger class ${\F3}_{,2}$. 
\end{enumerate}  
\end{proposition}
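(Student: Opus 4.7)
Both parts of the proposition are reduced to Theorem~\ref{th:main} by the observation that the strengthened moment-matching hypotheses of~(i) and~(ii) render $\E f(Y)-\E f(S)$, with $Y:=\Ga_{(1-\vp)\si^2}+y\tPi_{\vp\si^2/y^2}$, invariant under addition to $f$ of an affine (in~(i)) or quadratic (in~(ii)) polynomial. Indeed, under $\E X_i=0$ one has $\E S=0=\E Y$, and a direct computation of the variance of a centered Poisson gives $\E Y^2=\si^2$; thus under the further equality $\sum_i\E X_i^2=\si^2$ of~(ii) also $\E S^2=\E Y^2$.

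\textbf{Part~(i).} For $f\in{\F3}_{,1}$, set $\alpha:=f'(-\infty)\in[-\infty,\infty)$. If $\alpha>-\infty$, let $\tilde f(x):=f(x)-\alpha x$; then $\tilde f'=f'-\alpha\ge0$, while $\tilde f''=f''$ and $\tilde f'''=f'''$ remain nondecreasing, so $\tilde f\in\F3$, and Theorem~\ref{th:main} combined with the linear invariance above yields \eqref{eq:PUfF3} for $f$. If $\alpha=-\infty$, approximate $f$ by a sequence $(f_N)\subset{\F3}_{,1}$ with each $f_N'(-\infty)$ finite and $f_N\to f$ pointwise: for $N$ large (so that $f''(-N)>0$ and $f'''(-N)>0$; the degenerate sub-cases, in which $f$ is forced to be affine or quadratic on $(-\infty,-N]$, are handled by a simpler ad hoc extension), replace $f$ on $(-\infty,-N]$ by the $\C^2$ function $P_N$ determined by $P_N(-N)=f(-N)$, $P_N'(-N)=f'(-N)$, and $P_N''(x):=f''(-N)\,e^{\lambda_N(x+N)}$ with $\lambda_N:=f'''(-N)/f''(-N)$. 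This exponential tail construction (after routine checks at $x=-N$) yields $f_N\in{\F3}_{,1}$ with $f_N'(-\infty)=f'(-N)-f''(-N)^2/f'''(-N)>-\infty$; the first case gives $\E f_N(S)\le\E f_N(Y)$, and passing to the limit uses that $f_N=f$ on $[-N,\infty)$, together with monotone or dominated convergence.

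\textbf{Part~(ii) and the main obstacle.} For $f\in{\F3}_{,12}$ with $\beta:=f''(-\infty)$ finite, put $g(x):=f(x)-\tfrac{\beta}{2}x^2$; then $g''=f''-\beta\ge0$ is nondecreasing and convex, so $g'$ is nondecreasing and $g\in{\F3}_{,1}$. Part~(i) applied to $g$, combined with the cancellation $\E S^2=\E Y^2$, yields \eqref{eq:PUfF3} for $f$. The case $\beta=-\infty$ is handled by the clean monotone approximation $f_M''(x):=\max(f''(x),-M)$: this produces $f_M\in{\F3}_{,12}$ with $f_M''(-\infty)=-M$ finite, and a direct integration (matching $f_M(0)=f(0)$ and $f_M'(0)=f'(0)$) gives $f_M\downarrow f$ pointwise as $M\to\infty$, so monotone convergence completes the argument. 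The chief technical obstacle throughout is verifying that the approximants belong to the prescribed class --- specifically, that the relevant derivatives ($f_N', f_N'', f_N'''$ in part~(i), $f_M'', f_M'''$ in part~(ii)) remain nondecreasing across the $\C^2$-join at the boundary --- which is precisely why the exponential tail with the prescribed exponent is needed in part~(i), while the simpler max-truncation at the level of $f''$ suffices in part~(ii).
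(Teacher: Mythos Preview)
Your approach is correct but genuinely different from the paper's. The paper simply invokes Lemma~\ref{lem:F3}(ii,iii), whose proof uses a single, uniform device: for any $f$ in the relevant class and any $z\in\R$, form the quadratic-Taylor truncation $g_z$ (equal to $f$ on $[z,\infty)$ and to the second-order Taylor polynomial of $f$ at $z$ on $(-\infty,z)$). One checks that $g_z\ge f$, that $g_z$ minus an explicit quadratic lies in $\H3$ (so the $\H3$ comparison applies), and that $g_z\downarrow f$ as $z\to-\infty$; the moment-matching hypotheses enter only to neutralize the possibly wrong signs of the linear and quadratic coefficients in that decomposition. This handles all $f$ at once, with no case split on whether $f'(-\infty)$ or $f''(-\infty)$ is finite.

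Your route---subtract $\alpha x$ or $\tfrac{\beta}{2}x^2$ to descend to $\F3$ or ${\F3}_{,1}$ and invoke Theorem~\ref{th:main} or part~(i)---is more transparent about \emph{why} the moment equalities help, and is entirely clean in the finite-limit cases. The cost is the extra approximation machinery when $\alpha=-\infty$ or $\beta=-\infty$. Your max-truncation in~(ii) is fine. For~(i), the exponential-tail $f_N$ also works, but the passage to the limit you leave as ``monotone or dominated convergence'' needs one more observation: on $(-\infty,-N]$ both $f$ and $f_N$ lie between the first- and second-order Taylor polynomials of $f$ at $-N$ (since $0\le f''(x),\,f_N''(x)\le f''(-N)$ there), whence $|f_N-f|\le\tfrac{f''(-N)}{2}(x+N)^2\le\tfrac{f''(0)}{2}x^2$, and dominated convergence via $\E S^2,\E Y^2<\infty$ closes the argument. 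The paper's construction avoids this bookkeeping; yours makes the mechanism of moment cancellation more explicit.
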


\begin{proposition}\label{prop:exact}
For each triple $(\si,y,\be)$ of positive numbers satisfying condition \eqref{eq:vp} and each $f\in\F3$, the upper bound $\E f\big(\Ga_{(1-\vp)\si^2}+y\tPi_{\vp\si^2/y^2}\big)$ on $\E f(S)$, given by \eqref{eq:PUfF3}, is exact; moreover, this bound remains exact if the first three inequalities in the condition \eqref{eq:ineqs} are replaced by the corresponding equalities. 
\end{proposition}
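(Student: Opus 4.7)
My plan is to construct, for each triple $(\si, y, \be)$ satisfying \eqref{eq:vp}, a sequence of arrays $(X^{(n)}_i)$ of independent centered r.v.'s bounded above by $y$, satisfying the equality versions of the first two constraints in \eqref{eq:ineqs}, whose partial sums $S^{(n)}$ converge weakly to $T := \Ga_{(1-\vp)\si^2} + y\tPi_{\vp\si^2/y^2}$. Combining this with the inequality in Theorem~\ref{th:main} will then force $\E f(S^{(n)}) \to \E f(T)$ for every $f \in \F3$, proving the stronger (equality-constrained) form of the proposition; the inequality-constrained form will follow immediately, since every equality-constrained array is in particular inequality-constrained.

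For the construction I would take each array to consist of two independent blocks of $n$ summands. The \emph{Gaussian block} would be $n$ i.i.d.\ symmetric two-point variables supported on $\{-\epsilon_n, \epsilon_n\}$; the \emph{Poisson block} would be $n$ i.i.d.\ variables taking value $y$ with probability $p_n$ and $-y p_n/(1-p_n)$ otherwise (so each summand is centered and bounded above by $y$). The two parameters $\epsilon_n, p_n$ would then be determined (for $n$ large) by the two moment equations
\begin{equation*}
n\epsilon_n^2 + \frac{n y^2 p_n}{1-p_n} = \si^2, \qquad \frac{n\epsilon_n^3}{2} + n y^3 p_n = \be,
\end{equation*}
whose asymptotic solution is $\epsilon_n^2 \sim (1-\vp)\si^2/n$ and $p_n \sim \vp\si^2/(n y^2)$, giving $n\epsilon_n^2 \to (1-\vp)\si^2$ and $n p_n \to \vp\si^2/y^2$. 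The classical CLT then sends the Gaussian block sum to $\Ga_{(1-\vp)\si^2}$, while the Poisson limit theorem together with Slutsky's lemma sends the Poisson block sum to $y\tPi_{\vp\si^2/y^2}$ (the positive count converges to $\Pi_{\vp\si^2/y^2}$, and the negative drift $-(n-N_n)y p_n/(1-p_n)$ converges in probability to $-\vp\si^2/y$). Independence of the blocks yields $S^{(n)} \Rightarrow T$.

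To pass from the weak convergence of $S^{(n)}$ to convergence of $\E f(S^{(n)})$, I would fix $f \in \F3$ and for $M > 0$ introduce the truncation $f_M(x) := f(x \wedge M)$, which is bounded and continuous. Weak convergence gives $\E f_M(S^{(n)}) \to \E f_M(T)$, and monotonicity of $f$ gives $\E f(S^{(n)}) \ge \E f_M(S^{(n)})$, whence $\liminf_n \E f(S^{(n)}) \ge \E f_M(T)$; letting $M \to \infty$, monotone convergence (whose applicability is ensured by convexity of $f$ together with $\E|T| < \infty$) yields $\E f_M(T) \nearrow \E f(T) \in (-\infty, \infty]$. The reverse inequality from Theorem~\ref{th:main} then forces $\E f(S^{(n)}) \to \E f(T)$. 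The hardest step will be verifying that the exact (not merely asymptotic) solution of the two moment equations exists for all sufficiently large $n$ and produces both claimed distributional limits; this should follow from an implicit-function-theorem argument around the asymptotic solution, but requires some care to ensure that the two limiting regimes (CLT for one block, Poisson for the other) coexist under the exact equality constraints.
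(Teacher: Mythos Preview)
Your construction is essentially identical to the paper's Lemma~\ref{lem:exact}: two blocks of $m$ summands each, the first symmetric two-point (driving the Gaussian limit) and the second two-point with upper value $y$ (driving the centered-Poisson limit), with the two free parameters tuned so that the variance and positive-part third-moment constraints hold with equality. The paper then simply invokes ``the Fatou lemma for convergence in distribution'' (Billingsley, Theorem~5.3) to get $\liminf_n \E f(S^{(n)})\ge \E f(T)$, which combined with Theorem~\ref{th:main} finishes the proof.

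There is one slip in your limiting step: $f_M(x):=f(x\wedge M)$ is \emph{not} bounded in general. Any $f\in\F3$ is nondecreasing, so $f_M$ is bounded above by $f(M)$, but $f$ may tend to $-\infty$ at $-\infty$ (already $f(x)=x$ lies in $\F3$), so $f_M$ need not be bounded below and you cannot invoke weak convergence on a bounded continuous test function. The repair is easy: by convexity there are constants $a,b$ with $f(x)\ge a+bx$, hence $(f_M(S^{(n)}))_-\le |a|+|b|\,|S^{(n)}|$, and since $\E (S^{(n)})^2=\si^2$ uniformly in $n$ the negative parts are uniformly integrable; together with the uniform bound $(f_M)_+\le |f(M)|$ this gives $\E f_M(S^{(n)})\to \E f_M(T)$. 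Alternatively, do as the paper does and apply Fatou to the nonnegative continuous function $f(\cdot)-a-b\,(\cdot)$, using $\E S^{(n)}=\E T=0$ to handle the linear part.
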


Comparison inequality \eqref{eq:PUfF3} is optimal in yet another sense: namely, there the class $\F3$ of generalized moment functions $f$ cannot be substantially enlarged if \eqref{eq:PUfF3} is to remain true. To state this optimality property more precisely, let us first note a simple corollary of Theorem~\ref{th:main}, which follows immediately  
because $\H3\subseteq\F3$: 

\begin{corollary}\label{cor:H3}
In Theorem~\ref{th:main}, one can replace $\F3$ by $\H3$. 
\end{corollary}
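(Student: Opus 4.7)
The plan is to derive Corollary~\ref{cor:H3} as an immediate specialization of Theorem~\ref{th:main}, so essentially all the work consists in verifying the set-inclusion $\H3\subseteq\F3$ claimed in the text just before the statement; once this is established, specializing the theorem to $f\in\H3$ gives the corollary with no further argument.

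To see the inclusion, I would start from Proposition~\ref{prop:F-al} applied with $\al=3$: every $f\in\H3$ has finite derivatives $f^{(0)}=f,\,f^{(1)}=f',\,f^{(2)}=f''$ on $\R$ with $f''$ convex and the vanishing boundary conditions $f(-\infty+)=f'(-\infty+)=f''(-\infty+)=0$. Continuity of $f''$ (from its convexity) already places $f$ in $\C^2$, so it remains to verify that each of $f,f',f'',f'''$ is nondecreasing, where $f'''$ is the right derivative of $f''$.

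Proceeding from the top down, I would first argue that $f'''\ge0$ on $\R$. Since $f''$ is convex, $f'''$ is itself nondecreasing; if $f'''(x_0)<0$ at some point $x_0$, then $f'''(x)\le f'''(x_0)<0$ for all $x\le x_0$, whence
\begin{equation*}
f''(x)=f''(x_0)-\int_x^{x_0}f'''(t)\dd t\ge f''(x_0)+|f'''(x_0)|\,(x_0-x)\longrightarrow+\infty
\end{equation*}
as $x\to-\infty$, contradicting $f''(-\infty+)=0$. Hence $f'''\ge0$, so $f''$ is nondecreasing; combined with $f''(-\infty+)=0$, this forces $f''\ge0$, so $f'$ is nondecreasing; combined with $f'(-\infty+)=0$, this forces $f'\ge0$, so $f$ is nondecreasing. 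Therefore $f\in\F3$, as desired.

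There is no real obstacle here: the one micro-step requiring a moment's thought is the telescoping argument from the tail boundary conditions together with convexity to pass from the existence of derivatives to their nonnegativity. Once $\H3\subseteq\F3$ is established, applying Theorem~\ref{th:main} to any $f\in\H3$ yields exactly the inequality \eqref{eq:PUfF3}, which is the content of Corollary~\ref{cor:H3}.
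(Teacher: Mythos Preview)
Your proposal is correct and matches the paper's approach exactly: the paper states that the corollary ``follows immediately because $\H3\subseteq\F3$'', and you have simply supplied the (easy) verification of this inclusion that the paper leaves to the reader before specializing Theorem~\ref{th:main}.
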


In fact, in Section~\ref{proofs} essentially we shall first prove Corollary~\ref{cor:H3} and then extend the comparison inequality from $\H3$ to $\F3$. In this sense, one can say that Theorem~\ref{th:main} and Corollary~\ref{cor:H3} are equivalent to each other. Now one is ready to state the other optimality property: 

\begin{proposition}\label{prop:p=3}
 For any given $p\in(0,3)$, one cannot replace $\H3$ in Corollary~\ref{cor:H3} by the larger class $\H p$; in fact, this cannot be done even for $n=1$. 
\end{proposition}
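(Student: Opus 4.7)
The plan is to exhibit, for each $p\in(0,3)$, an $n=1$ counterexample: a r.v.\ $X_1$ satisfying the hypotheses of Corollary~\ref{cor:H3} and a function $f=(x-t_p)_+^p\in\H p$ for which $\E f(X_1)>\E f(Y_y)$, where $Y_y:=\Ga_{(1-\vp)\si^2}+y\tPi_{\vp\si^2/y^2}$.

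Fix $\si>0$ and let $X_1$ be scaled Rademacher: $\PP(X_1=\si)=\PP(X_1=-\si)=1/2$. Then $\E X_1=0$, $\E X_1^2=\si^2$, and $\E(X_1)_+^3=\si^3/2$. Set $\be:=\si^3/2$ and let $y>\si$ be a parameter to be sent to infinity; all hypotheses of the corollary hold, with $\vp=\si/(2y)\in(0,1)$ and Poisson parameter $\th:=\vp\si^2/y^2=\si^3/(2y^3)$. Conditioning on $\Pi_\th$ and using $(1-\vp)\si^2\to\si^2$ and $y\th\to 0$ as $y\to\infty$: the $\Pi_\th=0$ atom contributes $\E\bigl(\Ga_{(1-\vp)\si^2}-y\th-t\bigr)_+^p\to\E(\Ga_{\si^2}-t)_+^p$, while each $\Pi_\th=k\ge1$ term is bounded by $(\th^k/k!)\,O(y^p)=O(y^{p-3k})=o(1)$ because $p<3$. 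Hence, for every fixed $t\in\R$, $\E(Y_y-t)_+^p\to\E(\Ga_{\si^2}-t)_+^p$.

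It therefore suffices to find, for each $p\in(0,3)$, a $t_p\in\R$ with $\E(X_1-t_p)_+^p>\E(\Ga_{\si^2}-t_p)_+^p$; then $f(x):=(x-t_p)_+^p\in\H p$ (take $\mu=\de_{t_p}$ in \eqref{eq:H}) and for $y$ large enough, $\E f(X_1)>\E f(Y_y)$, contradicting the putative extension. Scaling by $\si$ and writing $s:=t_p/\si$, $Z\sim\No(0,1)$, the task reduces to finding $s$ at which
$$h_p(s):=\tfrac12(1-s)_+^p+\tfrac12(-1-s)_+^p-\E(Z-s)_+^p>0.$$
For $p\in(0,2)$, take $s=0$: $h_p(0)=\tfrac12-\E Z_+^p$, and the strict log-convexity of $p\mapsto\log\E Z_+^p$ (valid because $Z_+$ is not a.s.\ constant) together with the endpoint values $\E Z_+^0=\PP(Z>0)=1/2$ and $\E Z_+^2=\tfrac12\E Z^2=1/2$ forces $\E Z_+^p<1/2$ on $(0,2)$, so $h_p(0)>0$. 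For $p\in[2,3)$, take any $s<-1$, and set $u:=-s>1$. Since both $1\pm 1/u$ are positive, $\tfrac12[(1-s)^p+(-1-s)^p]=u^p\sum_{j\ge0}\binom{p}{2j}u^{-2j}$ exactly; on $\{|Z|<u\}$ the binomial series $(1+Z/u)^p=\sum_k\binom{p}{k}(Z/u)^k$ converges, and integrating against $\phi$ (with the contribution of $\{|Z|\ge u\}$ being exponentially small) and using $\E Z^{2j}=(2j-1)!!$ gives
$$h_p(s)=-2\binom{p}{4}u^{p-4}+O(u^{p-6})+O\bigl(\phi(u)u^p\bigr).$$
On $(2,3)$, $\binom{p}{4}=p(p-1)(p-2)(p-3)/24<0$ (exactly one factor is negative), so the leading term is strictly positive for $u$ large. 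At the boundary $p=2$, every $\binom{2}{2j}$ with $j\ge2$ vanishes, so the polynomial part is identically zero; but then the exact identity $h_2(s)=\E[(Z-s)^2\mathbf 1\{Z\le s\}]$ (following from the polynomial match $\tfrac12[(1-s)^2+(-1-s)^2]=1+s^2=\E(Z-s)^2$ for $s\le -1$) yields strict positivity directly.

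The cleanest steps are the convergence $\E(Y_y-t)_+^p\to\E(\Ga_{\si^2}-t)_+^p$ and the log-convexity argument for $p\in(0,2)$. The delicate case is the boundary $p=2$: the polynomial corrections in the large-$|s|$ expansion cancel identically, so strict positivity must be read off the exact residual $\E[(Z-s)^2\mathbf 1\{Z\le s\}]$, a quantity that is exponentially small in $|s|$. Consequently $s_p$ must be fixed first (to lock in the positive margin $h_p(s_p)$) and only then $y$ sent to infinity so that $\E f(Y_y)$ lies within that margin of $\E f(\Ga_{\si^2})$.
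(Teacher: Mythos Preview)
Your argument is correct, and it follows a genuinely different route from the paper's own proof.

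The paper fixes $y=1$, takes $X_1=X_{a,1}$ (the zero-mean two-point law on $\{-a,1\}$), and sends $a\downarrow0$; in this regime $\vp=\frac1{1+a}\to1$, so the Poisson component dominates. Testing with $f_{-a}(x)=(x+a)_+^p$ and expanding both $\E f_{-a}(X_1)$ and $\E f_{-a}(\Ga_{(1-\vp)\si^2}+\tPi_{\vp\si^2})$ to order $a^2$, the paper shows that the difference equals $(2^{p-1}-1-p)a^2+o(a^2)$, which is strictly negative for $p\in(2,3)$; the case $p\in(0,2]$ is absorbed into this one via the nesting \eqref{eq:F-al-beta}.

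You instead fix the Rademacher $X_1$ and send $y\to\infty$, so that $\vp=\si/(2y)\to0$ and the majorant $Y_y$ collapses to a pure Gaussian. This reduces the problem to the clean one-dimensional comparison $h_p(s)=\tfrac12(1-s)_+^p+\tfrac12(-1-s)_+^p-\E(Z-s)_+^p>0$, which you then settle case by case: strict log-convexity of $p\mapsto\log\E Z_+^p$ for $p\in(0,2)$, a fourth-order expansion at $s\to-\infty$ for $p\in(2,3)$, and the exact residual $\E[(Z-s)^2\ii{Z\le s}]$ at $p=2$. The trade-off is that the paper's approach handles all $p\in(2,3)$ with a single computation (after the WLOG reduction), at the cost of a more delicate expansion that tracks the Poisson atoms explicitly; your approach is conceptually simpler---the limit object is just $\Ga_{\si^2}$---but requires a case split and the boundary $p=2$ needs separate care because the leading polynomial term vanishes.
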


%A similar optimality property is possessed by comparison inequality \eqref{eq:Bef}, which, as mentioned above, was proved by Bentkus for all $f\in\H2$. 
%
%\begin{proposition}\label{prop:p=2}
% For any given $p\in(0,2)$, it is not true that inequality \eqref{eq:Bef} holds for all $f\in\H p$ (and all $\si$, $y$, and $X_1,\dots,X_n$ as described in the beginning of the Introduction); in fact, this is not true even for $n=1$.  
%\end{proposition}

By Theorem~\ref{th:comparison} and Remark~\ref{comparison-remark}, one immediately obtains the following corollary of Theorem~\ref{th:main}. 

\begin{corollary}\label{cor:main}
Under the conditions of Theorem~\ref{th:main}, for all $x\in\R$
\begin{align}
	\PP(S\ge x)\le\Pin(x)&:=P_3(\Ga_{(1-\vp)\si^2}+y\tPi_{\vp\si^2/y^2};x) \label{eq:main}\\%\quad\text{for all $x\in\R$}.
&\le c_{3,0}\,\PP^\lc(\Ga_{(1-\vp)\si^2}+y\tPi_{\vp\si^2/y^2}\ge x). \label{eq:main LC}
\end{align}
\end{corollary}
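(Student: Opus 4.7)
The plan is to deduce Corollary~\ref{cor:main} as a formal consequence of Theorem~\ref{th:main} combined with the abstract comparison framework of Theorem~\ref{th:comparison} and Remark~\ref{comparison-remark}. Write $\eta := \Ga_{(1-\vp)\si^2}+y\tPi_{\vp\si^2/y^2}$. Because $\H3 \subseteq \F3$, Theorem~\ref{th:main} immediately supplies the generalized-moment comparison $\E f(S) \le \E f(\eta)$ for every $f \in \H3$, which is precisely the hypothesis of Theorem~\ref{th:comparison} with $\al = 3$ and $\xi = S$.

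For the first inequality $\PP(S \ge x) \le \Pin(x) = P_3(\eta;x)$ the full strength of Theorem~\ref{th:comparison} is unnecessary, and no log-concavity hypothesis is invoked. Fix $t \in (-\infty,x)$. By Proposition~\ref{prop:F-al}, the function $f_t(u) := (u-t)_+^3$ lies in $\H3$, since its first two derivatives vanish at $-\infty$ and its second derivative $6(u-t)_+$ is convex on $\R$. A truncated Markov argument together with the moment comparison applied to $f_t$ then gives
\[
  \PP(S \ge x) \;\le\; \frac{\E(S-t)_+^3}{(x-t)^3} \;\le\; \frac{\E(\eta-t)_+^3}{(x-t)^3},
\]
and taking the infimum over $t \in (-\infty,x)$ yields exactly \eqref{eq:main}.

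For the second inequality \eqref{eq:main LC} the natural route is the rightmost bound \eqref{eq:comp-prob3} in Theorem~\ref{th:comparison}, which with $\al=3$ and $\beta=0$ produces the constant $c_{3,0}$. The only subtlety is that $\eta$ is a mixture of shifted Gaussians (being the convolution of a nondegenerate Gaussian with a centered-Poisson mass function), and its tail function $u \mapsto \PP(\eta \ge u)$ need not be log-concave, so the hypothesis of Theorem~\ref{th:comparison} is not literally met. This is exactly the situation remedied by Remark~\ref{comparison-remark}: the log-concavity requirement may be dropped in \eqref{eq:comp-prob3} at the cost of replacing $\PP(\eta \ge x)$ by its least log-concave majorant $\PP^\lc(\eta \ge x)$. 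Invoking the remark directly delivers \eqref{eq:main LC}. There is no real obstacle in this corollary: the substantive work is already carried by Theorem~\ref{th:main} and by the machinery assembled in Section~\ref{intro}, so the proof reduces to a short citation chain, with the only explicit verification being the membership $f_t \in \H3$ used above.
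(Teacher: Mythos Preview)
Your proof is correct and follows essentially the same route as the paper, which dispatches the corollary in a single sentence by citing Theorem~\ref{th:comparison} and Remark~\ref{comparison-remark}. Your more explicit treatment---verifying \eqref{eq:main} directly via Markov's inequality so that no log-concavity is needed there, and reserving Remark~\ref{comparison-remark} for \eqref{eq:main LC}---is a helpful unpacking of that citation; the only redundancy is invoking Proposition~\ref{prop:F-al} to check $f_t(u)=(u-t)_+^3\in\H3$, since this is immediate from the very definition \eqref{eq:H} with $\mu=\de_t$.
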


%!! Use \cite{asymm} to take out the $X_i$'s with $\E X_i^2\ge y^2$ 

Bennett~\cite{bennett} states that ``for most practical problems, precisely'' ``information on the distribution function of a sum when the number of component random variables is small and/or the variables have different distributions'' ``is required''. Accordingly, let us consider now the case when --- instead of the upper bounds in \eqref{eq:ineqs} on the \emph{sums} of moments and the uniform a.s.\ upper bound $y$ on the $X_i$'s --- 
such upper bounds are available for the individual distributions of the summands $X_i$, with possibly different upper bounds for different $i$. More specifically, some of the summands $X_i$ may be significantly smaller (in a certain sense) than the rest of them. Then, grouping them together and using certain results of \cite{asymm}, one can obtain the following improvement of Theorem~\ref{th:main} and Corollary~\ref{cor:main}. 

\begin{corollary}\label{cor:improve}
Suppose that  
\begin{equation}\label{eq:ineqs individ}
	\text{$X_i\le y_i\le y$ a.s.},\quad\E X_i^2\le\si_i^2,\quad\E(X_i)_+^3\le\be_i,\quad\E X_i\le0,
\end{equation}
for all $i$, where $y$, $y_i$, $\si_i$, and $\be_i$ are some positive real numbers.  
Also, suppose that (cf.\ \eqref{eq:vp}) 
\begin{equation}\label{eq:tvp}
	\tvp:=\frac{\tbe}{\si^2y}\in(0,1),     
\end{equation}
where 
\begin{equation}\label{eq:tbe}
\tbe:=\sum_i\be_i\ii{y_i>\si_i}\quad\text{and}\quad\si:=\sqrt{\sum_i\si_i^2}. 
\end{equation}
Then inequalities \eqref{eq:PUfF3}, \eqref{eq:main}, and \eqref{eq:main LC} hold with $\tvp$ in place of $\vp$: 
\begin{align}
	\E f(S)&\le\E f\big(\Ga_{(1-\tvp)\si^2}+y\tPi_{\tvp\si^2/y^2}\big)\quad\text{for all }f\in\F3;  
	\label{eq:tPUfF3} \\
	\PP(S\ge x)&\le P_3(\Ga_{(1-\tvp)\si^2}+y\tPi_{\tvp\si^2/y^2};x) \label{eq:tmain}\\%\quad\text{for all $x\in\R$}.
&\le c_{3,0}\,\PP^\lc(\Ga_{(1-\tvp)\si^2}+y\tPi_{\tvp\si^2/y^2}\ge x)\quad\text{for all }x\in\R. \label{eq:tmain LC}
\end{align}	
\end{corollary}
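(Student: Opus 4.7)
The plan is to partition the index set into $I_1 := \{i: y_i > \si_i\}$ and $I_2 := \{i: y_i \le \si_i\}$, bound the sums $S_{I_j} := \sum_{i \in I_j} X_i$ by two different comparison inequalities for $f \in \F3$, and then combine them using the independence of $S_{I_1}$ and $S_{I_2}$. The combination step rests on the fact that the class $\F3$ is invariant under shifts (if $f \in \F3$ and $c \in \R$ then $f(\cdot + c) \in \F3$), so that, by conditioning on an independent $\zeta$, a comparison $\E f(\xi + c) \le \E f(\eta + c)$ holding for all $c$ and all $f \in \F3$ upgrades to $\E f(\xi + \zeta) \le \E f(\eta + \zeta)$.

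For each $i \in I_2$, I would invoke the pure-Gaussian comparison $\E f(X_i) \le \E f(\Ga_{\si_i^2})$ for $f \in \F3$, borrowed from \cite{asymm}; this is precisely the ``asymmetric'' content of that paper: when the one-sided step bound $y_i$ does not exceed the standard-deviation bound $\si_i$, the positive third moment is automatically small enough that no Poisson piece is required. Chaining this across $i \in I_2$ via the shift-invariance of $\F3$ yields $\E f(S_{I_2}) \le \E f(\Ga_{\si_{I_2}^2})$, where $\si_{I_2}^2 := \sum_{i \in I_2} \si_i^2$.

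For the group $I_1$ I would apply Theorem~\ref{th:main} to $S_{I_1}$ with parameters $(\si_{I_1}, y, \tbe)$, where $\si_{I_1}^2 := \sum_{i \in I_1} \si_i^2$. The hypotheses \eqref{eq:ineqs} on second and positive third moments, on means, and on the uniform upper bound $y$ are all inherited from \eqref{eq:ineqs individ}. The side requirement $\vp' := \tbe/(\si_{I_1}^2 y) \in (0,1)$ is secured after the (harmless) reduction $\beta_i \leftarrow \min(\beta_i, y\si_i^2)$, which still majorizes $\E(X_i)_+^3$ since $(X_i)_+^3 \le y_i (X_i)_+^2 \le y X_i^2$; the monotonicity of the map $\tvp \mapsto \E f(\Ga_{(1-\tvp)\si^2} + y\tPi_{\tvp\si^2/y^2})$ for $f \in \F3$ --- itself deducible by applying Theorem~\ref{th:main} to a Gaussian variable obtained as a CLT limit of many small bounded summands, and splitting the two mixture r.v.'s into a common Gaussian piece plus an extra Gaussian vs.\ extra Poisson part --- then recovers the corollary for the original $\beta_i$'s. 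Theorem~\ref{th:main} thus produces $\E f(S_{I_1}) \le \E f(\Ga_{\si_{I_1}^2 - \tbe/y} + y\tPi_{\tbe/y^2})$.

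Combining the $I_1$ and $I_2$ bounds by the shift-invariance/chaining principle, and merging the two independent centered Gaussians into $\Ga_{(\si_{I_1}^2 - \tbe/y) + \si_{I_2}^2} = \Ga_{\si^2 - \tbe/y} = \Ga_{(1-\tvp)\si^2}$, I obtain the desired \eqref{eq:tPUfF3}; the tail bounds \eqref{eq:tmain}--\eqref{eq:tmain LC} then follow directly from Theorem~\ref{th:comparison} with $\al=3$ and Remark~\ref{comparison-remark}. The main obstacle I anticipate is locating and correctly applying the pure-Gaussian comparison for $I_2$ from \cite{asymm}; in particular, one must ensure the comparison holds on all of $\F3$ (not merely $\H3$), which is plausible given the analogue of the $\H3 \Rightarrow \F3$ upgrade exploited in the proof of Theorem~\ref{th:main} (cf.\ Corollary~\ref{cor:H3}).
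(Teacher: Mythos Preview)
Your proposal is correct and follows essentially the same route as the paper: split $\{1,\dots,n\}$ into $I_1=\{i:y_i>\si_i\}$ and $I_2=\{i:y_i\le\si_i\}$, apply the pure-Gaussian comparison from \cite{asymm} to $S_{I_2}$ and Theorem~\ref{th:main} to $S_{I_1}$, then chain via shift-invariance of $\F3$ and merge the two Gaussians using $\check\vp\,\tsi^2=\tvp\,\si^2$. The paper dispatches the side condition $\check\vp:=\tbe/(\si_{I_1}^2 y)\in(0,1)$ with a one-line ``w.l.o.g.'' (by the same remark made after Theorem~\ref{th:main}), whereas you propose the reduction $\beta_i\leftarrow\min(\beta_i,y\si_i^2)$ followed by the monotonicity in $\tvp$; the latter is exactly Lemma~\ref{lem:le} in the paper, so both handlings are sound.
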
 

Note that conditions \eqref{eq:ineqs individ} together with \eqref{eq:tbe} will imply \eqref{eq:ineqs} if $\be:=\sum_i\be_i$. % and $X_i\le y$ a.s.\ for all $i$. 
As for condition \eqref{eq:tvp}, similarly to condition \eqref{eq:vp}, it does not diminish generality. 
In fact, one will obviously have   
\begin{equation}\label{eq:tvp<vp}
	\tvp\le\vp. 
\end{equation} 
Then, \eqref{eq:tvp<vp} will imply (by Lemma~\ref{lem:le}) that inequalities \eqref{eq:tPUfF3}, \eqref{eq:tmain}, and \eqref{eq:tmain LC}, as established by Corollary~\ref{cor:improve}, will respectively be improvements of \eqref{eq:PUfF3}, \eqref{eq:main}, and \eqref{eq:main LC}.  

For completeness, let us also present results similar to Theorem~\ref{th:main}, Propositions~\ref{prop:F3larger}, \ref{prop:exact}, and \ref{prop:p=3}, and Corollary~\ref{cor:H3}, 
without conditions on the truncated third moments $\E(X_i)_+^3$ and for somewhat larger classes of generalized moment functions. 
Let (cf.\ \eqref{eq:F3})
\begin{align} \notag%\label{eq:F3}
\F2&:=\{f\in\C^1\colon \text{$f$ and $f'$ are nondecreasing and convex}\} \notag\\
&=\{f\in\C^1\colon \text{$f$, $f'$, $f''$ are nondecreasing}\}, \label{eq:F2}
\end{align} 
where $\C^1$ denotes the class of all continuously differentiable functions 
$f\colon\R\to\R$ and $f''$ denotes the right derivative of the convex function $f'$. 
For example, functions $x\mapsto a+b\,x+c\,(x-t)_+^\al$ and 
$x\mapsto a+b\,x+c\,e^{\la x}$ belong to $\F2$ for all $a\in\R$, $b\ge0$, $c\ge0$, $t\in\R$, $\al\ge2$, and $\la\ge0$. 
It is easy to see that $\H2\subseteq\F2$, and it is obvious that $\F3\subset\F2$. 
%A less trivial example of a function $f$ in $\F3$ (and hence in $\F2$) is given by the formula $f(x)=-4(1-x)^{1/2}\ii{x\le0}+(-4+2x+\frac{x^2}2+\frac{x^3}4)\ii{x>0}$ for $x\in\R$; then, by \cite[Proposition~2]{asymm}, the function $\R\ni x\mapsto f(x)-a-bx$ is not in $\H3$ (and hence not in $\H2$) for any real $a$ and $b$. 

\begin{proposition}\label{prop:F2main}
\emph{(Cf.\ Theorem~\ref{th:main}.)} 
Let $\si$ and $y$ be any (strictly) positive real numbers. 
Suppose that  
\begin{equation}\label{eq:F2ineqs}
	\sum_i\E X_i^2\le\si^2,\quad\E X_i\le0,\quad\text{and $X_i\le y$ a.s., %for all $i$.
	}
\end{equation}
%and $X_i\le y$ a.s., 
for all $i$.  
Then 
\begin{equation}\label{eq:PUfF2}
	\E f(S)\le\E f\big(y\tPi_{\si^2/y^2}\big)%\quad\text{for all }f\in\F3,
\end{equation}
for all $f\in\F2$. 
\end{proposition}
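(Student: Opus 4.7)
The plan is to leverage Bentkus's $\H2$ inequality (mentioned in the paragraph preceding \eqref{eq:Be}, asserting \eqref{eq:Bef} for all $f\in\H2$) and extend it to $\F2$ via a Stieltjes decomposition plus, when needed, a truncation‑and‑DCT argument. First observe that any $f\in\F2$ must satisfy $f''(-\infty)=0$: if $f''(-\infty)=c>0$, then $f'(u)=f'(-\infty)+\int_{-\infty}^u f''(s)\,ds$ would be infinite. Set $b:=f'(-\infty)\in[0,\infty)$ and let $\mu$ be the Lebesgue--Stieltjes measure of $f''$, so $\mu((-\infty,u])=f''(u)$ and hence $f'(u)=b+\int(u-t)_+\,\mu(dt)$. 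One more integration (via the Tonelli direction of Fubini, legitimate since $(s-t)_+\ge0$) yields the representation
\[
f(u)=f(0)+bu+\tfrac12\int\bigl[(u-t)_+^2-(-t)_+^2\bigr]\,\mu(dt),
\]
the (possibly signed) integral being well‑defined and equal to $2[f(u)-f(0)-bu]<\infty$.

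If $\int(-t)_+^2\,\mu(dt)<\infty$, then $f=A+bu+F$ with $A:=f(0)-\tfrac12\int(-t)_+^2\,\mu(dt)\in\R$ and $F(u):=\tfrac12\int(u-t)_+^2\,\mu(dt)\in\H2$ by \eqref{eq:H}. Applying the Bentkus $\H2$ inequality to $F$ gives $\E F(S)\le\E F(y\tPi_{\si^2/y^2})$, and combining this with the trivial equality on $A$ together with $b\,\E S\le0=b\,\E(y\tPi_{\si^2/y^2})$ (using $b\ge0$ and $\E S\le0$) yields \eqref{eq:PUfF2}.

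If instead $\int(-t)_+^2\,\mu(dt)=\infty$, truncate: for each $c>0$ let $\mu_c:=\mu|_{(-c,\infty)}$, which satisfies $\int(-t)_+^2\,\mu_c(dt)\le c^2 f''(0)<\infty$, and define $f_c$ by substituting $\mu_c$ for $\mu$ in the representation above. Then $f_c\in\F2$ is handled by the previous case, so $\E f_c(S)\le\E f_c(y\tPi_{\si^2/y^2})$. A direct estimate, splitting the integrand $(u-t)_+^2-(-t)_+^2$ over $t\le u$ and $u<t\le-c$ and using the bounds $\int_{(-\infty,-c]}(-t)\,\mu(dt)\le\int_{(-\infty,0]}(-t)\,\mu(dt)=f'(0)-b$ together with $f''(-c)\le f''(0)$, yields the uniform control
\[
|f(u)-f_c(u)|\le f''(0)\,u^2+|u|\,\bigl(f'(0)-b\bigr)\quad\text{for all $u\in\R,\ c>0$.}
\]
Since $\E S^2\le(n+1)\si^2<\infty$ (from $\sum\E X_i^2\le\si^2$ and Cauchy--Schwarz applied to the means) and $\E(y\tPi_{\si^2/y^2})^2=\si^2<\infty$, dominated convergence gives $\E f_c(S)\to\E f(S)$ and $\E f_c(y\tPi_{\si^2/y^2})\to\E f(y\tPi_{\si^2/y^2})$ as $c\to\infty$, and the inequality passes to the limit.

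The main obstacle is the second case, where $f$ does not admit a direct decomposition as constant + nonnegative‑slope linear function + element of $\H2$. The truncation $\mu\rightsquigarrow\mu_c$ restores this decomposition for every $c$, and the crucial point is that the approximation error $|f-f_c|$ is bounded by a single quadratic‑plus‑linear function independent of $c$, which is integrable against both $S$ and $y\tPi_{\si^2/y^2}$ thanks to the second‑moment hypotheses; this makes the limit argument routine.
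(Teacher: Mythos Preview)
Your proof is correct and follows the same overall strategy as the paper: invoke Bentkus's result for $\H2$, then extend to $\F2$ by approximating $f$ with functions that split as a constant plus a nonnegative-slope linear part plus an $\H2$-element, and pass to the limit by dominated convergence. The paper packages the extension step as Lemma~\ref{lem:F2}(i), whose proof (adapted from Lemma~\ref{lem:F3}) approximates $f$ by the function $g_z$ that equals $f$ on $[z,\infty)$ and equals the second-order Taylor polynomial of $f$ at $z$ on $(-\infty,z)$; the $\H2$-component $h_z$ of $g_z$ has Stieltjes measure $\mu|_{(z,\infty)}$, exactly your $\mu_c$ with $c=-z$, but $g_z$ also carries the quadratic term $\tfrac12 f''(z)u^2$. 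Your measure-truncation route is thus a close variant that is slightly cleaner: since your $f_c$ has no quadratic term, the comparison for the polynomial part needs only $b\,\E S\le 0$, and the second-moment information is used solely to furnish an integrable dominator for the DCT step rather than as a moment inequality between $S$ and $y\tPi_{\si^2/y^2}$.
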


\begin{proposition}\label{prop:F2larger}
\emph{(Cf.\ Proposition~\ref{prop:F3larger}.)} 
Let the class ${\F2}_{,1}$ of functions be defined by removing $f$ from the list ``$f,f',f''$'' in \eqref{eq:F2}; similarly define the class 
${\F2}_{,12}$ by removing both $f$ and $f'$ from the same list; thus, each of these two new classes is larger than the class $\F2$. 
\begin{enumerate}[(i)]
	\item If the condition ``$\E X_i\le0$ $\forall i$'' in Proposition~\ref{prop:F2main} is replaced by  
``$\E X_i=0$ $\forall i$'', then inequality \eqref{eq:PUfF2} will hold for all $f$ in the larger class ${\F2}_{,1}$. 
	\item If the conditions ``$\E X_i\le0$ $\forall i$'' and $\sum_i\E X_i^2\le\si^2$ 
	in Proposition~\ref{prop:F2main} are both replaced by the equalities  
``$\E X_i=0$ $\forall i$'' and $\sum_i\E X_i^2=\si^2$, then \eqref{eq:PUfF2} will hold for all $f$ in the larger class ${\F2}_{,12}$. 
\end{enumerate}  
\end{proposition}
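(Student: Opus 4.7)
The plan is to deduce each part from Proposition~\ref{prop:F2main} by exploiting the strengthened moment conditions to absorb a polynomial correction whose expectations match on the two sides of \eqref{eq:PUfF2}. For part~(i), let $f\in{\F2}_{,1}$ and put $b:=f'(-\infty)\in[-\infty,\infty)$. If $b>-\infty$, the function $g(x):=f(x)-bx$ satisfies $g'=f'-b\ge0$ and $g''=f''$, both nondecreasing, so $g\in\F2$; Proposition~\ref{prop:F2main} then gives $\E g(S)\le\E g(y\tPi_{\si^2/y^2})$, and since $\E X_i=0$ forces $\E S=0=\E y\tPi_{\si^2/y^2}$, the linear term $bx$ can be restored on both sides without change, producing \eqref{eq:PUfF2}. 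If $b=-\infty$, I would approximate $f$ by $\tilde f_R$ that agrees with $f$ on $[-R,\infty)$ and coincides with the tangent line of $f$ at $-R$ on $(-\infty,-R]$; each $\tilde f_R\in{\F2}_{,1}$ has $\inf\tilde f_R'=f'(-R)$ finite, so the preceding argument applies to it. Convexity of $f$ (a consequence of $f'$ being nondecreasing) gives $\tilde f_R\le f$ with $\tilde f_R\nearrow f$ pointwise as $R\to\infty$, and monotone convergence completes the step.

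Part~(ii) follows the same template one order higher. For $f\in{\F2}_{,12}$, put $c:=f''(-\infty)\in[-\infty,\infty)$. If $c>-\infty$, the function $g(x):=f(x)-\tfrac{c}{2}\,x^2$ has $g''=f''-c\ge0$ nondecreasing and hence $g'=f'-cx$ nondecreasing as well, so $g\in{\F2}_{,1}$; part~(i) applied to $g$ yields $\E g(S)\le\E g(y\tPi_{\si^2/y^2})$, and the equalities $\E X_i=0$ and $\sum_i\E X_i^2=\si^2$ supply $\E S=\E y\tPi_{\si^2/y^2}=0$ together with $\E S^2=\E(y\tPi_{\si^2/y^2})^2=\si^2$, so $\tfrac{c}{2}\,x^2$ again has matching expectations on the two sides and can be added back. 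If $c=-\infty$, I would replace $f$ on $(-\infty,-R]$ by its second-order Taylor polynomial at $-R$, obtaining $\hat f_R\in{\F2}_{,12}$ with $\inf\hat f_R''=f''(-R)$ finite; since $\hat f_R''-f''\ge0$ on $(-\infty,-R]$ while $\hat f_R$ and $\hat f_R'$ match $f$ and $f'$ at $-R$, one finds $\hat f_R\ge f$ pointwise with $\hat f_R\searrow f$ as $R\to\infty$.

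The main subtlety I expect is the limit passage in the two approximations. In part~(i) the sequence $\tilde f_R$ is monotone nondecreasing to $f$, and monotone convergence goes through cleanly on both sides (with no loss even when $\E f(S)=\infty$, since then $\E f(y\tPi_{\si^2/y^2})=\infty$ too). In part~(ii) the quadratic extrapolation approaches $f$ from above, so dominated convergence is needed; the a.s.\ upper bound $X_i\le y$ yields $S\le ny$ a.s., hence $\hat f_{R_0}(S)$ is bounded above by a constant on $\{S\ge-R_0\}$ and by a quadratic in $S$ on $\{S<-R_0\}$, whose expectation is finite by $\E S^2\le\si^2<\infty$. With these bounds in place, the algebraic heart of the argument, namely the stepwise reduction ${\F2}_{,12}\to{\F2}_{,1}\to\F2$ by subtracting a quadratic and then a linear, delivers the result.
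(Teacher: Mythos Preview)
Your approach is correct and shares the paper's core mechanism—replace $f$ near $-\infty$ by a Taylor polynomial, use matched moments to kill the polynomial part, and pass to the limit—but the organization differs. The paper routes the result through a general lemma (Lemma~\ref{lem:F2}, proved exactly like Lemma~\ref{lem:F3}): for abstract r.v.'s $\xi,\eta$ with $\E\xi=\E\eta$, $\E\xi^2\le\E\eta^2<\infty$ and the $\H2$ comparison $\E h(\xi)\le\E h(\eta)$, one defines directly the second-order truncation $g_z(u)=f(z)+(u-z)f'(z)+\tfrac12(u-z)^2f''(z)$ on $(-\infty,z)$ and $g_z=f$ on $[z,\infty)$, checks that $h_z:=g_z-\text{(Taylor polynomial)}\in\H2$, and lets $z\to-\infty$. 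No case split on whether $f'(-\infty)$ or $f''(-\infty)$ is finite is needed, and the argument is stated once for general $\xi,\eta$ rather than specifically for $S$ and $y\tPi_{\si^2/y^2}$. Your stepwise reduction ${\F2}_{,12}\to{\F2}_{,1}\to\F2$ via subtracting first a quadratic and then a linear term is equivalent in spirit but more modular; it has the advantage of calling Proposition~\ref{prop:F2main} directly rather than going back to the $\H2$ input.

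Two small points worth tightening. First, in part~(ii) you justify convergence of $\E\hat f_R(S)$ but not of $\E\hat f_R(y\tPi_{\si^2/y^2})$; this side is in fact trivial because $y\tPi_{\si^2/y^2}\ge -\si^2/y$ a.s., so $\hat f_R(y\tPi_{\si^2/y^2})=f(y\tPi_{\si^2/y^2})$ once $R>\si^2/y$. Second, your $\hat f_R$ is monotone decreasing in $R$ (as you implicitly use), so monotone convergence from above—requiring only $\E\hat f_{R_0}(S)^+<\infty$, which your quadratic bound gives—suffices; calling it dominated convergence slightly overstates what is needed and would otherwise require a lower integrable envelope you have not produced.
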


As mentioned in the Introduction, similar results for (continuous-time) martingales that are stochastic integrals were obtained by Klein, Ma and Privault \cite{klein-priv}, for the class ${\F2}_{,1}$; that is, for the class of all functions $f$ such that $f$ and $f'$ are convex. Cf.\ Remark~\ref{rem:mart} below.   

\begin{proposition}\label{prop:exact2}
\emph{(Cf.\ Proposition~\ref{prop:exact}.)} 
For each pair $(\si,y)$ of positive numbers and each $f\in\F2$, the upper bound $\E f\big(y\tPi_{\si^2/y^2}\big)$ on $\E f(S)$, given by \eqref{eq:PUfF2}, is exact; moreover, this bound remains exact if the first two inequalities in the condition \eqref{eq:F2ineqs} are replaced by the corresponding equalities. 
\end{proposition}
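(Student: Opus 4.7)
The plan is to establish exactness by exhibiting, for each admissible pair $(\sigma,y)$, a sequence of $n$-term independent constructions saturating the bound. The natural candidate is the two-point i.i.d.\ extremal familiar from the theory of Bennett-Hoeffding-type inequalities: take $X_1,\dots,X_n$ to be i.i.d.\ copies of a r.v.\ $X^{(n)}$ with $\PP(X^{(n)}=y)=p_n$ and $\PP(X^{(n)}=-a_n)=1-p_n$, where $p_n$ and $a_n$ are chosen so that both equality versions of the first two conditions in \eqref{eq:F2ineqs} hold, namely $\E X^{(n)}=0$ and $n\E(X^{(n)})^2=\sigma^2$. Solving gives $a_n=p_n y/(1-p_n)$ and $p_n=\sigma^2/(ny^2+\sigma^2)$; in particular, $p_n\to0$ and $np_n\to\theta:=\sigma^2/y^2$ as $n\to\infty$. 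Writing $K_n\sim\operatorname{Bin}(n,p_n)$ for the number of indices $i$ with $X_i=y$, a routine manipulation yields $S_n=y(K_n-np_n)/(1-p_n)$, and the classical Poisson limit theorem gives $K_n-np_n\to\tPi_{\theta}$ in distribution, whence $S_n\to y\tPi_{\sigma^2/y^2}$ in distribution.

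To upgrade this distributional convergence to convergence of $\E f(S_n)$ for each $f\in\F2$, I would establish a uniform lower bound on $f(S_n)$. Since $f$ is convex and nondecreasing, $f(u)\ge f(0)+f'(0)\min(u,0)$ for every $u\in\R$. The construction yields $S_n\ge-na_n=-np_ny/(1-p_n)\to-\sigma^2/y$, so $na_n$ is uniformly bounded in $n$; hence $f(S_n)\ge C$ a.s.\ for some finite constant $C$ independent of $n$. Fatou's lemma applied to $f(S_n)-C\ge0$ then gives $\liminf_n\E f(S_n)\ge\E f(y\tPi_{\sigma^2/y^2})$. Combined with the upper bound $\E f(S_n)\le\E f(y\tPi_{\sigma^2/y^2})$ supplied by Proposition~\ref{prop:F2main}, this produces $\lim_n\E f(S_n)=\E f(y\tPi_{\sigma^2/y^2})$ whenever the right-hand side is finite, and $\E f(S_n)\to\infty=\E f(y\tPi_{\sigma^2/y^2})$ otherwise (in the latter case $|S_n|\le n\max(y,a_n)$ is a.s.\ bounded, so each $\E f(S_n)$ is finite and the sequence is a legitimate witness).

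The main technical point is the convergence step, which has to cover the entire class $\F2$, not merely functions of polynomial or exponential growth. What makes it go through is the one-sided nature of $\F2$: its members may grow arbitrarily fast at $+\infty$, but convexity together with monotonicity forces an affine lower bound at $-\infty$, while the extremal $S_n$ stays bounded below uniformly in $n$. This turns Fatou's lemma into the required reverse inequality without any need for upper-tail uniform-integrability arguments, and yields the claimed exactness under the first two inequalities in \eqref{eq:F2ineqs} replaced by equalities.
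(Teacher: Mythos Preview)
Your proposal is correct and takes essentially the same approach as the paper: construct i.i.d.\ two-point zero-mean r.v.'s whose sum converges in distribution to $y\tPi_{\si^2/y^2}$, then apply the Fatou lemma for weak convergence. The paper's own proof is a one-line reference to the analogous argument for Proposition~\ref{prop:exact} (Lemma~\ref{lem:exact} plus Fatou), and your write-up simply fills in the details of that analogue; your observation that $S_n$ is uniformly bounded below (so Fatou applies to all of $\F2$ without any growth hypothesis on $f$) is exactly the right justification.
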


\begin{corollary}\label{cor:H2}
\emph{(Cf.\ Corollary~\ref{cor:H3}.)} 
In Proposition~\ref{prop:F2main}, one can replace $\F2$ by $\H2$. 
\end{corollary}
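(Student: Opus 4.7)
The proof is immediate. My first step is to invoke the inclusion $\H2 \subseteq \F2$, which is stated just before Proposition~\ref{prop:F2main} and follows at once from Proposition~\ref{prop:F-al}: any $f \in \H2$ is $C^1$ with $f'$ convex and $f(-\infty+) = f'(-\infty+) = 0$, and a convex function on $\R$ with limit $0$ at $-\infty$ must be nonnegative and nondecreasing (any tangent line of negative slope would force the function to $+\infty$ at $-\infty$). Hence both $f$ and $f'$ are nondecreasing and $f'$ is convex, so $f \in \F2$, and inequality \eqref{eq:PUfF2} of Proposition~\ref{prop:F2main} specializes at once to \eqref{eq:PUfF2} for every $f \in \H2$.

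The reason to single out this corollary is the same as for Corollary~\ref{cor:H3}: the $\H2$-version and the $\F2$-version of the comparison are in fact equivalent, and in the actual proof of Proposition~\ref{prop:F2main} one first establishes the $\H2$-statement (by the method developed for $\H3$) and then extends to $\F2$. To extend from $\H2$ to $\F2$ one represents an arbitrary $f \in \F2$ as follows: since $f''$ is nonnegative, nondecreasing, and has finite limit $c := f''(-\infty+) \ge 0$, write $f''(u) = c + \mu((-\infty, u])$ for a nonnegative Borel measure $\mu$ on $\R$, and integrate twice to obtain, for any $u_0 \in \R$ and any $u > u_0$,
\begin{equation*}
f(u) = f(u_0) + f'(u_0)(u-u_0) + \tfrac12\, f''(u_0)(u-u_0)^2 + \tfrac12\int (u-t)_+^2\,\ii{t > u_0}\,d\mu(t).
\end{equation*}
Linearity then reduces the desired inequality to three pieces: the affine part (handled by $\sum_i \E X_i \le 0 = \E(y\tPi_{\si^2/y^2})$), the quadratic part (handled by $\sum_i \E X_i^2 \le \si^2 = \Var(y\tPi_{\si^2/y^2})$), and the integral part, to which the $\H2$-comparison applies pointwise in $t$ and is then integrated against $\mu$ by Fubini.

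The main obstacle I anticipate in the extension direction is the integrability bookkeeping needed to justify Fubini and then pass $u_0 \to -\infty$: one must know that $f$ is integrable against both the law of $S$ and that of $y\tPi_{\si^2/y^2}$. This is controlled by the a.s.\ upper bound $X_i \le y$ (so $f(S)_+$ is bounded) and by the fact that $y\tPi_{\si^2/y^2}$ has finite exponential moments of all orders, so any $f \in \F2$ of at most exponential growth is jointly integrable; a truncation at growth rate $e^{\la u}$ together with monotone convergence reduces the general case to this one.
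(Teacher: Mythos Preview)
Your first paragraph is correct and is exactly the paper's (implicit) argument: the corollary follows immediately from the inclusion $\H2\subseteq\F2$, which the paper notes just before Proposition~\ref{prop:F2main}. The paper gives no separate proof of this corollary, treating it as an immediate specialization.

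Your second and third paragraphs go beyond what is required: you are sketching the \emph{reverse} extension (from the $\H2$-statement to the $\F2$-statement), which in the paper is the content of Lemma~\ref{lem:F2} and is used inside the proof of Proposition~\ref{prop:F2main} rather than here. That discussion is accurate in spirit, and your decomposition of $f\in\F2$ into affine, quadratic, and $\H2$ pieces is essentially what Lemma~\ref{lem:F2} does (though the paper's argument is slightly different in mechanics: it compares $f$ with an approximating function $g_z$ whose second derivative agrees with $f''$ on $[z,\infty)$ and is constant on $(-\infty,z)$, then lets $z\to-\infty$). But none of this is needed for Corollary~\ref{cor:H2} as stated.
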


As mentioned in the Introduction, Corollary~\ref{cor:H2} is essentially contained in Bentkus~\cite{bent-ap}. 
By Theorem~\ref{th:comparison} and Remark~\ref{comparison-remark}, Corollary~\ref{cor:H2} immediately implies the Bentkus inequality \eqref{eq:Be}. 

\begin{proposition}\label{prop:p=2}
\emph{(Cf.\ Proposition~\ref{prop:p=3}.)} 
For any given $p\in(0,2)$, one cannot replace $\H2$ in Corollary~\ref{cor:H2} by the larger class $\H p$; in fact, this cannot be done even for $n=1$. 
\end{proposition}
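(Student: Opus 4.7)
I will refute the analogue of Corollary~\ref{cor:H2} with $\H2$ replaced by $\H p$ by producing, for each $p \in (0,2)$, a single r.v.\ $X_1 = X$ satisfying (\ref{eq:F2ineqs}) and a function $f \in \H p$ for which $\E f(X) > \E f\bigl(y\tPi_{\si^2/y^2}\bigr)$. By scaling invariance it suffices to take $y = 1$; set $\theta := \si^2$.

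For small $\theta > 0$, the construction is as follows. Let $X$ take the values $1$ and $-\theta$ with probabilities $\theta/(1+\theta)$ and $1/(1+\theta)$ respectively, so that $\E X = 0$, $\E X^2 = \theta$, and $X \le 1$; hence (\ref{eq:F2ineqs}) is satisfied with $\si^2 = \theta$. For the test function I take $f(u) := (u - (1-\eta))_+^p \in \H p$, where $\eta \in (0,1)$ is a parameter depending only on $p$, to be chosen in the last step.

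The next step is a joint Taylor expansion in $\theta \downarrow 0$. On one side, $\E f(X) = [\theta/(1+\theta)] \eta^p = \theta\eta^p - \theta^2\eta^p + O(\theta^3)$. On the other side, the atoms $k - \theta$ of $\tPi_\theta$ give nonzero $f$-value only for $k \ge 1$ (once $\theta < \eta$), and only $k = 1, 2$ feed the expansion at orders $\theta, \theta^2$ (the $k = 0$ atom lies to the left of $1 - \eta$, and $k \ge 3$ contributes $O(\theta^3)$). A routine expansion yields
\begin{equation*}
\E f(\tPi_\theta) = \theta\eta^p + \theta^2\Bigl[-\eta^p - p\eta^{p-1} + \tfrac{(1+\eta)^p}{2}\Bigr] + O(\theta^3),
\end{equation*}
so that
\begin{equation*}
\E f(X) - \E f(\tPi_\theta) = \theta^2 \Bigl[p\eta^{p-1} - \tfrac{(1+\eta)^p}{2}\Bigr] + O(\theta^3).
\end{equation*}
Therefore the candidate inequality will be strictly violated for all sufficiently small $\theta$ provided I can find $\eta > 0$ with $h(\eta) := (1+\eta)^p \eta^{1-p} < 2p$.

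The main obstacle is establishing this last inequality for every $p \in (0,2)$. For $p \in (0,1)$ it is immediate because $h(\eta) \to 0$ as $\eta \downarrow 0$; for $p = 1$, $h(\eta) = 1 + \eta$, so any $\eta < 1$ works. For $p \in (1,2)$, elementary calculus shows that $h$ attains its unique minimum on $(0,\infty)$ at $\eta^* = p-1$, with $h(\eta^*) = p (1 + 1/u)^u$ where $u := p - 1 \in (0,1)$; so it suffices to prove $(1 + 1/u)^u < 2$ on $(0,1)$. Setting $\phi(u) := u \ln(1 + 1/u) = u\ln(1+u) - u\ln u$, one computes $\phi'(u) = \ln(1 + 1/u) - 1/(1+u)$ and $\phi''(u) = -1/[u(1+u)^2] < 0$, so $\phi'$ is strictly decreasing; since $\phi'(1) = \ln 2 - 1/2 > 0$, we get $\phi' > 0$ on $(0,1]$, and hence $\phi$ is increasing there, yielding $\phi(u) < \phi(1) = \ln 2$ on $(0,1)$, i.e., the desired inequality. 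Combining the three regimes gives a counterexample for every $p \in (0,2)$.
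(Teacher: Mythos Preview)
Your proof is correct, but it takes a considerably more laborious route than the paper's.

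The paper first observes that, by the nesting $\H p\supseteq\H{p'}$ for $p<p'$ (relation~(1.3)), it suffices to produce a counterexample for each $p\in(1,2)$; any such counterexample automatically works for all smaller $p$. It then fixes $\si=y=1$ once and for all, takes $X_1$ to be the symmetric Rademacher variable $X_{1,1}$, and uses the single test function $f(x)=(x+1)_+^p$. The two sides become $\EE_1(p)=2^{p-1}$ and $\EE_2(p)=\E\Pi_1^p$, and the inequality $\EE_2(p)<\EE_1(p)$ follows in one line from the observation that $p\mapsto\EE_2(p)/\EE_1(p)=2\,\E(\Pi_1/2)^p$ is strictly convex on $[1,2]$ and equals $1$ at both endpoints.

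Your approach instead sends $\si^2=\theta\downarrow0$, expands both sides to order $\theta^2$, and is then left with the analytic task of showing $\inf_{\eta\in(0,1)}(1+\eta)^p\eta^{1-p}<2p$ for every $p\in(0,2)$, which you handle by a separate calculus argument in each of the regimes $p\in(0,1)$, $p=1$, $p\in(1,2)$. All of this is right (your minimization at $\eta^*=p-1$ and the concavity bound $(1+1/u)^u<2$ on $(0,1)$ are both fine, and $\eta^*\in(0,1)$ as required), but the $\theta$-asymptotics and the three-case endgame are doing work that the paper avoids entirely by the fixed-parameter, convexity-in-$p$ argument. In short: your construction proves more than needed (a whole one-parameter family of counterexamples as $\theta\to0$), whereas the paper gets away with a single explicit instance.
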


\begin{remark}\label{rem:mart}
Quite similarly to how it was done e.g.\ in \cite{normal,asymm}, it is easy to extend the results of Theorem~\ref{th:main}, Propositions~\ref{prop:F3larger}, \ref{prop:F2main}, and \ref{prop:F2larger}, and Corollary~\ref{cor:main} to the more general case when the $X_i$'s are the incremental differences of a (discrete-time) (super)martingale and/or replace $S$ by the maximum of the partial sums; cf.\ e.g.\ \cite[Corollary~5]{asymm}. Let us omit the details. 
\end{remark} 

On majorization of the distributions of sums of independent r.v.'s by compound Poisson distributions see e.g.\ \cite{prokh,pin-utev1,pin-utev2,utev,pin-spher,borisov-compound}. %,bestsen}. 
Also indirectly related to the present paper is the work \cite{bor-bor,borisov}, where it was shown that the rate of convergence in the functional central limit theorem can be significantly improved if the limit is taken to be the convolution of appropriately chosen Gaussian and Poisson distributions, rather than just a Gaussian distribution. Of course, this quite well corresponds with the fact that the limit distributions for the sums of uniformly small independent r.v.'s are precisely the limits of convolutions of Gaussian and compound Poisson distributions. 
One may also note here the work \cite{a_nag}, where, by taking specific heavy tails into account, asymptotics of large deviation probabilities $\PP(S_n\ge x)$ for the sum $S_n$ of i.i.d.\ r.v.'s was obtained essentially without any restrictions on $x$ other than that just $x/\sqrt n\to\infty$ or, equivalently, $\PP(S_n\ge x)\to0$; functional versions of such results were given in\cite{pin81}.

\section{Computation and comparison of the upper bounds on the tail \\ %\except{toc}{\linebreak} 
probability $\PP(S\ge x)$ }\label{comp}

\subsection{Computation}\label{comput}
The Bennett-Hoeffding upper bound $\BH(x)$, given by \eqref{eq:hoeff}, is quite easy to compute. It is almost as easy to compute the Pinelis-Utev upper bound $\PU(x)$, defined in \eqref{eq:PU}. 

\begin{proposition}\label{prop:lambert}
For all $\si>0$, $y>0$, $\vp\in(0,1)$, and $x\ge0$
\begin{align}
	\PU(x)&=e^{-\la_x x}\PU_{\exp}(\la_x) \label{eq:PU la_x}\\
	&=\exp\frac{
   (1-\vp)^2 (w_x+1)^2-(\vp+x y/\si^2)^2-(1-\vp^2)}{2 (1-\vp)y^2/\si^2} \label{eq:PUexpr}%\\
%	&=\exp\Big\{-\frac{(x y+\vp \si^2)^2-
%   [(1-\vp)^2 (w_x+1)^2-(1-\vp^2)]\si^4}{2 (1-\vp) \si^2 y^2} \Big\}\label{eq:PUexpr}\\
%&=\max\{\Ne_{(1-\vp)\si^2}((1-\al)x)\BH_{\vp \si^2,y}(\al x)\colon\al\in(0,1)\} \label{eq:max al}\\
%&=\Ne_{(1-\vp)\si^2}((1-\al_x)x)\BH_{\vp \si^2,y}(\al_x x), \label{eq:max at al_x}
\end{align} 
where $\PU_{\exp}$ is defined in \eqref{eq:PUexp}, 
\begin{equation}\label{eq:la_x,w}
	\la_x:=\frac1y\Big(\frac{\vp+xy/\si^2}{1-\vp}-w_x\Big),\quad w_x:=L\Big(\frac\vp{1-\vp}\,\exp\frac{\vp+xy/\si^2}{1-\vp}\Big), 
\end{equation}
and $L$ is (the principal branch of) the Lambert product-log function, so that for all $z\ge0$ the value $w=L(z)$ is the only real root of the equation $w e^w=z$. 

Moreover, $\la_x$ increases in $x$ from $0$ to $\infty$ as $x$ does so. 
\end{proposition}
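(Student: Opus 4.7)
The plan is to locate the minimizer $\la_x$ in \eqref{eq:PU} via a first-order condition, recognize the resulting transcendental equation as a Lambert-$W$ equation, and then substitute back to obtain the closed form \eqref{eq:PUexpr}.

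First I would set $g(\la):=-\la x+\log\PU_{\exp}(\la)$ so that $\PU(x)=\exp\inf_{\la>0}g(\la)$. Straightforward differentiation gives
\begin{equation*}
g'(\la)=-x+\la(1-\vp)\si^2+\frac{e^{\la y}-1}{y}\,\vp\si^2,
\end{equation*}
which is strictly increasing in $\la$ with $g'(0+)=-x\le0$ and $g'(+\infty)=+\infty$. Hence the infimum is attained at the unique critical point $\la_x$ solving $g'(\la_x)=0$, and, since $\la\mapsto g'(\la)$ does not depend on $x$ except through the additive $-x$, the implicit function theorem immediately yields $x\mapsto\la_x$ strictly increasing, with $\la_0=0$ and $\la_x\to\infty$ as $x\to\infty$; this gives \eqref{eq:PU la_x} and the last assertion of the proposition.

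Next I would rewrite the identity $g'(\la_x)=0$, after multiplying by $y/\si^2$, as $\la_xy(1-\vp)+\vp\,e^{\la_xy}=\vp+xy/\si^2$. Introducing the abbreviations $A:=(\vp+xy/\si^2)/(1-\vp)$ and $w_x:=A-\la_xy$, this first-order condition becomes $\vp\,e^{A-w_x}=(1-\vp)w_x$, i.e.\
\begin{equation*}
w_x\,e^{w_x}=\frac{\vp}{1-\vp}\,e^{A}.
\end{equation*}
Because $L$ is the unique inverse of $w\mapsto we^w$ on $[0,\infty)$ and the right-hand side is positive, we get $w_x=L\bigl(\tfrac{\vp}{1-\vp}e^A\bigr)$ and then $\la_x=(A-w_x)/y$, which is precisely \eqref{eq:la_x,w}.

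For \eqref{eq:PUexpr} I would use the first-order condition to eliminate the linear term: multiplying $g'(\la_x)=0$ by $\la_x$ and substituting into $g(\la_x)$ gives
\begin{equation*}
g(\la_x)=-\tfrac12\la_x^2(1-\vp)\si^2+\frac{\vp\si^2}{y^2}\bigl[e^{\la_xy}(1-\la_xy)-1\bigr].
\end{equation*}
Then I would insert the Lambert-$W$ relations $e^{\la_xy}=(1-\vp)w_x/\vp$ and $\la_xy=A-w_x$, expand, and repeatedly apply the identity $(1-\vp)A=\vp+xy/\si^2$; the cross terms involving $Aw_x$ cancel against the linear-in-$w_x$ piece, leaving only $(1-\vp)(w_x+1)^2$, $(1-\vp)A^2$, and a constant. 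Multiplying numerator and denominator by $1-\vp$ to clear the $A^2$ term converts this to \eqref{eq:PUexpr}. The only real obstacle is keeping the bookkeeping of the constant, linear, and quadratic terms in $w_x$ and $A$ straight so that the final expression collapses into the advertised symmetric form; there is no conceptual difficulty beyond that.
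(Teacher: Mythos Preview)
Your proof is correct and follows essentially the same route as the paper's: locate the minimizer by the first-order condition on $\log\PU_{\exp}$, rewrite the resulting equation in Lambert-$W$ form via the substitution $w_x=A-\la_x y$ (the paper uses the equivalent variables $\ka=\vp/(1-\vp)$, $r=xy/(\vp\si^2)$, with $(1+r)\ka=A$), and then substitute back to obtain \eqref{eq:PUexpr}. Your elimination of the linear term via $\la_x g'(\la_x)=0$ is a slightly tidier way to organize the algebra than the paper's direct substitution, but the two arguments are the same in substance.
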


%!! move \eqref{eq:max al}, \eqref{eq:max at al_x}, \eqref{eq:al_x} to the comparison section 

Thus, indeed $\PU(x)$ is easy to compute, since the Lambert function is about as easy to compute as the logarithmic one; in particular, in Mathematica the Lambert function is the built-in function \verb9ProductLog9; see e.g.\ \cite{knuth} and references there concerning this matter.  

A slight advantage of expression \eqref{eq:PUexpr} over \eqref{eq:PU la_x} is that \eqref{eq:PUexpr} contains just one entry of $w_x$, while \eqref{eq:PU la_x} contains several entries of $\la_x$ (recall \eqref{eq:PUexp} and \eqref{eq:la_x,w}); also, the exponent in \eqref{eq:PUexpr} is algebraic (actually quadratic) in $w_x$.

As for bounds $\Be(x)=P_2(y\tPi_{\si^2/y^2};x)$ and $\Pin(x)=P_3(\Ga_{(1-\vp)\si^2}+y\tPi_{\vp\si^2/y^2};x)$, as defined by \eqref{eq:Be} and \eqref{eq:main}, the computation of $P_\al(\eta;x)$ for general $\al$ and $\eta$ is described by \cite[Theorem~2.5]{pin98};  
for normal $\eta$, similar considerations were given already in \cite[page~363]{pin94}.   
The following proposition is essentially a special case of \cite[Theorem~2.5]{pin98}. 

\begin{proposition}\label{prop:Th2.5,pin98}
Take any real $\al>1$ and let $\eta$ be any real-valued r.v.\ such that $\E\eta_+^\al<\infty$. 
Then there exists $\E\eta\in[-\infty,\infty)$. 
Let 
$$x_*:=\sup\supp(\eta)\quad\text{and}\quad x_{**}:=\sup\big(\supp(\eta)\setminus\{x_*\}\big),$$
%$$x_*:=\esssup\eta:=\sup\{x\in\R\colon\PP(\eta>x)>0\},$$
%the essential supremum 
where $\supp(\eta)$ denotes, as usual, the topological support of the distribution of the r.v.\ $\eta$; note that $x_{**}=x_*$ unless $x_*$ is an isolated point of $\supp(\eta)$; 
in most applications, $x_*=\infty$ and hence $x_{**}=\infty$). 
For all $t\in(-\infty,x_*)$, let 
\begin{align}
m(t)&:=m_{\al,\eta}(t):=t+\frac{\E(\eta-t)_+^\al}{\E(\eta-t)_+^{\al-1}}=\frac{\E\eta(\eta-t)_+^{\al-1}}{\E(\eta-t)_+^{\al-1}}; \label{eq:m}
\end{align}
let also $m(x_*):=x_*$. 
Then
\begin{enumerate}[(i)]
\item the function $m$ is continuous on $(-\infty,x_*)$, left-continuous at $x_*$, and strictly increasing on $(-\infty,x_{**})$, from $\E\eta$ to $x_*$; also, $m(t)=x_*$ for all $t\in[x_{**},x_*]$. 
\item for every $x\in(\E\eta,x_*)$ there exists a unique $t_x=t_{x;\al,\eta}\in(-\infty,x_*)$ such that 
\begin{equation}\label{eq:m(t)=x}
	m(t_x)=x;
\end{equation}
in fact, $t_x\in(-\infty,x)$; 
\item for every $x\in(\E\eta,x_*)$
\begin{equation}\label{eq:P(x)}
P_\al(\eta;x)
=\frac{\E(\eta-t_x)_+^\al}{(x-t_x)^\al}=\frac{\E^\al(\eta-t_x)_+^{\al-1}}{\E^{\al-1}(\eta-t_x)_+^\al};	
\end{equation}
\item 
\begin{enumerate}
\item if $x\in(-\infty,\E\eta]$ then 
$P_\al(\eta;x)=1$; 
\item if $x\in[x_*,\infty)$ then 
$P_\al(\eta;x)=\PP(\eta=x)=\PP(\eta\ge x)%=\ii{x=x_*}
$; 
\end{enumerate}
it is therefore natural to extend $P_\al(\eta;x)$ to all $x\in[-\infty,\infty]$ by letting $P_\al(\eta;-\infty):=1$ and $P_\al(\eta;\infty):=0$ --- as will henceforth be assumed; 
\item $P_\al(\eta;x)$ strictly and continuously decreases from $1$ to $\PP(\eta=x_*)=\PP(\eta\ge x_*)$ as $x$ increases from $\E\eta$ to $x_*$; more specifically, 
\begin{enumerate}%[(i)]
\item the function $x\mapsto P_\al(\eta;x)$ is strictly decreasing on $(\E\eta,x_*)$;
\item it is also continuous on $(\E\eta,x_*)$, right-continuous at $\E\eta$, and left-continuous at $x_*$; 
hence, it is in fact strictly decreasing on the entire closed interval $[\E\eta,x_*]$;
\end{enumerate}
\item for any $a\in\R$ and $b>0$, one has
\begin{align*}
	t_{a+bx;\al,a+b\eta}&=a+bt_{x;\al,\eta}\quad\text{for all $x\in(\E\eta,x_*)$};\\
	P_\al(a+b\eta;x)&=P_\al(\eta;\tfrac{x-a}b)\quad\text{for all $x\in\R$}.
\end{align*}
\end{enumerate}
\end{proposition}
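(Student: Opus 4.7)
The plan is to reduce parts (i)--(v) to an analysis of the first-order stationarity condition for the infimum defining $P_\al(\eta;x)$, and then dispose of (vi) by a direct change of variables. First I would verify the identity
$\E\eta(\eta-t)_+^{\al-1}=t\,\E(\eta-t)_+^{\al-1}+\E(\eta-t)_+^\al$
by writing $\eta=t+(\eta-t)$ on $\{\eta>t\}$ and multiplying by $(\eta-t)_+^{\al-1}$; this reconciles the two forms of $m(t)$ in \eqref{eq:m}. Then for $t<x$ I would differentiate $\varphi(t):=\E(\eta-t)_+^\al/(x-t)^\al$ using $\frac{d}{dt}\E(\eta-t)_+^\al=-\al\,\E(\eta-t)_+^{\al-1}$, which holds for $\al>1$; a short calculation yields $\varphi'(t)=0$ iff $m(t)=x$. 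Hence parts (ii) and (iii) will follow once (i) is in hand, together with the fact that the unique stationary point of the continuous $\varphi$ with the appropriate boundary behavior is the global minimum.

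The heart of (i) is strict monotonicity of $m$ on $(-\infty,x_{**})$. The cleanest route is a monotone-likelihood-ratio argument: $m(t)$ is the mean of $\eta$ under the tilted probability $d\PP_t\propto(\eta-t)_+^{\al-1}\,d\PP$, and for $s<t$ the ratio $(\eta-t)_+^{\al-1}/(\eta-s)_+^{\al-1}$ is nondecreasing in $\eta$ on $\{\eta>t\}$, so $\PP_t$ stochastically dominates $\PP_s$, and $m(t)>m(s)$ unless $\eta$ is essentially constant on $\{\eta>s\}$ (which forces $s\ge x_{**}$). Continuity of $m$ follows from dominated convergence. For the range, dominated convergence yields $m(t)\to\E\eta$ as $t\to-\infty$, while as $t\uparrow x_{**}$ the tilted mass concentrates at $x_*$, so $m(t)\to x_*$. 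Parts (ii) and (iii) then drop out of the intermediate value theorem; substituting $t_x$ into the two equivalent forms of $\varphi(t_x)$ yields \eqref{eq:P(x)}.

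For part (iv), the case $x\le\E\eta$ follows by Jensen applied to the convex map $u\mapsto u_+^\al$: $\E(\eta-t)_+^\al\ge(\E\eta-t)_+^\al\ge(x-t)^\al$ for $t<x\le\E\eta$, so $\varphi(t)\ge1$; this is matched by $\varphi(t)\to1$ as $t\to-\infty$ via dominated convergence (with domination $(\eta-t)_+/(x-t)\le1+\eta_+$ for $t$ sufficiently negative). The case $x\ge x_*$ is handled by $\eta\le x_*\le x$ a.s., which gives $(\eta-t)_+^\al\le(x_*-t)^\al\ii{\eta=x_*}+(x_{**}-t)_+^\al\ii{\eta<x_*}$; letting $t\uparrow x_{**}$ reduces the ratio to $\PP(\eta=x_*)$, with the matching lower bound $\PP(\eta\ge x)\le P_\al(\eta;x)$ coming from Markov. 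Part (v) follows from the envelope identity $\tfrac{d}{dx}P_\al(\eta;x)=-\al\,P_\al(\eta;x)/(x-t_x)<0$, which uses the stationarity of $\varphi$ at $t_x$, together with continuity of $x\mapsto t_x$ inherited from (i). Part (vi) is immediate from substituting $t=a+bt'$ into the defining infimum.

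The main obstacle is the rigorous monotonicity argument for $m$ when $\al\in(1,2)$: the direct Cauchy--Schwarz approach via $m'(t)\propto\E(\eta-t)_+^\al\,\E(\eta-t)_+^{\al-2}-(\E(\eta-t)_+^{\al-1})^2\ge0$ would require $\E(\eta-t)_+^{\al-2}<\infty$, which need not hold under the sole assumption $\E\eta_+^\al<\infty$; one must therefore route the argument through stochastic dominance, which bypasses the problematic $(\al-2)$-th moment but requires care on the sets where $(\eta-s)_+^{\al-1}=0$. A secondary subtlety is the correct extension of $m$ to the half-closed interval $[x_{**},x_*]$, resolved by the convention $m(x_*):=x_*$ and by noting that the value of $m$ on $(x_{**},x_*)$ plays no role in the equation \eqref{eq:m(t)=x}.
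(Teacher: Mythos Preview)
Your outline is largely sound and, notably, more self-contained than the paper's own proof, which defers the strict monotonicity of $m$ and the identification of the minimizer in (iii) to \cite[Theorem~2.5]{pin98}. Your tilting/MLR argument for (i) is the right self-contained substitute: representing $m(t)$ as the mean of $\eta$ under $d\PP_t\propto(\eta-t)_+^{\al-1}\,d\PP$ and checking that the likelihood ratio is nondecreasing (strictly, on at least two support points whenever $t<x_{**}$) yields strict monotonicity without touching the problematic $(\al-2)$-moment. Likewise, your Jensen route to (iv)(a) is cleaner than the paper's reference-based argument.

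There is, however, a genuine gap in your treatment of (iv)(b). Sending $t\uparrow x_{**}$ with the crude bound
\[
(\eta-t)_+^\al\le(x_*-t)^\al\ii{\eta=x_*}+(x_{**}-t)_+^\al\ii{\eta<x_*}
\]
only pins down $P_\al(\eta;x)=\PP(\eta=x_*)$ when $x=x_*$ \emph{and} $x_{**}<x_*$. If $x>x_*$ your upper bound yields $\PP(\eta=x_*)[(x_*-x_{**})/(x-x_{**})]^\al$, which is positive when $x_*$ is an atom, whereas the correct value is $0$; that case is easily repaired by taking $t\in(x_*,x)$. More seriously, when $x=x_*=x_{**}$ your bound degenerates to $1$. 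The paper's approach handles all cases at once: from your own derivative computation $\varphi'(t)\propto m(t)-x$ and part (i) you get $m(t)\le x_*\le x$ for all $t<x_*$, hence $\varphi$ is nonincreasing on $(-\infty,x)$, so $P_\al(\eta;x)=\lim_{t\uparrow x}\varphi(t)$. Splitting the numerator over $\{\eta=x\}$ and $\{t<\eta<x\}$, the first piece contributes exactly $\PP(\eta=x)$ and the second is bounded by $\PP(t<\eta<x)\to0$. This is the step you should adopt.

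A smaller omission: your envelope-theorem argument gives (v)(a) but you do not address the boundary continuity in (v)(b), namely right-continuity at $\E\eta$ (including the case $\E\eta=-\infty$) and left-continuity at $x_*$. These require separate dominated-convergence estimates of the type $\E(\eta-t)_+^\al\sim(-t)^\al$ as $t\to-\infty$, and a careful choice such as $\tilde t_x=x-\sqrt{x_*-x}$ for the left endpoint; the paper works these out explicitly, and your sketch should at least flag them.
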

(Concerning the case $\al\in(0,1]$, see \cite[Remark~2.6]{pin98}.)

%!!Prove this

The following example illustrates Proposition~\ref{prop:Th2.5,pin98}, and also  
Proposition~\ref{prop:P_infty} (to be presented later, in Subsubsection~\ref{ineqs,idents}). 

\begin{ex*}
Take any real $\al>1$. 
Let $\eta$ be a zero-mean r.v.\ taking on only two values, $-a$ and $b$, where $a$ and $b$ are arbitrary positive real numbers. Then $x_*=b$, $x_{**}=-a$, and, using (say) the first expression for $P_\al(\eta;x)$ in \eqref{eq:P(x)}, one can see that 
\begin{equation*}
	P_\al(\eta;x)=\frac{(b+a)^{\al-1}ba}{\big[\big(b(x+a)^\al\big)^{\frac1{\al-1}}
	+\big(a(b-x)^\al\big)^{\frac1{\al-1}}\big]^{\al-1}}
\end{equation*}
for all $x\in[0,b]$; also, $P_\al(\eta;x)=1$ for all $x\in[-\infty,0]$ and $P_\al(\eta;x)=\frac a{b+a}\ii{x=b}=\PP(\eta\ge x)$ for all $x\in[b,\infty]$. 
\begin{center}
\includegraphics[scale=0.5]{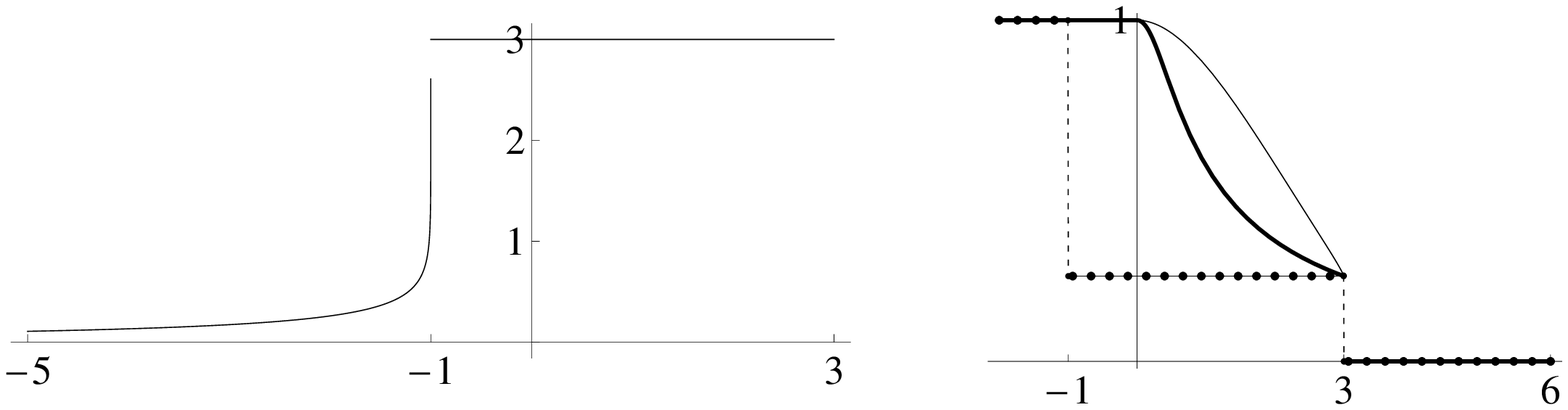}
\end{center}
Here the picture on the left shows the graph $\{(t,m(t))\colon -5<t<x_*\}$ for $a=1$, $b=3$ and $\al=1.2$, while the  picture on the right shows the graphs $\{(x,P_\al(\eta;x))\colon -2<x<x_*+3\}$ (the thick line) and $\{(x,\PP(\eta\ge x))\colon -2<x<x_*+3\}$ (thick-dotted over the thin line), also for $a=1$, $b=3$ and $\al=1.2$. A gap is seen in the graph $\{(t,m(t))\colon -5<t<x_*\}$ in a left neighborhood of $t=-1$, which is caused (despite making, with Mathematica,  15 recursive subdivisions with 1000 initial sample points) by a very steep increase of the function $m$ in such a neighborhood; for instance, $m(-1.000001)$ is only $2.498\ldots$, while $m(-1)=3$; yet, according to Proposition~\ref{prop:Th2.5,pin98}(i), there is no discontinuity there. 
The picture on the right also shows \big(see definition \eqref{eq:P_infty def} and relation \eqref{eq:P_infty} below\big) 
the graph $\{(x,P_\infty(\eta;x))\colon -2<x<x_*+3\}$ (the thinner line) of the best exponential bound 
\begin{align*}
	P_\infty(\eta;x)=\inf_{\la>0}e^{-\la x}\E e^{\la\eta}&=\lim_{\al\to\infty}P_\al(\eta;x) \\
	&=
	\Big(\frac{x+a}a\Big)^{-\frac{x+a}{a+b}} \Big(\frac{b-x}b\Big)^{-\frac{b-x}{a+b}} 
\end{align*}
%\begin{equation*}
%	P_\infty(\eta;x)=\lim_{\al\to\infty}P_\al(\eta;x)=
%	ba (a (b - x))^{-(b-x)/(a+b)} (b(a+x))^{-(x+a)/(a + b)}
%\end{equation*}
for all $x\in[0,b)$, also with $P_\infty(\eta;x)=1$ for all $x\in[-\infty,0]$ and $P_\infty(\eta;x)=\frac a{b+a}\ii{x=b}=\PP(\eta\ge x)$ for all $x\in[b,\infty]$. 
While, in this case, one may not be greatly impressed with the overall degree of closeness of the upper bound $P_\al(\eta;x)$ to $\PP(\eta\ge x)$, note that in the ``large-deviation'' zone $x\ge b$ the performance of the bound $P_\al(\eta;x))$ is perfect: $P_\al(\eta;x))=\PP(\eta\ge x)$ for all $x\ge b$, just in accordance with Proposition~\ref{prop:Th2.5,pin98}(iv)(b). 
\end{ex*}

In particular, Proposition~\ref{prop:Th2.5,pin98} shows that the computation of the upper bound $P_\al(\eta;x)$ is based on that of the positive-part moments $\E(\eta-t)_+^\al$ and $\E(\eta-t)_+^{\al-1}$. For $\al\in\{1,2,3\}$ and a number of common families of distributions of $\eta$, including the Poisson one, this computation was detailed in \cite{bent-64pp}. In particular, see formula \cite[(10.5)]{bent-64pp} for $P_2(\eta;x)$ with a centered Poisson r.v.\ $\eta$. That formula is relatively simple, since, for a natural $\al$ and a r.v.\ $\eta$ with (say) a lattice distribution, the generalized moment $\E(\eta-t)_+^\al$ can be computed ``locally''; indeed, if $\cdots<d_k<d_{k+1}<\cdots$ are the atoms of the distribution of $\eta$, then for any $t\in[d_k,d_{k+1})$ one a.s.\ has 
$\eta>t$ iff $\eta>d_k$; thus, for such $t$, $\E(\eta-t)_+^\al$ can be easily expressed in terms of the truncated moments $\E(\eta-d_k)_+^j$ with $j=0,\dots,\al$.  
These comments provide a simple way to compute the bound $\Be(x)=P_2(y\tPi_{\si^2/y^2};x)$.

As for the bound $\Pin(x)=P_3(\Ga_{(1-\vp)\si^2}+y\tPi_{\vp\si^2/y^2};x)$, here there is no such nice localization property as the one mentioned in the previous paragraph, since the distribution of the r.v.\ $\Ga_{(1-\vp)\si^2}+y\tPi_{\vp\si^2/y^2}$ is not discrete. It appears that the computation of the positive-part moments $\E(\eta-t)_+^\al$ for $\eta=\Ga_{(1-\vp)\si^2}+y\tPi_{\vp\si^2/y^2}$ can be done most effectively via formulas expressing such moments in terms of the Fourier or Fourier-Laplace transform of the distribution of $\eta$; see \cite{pos}, where such formulas were developed (with this specific motivation in mind). 
A reason for this approach to work is that the Fourier-Laplace transform of the distribution of the r.v.\ $\Ga_{(1-\vp)\si^2}+y\tPi_{\vp\si^2/y^2}$ has a simple expression (cf.\ \eqref{eq:PUexp-expr} and \eqref{eq:PUexp}). 

Namely, one has
\begin{align}
 \E X_+^p &=
 \frac{\Ga(p+1)}{\pi}\int_0^\infty
\Re\frac{\E e_j\big((s+it)X\big)}{(s+it)^{p+1}}\,\dd t, \label{eq:laplace}
\end{align}
where $p\in(0,\infty)$, $s\in(0,\infty)$, $\Ga$ is the Gamma function, $\Re$ denotes the real part of a complex number, $i$ is the imaginary unit, $j=-1,0,\dots,\ell$, $\ell:=\ce{p-1}$, $e_j(u):=e^u-\sum%{\scriptstyle{\sum}}
	_{m=0}^j\tfrac{u^m}{m!}$, 
and $X$ is any r.v.\ such that  $\E|X|^{j_+}<\infty$ and $\E e^{sX}<\infty$.
Also, 
\begin{equation}\label{eq:char}
 \E X_+^p =
 \frac{\E X^k}2\ii{p\in\N}+
 \frac{\Ga(p+1)}\pi 
\,\int_0^\infty\Re
\frac{\E e_\ell(itX)}{(it)^{p+1}}\,\dd t,
\end{equation} 
where $k:=\fl{p}$ 
and $X$ is any r.v.\ such that  $\E|X|^p<\infty$.
Of course, formulas \eqref{eq:laplace} and \eqref{eq:char} will be applied here to r.v.'s of the form $X=\Ga_{(1-\vp)\si^2}+y\tPi_{\vp\si^2/y^2}-w$, with $w\in\R$.

\subsection{Comparison}\label{compar}
In this subsection, we shall compare the bounds $\BH(x)$, $\PU(x)$, $\Be(x)$, and $\Pin(x)$, by means of identities and inequalities (in Subsubsection~\ref{ineqs,idents}), asymptotic relations for large $x>0$ (in Subsubsection~\ref{large dev}), and graphics and numerics for moderate $x>0$ (in Subsubsection~\ref{moder dev}); we shall also include into these comparisons 
the Cantelli bound $\frac{\si^2}{\si^2+x^2}$ and the best exponential upper bound $\exp\{-\frac{x^2}{2\si^2}\}$ on the tail of the normal distribution $\No(0,\si^2)$.

\subsubsection{Inequalities and identities}\label{ineqs,idents}

Let us begin here with the following simple proposition concerning the bounds $P_\al(\eta;x)$ (as defined in \eqref{eq:comp-prob2}). 
Unless specified otherwise, let $\eta$ in this subsubsection stand for any r.v., and take any $\al\in(0,\infty)$. 

\begin{proposition}\label{prop:P_al=inf}
For any $x\in\R$,  
\begin{align}
	P_\al(\eta;x)&=\inf\{\E f(\eta)\colon f\in\H\al,\ f(u)\ge\ii{u\ge x}\ \forall u\in\R\} \label{eq:P_al=inf1} \\ 
	&=\inf\Big\{\frac{\E f(\eta)}{f(x)}\colon f\in\H\al,\ f(x)>0\Big\}. \label{eq:P_al=inf2} 
\end{align}
\end{proposition}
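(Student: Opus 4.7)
My plan is to prove both equalities by combining a single lower bound, derived from the integral representation defining $\H\al$, with an upper bound realized by an explicit one-parameter family of test functions. Both directions will be essentially one-liners once the right functions are written down.

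For the lower bound, I will take an arbitrary $f\in\H\al$ with representation $f(u)=\int(u-t)_+^\al\,\mu(dt)$ and apply Fubini to get $\E f(\eta)=\int\E(\eta-t)_+^\al\,\mu(dt)$. By the very definition \eqref{eq:comp-prob2} of $P_\al(\eta;x)$, for every $t<x$ one has $\E(\eta-t)_+^\al\ge P_\al(\eta;x)\,(x-t)^\al$, while for $t\ge x$ the integrand $\E(\eta-t)_+^\al$ is nonnegative and $(x-t)_+^\al=0$. Splitting the integral at $t=x$ yields
\begin{equation*}
\E f(\eta)\;\ge\;P_\al(\eta;x)\int_{(-\infty,x)}(x-t)^\al\,\mu(dt)\;=\;P_\al(\eta;x)\,f(x).
\end{equation*}
This immediately gives $\E f(\eta)/f(x)\ge P_\al(\eta;x)$ whenever $f(x)>0$, which is the ``$\ge$'' half of \eqref{eq:P_al=inf2}. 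For \eqref{eq:P_al=inf1}, the pointwise constraint $f\ge\mathbf{I}\{\cdot\ge x\}$ forces $f(x)\ge 1$, so the same chain gives $\E f(\eta)\ge P_\al(\eta;x)\,f(x)\ge P_\al(\eta;x)$.

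For the matching upper bound, I will introduce, for each $t<x$, the normalized test function $f_t(u):=(u-t)_+^\al/(x-t)^\al$, which belongs to $\H\al$ (take $\mu=(x-t)^{-\al}\delta_t$) and satisfies $f_t(x)=1$. A direct pointwise check shows $f_t(u)\ge\mathbf{I}\{u\ge x\}$: for $u<x$ both sides are $\ge 0$ with the right being $0$, while for $u\ge x>t$ one has $(u-t)^\al\ge(x-t)^\al$. Thus $f_t$ is admissible in both right-hand infima, and $\E f_t(\eta)/f_t(x)=\E f_t(\eta)=\E(\eta-t)_+^\al/(x-t)^\al$; taking the infimum over $t\in(-\infty,x)$ returns $P_\al(\eta;x)$, completing the ``$\le$'' half of both identities.

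I anticipate no real obstacle here. The one minor issue to be explicit about is that the integral representation allows $\mu$ to place mass on $[x,\infty)$, but such mass contributes $0$ to $f(x)$ and a nonnegative amount to $\E f(\eta)$, so discarding it in the lower-bound step is safe and in fact makes the ratio $\E f(\eta)/f(x)$ only smaller, which is the inequality one wants.
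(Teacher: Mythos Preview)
Your proof is correct and takes essentially the same approach as the paper's: both use the integral representation of $\H\al$ together with Fubini for the lower bound and the normalized test functions $f_t(u)=(u-t)_+^\al/(x-t)^\al$ for the upper bound. The only differences are organizational---you argue directly rather than by contradiction, and you treat both infima simultaneously rather than first proving $P_\al=\inf_2$ and then $\inf_1=\inf_2$.
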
 

%The next proposition immediately follows from \eqref{eq:P_al=inf1} and \eqref{eq:F-al-beta}. 

Now let us state general relations between the bounds $P_\al(\eta;x)$ for different values of $\al$, as well as their relation with the best exponential upper bound 
%
%\begin{proposition}\label{prop:P_infty=inf} 
\begin{align}
	P_\infty(\eta;x)%&:=\inf\{\E(c\,e^{\la\eta})\colon c,\la\in(0,\infty),\ ce^{\la u}\ge\ii{u\ge x}\ \forall u\in\R\} \label{eq:P_infty=inf1} \\ 
	&:=\inf_{\la>0}\frac{\E e^{\la\eta}}{e^{\la x}}. \label{eq:P_infty def} 
\end{align}
%\end{proposition} 

%
%\begin{equation}\label{eq:P_infty def}
%	P_\infty(\eta;x):=\inf_{t\ge0}e^{-tx}\E e^{t\eta}
%\end{equation}
%on the tail probability $\PP(\eta\ge x)$. These relations are based on \eqref{eq:F-al-beta}, Proposition~\ref{prop:P_al=inf}, and 

\begin{proposition}\label{prop:H_infty} 
\begin{equation}\label{eq:H_infty}
	\bigcap_{\al>0}\H\al=\H\infty=\H{\exp},  
\end{equation}
where $\H\infty$ is defined as the class of all infinitely differentiable real functions $f$ on $\R$ such that $f^{(j)}\ge0$ on $\R$ and $f^{(j)}(-\infty+)=0$ for all $j=0,1,\dots$, and 
\begin{equation*}
	\H{\exp}:=\{f\colon f(x)=\textstyle{\int_{(0,\infty)}}%\smallint_{(0,\infty)}
	e^{tx}\mu(\dd t)\ \forall x\in\R\},
\end{equation*}
where $\mu$ denotes a nonnegative Borel measure such that the integral $\int_{(0,\infty)}e^{tx}\mu(\dd t)$ is finite $\forall x\in\R$; thus $\H{\exp}$ may be viewed as a closed convex hull of the set of all increasing exponential functions. 
\end{proposition}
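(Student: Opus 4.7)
The proof will split into three inclusions: $\H{\exp}\subseteq\bigcap_{\al>0}\H\al$, $\bigcap_{\al>0}\H\al\subseteq\H\infty$, and $\H\infty\subseteq\H{\exp}$.

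For the first inclusion, the plan is to show by a direct beta-type computation that each exponential $u\mapsto e^{tu}$ with $t>0$ belongs to $\H\al$ for every $\al>0$. The substitution $v=u-s$ gives $\int_{-\infty}^u(u-s)^\al e^{ts}\dd s=e^{tu}\Ga(\al+1)/t^{\al+1}$, hence
\begin{equation*}
e^{tu}=\int_{-\infty}^\infty (u-s)_+^\al\,\frac{t^{\al+1}}{\Ga(\al+1)}\,e^{ts}\dd s,
\end{equation*}
which exhibits $u\mapsto e^{tu}$ as a member of $\H\al$ with density $s\mapsto \frac{t^{\al+1}}{\Ga(\al+1)}\,e^{ts}$. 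Fubini's theorem applied to an arbitrary mixture $f(u)=\int_{(0,\infty)}e^{tu}\,\mu(\dd t)\in\H{\exp}$ then rewrites $f$ in the form \eqref{eq:H}, establishing $\H{\exp}\subseteq\H\al$ for every $\al>0$.

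For the second inclusion, the plan is to invoke Proposition~\ref{prop:F-al} for each positive integer $\al$. If $f\in\bigcap_{\al>0}\H\al$, then for every positive integer $\al$ the function $f$ is $(\al{-}1)$-times continuously differentiable, $f^{(\al-1)}$ is convex on $\R$, and $f^{(j)}(-\infty+)=0$ for $j=0,\dots,\al-1$. A convex function on $\R$ with limit $0$ at $-\infty$ must be nondecreasing and nonnegative (otherwise convexity would force the limit at $-\infty$ to be $+\infty$), so $f^{(\al-1)}\ge0$. Propagating downward, each $f^{(j)}$ with $j\le\al-1$ is convex (its derivative $f^{(j+1)}$ is nondecreasing) and has limit $0$ at $-\infty$, hence is nondecreasing and nonnegative. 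Letting $\al$ range over all positive integers yields $f\in C^\infty$ with $f^{(j)}\ge0$ and $f^{(j)}(-\infty+)=0$ for every $j\ge0$, which is precisely membership in $\H\infty$.

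The main obstacle is the third inclusion, which will be handled via the Bernstein representation for completely monotone functions. Given $f\in\H\infty$, the reflection $g(t):=f(-t)$ satisfies $(-1)^j g^{(j)}(t)=f^{(j)}(-t)\ge0$ for all $t\in\R$ and all $j\ge0$, so $g$ is completely monotone on the whole real line. The classical Bernstein theorem (applied on each half-line $(-a,\infty)$, $a>0$, to accommodate the fact that $g$ is completely monotone on all of $\R$, not merely on $(0,\infty)$) then yields a unique nonnegative Borel measure $\nu$ on $[0,\infty)$ with $\int_{[0,\infty)}e^{as}\,\nu(\dd s)<\infty$ for every $a>0$ and
\begin{equation*}
g(t)=\int_{[0,\infty)}e^{-ts}\,\nu(\dd s)\quad\text{for all }t\in\R.
\end{equation*}
Substituting $x=-t$ gives $f(x)=\int_{[0,\infty)}e^{sx}\,\nu(\dd s)$ for all $x\in\R$. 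Finally, the condition $f(-\infty+)=g(+\infty)=0$ together with dominated/monotone convergence forces $\nu(\{0\})=0$, so $\nu$ is concentrated on $(0,\infty)$ and $f\in\H{\exp}$, completing the chain of inclusions.
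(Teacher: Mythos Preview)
Your proof is correct and follows essentially the same route as the paper's. Both rely on Proposition~\ref{prop:F-al} for the identification $\bigcap_{\al>0}\H\al=\H\infty$ and on Bernstein's theorem for the step $\H\infty\subseteq\H{\exp}$. Two minor organizational differences: you close the cycle via an explicit Gamma-integral showing $e^{t\cdot}\in\H\al$ (the paper instead infers $\H{\exp}\subseteq\H\infty$ directly from the integral representation by differentiation under the integral and dominated convergence), and in the Bernstein step the paper works with the shifts $f_w(x)=f(x+w)$ and invokes uniqueness of the Laplace transform explicitly to glue the measures $\mu_w$ into a single $\mu_0$, whereas you reflect to $g(t)=f(-t)$ and leave the gluing across the half-lines $(-a,\infty)$ slightly implicit. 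Spelling out that uniqueness argument in one line would tighten your version.
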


Using Proposition~\ref{prop:H_infty}, one can obtain

\begin{proposition}\label{prop:P_infty}\ 
\begin{enumerate}[(i)]
	\item The function $(0,\infty]\ni\al\mapsto P_\al(\eta;x)$ is nondecreasing. 
	\item For all $x\in\R$ 
\begin{equation}\label{eq:P_infty}
	P_\infty(\eta;x)=\lim_{\al\to\infty}P_\al(\eta;x). 
\end{equation}
\end{enumerate} 
\end{proposition}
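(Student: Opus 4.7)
The plan for part (i) is to apply \eqref{eq:P_al=inf2} of Proposition~\ref{prop:P_al=inf}, which rewrites $P_\al(\eta;x)$ as $\inf\{\E f(\eta)/f(x)\colon f\in\H\al,\,f(x)>0\}$, together with the nestedness $\H\al\supseteq\H\beta$ for $0\le\al\le\beta$ supplied by \eqref{eq:F-al-beta} and its extension $\H\al\supseteq\H\infty$ for every finite $\al$ coming from Proposition~\ref{prop:H_infty}. Since the infimum over a smaller class cannot be smaller, the asserted monotonicity of $\al\mapsto P_\al(\eta;x)$ on $(0,\infty]$ follows at once, once I verify that the $\al=\infty$ endpoint fits the same pattern, namely that
\[\inf\{\E f(\eta)/f(x)\colon f\in\H{\exp},\,f(x)>0\}=P_\infty(\eta;x).\]
The latter identity is routine: writing $f\in\H{\exp}$ as $f(u)=\int e^{\la u}\,\mu(\dd\la)$ exhibits $\E f(\eta)/f(x)$ as a weighted average of $\E e^{\la\eta}/e^{\la x}$ with weights $e^{\la x}\,\mu(\dd\la)$, and so it is bounded below by $\inf_{\la>0}\E e^{\la\eta}/e^{\la x}$, with equality attained by letting $\mu$ concentrate near an approximately optimal $\la$.

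For part (ii) the direction $\lim_{\al\to\infty}P_\al(\eta;x)\le P_\infty(\eta;x)$ is immediate from (i); the content is the reverse. I would first dispatch the boundary ranges: Proposition~\ref{prop:Th2.5,pin98}(iv) gives $P_\al(\eta;x)=1$ on $(-\infty,\E\eta]$ and $P_\al(\eta;x)=\PP(\eta\ge x)$ on $[x_*,\infty)$, independently of $\al$, and one verifies directly from \eqref{eq:P_infty def} (by Jensen on the former range, and by dominated convergence as $\la\to\infty$ on the latter) that $P_\infty$ takes those same values. That leaves the main interval $x\in(\E\eta,x_*)$, on which Proposition~\ref{prop:Th2.5,pin98}(iii), parametrized via $\la:=\al/(x-t)$, yields
\[P_\al(\eta;x)=\inf_{\la>0}h_\al(\la),\qquad h_\al(\la):=\E\Big[\Big(1+\tfrac{\la(\eta-x)}{\al}\Big)_+^\al\Big],\]
and the standard monotonicity $(1+y/\al)^\al\uparrow e^y$, valid because $\al\mapsto\al\log(1+y/\al)$ is increasing wherever defined, together with monotone convergence gives $h_\al(\la)\uparrow h_\infty(\la):=\E e^{\la(\eta-x)}$ for every fixed $\la>0$.

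The remaining and crucial step, which I expect to be the main obstacle, is the interchange of $\lim_{\al\to\infty}$ with $\inf_{\la>0}$. Each $h_\al$ is convex in $\la$ (as an expectation of convex functions of $\la$), and so is $h_\infty$; pointwise convergence of convex functions on an open interval automatically upgrades to uniform convergence on compact subsets. Let $\la^*\in(0,\infty)$ be the unique minimizer of $h_\infty$ in this regime. Given $\epsilon>0$, I would choose $\delta>0$ with $h_\infty(\la^*\pm\delta)\ge P_\infty+\epsilon$. Local uniform convergence on $[\la^*-\delta,\la^*+\delta]$ then gives, for all large $\al$, the bound $h_\al\ge h_\infty-\epsilon/2\ge P_\infty-\epsilon/2$ on that interval together with $h_\al(\la^*\pm\delta)\ge P_\infty+\epsilon/2$; the accompanying convergence of the one-sided derivatives of convex functions forces $h_\al$ to be nondecreasing on $[\la^*+\delta,\infty)$ and nonincreasing on $(0,\la^*-\delta]$, whence $h_\al\ge h_\al(\la^*\pm\delta)\ge P_\infty+\epsilon/2$ on each of those two rays. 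Combining, $P_\al=\inf_\la h_\al\ge P_\infty-\epsilon/2$ for all sufficiently large $\al$, and letting $\epsilon\downarrow 0$ closes the argument.
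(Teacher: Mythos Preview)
Your part~(i) follows the paper's route exactly: combine \eqref{eq:P_al=inf2} with the nesting \eqref{eq:F-al-beta} and Proposition~\ref{prop:H_infty}, together with the identification $P_\infty(\eta;x)=\inf_{f\in\H{\exp}}\E f(\eta)/f(x)$, which you and the paper both establish via the same averaging argument.

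For part~(ii) you take a genuinely different route. The paper simply invokes $\H{\exp}=\bigcap_{\al}\H\al$ together with \eqref{eq:P_al=inf1}--\eqref{eq:P_al=inf2} to assert $\inf_{f\in\H{\exp}}\E f(\eta)/f(x)=\lim_{\al\uparrow\infty}\inf_{f\in\H\al}\E f(\eta)/f(x)$; this is extremely short but leaves the nontrivial direction of the $\lim$/$\inf$ interchange implicit (the equality $\inf_{\bigcap A_\al}=\lim_\al\inf_{A_\al}$ is \emph{not} a general fact for decreasing families). You instead substitute $\la=\al/(x-t)$ in the definition \eqref{eq:comp-prob2} --- your citation of Proposition~\ref{prop:Th2.5,pin98}(iii) is superfluous here --- to obtain $P_\al(\eta;x)=\inf_{\la>0}h_\al(\la)$ with $h_\al\uparrow h_\infty$ by monotone convergence, and then use convexity to localize the infimum near the minimizer $\la^*$ of $h_\infty$, making the interchange fully explicit at the cost of a longer argument. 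One simplification: you do not need convergence of one-sided derivatives; once $h_\al(\la^*)\le h_\infty(\la^*)=P_\infty$ and $h_\al(\la^*\pm\delta)\ge P_\infty+\epsilon/2$, convexity alone gives $(h_\al)'_+(\la^*+\delta)\ge\epsilon/(2\delta)>0$ and $(h_\al)'_-(\la^*-\delta)\le-\epsilon/(2\delta)<0$, hence the tail monotonicity. Finally, both your argument and the paper's tacitly require $\E e^{\la\eta}<\infty$ for some $\la>0$ (so that $\la^*$ exists and $P_\infty<\infty$; otherwise $P_\infty(\eta;x)=\infty$ while Proposition~\ref{prop:Th2.5,pin98}(v) gives $P_\al(\eta;x)\le1$); this holds in all the paper's applications.
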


For completeness, let us also consider 
the Cantelli bound 
\begin{equation}\label{eq:Ca}
\Ca(x):=\Ca_{\si^2}(x):=\frac{\si^2}{\si^2+x^2}   	
\end{equation}
and 
the best exponential upper bound 
\begin{equation}\label{eq:EN}
\EN(x):=\EN_{\si^2}(x):=P_\infty(\Ga_{\si^2};x)
%\inf_{\la>0}e^{-\la x}\E e^{\la\Ga_{\si^2}}
=\exp\Big\{-\frac{x^2}{2\si^2}\Big\}	
\end{equation}
on the tail of the normal distribution $\No(0,\si^2)$; 
of course, in general $\EN(x)$ is \emph{not} an upper bound on $\PP(S\ge x)$.   

The bound $\Ca(x)$ can be presented in a form similar to \eqref{eq:P_al=inf2} and \eqref{eq:P_infty def}: 

\begin{proposition}\label{prop:Ca=inf}
Take any $\si\in(0,\infty)$, any r.v.'s\ $\xi$ and $\eta$ such that $\E\xi\le0=\E\eta$ and $\E\xi^2\le\E\eta^2=\si^2$, and any $x\in[0,\infty)$. Then  
\begin{align}
\PP(\xi\ge x)&\le\Ca(x) 
%&=\inf\{\E(a\eta^2+b\eta+c)\colon a,b,c\in\R,\ au^2+bu+c\ge\ii{u\ge x}\ \forall u\in\R\} \label{eq:Ca=inf1} \\
=\inf_{t\in(-\infty,x)}\frac{\E(\eta-t)^2}{(x-t)^2}. \label{eq:Caexpr}
%&=\inf\Big\{\frac{\E(\eta-t)^2}{(x-t)^2}\colon t\in(-\infty,x),\ (u-t)^2\ge\ii{u\ge x}\ \forall u\in\R\Big\} \label{eq:Ca=inf2} 
\end{align}
\end{proposition}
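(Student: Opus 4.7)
The statement is essentially the classical Cantelli (one-sided Chebyshev) inequality together with a variational identity for its right-hand side, so the proof is short and standard; the plan is to handle the identity first and then the inequality.

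For the identity, I would use that $\E\eta=0$ and $\E\eta^2=\si^2$, so $\E(\eta-t)^2=\si^2+t^2$ and the quantity to minimize becomes
\begin{equation*}
  g(t):=\frac{\si^2+t^2}{(x-t)^2},\qquad t<x.
\end{equation*}
A direct differentiation gives $g'(t)=\frac{2(\si^2+tx)}{(x-t)^3}$, so the unique stationary point in $(-\infty,x)$ is $t_*=-\si^2/x$ (for $x>0$; one takes a limit for $x=0$). Substituting yields $g(t_*)=\si^2/(\si^2+x^2)$, and checking the sign of $g'$ confirms this is the minimum. This establishes $\Ca(x)=\inf_{t<x}\E(\eta-t)^2/(x-t)^2$.

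For the inequality $\PP(\xi\ge x)\le\Ca(x)$, I would use the classical translation-plus-Markov trick. For any $s>0$, the event $\{\xi\ge x\}$ is contained in $\{(\xi+s)^2\ge(x+s)^2\}$ (since $x+s>0$), so Markov's inequality gives
\begin{equation*}
  \PP(\xi\ge x)\le\frac{\E(\xi+s)^2}{(x+s)^2}=\frac{\E\xi^2+2s\,\E\xi+s^2}{(x+s)^2}\le\frac{\si^2+s^2}{(x+s)^2},
\end{equation*}
where in the last step I use $\E\xi\le 0$, $s>0$, and $\E\xi^2\le\si^2$. Setting $t=-s$ converts the right-hand side into $g(t)$ above, so taking the infimum over $s>0$ (equivalently, over $t<0$, which contains the optimizer $t_*=-\si^2/x$ when $x>0$) yields the bound $\si^2/(\si^2+x^2)=\Ca(x)$. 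The boundary case $x=0$ is trivial since $\Ca(0)=1$.

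Combining the two steps gives both assertions of \eqref{eq:Caexpr}. The main step is not an obstacle at all but just the choice to use the shift-and-square trick together with the fact that the moment conditions on $\xi$ enter as one-sided inequalities that align with the sign of $s$; if one prefers, the minimization calculus can also be bypassed by rewriting $(x-t)^2\,\Ca(x)\cdot(\si^2+x^2)/\si^2-(\si^2+t^2)=(tx+\si^2)^2/(\si^2+x^2)\ge 0$, giving $g(t)\ge\Ca(x)$ with equality at $t=-\si^2/x$.
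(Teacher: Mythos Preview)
Your proof is correct and follows essentially the same approach as the paper: both use that $\E(\eta-t)^2=\si^2+t^2$, apply the shift-and-Markov argument (the paper writes $(x-t)^2\PP(\xi\ge x)\le\E(\xi-t)^2$ for $t<0$, which is your step with $s=-t$), and identify the minimizer $t_*=-\si^2/x$. Your treatment is slightly more detailed---you explicitly handle the boundary case $x=0$ and give an alternative algebraic verification of the minimum---but the substance is identical.
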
 

This proposition is essentially well known; yet, we shall provide a proof for the readers' convenience. % and because it will useful in other proofs in this paper. 

Now we are ready to turn to relations between the four related bounds: $\BH(x)$, $\PU(x)$, $\Be(x)$, and $\Pin(x)$, as well as $\Ca(x)$ and $\EN(x)$. 

\begin{proposition}\label{prop:ineqs}
For all $x>0$ and all values of the parameters: $\si>0$, $y>0$, and $\vp\in(0,1)$,  
\begin{enumerate}[(I)]
	\item $\Pin(x)\le\PU(x)\le\BH(x)\quad\text{and}\quad\Be(x)\le\Ca(x)\wedge\BH(x);$
	\item
$%\begin{equation}\label{eq:Be=Ca}
	\Be(x)=\Ca(x)\quad\text{for all }x\in[0,y] 
$;
	\item $\BH(x)$ increases from $\EN(x)$ to $1$ as $y$ increases from $0$ to $\infty$;   
	\item there exists some $u_{y/\si}\in(0,\infty)$ that depends only on the ratio $y/\si$ such that $\Ca(x)<\BH(x)$ if $x\in(0,\si u_{y/\si})$ and $\Ca(x)>\BH(x)$ if $x\in(\si u_{y/\si},\infty)$; moreover, $u_{y/\si}$ increases from $u_{0+}=1.585\dots$ to $\infty$ as $y/\si$ increases from $0$ to $\infty$; 
	in particular, $\Ca(x)<\EN(x)$ if $x/\si\in(0,1.585)$ and $\Ca(x)>\EN(x)$ for $x/\si\in(1.586,\infty)$. 
	\item $\PU(x)$ increases from $\EN(x)$ to $\BH(x)$ as $\vp$ increases from $0$ to $1$.  
\end{enumerate}
\end{proposition}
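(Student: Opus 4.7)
The plan is to treat parts (I), (II), (III), (V) as routine consequences of the variational and explicit formulas collected earlier, and to isolate (IV) as the one genuinely technical step. Throughout, let $\eta_{BH}:=y\tPi_{\si^2/y^2}$ and $\eta_{PU}:=\Ga_{(1-\vp)\si^2}+y\tPi_{\vp\si^2/y^2}$, so that $\BH=P_\infty(\eta_{BH};\cdot)$ and $\PU=P_\infty(\eta_{PU};\cdot)$ by \eqref{eq:BH} and \eqref{eq:PU}.

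For (I), the inequality $\PU_{\exp}(\la)\le\BH_{\exp}(\la)$, valid since $\vp<1$ and $\la^2/2<(e^{\la y}-1-\la y)/y^2$, gives $\PU\le\BH$ after infimum in $\la$. Proposition~\ref{prop:P_infty}(i) yields $\Be=P_2(\eta_{BH};\cdot)\le P_\infty(\eta_{BH};\cdot)=\BH$ and $\Pin=P_3(\eta_{PU};\cdot)\le P_\infty(\eta_{PU};\cdot)=\PU$; Proposition~\ref{prop:Ca=inf} applied with $\eta=\eta_{BH}$, combined with $(\eta-t)_+^2\le(\eta-t)^2$, gives $\Be\le\Ca$. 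For (II), I apply Proposition~\ref{prop:Th2.5,pin98} with $\al=2$ and $\eta=\eta_{BH}$: since the support of $\eta_{BH}$ lies in $[-\si^2/y,\infty)$, for every $t\le-\si^2/y$ the function $m_2$ in \eqref{eq:m} simplifies to $m_2(t)=-\si^2/t$, so the equation $m_2(t_x)=x$ has solution $t_x=-\si^2/x$ (admissible iff $x\le y$); plugging into \eqref{eq:P(x)} gives $\Be(x)=\si^2/(\si^2+x^2)=\Ca(x)$.

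For (III), I rewrite $-\ln\BH(x)=(x/\si)^2\,\psi(u)/u^2$ with $u:=xy/\si^2$, reducing monotonicity in $y$ to the claim that $u\mapsto\psi(u)/u^2$ is strictly decreasing on $(0,\infty)$; this follows since $q(u):=2u-(u+2)\ln(1+u)$ satisfies $q(0)=q'(0)=0$ and $q''(u)=-u/(1+u)^2<0$, so that the numerator $u\psi'(u)-2\psi(u)=q(u)$ of $(\psi/u^2)'$ is strictly negative on $(0,\infty)$. The limits $\BH\to\EN$ (as $y\to 0+$) and $\BH\to 1$ (as $y\to\infty$) come from $\psi(u)\sim u^2/2$ and $\psi(u)\sim u\ln u$, respectively. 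For (V), the exponent in \eqref{eq:PUexp} is affine in $\vp$ with strictly positive slope $\si^2\bigl[(e^{\la y}-1-\la y)/y^2-\la^2/2\bigr]$, so $\vp\mapsto\PU_{\exp}(\la)$ and hence $\PU(x)=\inf_\la e^{-\la x}\PU_{\exp}(\la)$ is strictly increasing in $\vp$, with the endpoint values $\EN(x)$ (at $\vp=0$) and $\BH(x)$ (at $\vp=1$) read off \eqref{eq:PUexp}.

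Part (IV) is the heart of the proposition. The scaling $\Ca(\si v)=1/(1+v^2)$ and $\BH(\si v)=\exp\{-\psi(rv)/r^2\}$ with $v:=x/\si$, $r:=y/\si$ shows that the comparison depends only on $(v,r)$, reducing the problem to the sign of
\begin{equation*}
F(v,r):=\ln(1+v^2)-\psi(rv)/r^2
\end{equation*}
on $(0,\infty)^2$. Routine expansions give $F(0,r)=0$, $F(v,r)\sim v^2/2>0$ for small $v$, $F\to-\infty$ as $v\to\infty$, and a sign-reversed version of the computation in (III) shows $\partial_r F(v,r)=r^{-3}\bigl[(u+2)\ln(1+u)-2u\bigr]>0$ with $u=rv$; hence $\{F=0\}\cap(0,\infty)^2$ is the graph of a function $r=r^*(v)$, which immediately gives the monotonicity of $u_{y/\si}$ in $y/\si$. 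The endpoint limits come from $F\to\ln(1+v^2)-v^2/2$ as $r\to 0+$ (whose unique positive root $u_{0+}\approx 1.585$ is pinned down by the fact that $(1+v^2)e^{-v^2/2}$ has a single interior maximum at $v=1$ and hence equals $1$ only at $v=0$ and $v=u_{0+}$) and $F\to\ln(1+v^2)>0$ as $r\to\infty$ (forcing $u_{y/\si}\to\infty$). The main obstacle, and the step I expect to need the most care, is uniqueness of the crossover for each fixed $r>0$, equivalently strict monotonicity of $r^*$, equivalently $\partial_v F<0$ along $\{F=0\}$; using $\psi(rv)=r^2\ln(1+v^2)$ and the identity $\ln(1+u)=(\psi(u)+u)/(1+u)$ this reduces to $(1+v^2)\ln(1+rv)>2rv$ on the level curve, which I would establish by combining this constraint with the parametric relation $\ln(1+v^2)/v^2=\psi(rv)/(rv)^2$ (both sides being strictly decreasing bijections from $(0,\infty)$ onto $(0,1)$ and $(0,1/2)$, respectively).
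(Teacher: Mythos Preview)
Your treatment of (I), (III), and (V) matches the paper's, with one small caveat in (V): from strict monotonicity of $\vp\mapsto e^{-\la x}\PU_{\exp}(\la)$ for each fixed $\la>0$ you cannot immediately conclude that the \emph{infimum} over $\la$ is strictly increasing in $\vp$ (an infimum of strictly increasing functions can be merely nondecreasing). The paper closes this by invoking \eqref{eq:PU la_x}: the infimum is attained at some $\la_x\in(0,\infty)$, so for $\vp_1<\vp_2$ one has $\PU_{\vp_2}(x)=e^{-\la_x x}\PU_{\exp,\vp_2}(\la_x)>e^{-\la_x x}\PU_{\exp,\vp_1}(\la_x)\ge\PU_{\vp_1}(x)$. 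Your (II) is actually more self-contained than the paper's, which simply cites \cite[(10.5)]{bent-64pp}; your direct computation of $m_2(t)=-\si^2/t$ for $t\le-\si^2/y$ via Proposition~\ref{prop:Th2.5,pin98} is correct and pleasant.

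The genuine gap is in (IV). You correctly isolate the hard step---uniqueness of the crossover for each fixed $r$, equivalently $\partial_vF<0$ along $\{F=0\}$---and correctly reduce it to the inequality $(1+v^2)\ln(1+rv)>2rv$ on the level curve. But you do not prove this inequality; the phrase ``which I would establish by combining this constraint with the parametric relation\dots'' is a hope, not an argument, and it is not clear that the suggested parametrization (via the decreasing bijections $v\mapsto\ln(1+v^2)/v^2$ and $u\mapsto\psi(u)/u^2$) makes the needed inequality tractable without substantial further work.

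The paper takes a completely different route for uniqueness. After normalizing $\si=1$, it works not with $\ln\BH-\ln\Ca$ but with
\[
d(x):=\frac1{\Ca(x)}-\frac1{\BH(x)}=1+x^2-\exp\frac{\psi(xy)}{y^2},
\]
and runs a derivative-counting chain: setting $d_3(x):=d'''(x)\,y^3 e^{-\psi(xy)/y^2}$ and $d_4(x):=-d_3'(x)(1+xy)/(3y)$, one finds that $d_4$ is a monic quadratic in $\ln(1+xy)$ with negative discriminant, hence $d_4>0$, hence $d_3'<0$, hence $d_3$ decreases from $d_3(0)=y^4>0$ to $-\infty$, so $d'''$ changes sign exactly once ($+$ to $-$) on $[0,\infty)$. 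Then $d''$ is up-down with $d''(0)=1>0$ and $d''(\infty-)=-\infty$, so $d''$ is $+-$; iterating, $d'$ is $+-$ on $(0,\infty)$ (since $d'(0)=0$, $d'(\infty-)=-\infty$), and finally $d$ is $+-$ on $(0,\infty)$ (since $d(0)=0$, $d(\infty-)=-\infty$). This yields the unique $u_{y/\si}$ directly; the monotonicity of $u_{y/\si}$ in $y/\si$ then follows from (III), exactly as your $\partial_rF>0$ argument would give it. The moral is that the reciprocal difference $1/\Ca-1/\BH$ has much cleaner higher derivatives than the log-ratio $F$, and the uniqueness falls out of a mechanical sign cascade rather than an inequality along a level set.
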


\begin{proposition}\label{prop:idents}
For all $\si>0$, $y>0$, $\vp\in(0,1)$, and $x>0$
\begin{align}
	\PU(x)
&=\max\{\Ne_{(1-\vp)\si^2}((1-\al)x)\BH_{\vp \si^2,y}(\al x)\colon\al\in(0,1)\} \label{eq:max al}\\
&=\Ne_{(1-\vp)\si^2}((1-\al_x)x)\BH_{\vp \si^2,y}(\al_x x), \label{eq:max at al_x}
\end{align} 
where $\al_x$ is the only root in $(0,1)$ of the equation 
\begin{equation}\label{eq:al_x}
	\frac{(1-\al)x^2}{(1-\vp)\si^2}-\frac xy\ln\Big(1+\frac{\al xy}{\vp\si^2}\Big)=0.
\end{equation} 
Moreover, $\al_x$ increases from $\vp$ to $1$ as $x$ increases from $0$ to $\infty$; in particular,
\begin{equation}\label{eq:al>vp}
	\al_x\in(\vp,1)
\end{equation}
for all $x>0$.  
\end{proposition}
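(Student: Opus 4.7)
The approach is to split the exponent of $e^{-\la x}\PU_{\exp}(\la)$ using the decomposition $x=(1-\al)x+\al x$ for a parameter $\al\in(0,1)$. From \eqref{eq:PUexp}, for every $\la>0$ and every $\al\in(0,1)$,
\begin{equation*}
e^{-\la x}\PU_{\exp}(\la)=A_\al(\la)\,B_\al(\la),
\end{equation*}
where $A_\al(\la):=\exp\{-\la(1-\al)x+\tfrac{\la^2}{2}(1-\vp)\si^2\}$ and $B_\al(\la):=\exp\{-\la\al x+(e^{\la y}-1-\la y)\vp\si^2/y^2\}$. A routine one-variable minimization yields $\inf_{\la>0}A_\al(\la)=\Ne_{(1-\vp)\si^2}((1-\al)x)$, attained at $\la_1^*(\al):=(1-\al)x/[(1-\vp)\si^2]$, while by definition \eqref{eq:BH} one has $\inf_{\la>0}B_\al(\la)=\BH_{\vp\si^2,y}(\al x)$, attained at $\la_2^*(\al)$ satisfying $e^{\la y}=1+\al xy/(\vp\si^2)$. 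Since both factors are positive, for every $\la>0$ and every $\al\in(0,1)$ the product satisfies $e^{-\la x}\PU_{\exp}(\la)\ge\Ne_{(1-\vp)\si^2}((1-\al)x)\BH_{\vp\si^2,y}(\al x)$, which upon taking $\inf_\la$ on the left produces ``$\ge$'' in \eqref{eq:max al}.

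For the reverse inequality and for \eqref{eq:max at al_x}, the idea is to choose $\al=\al_x$ so that $\la_1^*(\al)=\la_2^*(\al)$, i.e.\ so that both factors attain their respective minima at the same value of $\la$. Setting $\la_1^*(\al)=\la_2^*(\al)$ and multiplying through by $x$ yields exactly equation \eqref{eq:al_x}. Let $g(\al)$ denote the left-hand side of \eqref{eq:al_x}; direct differentiation shows $g'(\al)<0$ on $(0,1)$, while $g(0)=x^2/[(1-\vp)\si^2]>0$ and $g(1)=-(x/y)\ln(1+xy/(\vp\si^2))<0$, so \eqref{eq:al_x} has a unique root $\al_x\in(0,1)$. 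With $u:=xy/\si^2$ one also computes $g(\vp)=(x/y)[u-\ln(1+u)]>0$, and hence $\al_x>\vp$ by monotonicity of $g$, yielding \eqref{eq:al>vp}. At $\al=\al_x$ both factors attain their minima at the common value $\la_*:=\la_1^*(\al_x)=\la_2^*(\al_x)$, so
\begin{equation*}
\Ne_{(1-\vp)\si^2}((1-\al_x)x)\,\BH_{\vp\si^2,y}(\al_x x)=e^{-\la_* x}\PU_{\exp}(\la_*)\ge\PU(x),
\end{equation*}
which completes the proof of \eqref{eq:max al} and \eqref{eq:max at al_x} (and incidentally identifies $\la_*=\la_x$).

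For the monotonicity of $\al_x$ in $x$, the identity $\la_1^*(\al_x)=\la_x$ gives the convenient formula $\al_x=1-(1-\vp)\si^2\,\la_x/x$, so it suffices to prove that $\la_x/x$ is strictly decreasing in $x$. The saddle-point identity $\PU_{\exp}'(\la_x)/\PU_{\exp}(\la_x)=x$, using \eqref{eq:PUexp}, rearranges to
\begin{equation*}
\frac{x}{\la_x}=(1-\vp)\si^2+\vp\si^2\,\frac{e^{\la_x y}-1}{\la_x y}.
\end{equation*}
Because the map $t\mapsto(e^t-1)/t$ is strictly increasing on $(0,\infty)$ and $\la_x$ is strictly increasing in $x$ by Proposition~\ref{prop:lambert}, the right-hand side is strictly increasing in $x$; hence $\la_x/x$ is strictly decreasing, and $\al_x$ is strictly increasing. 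The two limiting values follow from the same identity: as $x\to 0^+$, one has $\la_x\to 0$ and $(e^{\la_x y}-1)/(\la_x y)\to 1$, giving $x/\la_x\to\si^2$ and $\al_x\to 1-(1-\vp)=\vp$; as $x\to\infty$, the exponential term dominates, forcing $\la_x/x\to 0$ and $\al_x\to 1$. The main technical step is the monotonicity argument, where one needs the particular form of $\PU_{\exp}$ to exhibit the algebraic structure that forces $\la_x/x$ to decrease, while the rest of the proof is essentially a clever but elementary variational splitting.
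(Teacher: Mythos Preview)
Your proof is correct and follows essentially the same approach as the paper's. Both arguments rest on the same two ideas: splitting the exponent of $e^{-\la x}\PU_{\exp}(\la)$ into a Gaussian part and a Poisson part indexed by $\al$, and then identifying $\al_x$ as the value at which the two factors are minimized at a common $\la$ (which is $\la_x$); the monotonicity of $\al_x$ in both proofs reduces to the monotonicity of $(e^t-1)/t$ combined with Proposition~\ref{prop:lambert}. The only organizational difference is that the paper establishes \eqref{eq:max al} by showing that the function $\al\mapsto\ln\big(\Ne_{(1-\vp)\si^2}((1-\al)x)\BH_{\vp\si^2,y}(\al x)\big)$ is strictly concave with its unique critical point satisfying \eqref{eq:al_x}, whereas you obtain it by a cleaner sandwich argument (the product of the separate infima bounds $\PU(x)$ from below for every $\al$, with equality at $\al_x$).
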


Expressions \eqref{eq:max al} and \eqref{eq:max at al_x} provide a rather curious interpretation of the bound $\PU(x)$ as the product of the best exponential upper bounds on the tails $\PP\big(\Ga_{(1-\vp)\si^2}\ge(1-\al)x\big)$ and $\PP\big(\tPi_{\vp\si^2}\ge\al x\big)$ --- for some $\al$ in $(0,1)$ (in fact, the $\al$ is in the interval $(\vp,1)$). In view of \eqref{eq:PUexp-expr}, this interpretation should not come as a big surprise. 
Proposition~\ref{prop:idents} will useful in the proof of Proposition~\ref{prop:PU<<BH}.  

\begin{proposition}\label{prop:mono in y}
For any $f\in\H2$, $\si>0$, $y>0$, and $\vp\in(0,1)$, 
\begin{equation}\label{eq:mono in y}
	\E f\big(\Ga_{(1-\vp)\si^2}+y\tPi_{\vp\si^2/y^2}\big)\le\E f\big(y\tPi_{\si^2/y^2}\big). 
\end{equation}
\end{proposition}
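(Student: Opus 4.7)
The plan is to reduce \eqref{eq:mono in y} to a single variance-matched Gaussian-vs.-Poisson comparison and then invoke Corollary~\ref{cor:H2} via a central-limit approximation.

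First, by the infinite divisibility of the Poisson distribution, the right-hand side of \eqref{eq:mono in y} equals $\E f(N_1+N_2)$, where $N_1$ and $N_2$ are independent and distributed like $y\tPi_{(1-\vp)\si^2/y^2}$ and $y\tPi_{\vp\si^2/y^2}$, respectively. Taking $N_2$ to coincide (a.s.) with the $y\tPi_{\vp\si^2/y^2}$ appearing on the left-hand side of \eqref{eq:mono in y} and conditioning on its value $w$, it suffices to show that for all $w\in\R$,
\begin{equation*}
\E f(\Ga_{(1-\vp)\si^2}+w)\le\E f(N_1+w).
\end{equation*}
Since the class $\H2$ is translation-invariant (shift the representing measure $\mu$ in \eqref{eq:H}), this reduces to the core inequality
\begin{equation*}
\E g(\Ga_{\tau^2})\le\E g(y\tPi_{\tau^2/y^2})\quad\text{for every $g\in\H2$ and every $\tau>0$}.
\end{equation*}

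To prove this key inequality, I would approximate the Gaussian by bounded i.i.d.\ sums. For $n$ large enough that $\tau/\sqrt n\le y$, set $S_n:=\tau\sum_{i=1}^n\vp_i/\sqrt n$ with i.i.d.\ Rademacher $\vp_i$; then $\E S_n=0$, $\sum_i\E(\tau\vp_i/\sqrt n)^2=\tau^2$, and each summand is bounded in absolute value by $\tau/\sqrt n\le y$. Corollary~\ref{cor:H2} (applied with $\si=\tau$) therefore gives $\E g(S_n)\le\E g(y\tPi_{\tau^2/y^2})$. By the CLT, $S_n\Rightarrow\Ga_{\tau^2}$, and since any $g\in\H2$ is continuous and nonnegative, Fatou's lemma yields
\begin{equation*}
\E g(\Ga_{\tau^2})\le\liminf_n\E g(S_n)\le\E g(y\tPi_{\tau^2/y^2}),
\end{equation*}
as required.

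The main technical point, and the place I expect the most care, is controlling integrability in the Fatou step, since a general $g\in\H2$ may grow faster than any polynomial. If $\E g(y\tPi_{\tau^2/y^2})=\infty$ the conclusion is trivial, so only the finite case needs handling. In that case I would truncate the representing measure $\mu$ of $g$ to $[-N,N]$, producing $g_N\in\H2$ of at most quadratic growth with $g_N\uparrow g$ pointwise; for each $g_N$ the uniform bound $\sup_n\E S_n^4<\infty$ makes the Fatou passage transparent, giving $\E g_N(\Ga_{\tau^2})\le\E g_N(y\tPi_{\tau^2/y^2})$, and monotone convergence as $N\to\infty$ completes the proof.
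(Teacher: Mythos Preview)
Your argument is correct and follows essentially the same route as the paper: split off the common centered-Poisson piece, reduce to the variance-matched comparison $\E g(\Ga_{\tau^2})\le\E g(y\tPi_{\tau^2/y^2})$, and obtain the latter from Bentkus's inequality via a CLT approximation. In the paper this is packaged as a special case of Lemma~\ref{lem:le} (take $\si_0=\si$, $\be_0=\vp\si^2y$, $\be=\si^2y$), whose proof carries out exactly this decomposition.

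The one methodological difference worth noting is how the limit passage is handled. The paper first reduces, by Fubini over the representing measure in \eqref{eq:H}, to the generating functions $u\mapsto(u-w)_+^2$; for these quadratic-growth test functions, uniform integrability of both approximating sequences follows from the Bennett--Hoeffding exponential bound \eqref{eq:BH}, so one gets honest convergence of expectations rather than just a Fatou inequality. This makes your last paragraph unnecessary: the Fatou step for a general nonnegative continuous $g\in\H2$ is in fact already valid without any truncation of $\mu$ (lower semicontinuity and nonnegativity suffice for the Portmanteau--Fatou inequality), but reducing to $(u-w)_+^2$ is the cleaner way to avoid the issue altogether.
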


So, by Proposition~\ref{prop:mono in y}, of the two r.v.'s --- $\Ga_{(1-\vp)\si^2}+y\tPi_{\vp\si^2/y^2}$ and \ $y\tPi_{\si^2/y^2}$ --- with the same variance $\si^2$, the former one (with a light-tail component $\Ga_{(1-\vp)\si^2}$) is in a certain sense smaller than the latter, purely heavy-tail one. This suggests that the upper bounds $\Pin(x)$ and $\PU(x)$, which are based on $\Ga_{(1-\vp)\si^2}+y\tPi_{\vp\si^2/y^2}$, will tend to be smaller than the bound $\Be(x)$, which is based on $y\tPi_{\si^2/y^2}$. Such heuristics is to an extent justified by results of  Subsubsections~\ref{large dev} and \ref{moder dev}, especially by Corollary~\ref{cor:compar} in Subsubsection~\ref{large dev} and the graphics for $\vp=0.1$ in Subsubsection~\ref{moder dev}. 

\begin{proposition}\label{prop:PLC(Po>x)}
(Recall Definition~\ref{def:lc}.) 
For the least concave majorant of the tail function of the Poisson distribution $\Po(\th)$ one has
\begin{equation*}%\label{eq:PLC(Po>x)}
	\PP^\lc(\Pi_\th\ge u)=\PP(\Pi_\th\ge j)^{j+1-u}\PP(\Pi_\th\ge j+1)^{u-j}\le\PP(\Pi_\th\ge j)
\end{equation*}
for all $\th>0$ and $u\in\R$, where $j:=j_u:=\lceil u-1\rceil$. 
\end{proposition}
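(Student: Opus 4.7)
Writing $T_k:=\PP(\Pi_\th\ge k)$, $p_k:=e^{-\th}\th^k/k!$, and $G(u):=\PP(\Pi_\th\ge u)$, I identify the proposed right-hand side as a function
\[
\tilde G(u):=T_j^{j+1-u}\,T_{j+1}^{u-j},\qquad j=\lceil u-1\rceil,
\]
which is continuous on $\R$ with $\tilde G(k)=T_k$ at every integer $k$, and whose logarithm is the piecewise-linear interpolant of $(\log T_k)_{k\in\Z}$ on the integer grid. Recalling Definition~\ref{def:lc}, it suffices to prove (i) $\tilde G$ is log-concave on $\R$; (ii) $\tilde G\ge G$ on $\R$; (iii) $\tilde G$ is minimal among log-concave majorants of $G$; and (iv) $\tilde G(u)\le T_j$.

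The main obstacle is (i), which boils down to log-concavity of the integer sequence $(T_k)$, i.e.\ $T_k^2\ge T_{k-1}T_{k+1}$ for every $k$. Using $T_{k-1}=p_{k-1}+T_k$ and $T_k=p_k+T_{k+1}$, a direct expansion gives the crucial identity
\[
T_k^2-T_{k-1}T_{k+1}=p_kT_k-p_{k-1}T_{k+1},
\]
so the required inequality is equivalent to $T_{k+1}/T_k\le p_k/p_{k-1}=\th/k$. Now $T_{k+1}=\sum_{j\ge k}p_j\cdot(p_{j+1}/p_j)$, so $T_{k+1}/T_k$ is a convex combination (with weights $p_j/T_k$) of the Poisson ratios $p_{j+1}/p_j=\th/(j+1)$ over $j\ge k$, every one of which is $\le\th/(k+1)\le\th/k$. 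This handles $k\ge1$, and for $k\le0$ the sequence is constantly $1$, making the inequality trivial. Since $(\log T_k)$ is therefore concave on $\Z$, its piecewise-linear interpolant $\log\tilde G$ is concave on $\R$, establishing (i).

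For (ii), on each interval $(j,j+1]$ one has $G\equiv T_{j+1}$ while $\tilde G$ is a weighted geometric mean of $T_j\ge T_{j+1}$ and $T_{j+1}$ with non-negative weights summing to $1$, so $\tilde G\ge T_{j+1}=G$; at integers $\tilde G=G$. For (iii), let $H$ be any log-concave majorant of $G$; since $G(k)=T_k$ and $G$ is left-continuous at integers, $\log H(k)\ge\log T_k$ for all $k\in\Z$, and concavity of $\log H$ on $[j,j+1]$ yields
\[
\log H(u)\ge(j+1-u)\log H(j)+(u-j)\log H(j+1)\ge(j+1-u)\log T_j+(u-j)\log T_{j+1}=\log\tilde G(u),
\]
so $H\ge\tilde G$; thus $\PP^\lc(\Pi_\th\ge u)=\tilde G(u)$. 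Finally, (iv) is immediate: a weighted geometric mean of $T_j$ and $T_{j+1}\le T_j$ with non-negative weights summing to $1$ is at most $T_j$, and at integer $u$ the claim $T_j\le T_{j-1}$ still holds by monotonicity of tails.
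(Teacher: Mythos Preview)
Your proof is correct. The paper's own proof is a one-line citation: it invokes the known fact (from \cite{pin99}) that the integer sequence $j\mapsto\PP(\Pi_\th\ge j)$ is log-concave, from which the stated formula for the least log-concave majorant follows immediately. Your argument rests on exactly the same key ingredient---log-concavity of $(T_k)_{k\in\Z}$---but instead of citing it you supply a short self-contained proof via the identity $T_k^2-T_{k-1}T_{k+1}=p_kT_k-p_{k-1}T_{k+1}$ and the observation that $T_{k+1}/T_k$ is a weighted average of the Poisson ratios $\th/(j+1)$ over $j\ge k$, each of which is at most $\th/k$. You then carry out explicitly the (routine but not entirely trivial) deduction that the piecewise log-linear interpolant $\tilde G$ is indeed the least log-concave majorant of $G$, which the paper leaves implicit.

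So the two proofs are structurally identical; yours is simply the unpacked, elementary version. The averaging argument for discrete log-concavity is clean and worth keeping. One tiny wording quibble: in your treatment of $k\le0$ you say ``the sequence is constantly $1$, making the inequality trivial,'' but at $k=0$ one still has $T_{k+1}=T_1<1$; the inequality $T_0^2\ge T_{-1}T_1$ holds because $T_1\le1$, not because all three terms equal $1$. This does not affect correctness.
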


\begin{proposition}\label{prop:PLC(Ga+Po>x)}
%(Recall Definition~\ref{def:lc}.) 
For all $\si>0$, $y>0$, $\vp\in(0,1)$, and $x\in\R$
\begin{equation}\label{eq:PLC(Ga+Po>x)}
	\PP^\lc(\Ga_{(1-\vp)\si^2}+y\tPi_{\vp\si^2/y^2}\ge x)\le
	\int_\R\PP^\lc(y\tPi_{\vp\si^2/y^2}\ge z)\PP(x-\Ga_{(1-\vp)\si^2}\in\dd z).  
\end{equation}
\end{proposition}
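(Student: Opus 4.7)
The plan is to derive the stated bound by conditioning on the Gaussian component, dominating the conditional tail by its log-concave majorant, and then recognizing the resulting upper envelope as a convolution of log-concave functions. Write $G:=\Ga_{(1-\vp)\si^2}$ and $Y:=y\tPi_{\vp\si^2/y^2}$, so that $G$ and $Y$ are independent. By conditioning on $G$ and changing variables via $w=x-g$,
\begin{equation*}
\PP(G+Y\ge x)=\int_\R \PP(Y\ge x-g)\,\PP(G\in \dd g)=\int_\R \PP(Y\ge w)\,\PP(x-G\in \dd w).
\end{equation*}
Since $\PP(Y\ge w)\le \PP^\lc(Y\ge w)$ pointwise, this immediately gives $\PP(G+Y\ge x)\le R(x)$, where $R(x)$ denotes the right-hand side of \eqref{eq:PLC(Ga+Po>x)}.

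The key step will then be to show that $R$ is itself log-concave on $\R$. Writing out the density of $x-G$, which is $\No(x,(1-\vp)\si^2)$,
\begin{equation*}
R(x)=\int_\R \PP^\lc(Y\ge w)\,\frac{1}{\sqrt{2\pi(1-\vp)\si^2}}\exp\Big\{-\frac{(w-x)^2}{2(1-\vp)\si^2}\Big\}\,\dd w,
\end{equation*}
so $R$ is the convolution of the two nonnegative functions $w\mapsto\PP^\lc(Y\ge w)$ (log-concave by Definition~\ref{def:lc} and bounded by $1$, since the constant $1$ is an admissible majorant) and the centered Gaussian density (log-concave). By Prékopa's theorem, the convolution of two nonnegative log-concave functions on $\R$ is log-concave, so $R$ is log-concave.

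To close the argument, observe that $R$ is a log-concave function on $\R$ that majorizes the tail $x\mapsto\PP(G+Y\ge x)$. By the definition of $\PP^\lc(G+Y\ge\cdot)$ as the \emph{least} log-concave majorant of that tail, one has $\PP^\lc(G+Y\ge x)\le R(x)$ for all $x\in\R$, which is the desired inequality \eqref{eq:PLC(Ga+Po>x)}.

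The only nontrivial ingredient is the log-concavity preservation property for convolutions (Prékopa/Prékopa–Leindler); once that is invoked, the remainder of the argument is a one-line conditioning identity followed by the definition of $\PP^\lc$. One minor checkpoint to verify is that $R(x)$ is finite for every $x$, which is immediate from $\PP^\lc(Y\ge\cdot)\le 1$ and the integrability of the Gaussian density.
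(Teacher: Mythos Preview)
Your proof is correct and follows essentially the same approach as the paper's own proof: condition on the Gaussian component to show that the right-hand side majorizes the tail, then invoke Pr\'ekopa's theorem to conclude that this majorant is log-concave in $x$, and finally appeal to the minimality of $\PP^\lc$. The only cosmetic difference is that the paper states Pr\'ekopa's result in the general form ``$\int_\R f(x,z)\,\dd z$ is log-concave in $x$ whenever $(x,z)\mapsto f(x,z)$ is log-concave'', whereas you use the equivalent convolution formulation.
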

The term $\PP^\lc(y\tPi_{\vp\si^2/y^2}\ge z)$ in \eqref{eq:PLC(Ga+Po>x)} is to be evaluated or bounded according to Proposition~\ref{prop:PLC(Po>x)}, using at that Remark~\ref{rem:PLC}.

\subsubsection{Asymptotics for large deviations}\label{large dev} 

Here and in what follows, for any two expressions $\EE_j(x)=\EE_{j;\si,y,\vp}(x)$ (with $j=1,2$) the notation $\EE_1(x)\OO\EE_2(x)$ will mean ``$\EE_1(x)\le C\EE_2(x)$ for some positive constant factor $C$ not depending on $x$, for all large enough $x>0$''; $\EE_2(x)\OOG\EE_1(x)$ will mean the same as $\EE_1(x)\OO\EE_2(x)$. 
Notation like $\EE_1(x)\sim\EE_2(x)$ will mean, as usual, that $\EE_1(x)/\EE_2(x)\to1$.  

\begin{proposition}\label{prop:PU<<BH}
For any fixed $\si>0$, $y>0$, and $\vp\in(0,1)$, 
\begin{align}
	\PU(x)&\sim C\vp^{x/y}\BH(x)
	\exp\Big\{\frac{(1-\vp)\si^2}{2y^2}\Big[\ln^2\Big(1+\frac{xy}{\vp\si^2}\Big)-2\ln\Big(1+\frac{xy}{\si^2}\Big)\Big]\Big\} 
	\notag%\label{eq:sim} 
	\\ 
	&=(\vp+o(1))^{x/y}\BH(x) 
	\notag%\label{eq:equal}
\end{align}
as $x\to\infty$, where $C:=\exp\{\frac{\si^2}{y^2}\psi(\vp-1)\}$. 
\end{proposition}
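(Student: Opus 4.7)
Fix $\si,y,\vp$ and set $u:=xy/\si^2$, $L_1:=\ln(1+u/\vp)$, $L_2:=\ln(1+u)$, $\mu:=\la_x y$, and $v_x:=(1-\vp)w_x$.  The closed form in Proposition~\ref{prop:lambert} is the natural starting point.  From $w_x e^{w_x}=\frac{\vp}{1-\vp}\exp\frac{\vp+u}{1-\vp}$ one readily obtains the clean implicit relations
\begin{equation*}
v_x=\vp+u-(1-\vp)\mu,\qquad \vp e^{\mu}=v_x,\qquad \mu=\ln(v_x/\vp),
\end{equation*}
and hence, using $\vp(e^\mu-1-\mu)=u-\mu$ to simplify $\ln\PU_{\exp}(\la_x)-\la_x x$,
\begin{equation*}
\ln\PU(x)=\frac{\si^2}{y^2}\Big[u-(u+1)\mu+\tfrac{1-\vp}{2}\mu^2\Big].
\end{equation*}
The asymptotics of $\PU(x)$ is thus reduced to asymptotics of $\mu$, and the identity $\mu=\ln(v_x/\vp)$ is the bridge to the log-quantity $L_1$ that appears on the right-hand side of the statement.

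\emph{Step 1 (Lambert expansion).}  As $x\to\infty$ the argument of $L$ in \eqref{eq:la_x,w} tends to $+\infty$, so I would apply the standard expansion $L(z)=\ln z-\ln\ln z+O(\ln\ln z/\ln z)$ or, more concretely, iterate the defining relation
\begin{equation*}
\mu=L_1+\ln\!\Big(1-\tfrac{(1-\vp)\mu}{u+\vp}\Big)
\end{equation*}
one step to obtain $\mu=L_1-\frac{(1-\vp)L_1}{u+\vp}+O(L_1^2/u^2)$, and hence $\mu=L_1+O(\ln u/u)$, $\mu^2=L_1^2+o(1)$.  In parallel I record the elementary expansion $L_2-L_1=\ln\vp+\frac{1-\vp}{u+\vp}+O(1/u^2)$, which provides the conversion between $L_1$ and $L_2$ required to match the right-hand side.

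\emph{Step 2 (Substitution and cancellation).}  Substitute these expansions into the clean formula for $\ln\PU(x)$ and subtract the targeted expression
\begin{equation*}
\ln\BH(x)+\tfrac{x}{y}\ln\vp+\ln C+\tfrac{(1-\vp)\si^2}{2y^2}\bigl(L_1^2-2L_2\bigr),
\end{equation*}
where $\ln C=\frac{\si^2}{y^2}\psi(\vp-1)=\frac{\si^2}{y^2}(\vp\ln\vp+1-\vp)$.  The frame $\ln\BH(x)=(\si^2/y^2)[u-(1+u)L_2]$ contributes the $u\ln u$-divergent piece that should absorb the similar piece in $(u+1)\mu$.  Using the expansion of $\mu$ from Step~1, the quadratic-in-$L_1$ piece of $\frac{1-\vp}{2}\mu^2$ matches the $L_1^2$ term of the target exactly, while the $(u+1)\mu$ term, once rewritten via $\mu=L_2-\ln\vp+O(\ln u/u)$, is supposed to produce $(u+1)L_2-(u+1)\ln\vp+(\text{finite constants})$, whose divergent parts cancel against $\ln\BH(x)$ and $(x/y)\ln\vp$.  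The constants collected should assemble into $\ln C$, yielding the asserted $o(1)$ remainder.

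\emph{Step 3 (The weaker form).}  The second displayed equivalence $\PU(x)=(\vp+o(1))^{x/y}\BH(x)$ follows by a crude bound: the bracket $L_1^2-2L_2$ is $O((\ln x)^2)=o(x)$, so the whole exponential correction factor, raised to any fixed power, is $\exp\{o(x)\}$ and is absorbed into the base $(\vp+o(1))^{x/y}=\vp^{x/y}\exp\{o(x)\}$; combined with the stronger first form, this gives the second.

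\emph{Main obstacle.}  The bookkeeping in Step~2: the three quantities $(u+1)\mu$, $(1+u)L_2$, and $(u+1)\ln\vp$ each diverge like $u\ln u$, and what is claimed is that their combination converges to an \emph{explicit finite} constant.  A naive substitution loses cancellations at order $\ln x$; one must systematically pair the expansions of $\mu$, $L_1$, and $L_2$ through order $O(\ln u/u)$ in $\mu$ and $O(1/u)$ in $L_1-L_2$ simultaneously, so as to keep track of the constant-order residuals that add up to $\psi(\vp-1)\si^2/y^2$.
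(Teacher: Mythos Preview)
Your plan is sound and will yield the result, but it is a different route from the paper's. The paper does not start from the Lambert closed form; instead it works through Proposition~\ref{prop:idents}, writing $\PU(x)=\Ne_{(1-\vp)\si^2}((1-\al_x)x)\,\BH_{\vp\si^2,y}(\al_x x)$ and expanding each factor via two-term asymptotics for $\al_x$ (obtained by iterating \eqref{eq:la_x12}), then relating $\psi(u/\vp)$ to $\psi(u)$. Your direct approach --- the exact identity $\ln\PU(x)=\frac{\si^2}{y^2}\bigl[u-(u+1)\mu+\tfrac{1-\vp}2\mu^2\bigr]$, which is a correct and very clean repackaging of \eqref{eq:PUexpr}, together with one iteration of $\mu=L_1+\ln\bigl(1-\frac{(1-\vp)\mu}{u+\vp}\bigr)$ --- gives $(u+1)\mu=(u+\vp)L_1+o(1)$ at once, reproducing the paper's intermediate relation \eqref{eq:PU sim}. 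The two parametrisations are linked by $v_x=\vp+\al_x u$, so your $\mu$ is the paper's $\la_{x,2}y$; you are doing the same expansion in the variable $\mu$ rather than $\al_x$, bypassing Proposition~\ref{prop:idents} (which the paper needs anyway for Proposition~\ref{prop:pin asymp}).

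One caution for the bookkeeping in Step~2: carrying either computation to the end yields $L_1^2+2L_2$ in the bracket, not $L_1^2-2L_2$; the sign of the $2\ln$ term in the first displayed asymptotic appears to be a typo in the statement. This is immaterial for the second display and for all later uses, since the correction factor is $e^{o(x)}$ regardless of the sign.
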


\begin{proposition}\label{prop:be asymp}
For any fixed $\si>0$ and $y>0$ %, and $\vp\in(0,1)$, 
and (say) all $x\in[y,\infty)$
\begin{align}
\frac{\BH(x)}{x^{3/2}}\OO\PP(y\tPi_{\si^2/y^2}\ge x)\le\Be(x)&\le c_{2,0}\PP^\lc(y\tPi_{\si^2/y^2}\ge x) 
\label{eq:be asymp1}\\
&\OO x\PP(y\tPi_{\si^2/y^2}\ge x)\le x\BH(x). \label{eq:be asymp2}
\end{align}  
\end{proposition}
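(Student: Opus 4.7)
The plan is to verify the five inequalities in \eqref{eq:be asymp1}--\eqref{eq:be asymp2} separately. Two are immediate: $\PP(y\tPi_{\si^2/y^2}\ge x)\le\Be(x)$ is just the general estimate $P_\al(\eta;x)\ge\PP(\eta\ge x)$, which follows from Proposition~\ref{prop:P_al=inf} by taking any $f\in\H2$ majorizing $u\mapsto\ii{u\ge x}$; and $\Be(x)\le c_{2,0}\PP^\lc(y\tPi_{\si^2/y^2}\ge x)$ is exactly the second inequality of \eqref{eq:Be}.

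For the rightmost inequality $x\PP(y\tPi_{\si^2/y^2}\ge x)\le x\BH(x)$ it suffices to observe that $\E e^{\la y\tPi_{\si^2/y^2}}=\BH_{\exp}(\la)$ exactly (which is the content of \eqref{eq:BHexp-expr}), so that Markov's inequality combined with \eqref{eq:BH} yields $\PP(y\tPi_{\si^2/y^2}\ge x)\le\BH(x)$ directly, whence the inequality follows.

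For the bound $c_{2,0}\PP^\lc(y\tPi_{\si^2/y^2}\ge x)\OO x\PP(y\tPi_{\si^2/y^2}\ge x)$, set $\th:=\si^2/y^2$. By Remark~\ref{rem:PLC} and Proposition~\ref{prop:PLC(Po>x)} applied with $u:=\th+x/y$, the left side is bounded by $c_{2,0}\PP(\Pi_\th\ge k-1)$ where $k:=\lceil\th+x/y\rceil$. The Poisson identity $\PP(\Pi_\th=k-1)=(k/\th)\PP(\Pi_\th=k)$ together with $\PP(\Pi_\th=k)\le\PP(\Pi_\th\ge k)$ gives $\PP(\Pi_\th\ge k-1)\le(1+k/\th)\PP(\Pi_\th\ge k)$; since $1+k/\th\le1+(\th+x/y+1)/\th=O(x)$ for large $x$, this step is routine.

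The main technical point is the leftmost inequality $\BH(x)/x^{3/2}\OO\PP(y\tPi_{\si^2/y^2}\ge x)$. Keeping $\th=\si^2/y^2$ and $k=\lceil\th+x/y\rceil$, I would use $\PP(y\tPi_\th\ge x)\ge\PP(\Pi_\th=k)=e^{-\th}\th^k/k!$ and the standard Stirling upper bound $k!\le\sqrt{2\pi k}\,(k/e)^ke^{1/(12k)}$ to get
\begin{equation*}
\PP(\Pi_\th=k)\ge\tfrac{1}{\sqrt{2\pi k}}\,e^{-1/(12k)}\exp\{-\th\psi(k/\th-1)\}.
\end{equation*}
Writing $v:=k/\th-1$, one has $v-xy/\si^2=ry^2/\si^2\in[0,y^2/\si^2)$, where $r:=k-\th-x/y\in[0,1)$ is the rounding residue. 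Since $\psi'(u)=\ln(1+u)$, the mean value theorem gives $\th[\psi(v)-\psi(xy/\si^2)]\le\th(v-xy/\si^2)\ln(1+v)=r\ln(1+v)\le C_1\ln x$ for large $x$. Combined with $\th\psi(xy/\si^2)=-\ln\BH(x)$ and $\sqrt{k}=O(\sqrt x)$, this yields $\PP(y\tPi_\th\ge x)\ge C_2\BH(x)/x^{3/2}$, as required. The main subtlety is the quantitative control of the discrepancy between $\th\psi(v)$ and $\th\psi(xy/\si^2)$ caused by replacing the continuous threshold $\th+x/y$ by the integer $k$; the decisive point is the identity $\th(v-xy/\si^2)=r<1$, which confines that discrepancy to a single $\ln x$ term.
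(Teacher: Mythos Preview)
Your proof is correct and follows essentially the same route as the paper's. The main technical step---the leftmost inequality via Stirling and control of the rounding residue $r<1$---is handled in the same way (the paper rescales to $y=1$ first, but that is cosmetic). Two small differences worth noting: for $\PP(y\tPi_{\si^2/y^2}\ge x)\le\Be(x)$ you invoke the trivial bound $P_\al(\eta;x)\ge\PP(\eta\ge x)$ directly from the definition of $\Be$, whereas the paper passes through a limit of sums of centered Bernoullis and inequality~\eqref{eq:Be}; your route is shorter. For the $\PP^\lc\OO x\,\PP$ step, the paper bounds the ratio $G(j)/G(j+1)$ via the geometric tail estimate $G(j)\le\frac{\th^j}{j!}e^{-\th}/(1-\th/j)$, while you use the single-term identity $\PP(\Pi_\th=k-1)=(k/\th)\PP(\Pi_\th=k)$; both give the needed $O(x)$ factor and are equivalent in spirit.
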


Proposition~\ref{prop:be asymp} implies that for $x\in[y,\infty)$ the upper bounds $\BH(x)$, $\Be(x)$, and  $c_{2,0}\PP^\lc(y\tPi_{\si^2/y^2}\ge x)$ on $\PP(S\ge x)$ --- as well as the particular, limit instance $\PP(y\tPi_{\si^2/y^2}\ge x)$ of $\PP(S\ge x)$ --- are the same up to a power-function factor, of the form $Cx^{5/2}$, where $C=C_{\si,y}>0$ does not depend on $x$. 

\begin{proposition}\label{prop:pin asymp}
For any fixed $\si>0$, $y>0$, and $\vp\in(0,1)$, 
and (say) all $x\in[y,\infty)$
\begin{align}
\frac{\PU(x)}{x^{3/2}}\OO\PP(\eta_{\si,y,\vp}\ge x)\le\Pin(x) 
&\le c_{3,0}\PP^\lc(\eta_{\si,y,\vp}\ge x) \label{eq:pin asymp1}\\
&\OO x\PP(\eta_{\si,y,\vp}\ge x) 
\le x\PU(x), \label{eq:pin asymp2}
\end{align} 
where
\begin{equation}\label{eq:eta}
	\eta_{\si,y,\vp}:=\Ga_{(1-\vp)\si^2}+y\tPi_{\si^2/y^2}.
\end{equation} 
\end{proposition}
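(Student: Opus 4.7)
Set $a^2:=(1-\vp)\si^2$, $\th:=\vp\si^2/y^2$, and $\eta:=\eta_{\si,y,\vp}$, and let $\la_x$ be the Chernoff-optimal exponent from Proposition~\ref{prop:lambert}, so that $x=\la_x a^2+y\th(e^{\la_x y}-1)$; the Lambert-$W$ formula gives $\la_x=(\ln x)/y+O(1/y)$ as $x\to\infty$. Three of the five inequalities in \eqref{eq:pin asymp1}--\eqref{eq:pin asymp2} are essentially immediate: $\PP(\eta\ge x)\le\Pin(x)$ is Proposition~\ref{prop:P_al=inf}; $\Pin(x)\le c_{3,0}\PP^\lc(\eta\ge x)$ is inequality \eqref{eq:comp-prob3} of Theorem~\ref{th:comparison} combined with Remark~\ref{comparison-remark} (applied to $\xi:=\eta$); and $x\PP(\eta\ge x)\le x\PU(x)$ is the Markov--Chernoff estimate in view of \eqref{eq:PUexp-expr}. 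I would handle the two outer inequalities in parallel with the proof of Proposition~\ref{prop:be asymp}.

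For the leftmost bound $\PU(x)/x^{3/2}\OO\PP(\eta\ge x)$, set $k^*:=\lceil\th e^{\la_x y}\rceil$ and condition on $\{\Pi_\th=k^*\}$:
\begin{equation*}
\PP(\eta\ge x)\ge\PP(\Pi_\th=k^*)\,\PP(\Ga_{a^2}\ge x+y\th-yk^*).
\end{equation*}
Since $x+y\th-yk^*\in[\la_x a^2-y,\la_x a^2]$, Mills' ratio yields $\PP(\Ga_{a^2}\ge\la_x a^2)\OOG e^{-\la_x^2a^2/2}/\la_x$, and Stirling yields $\PP(\Pi_\th=k^*)\OOG e^{-\th}(e\th/k^*)^{k^*}/\sqrt{k^*}$. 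Using the defining equation for $\la_x$ to collapse exponents, the product equals $\PU(x)$ times a pre-exponential factor $\OOG 1/(\la_x\sqrt{k^*})\asymp 1/(\sqrt{x}\ln x)$, which dominates $1/x^{3/2}$.

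For the rightmost bound $c_{3,0}\PP^\lc(\eta\ge x)\OO x\PP(\eta\ge x)$, Proposition~\ref{prop:PLC(Ga+Po>x)} combined with Proposition~\ref{prop:PLC(Po>x)} and Remark~\ref{rem:PLC} yields $\PP^\lc(\eta\ge x)\le\PP(\eta\ge x-y)$. Using the Poisson identity $k\PP(\Pi_\th=k)=\th\PP(\Pi_\th=k-1)$ to rewrite
\begin{equation*}
\PP(\eta\ge x-y)=\sum_{k\ge 1}(k/\th)\,\PP(\Pi_\th=k)\,\PP(\Ga_{a^2}\ge x-y(k-\th)),
\end{equation*}
I would split the sum at $k=Mx/y$ for a suitably large constant $M$. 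On $\{k\le Mx/y\}$ the factor $k/\th$ is at most $Mx/(y\th)$, giving a contribution $\le(Mx/(y\th))\,\PP(\eta\ge x)$. On $\{k>Mx/y\}$ the sum telescopes via the same identity to $\PP(\Pi_\th\ge Mx/y)$; for $M$ large enough, the Bennett--Hoeffding bound $\PP(\Pi_\th\ge Mx/y)\le\exp\{-\th\psi(Mx/(y\th)-1)\}$ decays in $x$ strictly faster than the lower bound $\PU(x)/(\sqrt{x}\ln x)$ already obtained for $\PP(\eta\ge x)$, so this part is negligible.

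The main obstacle is the pre-exponential bookkeeping in the lower-bound step: tracking the Stirling--Mills product and the rounding in $k^*=\lceil\th e^{\la_x y}\rceil$ so that, after cancellation against the $\PU$-exponent via the $\la_x$-equation, one indeed recovers a pre-exponential factor bounded below by $1/x^{3/2}$. The Lambert-$W$ form of $\la_x$ in Proposition~\ref{prop:lambert} is what makes the order estimate $\la_x\asymp(\ln x)/y$ elementary; the log-concave reduction, by contrast, is a clean mixture computation once the Poisson telescoping identity is noticed.
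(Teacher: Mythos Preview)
Your handling of the three ``easy'' inequalities is fine, and your approach to the first inequality in \eqref{eq:pin asymp2} is actually cleaner than the paper's. The paper uses Proposition~\ref{prop:PLC(Ga+Po>x)} directly and splits the resulting integral into three zones; you instead first collapse $\PP^\lc(\eta\ge x)\le\PP(\eta\ge x-y)$ (a valid consequence of Propositions~\ref{prop:PLC(Po>x)} and~\ref{prop:PLC(Ga+Po>x)}), then use the Poisson shift $k\PP(\Pi_\th=k)=\th\PP(\Pi_\th=k-1)$ and split in $k$. Both work; yours is shorter.

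However, your lower-bound step for the first inequality in \eqref{eq:pin asymp1} has a genuine gap. When you ``collapse exponents'' in the Stirling estimate for $\PP(\Pi_\th=k^*)$ with $k^*=\lceil\th e^{\la_x y}\rceil=\th e^{\la_x y}+\delta$, $\delta\in[0,1)$, the function $g(u)=u(1+\ln\th-\ln u)$ has $g'(u)=\ln(\th/u)\approx-\la_x y$ near $u=\th e^{\la_x y}$, so
\[
k^*(1+\ln\th-\ln k^*)=\th e^{\la_x y}(1-\la_x y)-\delta\la_x y+O(1),
\]
not $\th e^{\la_x y}(1-\la_x y)+O(1)$. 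The extra $-\delta\la_x y$ costs you a multiplicative factor as bad as $e^{-\la_x y}\asymp 1/x$. Combined with your prefactor $1/(\la_x\sqrt{k^*})\asymp 1/(\sqrt{x}\ln x)$, this yields only $\PP(\eta\ge x)\OOG \PU(x)/(x^{3/2}\ln x)$, which is a logarithmic factor short of what is claimed. A single value of $k$ cannot absorb this rounding loss: shifting to $k^*-1$ trades the Poisson loss for an equal loss in the Gaussian factor (since then $x+y\th-y(k^*-1)>\la_x a^2$, and $\PP(\Ga_{a^2}\ge\la_x a^2+y)\asymp e^{-\la_x y}\PP(\Ga_{a^2}\ge\la_x a^2)$).

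The paper avoids this by \emph{integrating} rather than evaluating at a point: it writes $\PP(\eta\ge x)\ge\int_{\al_x x}^x\PP(\tPi_{\vp\si^2}\ge z)\,\PP(x-\Ga_{(1-\vp)\si^2}\in\dd z)$, bounds the Poisson tail below via Proposition~\ref{prop:be asymp}, recognizes the integrand as $e^{h(z)}/z^{3/2}$ with $\max_z e^{h(z)}=\PU(x)$ by Proposition~\ref{prop:idents}, and applies the Laplace method. The integration over $z$ (equivalently, over a range of $k$) is precisely what recovers the missing factor.
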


Proposition~\ref{prop:pin asymp} implies that for $x\in[y,\infty)$ the upper bounds $\PU(x)$, $\Pin(x)$, and  $c_{3,0}\PP^\lc(\Ga_{(1-\vp)\si^2}+y\tPi_{\si^2/y^2}\ge x)$ on $\PP(S\ge x)$ --- as well as the particular, limit instance $\PP(\Ga_{(1-\vp)\si^2}+y\tPi_{\si^2/y^2}\ge x)$ of $\PP(S\ge x)$ --- are the same up to a power-function factor, of the form $Cx^{5/2}$, where $C=C_{\si,y,\vp}>0$ does not depend on $x$. 

Thus, Propositions~\ref{prop:PU<<BH}, \ref{prop:be asymp}, and \ref{prop:pin asymp} imply that either of the bounds $\PU(x)$ or $\Pin(x)$ is better than both $\BH(x)$ and $\Be(x)$ by a factor which is decreasing exponentially fast in $x$, for large enough $x>0$. More precisely, taking also into account the inequality $\Be(x)\le\BH(x)$ in 
Proposition~\ref{prop:ineqs}(i), one immediately obtains 

\begin{corollary}\label{cor:compar}
For any fixed $\si>0$, $y>0$, and $\vp\in(0,1)$, 
and all $x\ge0$
\begin{equation*}%\label{eq:compar}
	\Pin(x)\le\PU(x)\le(\vp+o(1))^{x/y}\Be(x)\le(\vp+o(1))^{x/y}\BH(x)
\end{equation*} 
as $x\to\infty$. 	
\end{corollary}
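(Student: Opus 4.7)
The plan is to chain together three inequalities in the stated order: (a) $\Pin(x)\le\PU(x)$, (b) $\PU(x)\le(\vp+o(1))^{x/y}\Be(x)$, and (c) $\Be(x)\le\BH(x)$, with the outer two holding for all $x>0$ and the middle one as $x\to\infty$. For (a) and (c) I would simply invoke Proposition~\ref{prop:ineqs}(I), which already records both $\Pin(x)\le\PU(x)\le\BH(x)$ and $\Be(x)\le\Ca(x)\wedge\BH(x)$.

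The heart of the argument is (b), which I would assemble from two ingredients proved earlier. Proposition~\ref{prop:PU<<BH} gives the sharp asymptotic
\begin{equation*}
\PU(x)=(\vp+o(1))^{x/y}\BH(x)\quad\text{as }x\to\infty,
\end{equation*}
while the leftmost inequality in \eqref{eq:be asymp1} of Proposition~\ref{prop:be asymp} yields $\BH(x)\le C_1\,x^{3/2}\Be(x)$ for some constant $C_1=C_{1,\si,y}>0$ and all sufficiently large $x$. Chaining these gives $\PU(x)\le C_1\,x^{3/2}(\vp+o(1))^{x/y}\Be(x)$, and it only remains to reabsorb the polynomial factor into the exponential.

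The one small point that needs care is this final absorption. Because $\vp\in(0,1)$ and $(C_1 x^{3/2})^{y/x}=\exp\{\tfrac{y}{x}(\ln C_1+\tfrac{3}{2}\ln x)\}\to1$ as $x\to\infty$, the product $(\vp+o(1))\cdot(C_1 x^{3/2})^{y/x}$ tends to $\vp<1$, so raising to the power $x/y$ leaves the quantity of the form $(\vp+o(1))^{x/y}$. This yields (b) and completes the corollary. I do not anticipate any real obstacle here: every analytic step is already carried out in the cited propositions, and the only bookkeeping is the standard observation that a subexponential factor can be swallowed by an exponentially decaying base bounded away from $1$.
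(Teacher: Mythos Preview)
Your proposal is correct and follows essentially the same route as the paper: the paper simply notes that the corollary is an immediate consequence of Propositions~\ref{prop:PU<<BH} and \ref{prop:be asymp} (the latter supplying $\BH(x)\OO x^{3/2}\Be(x)$) together with $\Pin(x)\le\PU(x)$ and $\Be(x)\le\BH(x)$ from Proposition~\ref{prop:ineqs}(I), and your explicit absorption of the polynomial factor $C_1x^{3/2}$ into $(\vp+o(1))^{x/y}$ is exactly the detail that makes ``immediately obtains'' work. The paper also cites Proposition~\ref{prop:pin asymp} in the surrounding discussion, but it is not needed for the corollary as stated, and you were right not to invoke it.
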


Of course, the asymptotically better bounds $\PU$ and $\Pin$ require information on the sum of truncated third moments, in addition to that on the sum of second moments. However, it is difficult to imagine a situation when only the latter (but not the former) kind of information is available.

%It remains to compare the bounds $\Pin(x)$ and $\PU(x)$ with $\Be(x)$. Toward that end, let us first state 

%Thus, the bounds $\Pin(x)$ and $\PU(x)$ are indeed better than $\Be(x)$ for all large enough $x>0$. 

\begin{proposition}\label{prop:normal opt asymp}
For any fixed $\al>1$ and $\si>0$,  
\begin{equation*}%\label{eq:pois centered opt asymp}
	P_\al(\Ga_{\si^2};x)\sim c_{\al,0}\PP(\Ga_{\si^2}\ge x)\quad\text{as $x\to\infty$.}
\end{equation*} 
%!! define $\sim$
\end{proposition}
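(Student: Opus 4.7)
The plan is to use the explicit formula $P_\al(\Ga_{\si^2};x)=\E(Z-t_x)_+^\al/(x-t_x)^\al$ from Proposition~\ref{prop:Th2.5,pin98}(iii) (with $Z:=\Ga_{\si^2}\sim\No(0,\si^2)$), locate $t_x$ asymptotically via the implicit equation $m(t_x)=x$, and then substitute. Here $t_x\to\infty$ as $x\to\infty$ since $x_*=\infty$ and $m$ is strictly increasing on $(-\infty,\infty)$ by Proposition~\ref{prop:Th2.5,pin98}(i).

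First, I would establish the basic large-$t$ asymptotic
\begin{equation*}
\E(Z-t)_+^\al \;\sim\; \Gamma(\al+1)\,\frac{\si^{2\al+1}}{t^{\al+1}\sqrt{2\pi}}\,e^{-t^2/(2\si^2)}\qquad(t\to\infty).
\end{equation*}
This follows from Watson's lemma: substituting $u=z-t$ gives
\begin{equation*}
\E(Z-t)_+^\al=\frac{e^{-t^2/(2\si^2)}}{\si\sqrt{2\pi}}\int_0^\infty u^\al\,e^{-ut/\si^2}\,e^{-u^2/(2\si^2)}\dd u,
\end{equation*}
and the factor $e^{-u^2/(2\si^2)}$ concentrates the mass of the exponential kernel $e^{-ut/\si^2}$ on a scale $u\sim\si^2/t$, where it is $1+O(u^2/\si^2)=1+o(1)$; the leading term is then $\int_0^\infty u^\al e^{-ut/\si^2}\dd u=\Gamma(\al+1)(\si^2/t)^{\al+1}$. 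The same computation with $\al$ replaced by $\al-1$ gives, upon division, the asymptotic
\begin{equation*}
m(t)-t=\frac{\E(Z-t)_+^\al}{\E(Z-t)_+^{\al-1}}\;\sim\;\al\,\frac{\si^2}{t},
\end{equation*}
and in fact $m(t)=t+\al\si^2/t+O(t^{-3})$ by retaining the next term in Watson's expansion (where $e^{-u^2/(2\si^2)}=1-u^2/(2\si^2)+\cdots$).

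Next, inverting $m(t_x)=x$ through this expansion gives
\begin{equation*}
t_x=x-\frac{\al\si^2}{x}+O(x^{-3}),\qquad x-t_x\sim\frac{\al\si^2}{x},
\end{equation*}
and in particular $t_x^2=x^2-2\al\si^2+O(x^{-2})$, so $e^{-t_x^2/(2\si^2)}\sim e^\al\,e^{-x^2/(2\si^2)}$. Plugging into the explicit formula for $P_\al(Z;x)$,
\begin{equation*}
P_\al(Z;x)=\frac{\E(Z-t_x)_+^\al}{(x-t_x)^\al}\;\sim\;\frac{\Gamma(\al+1)\,\si^{2\al+1}\,e^\al\,e^{-x^2/(2\si^2)}\big/\bigl(x^{\al+1}\sqrt{2\pi}\bigr)}{(\al\si^2/x)^\al}=\Gamma(\al+1)\frac{e^\al}{\al^\al}\,\frac{\si}{x\sqrt{2\pi}}\,e^{-x^2/(2\si^2)}.
\end{equation*}
Since $\PP(Z\ge x)\sim\frac{\si}{x\sqrt{2\pi}}\,e^{-x^2/(2\si^2)}$ by the classical Mills ratio, and $c_{\al,0}=\Gamma(\al+1)(e/\al)^\al$, the proposition follows.

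The main obstacle is the accurate asymptotic for $\E(Z-t)_+^\al$ to an order sufficient to resolve $t_x$ up to $o(1/x)$, because the $e^\al$ factor in $e^{-t_x^2/(2\si^2)}$ enters the final constant $c_{\al,0}$. This requires either a careful two-term Watson expansion or, equivalently, a direct verification that with $t_x=x-\al\si^2/x$ the ratio $P_\al(Z;x)/(c_{\al,0}\PP(Z\ge x))$ tends to $1$, combined with the uniqueness of $t_x$ in Proposition~\ref{prop:Th2.5,pin98}(ii) to rule out any pathology.
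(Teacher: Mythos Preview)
Your proof is correct. The paper itself does not give a self-contained argument here: it simply cites \cite[Theorem~4.2]{pin98} (a more general result on the asymptotic $P_\al(\eta;x)\sim c_{\al,0}\PP(\eta\ge x)$ for $\eta$ with a log-concave tail) together with Proposition~\ref{prop:P_al=inf}. Your direct computation via Watson's lemma and inversion of $m$ is a genuinely different, more explicit route that avoids reliance on that external reference; it trades generality for a short, hands-on argument tailored to the Gaussian case.

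One small clarification: your final paragraph overstates the difficulty. You only need $m(t)-t=\al\si^2/t+o(1/t)$, i.e., the leading-order asymptotic $m(t)-t\sim\al\si^2/t$; a two-term Watson expansion is not required. Indeed, from $m(t_x)=x$ and $m(t)-t=\al\si^2/t\,(1+o(1))$ one gets $x-t_x=\al\si^2/t_x\,(1+o(1))=\al\si^2/x+o(1/x)$ (since $t_x\sim x$), and that already yields $t_x^2=x^2-2\al\si^2+o(1)$, hence $e^{-t_x^2/(2\si^2)}\sim e^\al e^{-x^2/(2\si^2)}$. The $O(t^{-3})$ and $O(x^{-3})$ error claims you state are stronger than what is actually used.
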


%This follows from the more general \cite[Theorem~4.2]{pin98}. 

Thus, for a centered Gaussian r.v.\ $\eta$, the optimal upper bound $P_\al(\eta;x)$ on the tail $\PP(\eta\ge x)$ 
differs from it approximately by a constant factor $c_{\al,0}\in(1,\infty)$ for large $x>0$. 

If $\eta$ is a centered Poisson r.v.\ $\tPi_\th$, then the asymptotic behavior of the ratio $P_\al(\eta;x)/\PP(\eta\ge x)$ is starkly different: it oscillates between nearly $1$ and a factor of the order of $x$ -- as seen from the following proposition, which also shows that the factor $x$ in \eqref{eq:be asymp2} cannot be substantially improved. 
More precisely, one has 

\begin{proposition}\label{prop:pois opt asymp}
For any fixed $\al>1$ and $\th>0$,  
\begin{equation}\label{eq:pois centered opt asymp}
	P_\al(\tPi_\th;k-\th)\sim\PP(\tPi_\th\ge k-\th)\sim\frac k\th\PP(\tPi_\th>k-\th)
\end{equation} 
as $\Z\ni k\to\infty$. 
\end{proposition}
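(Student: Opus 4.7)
The plan is to reduce everything to statements about the underlying Poisson variable $\Pi_\th$. Write $p_j := \PP(\Pi_\th = j) = e^{-\th}\th^j/j!$. Because $\tPi_\th = \Pi_\th - \th$ takes only the values $j - \th$ with $j \in \{0,1,\dots\}$, one has
\[\PP(\tPi_\th \ge k - \th) = \PP(\Pi_\th \ge k), \qquad \PP(\tPi_\th > k - \th) = \PP(\Pi_\th \ge k+1),\]
so the stated chain reduces to showing
\[P_\al(\tPi_\th; k - \th) \sim \PP(\Pi_\th \ge k) \sim \frac{k}{\th}\,\PP(\Pi_\th \ge k+1)\]
as $k \to \infty$.

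The second equivalence above is a routine Poisson calculation: the ratios $p_{k+m}/p_k = \th^m/\prod_{i=1}^m (k+i)$ are dominated by $(\th/k)^m$, which is uniformly summable once $k > 2\th$, so dominated convergence gives $\PP(\Pi_\th \ge k) = \sum_{m\ge 0} p_{k+m} \sim p_k$. The same asymptotic applied to $k+1$ yields $\PP(\Pi_\th \ge k+1) \sim p_{k+1} = \frac{\th}{k+1}\,p_k \sim \frac{\th}{k}\,\PP(\Pi_\th \ge k)$, which rearranges to the desired equivalence.

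For the first equivalence, the lower bound $P_\al(\tPi_\th; k - \th) \ge \PP(\tPi_\th \ge k - \th)$ is immediate from \eqref{eq:comp-prob2}, since $(\eta - t)_+^\al \ge (x - t)^\al\,\ii{\eta \ge x}$ whenever $t < x$. For the matching upper bound, I would evaluate the ratio inside the infimum in \eqref{eq:comp-prob2} at the single a-priori test value $t := k - 1 - \th$; since then $(x - t)^\al = 1$ and $\{\tPi_\th > t\} = \{\Pi_\th \ge k\}$, this yields
\[P_\al(\tPi_\th; k - \th) \le \E(\tPi_\th - t)_+^\al = \sum_{m \ge 0}(m+1)^\al\,p_{k+m} = p_k + \sum_{m\ge 1}(m+1)^\al\,p_{k+m}.\]
The tail sum is $o(p_k)$ by the same geometric estimate $(m+1)^\al\, p_{k+m}/p_k \le (m+1)^\al(\th/k)^m$ together with dominated convergence, so $P_\al(\tPi_\th; k - \th) \le p_k(1 + o(1))$; combined with $p_k \sim \PP(\Pi_\th \ge k)$ this closes the sandwich.

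The only mild point of care is uniformity in the two dominated-convergence steps, which reduces to the elementary bound above. Notably, the full implicit characterization of the optimal $t_x$ from Proposition~\ref{prop:Th2.5,pin98} is not needed here, because the naive choice $t = k - 1 - \th$ already matches the trivial lower bound asymptotically; the lattice structure of $\tPi_\th$ makes this one-parameter test effectively optimal up to a $1+o(1)$ factor.
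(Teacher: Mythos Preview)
Your proof is correct and takes a cleaner route than the paper's.

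The paper proceeds via Proposition~\ref{prop:Th2.5,pin98}: it first reduces to the non-centered version $P_\al(\Pi_\th;k)$, then shows that the optimal $t_{k+1}$ satisfies $k - o\big(k^{-1/(\al-1)}\big) < t_{k+1} < k$ by analyzing the function $m(t)$ at $t=k$ and at $t=k-\de$ with $\de=(c/k)^{1/(\al-1)}$, and finally substitutes these bounds into the exact formula \eqref{eq:P(x)} to obtain $P_\al(\Pi_\th;k+1)\sim p_{k+1}$. This pins down the minimizer precisely, which is informative but not required for the stated asymptotic.

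Your approach instead exploits that \emph{any} admissible $t$ in \eqref{eq:comp-prob2} furnishes an upper bound, and the lattice structure of $\tPi_\th$ makes the naive test value $t=k-1-\th$ already asymptotically sharp: the denominator is exactly $1$, while the numerator $\E(\tPi_\th-t)_+^\al=\sum_{m\ge0}(m+1)^\al p_{k+m}$ collapses to $p_k(1+o(1))$ by the geometric decay $p_{k+m}/p_k\le(\th/k)^m$. The trivial lower bound $P_\al\ge\PP(\cdot)$ then closes the sandwich. This is shorter and more elementary; the paper's argument buys the additional information of where the infimum is attained, but for the proposition as stated your method loses nothing.
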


To illustrate Proposition~\ref{prop:pois opt asymp}, here is the graph of 
$\frac{\Be(x)}{\PP(\tPi_\th\ge x)}-1=\frac{P_2(\tPi_\th;x)}{\PP(\tPi_\th\ge x)}-1$ with $\th=\si^2=0.6$ and $y=1$, over  $x\in[0,7.4]$: 
\begin{center}
\includegraphics[scale=0.6]{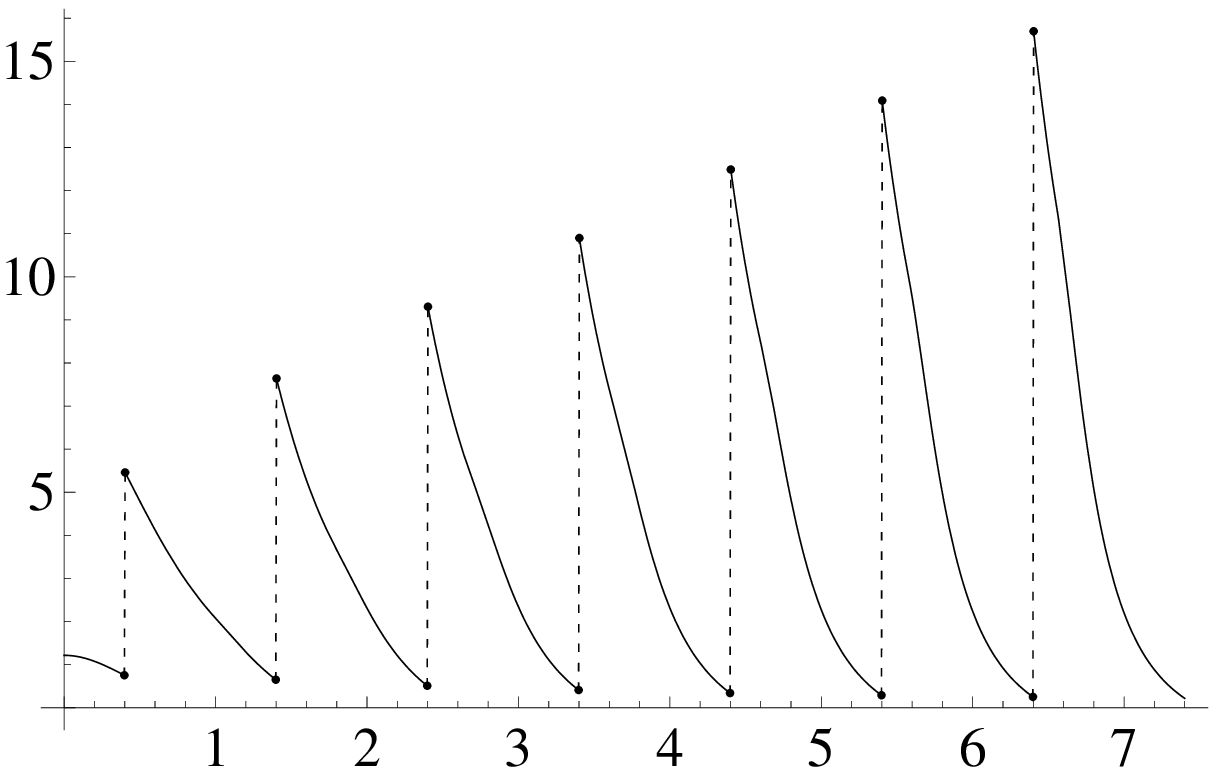}
\end{center}

One can expect the behavior of the ratio $P_\al(\eta;x)/\PP(\eta\ge x)$ for $\eta=\Ga_{(1-\vp)\si^2}+y\tPi_{\si^2/y^2}$ and large $x>0$ to be intermediate between the two kinds described in Propositions~\ref{prop:normal opt asymp} and \ref{prop:pois opt asymp}.

\subsubsection{Numerics and graphics for moderate deviations}\label{moder dev} 

In Subsubsection~\ref{large dev}, it was shown that the bounds $\Pin(x)$ and $\PU(x)$ are much better than $\Be(x)$ and $\BH(x)$ for all large enough $x>0$. 
For moderate deviations, the comparison is more complicated. Recall that
the bound $\Be(x)=P_2(y\tPi_{\si^2/y^2};x)$ is based on the comparison inequality \eqref{eq:Bef} over the class $\H2$ of generalized moment functions $f$, while the bound $\Pin(x)=P_3(\Ga_{(1-\vp)\si^2}+y\tPi_{\vp\si^2/y^2};x)$ is based on the comparison inequality \eqref{eq:PUfF3} over the class $\F3$, and the latter comparison is essentially equivalent to that over the class $\H3$, which is smaller than $\H2$ (by \eqref{eq:F-al-beta}). 
This is the factor that may make $\Be(x)$ better than $\Pin(x)$ (and hence better than $\PU(x)$) if $x$ is not so large; this factor will be especially significant when $\vp$ is close to $1$ and thus the role of the light-tail component $\Ga_{(1-\vp)\si^2}$ is negligible. However, as was noted in the Introduction concerning non-uniform Berry-Esseen type bounds, in typical applications when the $X_i$'s do not differ too much in distribution from one another, $\vp$ will be  close to $0$, rather than to $1$. The interplay between these two factors --- the presence of a light-tail component vs.\ the larger class of generalized moment functions --- is illustrated below. 

\begin{center}
\includegraphics[scale=0.6]{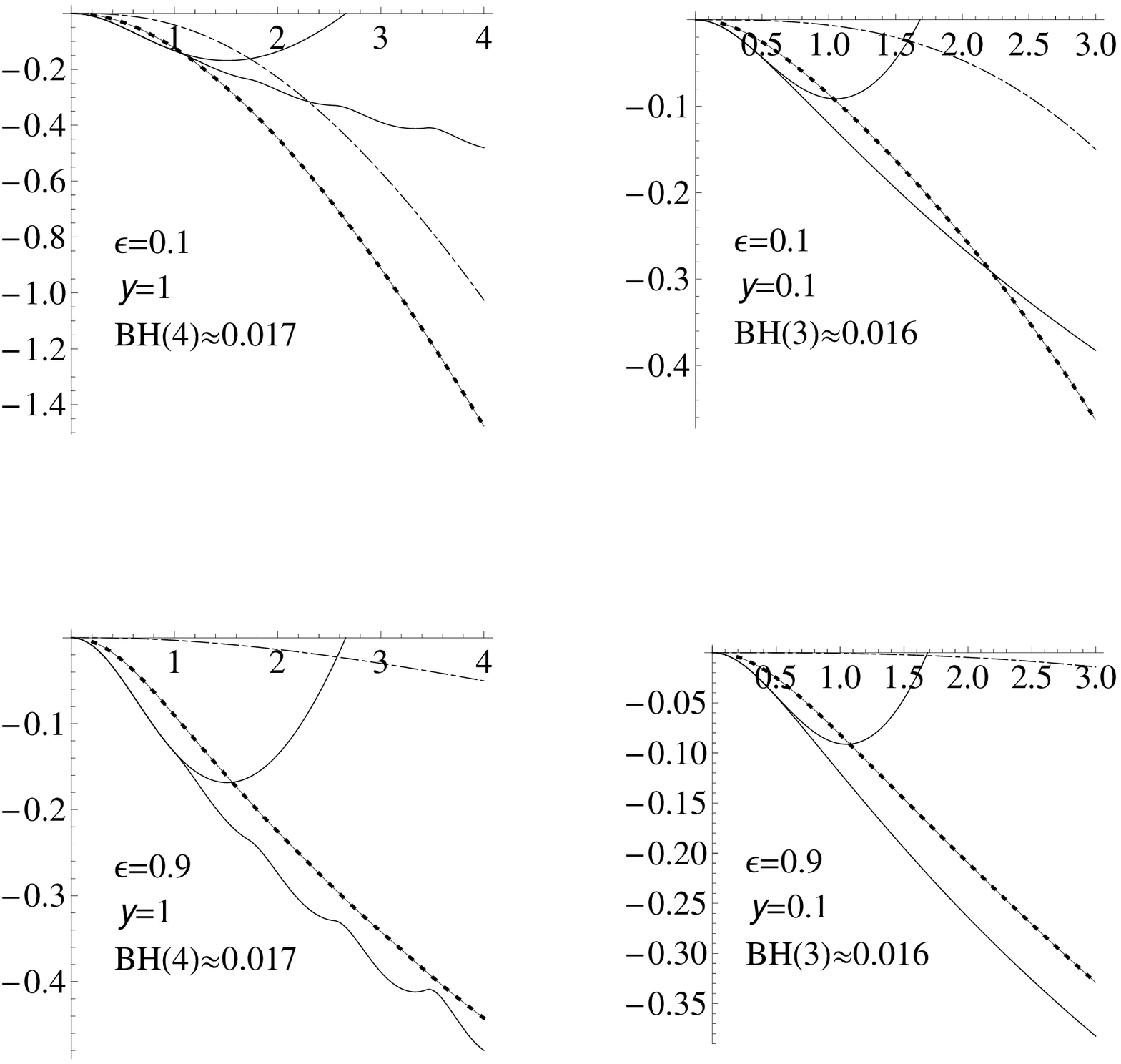}
\end{center}
Here, for $\si$ normalized to be $1$, and for $\vp\in\{0.1,0.9\}$ and $y\in\{0.1,1\}$, the graphs $G(P):=\{\big(x,\log_{10}\frac{P(x)}{\BH(x)}\big)\colon 0<x\le x_{\max}\}$ of the decimal logarithms of the ratios of the bounds $P=\Ca,\PU,\Be,\Pin$ to the benchmark Bennett-Hoeffding bound are shown, where $x_{\max}$ equals either $3$ or $4$, depending on whether $y=0.1$ (relatively little skewed-to-the-right summands $X_i$) or $y=1$ (relatively highly skewed-to-the-right summands $X_i$). 
The corresponding values of $\vp$, $y$, and $\BH(x_{\max})$ are shown for each of the four pictures. 
Note that, for such choices of $x_{\max}$, the values of $\BH(x_{\max})$ are approximately the same (about $2\%$), whether $y=0.1$ or $y=1$.

The graphs $G(P)$ for the bounds $P=\PU$ and $P=\Be$ are shown by the dot-dashed and solid lines, respectively; 
the graph $G(\Ca)$ too is shown by a solid line, but only on the interval $(0,u_y)$, on which $\Ca<\BH$, that is, $\log_{10}\frac{\Ca}{\BH}<0$ --- see Proposition~\ref{prop:ineqs}(IV). 
One can see, for $y=1$, $\Ca(x)$ is better than $\BH(x)$ for all $x\in(0,2.66)$. 
In accordance with Proposition~\ref{prop:ineqs}(I,II), the graph $G(\Ca)$ lies above $G(\Be)$ except that the two graphs coincide on the interval $[0,y]$, even though the graph $G(\Be)$ is seen to be very close $G(\Ca)$ well to the right of the interval $[0,y]=[0,0.1]$ for $y=0.1$.  
For the bound $\Pin$, actually two approximate graphs are shown: the one given by the thick dashed line was produced using formula \eqref{eq:laplace} (with $s=\ln(1+y)/y$ and $j=-1$) and the one given by the thin solid line was produced using formula \eqref{eq:char}; one can see that the two lines look practically the same -- as they should. (However, no other accuracy control of the performance of the Mathematica numerical integration command \verb9NIntegrate9 used to evaluate the integrals in \eqref{eq:laplace} and \eqref{eq:char} was done.) 
In fact, the graph for $\Pin$ was obtained via a ``parametric'' setting, as the set of the form 
$\{\big(x,\log_{10}\frac{\Pin(x)}{\BH(x)}\big)\colon x=m(t), t=u-1/u, 0.1\le u\le u_{\max}\}$, where the function $m$ is as in \eqref{eq:m} and $u_{\max}$ is the positive root $u$ of the equation $m(u-1/u)=x_{\max}$; this way, one have to solve the equation $m(t)=x$ in $t$ only for $x=x_{\max}$.

These pictures confirm the thesis that, if the weight $\vp$ of the heavy-tail Poisson component is relatively small, then the bound $\Pin(x)$ is significantly better (i.e., smaller) than $\Be(x)$ for (say) $x\ge3$. If $\vp$ is relatively large, then $\Be(x)$ may be slightly better than $\Pin(x)$ for moderate $x>0$ (say for $x<4$). Both $\Pin(x)$ and $\Be(x)$ are significantly better than the Bennett-Hoeffding bound $\BH(x)$, even for moderate $x>0$. The bound $\PU(x)$ is close to $\BH(x)$ for moderate $x>0$ if $\vp$ is close to $1$, which is in accordance with Proposition~\ref{prop:ineqs}(V). On the other hand, if the weight $\vp$ of the heavy-tail Poisson component is small while $y$ is large enough so that the Poisson component is quite distinct from the Gaussian component, then $\PU(x)$ is better than $\Be(x)$ even for such rather small $x$ as $x=2.5$. Here it is with more detail:
\begin{enumerate}[(i)]
	\item If the weight of the Poisson component is small ($\vp=0.1$) and the Poisson component is quite distinct from the Gaussian component ($y=1$), then $\Be(x)$ is about $9.93$ times worse (i.e., greater) than $\Pin(x)$ at $x=4$. Moreover, for these values of $\vp$ and $y$, even the bound $\PU(x)$ is better than $\Be(x)$ already at about $x=2.5$. 
	\item If the weight of the Poisson component is small ($\vp=0.1$) and the Poisson component is close to the Gaussian component ($y=0.1$), then $\Be(x)$ is still about $20\%$ greater than $\Pin(x)$ at $x=3$. 
	\item If the weight of the Poisson component is large ($\vp=0.9$) and the Poisson component is quite distinct from the Gaussian component ($y=1$), then $\Be(x)$ is about $8\%$ better than $\Pin(x)$ at $x=4$. For $x\in[0,4]$, $\Pin(x)$ and $\Be(x)$ are close to each other and both are significantly better than either $\BH(x)$ or $\PU(x)$ (which latter are also close to each other).  
	\item If the weight of the Poisson component is large ($\vp=0.9$) and the Poisson component is close to the Gaussian component ($y=0.1$), then $\Be(x)$ is about $12\%$ better than $\Pin(x)$ at $x=3$. For $x\in[0,3]$, $\Pin(x)$ and $\Be(x)$ are close to each other and both are significantly better than either $\BH(x)$ or $\PU(x)$ (which latter are \emph{very} close to each other).  
\end{enumerate}
In particular, we see that the latter two of the four enumerated cases are quite similar to each other. That is, if the weight of the Poisson component is large, then it does not matter much whether the Poisson component is close to the Gaussian component.

%plChar = 
%  ParametricPlot[logPchar[t - 1/t, ep, y], {t, 0.1, tt}, 
%   AspectRatio -> 1,
%   PlotStyle -> {Thin, Black}, Ticks -> {{0, xxx}, {0, vert}}, 
%   LabelStyle -> Directive[FontSize -> 20]];
%pl = ParametricPlot[logP[t - 1/t, ep, y, s], {t, 0.1, tt}, 
%   AspectRatio -> 1, PlotStyle -> {Thick, Dashed, Black}, 
%   Ticks -> None]; 

\textbf{A summary of the comparisons} made in this subsubsection and in the previous one is as follows. For all $x>0$, bounds $\Pin(x)$ and $\Be(x)$ are respectively better than the corresponding exponential bounds $\PU(x)$ and $\BH(x)$. For large $x>0$, each of the bounds $\Pin(x)$ and $\PU(x)$ is better than $\Be(x)$; the same may hold even for moderate $x>0$, especially when the weight $\vp$ of the Poisson component vs.\ the weight $1-\vp$ of the Gaussian one is relatively small; this is the case in typical applications. Otherwise, that is for relatively large $\vp\in(0,1)$ and moderate $x>0$, bound $\Be(x)$ may be a little better than $\Pin(x)$ and significantly better than $\PU(x)$. (On comparisons of bound $\BH(x)$ with previously known to Bennett bounds that show that $\BH(x)$ is superior to them, see \cite{bennett}.) Overall, the upper bound $\Pin(x)$ introduced in this paper usually outperforms the other three bounds: $\BH(x)$, $\PU(x)$, and $\Be(x)$. The minimum $\Pin(x)\wedge\Be(x)$ will in all cases be better (and usually significantly better) than $\PU(x)\wedge\BH(x)$.  

These relations are illustrated by the following diagram: 
$$
\begin{CD}
\BH @>r>> \PU\\
@VViV @VViV\\
\Be @>pr>> \Pin
\end{CD}
$$
In particular, it shows that $\PU$ is a refinement (denoted by $r$) of $\BH$. This refinement is also an improvement, as is obviously the case with any refinement that is exact in its own terms; indeed, the more specific the terms, the better the best possible result is; the usual downside of a refinement, though, is that it is more difficult to deal with: in terms of getting more specific information on the distributions of the $X_i$'s, as well as proving and computing the bound. Also, $\PU$ may be considered as a generalization of $\BH$, as $\BH$ may be considered as a special, limit case of $\PU$, with $\vp\to1$. 

The relation of $\Pin$ with $\Be$ is almost parallel to that of $\PU$ with $\BH$. However, the refinement (and hence the improvement and generalization) here are only partial ($pr$), because, as discussed, the class $\H3$ (corresponding to $\Pin$) is a bit smaller than $\H2$ (corresponding to $\Be$), even though, according to Propositions~\ref{prop:p=3} and \ref{prop:p=2}, $\H3$ is essentially the largest possible class for  $\Pin$, just as $\H2$ is for $\Be$. 

The relations of $\Be$ to $\BH$ and $\Pin$ to $\PU$ are pure improvements ($i$), due to using the larger classes $\H\al$ in place of the smaller class of exponential moment functions.

\section{Proofs}\label{proofs}

In Subsection~\ref{proofs1} of this section, we shall first state several lemmas; based on these lemmas, we shall provide the necessary proofs of results stated in Sections~\ref{results} and \ref{comp}. 
Proofs of the lemmas will be deferred to Subsection~\ref{proofs2}. 
We believe that such a structure will allow us to effectively present first the main ideas of the proofs and then the details. 

\subsection{Statements of lemmas, and proofs of theorems, corollaries, 
and propositions}\label{proofs1}

First here, let us state a few lemmas used in the proofs of Theorem~\ref{th:main} and Proposition~\ref{prop:exact}. We shall need more notation. 

Let $\si$ and $y$ be any (strictly) positive real numbers. %For any $x\in\R$, let $x_+:=0\vee x$. 
For any pair of numbers $(a,b)$ such that $a\ge0$ and $b>0$, let $X_{a,b}$ denote any r.v.\ such that $X_{a,b}\sim\frac b{a+b}\de_{-a}+\frac a{a+b}\de_b$; that is, the distribution of $X_{a,b}$ is $\frac b{a+b}\de_{-a}+\frac a{a+b}\de_b$, the unique zero-mean distrubution on the two-point set $\{-a,b\}$; here and in what follows $\de_x$ stands, as usual, for the (Dirac) distribution concentrated at point $x$. %Note that 
%\begin{equation*}
%	\E X_{a,b}=0\quad\text{and}\quad\E X^2_{a,b}=ab.
%\end{equation*}

\begin{lemma}\label{lem:1}
For all $x\in(-\infty,y]$,
\begin{equation*}%\label{eq:1}
	x_+^3\le\frac{y^5}{(y^2+\si^2)^2}\,(x+\si^2/y)^2.
\end{equation*}
\end{lemma}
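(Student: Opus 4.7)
The plan is as follows. For $x \le 0$ the left-hand side vanishes while the right-hand side, being a square, is nonnegative, so the inequality holds trivially; only the range $x \in (0, y]$ requires work. A direct check at the endpoint $x = y$ yields equality: the right-hand side becomes $\frac{y^5}{(y^2+\sigma^2)^2} \cdot \frac{(y^2+\sigma^2)^2}{y^2} = y^3$, which matches $y_+^3$. Thus the constants are tuned precisely so that the two sides touch at $x = y$, which suggests rescaling by that value.

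My approach is to substitute $u := x/y \in (0,1]$. After clearing the common factor $y^3$, the inequality becomes equivalent to $u^3(y^2+\sigma^2)^2 \le (uy^2 + \sigma^2)^2$; since both sides are nonnegative, taking square roots reduces the problem to the one-parameter claim
\[
u^{3/2}(y^2+\sigma^2) \le u y^2 + \sigma^2 \qquad \text{for all } u \in [0,1].
\]

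To finish, I would rewrite the difference of the two sides as $y^2\, u\,(1-u^{1/2}) + \sigma^2(1 - u^{3/2})$ and use the factorization $1 - u^{3/2} = (1-u^{1/2})(1+u^{1/2}+u)$ to pull out the common factor $1 - u^{1/2} \ge 0$. What remains inside the brackets, namely $y^2 u + \sigma^2(1+u^{1/2}+u)$, is a sum of nonnegative terms, so the entire expression is manifestly nonnegative, closing the argument.

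I do not anticipate any real obstacle here. The only substantive ingredients are the observation (motivating the rescaling) that equality holds at $x = y$ and the elementary factorization of $1 - u^{3/2}$; the rest is bookkeeping. If one wanted a conceptual interpretation, the sharp constant $y^5/(y^2+\sigma^2)^2$ is precisely what one computes from the two-point zero-mean r.v.\ $X_{a,b}$ with $a = \sigma^2/y$, $b = y$ discussed just before the lemma, for which $\E X^2 = \sigma^2$ and equality is attained at the upper support point.
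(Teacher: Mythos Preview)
Your proof is correct. The paper takes a slightly different, shorter route: after disposing of $x\le0$ as you do, it observes that the ratio
\[
\frac{x^3}{(x+\si^2/y)^2}
\]
is nondecreasing on $[0,y]$ (its derivative has the sign of $x^2(x+3\si^2/y)>0$), so the ratio is bounded above by its value at $x=y$, which is exactly $\frac{y^5}{(y^2+\si^2)^2}$. This gives the inequality in one stroke, with equality at $x=y$ built in.

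Your substitution $u=x/y$ followed by the factorization $(1-u^{1/2})\bigl[y^2 u+\si^2(1+u^{1/2}+u)\bigr]\ge0$ reaches the same conclusion by pure algebra. The monotonicity argument is more conceptual and shorter; your factorization is more explicit and makes the nonnegativity visible term by term without any calculus. Either is perfectly adequate here.
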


\begin{lemma}\label{lem:2}
Let $X$ be any r.v.\ such that $X\le y$ a.s., $\E X\le0$, and $\E X^2\le\si^2$. Then 
\begin{equation}\label{eq:2}
	\E X_+^3\le\frac{y^3\si^2}{y^2+\si^2}.
\end{equation}
\end{lemma}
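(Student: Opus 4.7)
The plan is to derive Lemma~\ref{lem:2} essentially immediately from Lemma~\ref{lem:1}, by pointwise application followed by taking expectations and exploiting the hypotheses in the cleanest possible way. Since $X\le y$ a.s., Lemma~\ref{lem:1} gives
\[
X_+^3\le\frac{y^5}{(y^2+\si^2)^2}\,(X+\si^2/y)^2\quad\text{a.s.}
\]
Taking expectations of both sides and expanding the square,
\[
\E X_+^3\le\frac{y^5}{(y^2+\si^2)^2}\,\Bigl(\E X^2+2\,\tfrac{\si^2}{y}\E X+\tfrac{\si^4}{y^2}\Bigr).
\]

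Now I would use the two remaining hypotheses. The condition $\E X\le0$ makes the cross term $2(\si^2/y)\E X$ nonpositive and thus harmless, and the condition $\E X^2\le\si^2$ then yields
\[
\E X^2+2\,\tfrac{\si^2}{y}\E X+\tfrac{\si^4}{y^2}
\le\si^2+\tfrac{\si^4}{y^2}=\frac{\si^2(y^2+\si^2)}{y^2}.
\]
Substituting back produces
\[
\E X_+^3\le\frac{y^5}{(y^2+\si^2)^2}\cdot\frac{\si^2(y^2+\si^2)}{y^2}=\frac{y^3\si^2}{y^2+\si^2},
\]
which is the claimed inequality.

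There is essentially no obstacle: the whole difficulty has been absorbed into Lemma~\ref{lem:1}, whose quadratic right-hand side was precisely engineered so that its expectation is controlled by $\E X$ and $\E X^2$ alone. As a sanity check that nothing sharper can be expected, I would note that the two-point zero-mean distribution $X_{\si^2/y,\,y}$ (which takes values $-\si^2/y$ and $y$ with probabilities $y^2/(y^2+\si^2)$ and $\si^2/(y^2+\si^2)$) satisfies $\E X^2=\si^2$ and makes every inequality above an equality, so the bound $y^3\si^2/(y^2+\si^2)$ is attained; this also identifies the extremal distribution that motivates the tangent-parabola construction presumably used in the proof of Lemma~\ref{lem:1}.
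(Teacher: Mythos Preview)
Your proof is correct and follows essentially the same route as the paper: apply Lemma~\ref{lem:1} pointwise, take expectations, drop the cross term using $\E X\le0$, and bound $\E X^2$ by $\si^2$. The paper compresses these steps into a single display, and your extra remark about equality at $X_{\si^2/y,\,y}$ anticipates exactly what the paper records after Lemma~\ref{lem:3}.
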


\begin{lemma}\label{lem:3}
For any 
\begin{equation}\label{eq:be}
\be\in\Big(0,\frac{y^3\si^2}{y^2+\si^2}\Big]	
\end{equation}
there exists a unique pair $(a,b)\in(0,\infty)\times(0,\infty)$ such that $X_{a,b}\le y$ a.s., %($\E X_{a,b}=0$,) 
$\E X^2_{a,b}=\si^2$, and $\E(X_{a,b})_+^3=\be$; more specifically, $b$ is the only positive root of equation 
\begin{equation}\label{eq:b}
\si^2 b^3=\be(b^2+\si^2),	
\end{equation}
and 
\begin{equation}\label{eq:a}
	a=\frac{\si^2}b=\frac{\be b}{b^3-\be}.
\end{equation}  
\end{lemma}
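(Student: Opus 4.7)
The plan is to translate the three conditions on $(a,b)$ into explicit algebraic equations, eliminate $a$, and analyze the resulting cubic in $b$.

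First I would compute the moments of $X_{a,b}$ directly from its two-point distribution. Since $X_{a,b}\sim\frac{b}{a+b}\de_{-a}+\frac{a}{a+b}\de_b$, one has $\E X_{a,b}=0$,
\[
\E X_{a,b}^2=\frac{b\,a^2+a\,b^2}{a+b}=ab,\qquad
\E(X_{a,b})_+^3=\frac{a\,b^3}{a+b}.
\]
The condition $X_{a,b}\le y$ a.s.\ becomes simply $b\le y$ (as $a>0$), and the two moment conditions become
\[
ab=\si^2,\qquad \frac{ab^3}{a+b}=\be.
\]
From the first, $a=\si^2/b$; substituting into the second and clearing denominators gives exactly the cubic \eqref{eq:b}: $\si^2 b^3=\be(b^2+\si^2)$. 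This shows that the system is equivalent to: find $b>0$ solving \eqref{eq:b} with $b\le y$, and then set $a:=\si^2/b$.

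Next I would establish existence and uniqueness of the positive root of \eqref{eq:b}. Rewrite the cubic as $h(b)=\be/\si^2$, where $h(b):=b^3/(b^2+\si^2)$. A direct differentiation yields
\[
h'(b)=\frac{b^4+3\si^2 b^2}{(b^2+\si^2)^2}>0\qquad\text{for all }b>0,
\]
so $h$ strictly increases on $(0,\infty)$ from $h(0+)=0$ to $h(\infty-)=\infty$. Hence for every positive value $\be/\si^2$ there is a unique $b\in(0,\infty)$ with $h(b)=\be/\si^2$, establishing the uniqueness claim. Moreover, $h(y)=y^3/(y^2+\si^2)$, so the hypothesis $\be\le y^3\si^2/(y^2+\si^2)$ is precisely $\be/\si^2\le h(y)$, which by monotonicity yields $b\le y$, verifying the boundedness constraint.

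Finally I would derive the alternative formula for $a$. Rewriting \eqref{eq:b} as $\si^2(b^3-\be)=\be b^2$ shows in particular that $b^3>\be$, so the expression $\be b/(b^3-\be)$ is well defined and positive, and dividing through gives
\[
\frac{\si^2}{b}=\frac{\be b}{b^3-\be},
\]
which is \eqref{eq:a}. The only subtlety worth being careful about is making sure the range of $\be$ in \eqref{eq:be} is sharp: the upper endpoint $y^3\si^2/(y^2+\si^2)$ is precisely $h(y)\si^2$, so it corresponds exactly to the borderline case $b=y$, which matches Lemma~\ref{lem:2}. There is no serious obstacle here; the argument is a routine elimination plus a one-variable monotonicity check.
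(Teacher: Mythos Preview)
Your proof is correct and follows essentially the same approach as the paper. The only minor technical difference is that the paper substitutes $x=b^2$ and uses convexity of $f(x):=\si^2 x^{3/2}-\be(x+\si^2)$ on $[0,\infty)$ together with $f(0)<0\le f(y^2)$ to get a unique positive root, whereas you work directly with the strict monotonicity of $h(b)=b^3/(b^2+\si^2)$; both arguments establish the same fact and the rest of the proof is identical.
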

In particular, Lemma~\ref{lem:3} implies that inequality \eqref{eq:2} is exact.

For any given $w\in\R$, $y>0$, $\si>0$, and $\be>0$, consider now the problem of finding the exact upper bound of $\E(X-w)_+^3$ over all r.v.'s $X$ satisfying the conditions $X\le y$ a.s., $\E X=0$, $\E X^2=\si^2$, and $\E X_+^3=\be$. At that, by Lemma~\ref{lem:2}, w.l.o.g.\ condition \eqref{eq:be} holds, since otherwise the corresponding set of r.v.'s $X$ is empty. 

\begin{lemma}\label{prop:ab}   
Fix any $w\in\R$, $y>0$, $\si>0$, and $\be$ satisfying condition \eqref{eq:be}, and let $(a,b)$ be the unique pair of numbers described in Lemma~\ref{lem:3}. Then 
\begin{align}
\sup&\{\E(X-w)_+^3\colon X\le y \text{ a.s.}, \E X=0, \E X^2=\si^2, \E X_+^3=\be\} \notag\\
=\max&\{\E(X-w)_+^3\colon X\le y \text{ a.s.}, \E X=0, \E X^2=\si^2, \E X_+^3=\be\} \label{eq:max_equal}\\
=\max&\{\E(X-w)_+^3\colon X\le y \text{ a.s.}, \E X\le0, \E X^2\le\si^2, \E X_+^3\le\be\} \label{eq:max_less}\\
=&\begin{cases}
\E(X_{a,b}-w)_+^3 \text{ if } w\le0,\\
\E(X_{\ta,\tb}-w)_+^3 \text{ if } w\ge0,
\end{cases} \label{eq:attain}
\end{align}
where
\begin{equation}\label{eq:tb,ta}
	\tb:=y\quad\text{and}\quad\ta:=\frac{\be y}{y^3-\be}
\end{equation}
(cf.\ \eqref{eq:a}). 
At that, $\ta>0$, $X_{\ta,\tb}\le y$ a.s., $\E X_{\ta,\tb}=0$, and $\E(X_{\ta,\tb})_+^3=\be$, but one can only say that $\E X_{\ta,\tb}^2\le\si^2$, and the latter  inequality is strict if $\be\ne\frac{y^3\si^2}{y^2+\si^2}$. 
\end{lemma}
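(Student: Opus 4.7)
My plan is duality via a majorant polynomial. For each sign of $w$, I will construct $P(x)=c_0+c_1 x+c_2 x^2+c_3 x_+^3$ with
\textbf{(a)} $P(x)\ge(x-w)_+^3$ on $(-\infty,y]$,
\textbf{(b)} $c_1,c_2,c_3\ge 0$, and
\textbf{(c)} $P$ agrees with $(\cdot-w)_+^3$ at the two support points of the conjectured extremizer --- $X_{a,b}$ for $w\le 0$, $X_{\ta,\tb}$ for $w\ge 0$. Given such a $P$, for every inequality-feasible $X$ one has
\[
\E(X-w)_+^3\le\E P(X)=c_0+c_1\E X+c_2\E X^2+c_3\E X_+^3\le c_0+c_2\si^2+c_3\be,
\]
and property (c), together with $\E X=0$ at each extremizer, turns this into a chain of equalities at the extremizer. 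This would simultaneously establish the chain \eqref{eq:max_equal}--\eqref{eq:attain}, provided that in each case an \emph{equality}-feasible $X$ attaining the right-hand side is also exhibited.

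\textbf{Case $w\le 0$.} By \eqref{eq:b} and \eqref{eq:be} one has $b\le y$ (the cubic $\si^2 b^3-\be(b^2+\si^2)$ is nonnegative at $b=y$), so the support points $-a$ and $b$ of $X_{a,b}$ both lie in $(-\infty,y]$, and $X_{a,b}$ itself is equality-feasible by Lemma~\ref{lem:3}. I would impose tangency of order two at each of $-a$ and $b$ (matching value and first derivative of $P$ with those of $(\cdot-w)_+^3$) to pin down $c_0,c_1,c_2,c_3$, then verify (a) by partitioning $(-\infty,y]$ at the kinks $0$ and $w$; on each subinterval $P-(\cdot-w)_+^3$ is a polynomial of degree $\le 3$ with prescribed double roots, so its sign is read off from the leading coefficient and boundary values. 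Property (b) is then read off from the explicit solution of the linear system.

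\textbf{Case $w\ge 0$.} The candidate $X_{\ta,\tb}$ has $\tb=y$ on the boundary and $-\ta<0\le w$ in the flat region of $(\cdot-w)_+^3$. I would impose tangency of order two at $-\ta$ (forcing $P(-\ta)=P'(-\ta)=0$) and a one-sided contact at $y$, and verify (a)--(b) as above. The residual claims about $\ta$ follow directly from \eqref{eq:tb,ta}: $\ta>0$ from $\be<y^3$ (implied by \eqref{eq:be}); $\E X_{\ta,\tb}=0$ and $\E(X_{\ta,\tb})_+^3=\be$ from the defining proportions; and the identity $\E X_{\ta,\tb}^2=\ta\tb=\frac{\be y^2}{y^3-\be}$ rearranges $\le\si^2$ into exactly \eqref{eq:be}, with equality only in the boundary case. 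When $\be<\frac{y^3\si^2}{y^2+\si^2}$, $X_{\ta,\tb}$ is not itself equality-feasible (its second moment is too small), but I can still attain the eq-max in \eqref{eq:max_equal} by splitting the atom of $X_{\ta,\tb}$ at $-\ta$ into a distribution $\nu$ on $(-\infty,0)$ with the same mass and mean $-\ta$; since $\{x\le 0\}\subseteq\{x\le w\}$, this preserves $\E X$, $\E X_+^3$, and $\E(X-w)_+^3$, while the Jensen-type inequality $\int x^2\,d\nu\ge \ta^2\int d\nu$ lets me raise the second moment to $\si^2$ by spreading $\nu$ sufficiently.

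The main obstacle I expect is the pointwise verification of (a) together with the sign check (b): producing $c_0,c_1,c_2,c_3$ explicitly from the tangency linear system and then showing that $P-(\cdot-w)_+^3\ge 0$ holds piecewise on $(-\infty,y]$ while $c_1,c_2,c_3\ge 0$. The check is more delicate in the case $w\ge 0$, where contact at $y$ is only one-sided, so one must additionally verify that the derivative of $P-(\cdot-w)_+^3$ at $y$ has the correct sign from the left.
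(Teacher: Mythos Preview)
Your approach is essentially the paper's: linear-programming duality via a majorant $P(x)=c_0+c_1x+c_2x^2+c_3x_+^3$, with contact at the support points of the claimed extremizer, and your atom-splitting construction for the equality-constrained attainment when $w\ge0$ is the same idea as the paper's explicit three-point law $X_v$. The paper splits $w\le0$ into the two subcases $w\le -a$ and $-a\le w\le0$ with explicit coefficients (verified by a \texttt{Reduce} call), whereas your unified tangency system would reproduce both once solved; and for the pointwise check (a) you propose a piecewise cubic-with-double-root argument where the paper simply invokes \texttt{Reduce}.

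There is one genuine gap in your $w\ge0$ case. Your conditions (double tangency at $-\tilde a$, one-sided contact at $y$) give only three equations in four unknowns; they force $P(x)=c_2(x+\tilde a)^2+c_3 x_+^3$ with a free parameter $c_2\ge0$. But for the duality chain to collapse to equality at $X_{\tilde a,\tilde b}$ you need complementary slackness in the second-moment constraint: since $\E X_{\tilde a,\tilde b}^2=\tilde a y<\si^2$ in general, the step $c_2\E X^2\le c_2\si^2$ is strict unless $c_2=0$. Concretely, using $\be/y^3=\tilde a/(\tilde a+y)$ one computes the dual value
\[
c_0+c_2\si^2+c_3\be \;=\; c_2(\si^2-\tilde a y)\;+\;\frac{(y-w)_+^3}{y^3}\,\be,
\]
which is minimized over admissible $c_2\ge0$ precisely at $c_2=0$, yielding the paper's majorant $f_3(x)=\tfrac{(y-w)_+^3}{y^3}\,x_+^3$. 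So your plan works, but only after you recognize that the right member of your one-parameter family is $c_2=0$; as written, ``tangency of order two at $-\tilde a$ plus one-sided contact at $y$'' does not single it out.
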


Together with Lemma~\ref{th:basic} below, Lemma~\ref{prop:ab} represents one of the two most important steps in the proof of Theorem~\ref{th:main}. 

\begin{lemma}\label{lem:F3}
Let $\xi$ and $\eta$ be any real-valued r.v.'s such that $\E\xi\le\E\eta$, $\E\xi^2\le\E\eta^2<\infty$, and 
$\E f(\xi)\le\E f(\eta)$ for all $f\in\H3$. 
Then  
\begin{enumerate}[(i)]
	\item inequality $\E f(\xi)\le\E f(\eta)$ will hold for all $f\in\F3$;   
	\item if 
the condition 
$\E\xi\le\E\eta$ is replaced by 
$\E\xi=\E\eta$, then the inequality $\E f(\xi)\le\E f(\eta)$ will hold for all $f$ in the larger class ${\F3}_{,1}$, defined in Proposition~\ref{prop:F3larger};   
	\item 
	if the conditions 
$\E\xi\le\E\eta$ and $\E\xi^2\le\E\eta^2$ are both replaced by the equalities 
$\E\xi=\E\eta$ and $\E\xi^2=\E\eta^2$, then the inequality $\E f(\xi)\le\E f(\eta)$ will hold for all $f$ in the larger class ${\F3}_{,12}$; 	
\item however, it is \emph{not} enough to replace  
the condition 
$\E\xi^2\le\E\eta^2$ by the equality
$\E\xi^2=\E\eta^2$ for the inequality $\E f(\xi)\le\E f(\eta)$ to hold for all $f$ in the larger class ${\F3}_{,2}$ defined by removing $f'$ from the list ``$f,f',f'',f'''$'' in \eqref{eq:F3}.      
\end{enumerate}
\end{lemma}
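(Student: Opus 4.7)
The plan is to handle parts (i)--(iii) by decomposing any $f$ in the relevant larger class as ``polynomial $+$ element of $\H3$'' (after a truncation/limit when needed), thereby reducing to the assumed $\H3$-comparison, and to treat (iv) with an explicit counterexample. The key tool is Proposition~\ref{prop:F-al}, which characterizes $\H3$ as those $g\in\C^2$ for which $g''$ is convex and $g(-\infty+)=g'(-\infty+)=g''(-\infty+)=0$.

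For (i), first observe that if $f\in\F3$, then --- since $f,f',f'',f'''$ are all nondecreasing and finite on $\R$ --- one must have $f',f'',f'''\ge 0$ everywhere and $f''(-\infty+)=f'''(-\infty+)=0$: a strictly positive limit of $f^{(j)}$ at $-\infty$ would force $f^{(j-1)}$ down to $-\infty$. Setting $c_1:=f'(-\infty+)\ge 0$ and letting $\mu\ge 0$ be the Stieltjes measure of $f'''$, three successive integrations yield
\[
f(x)=f(0)+c_1 x+\int_\R\frac{(x-t)_+^3-(-t)_+^3}{6}\,\mu(\dd t)\qquad(x\in\R),
\]
where the integrand is $O(t^2)$ as $t\to-\infty$ and $\int_{(-\infty,0]}t^2\,\mu(\dd t)<\infty$ (as a consequence of $f'(0)<\infty$). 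If $\int(-t)_+^3\,\mu(\dd t)<\infty$, this collapses to $f=C+c_1 x+g$ with $g\in\H3$, and the hypotheses give $\E f(\eta)-\E f(\xi)\ge 0$ at once. Otherwise, I would approximate by truncating: set $\mu_n:=\mu|_{[-n,\infty)}$, $g_n(x):=\int(x-t)_+^3\mu_n(\dd t)/6\in\H3$ and $f_n(x):=f(0)+c_1 x+g_n(x)-g_n(0)$. Then $f-f_n$ is a polynomial of degree $\le 2$ in $x$ whose coefficients (involving $\mu((-\infty,-n))$ and $\int_{(-\infty,-n)}t^j\,\mu(\dd t)$ for $j=1,2$) all vanish as $n\to\infty$, and by inspection $f_n\uparrow f$ on $[0,\infty)$ and $f_n\downarrow f$ on $(-\infty,0]$, with the integrable dominator $f(0)+c_1 x$ on the negative half-line; one-sided monotone (or dominated) convergence then passes $\E f_n(\eta)-\E f_n(\xi)\ge 0$ to the limit.

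Parts (ii) and (iii) follow the same template, except that the polynomial coefficients need no longer have definite sign; they cancel in expectation thanks to $\E\xi=\E\eta$ (in (ii) and (iii)) and $\E\xi^2=\E\eta^2$ (in (iii)). In (iii) the decomposition of $f\in{\F3}_{,12}$ additionally carries a nonnegative cubic term, whose expectation is controlled by $\E\xi^3\le\E\eta^3$, extracted from the $\H3$-hypothesis by testing against $(x-t)_+^3$ and letting $t\to-\infty$ (the two moment equalities kill the $t^1$- and $t^2$-terms in the expansion of $(x-t)^3$). For (iv) I would take $f(x):=x^3/3$, which belongs to ${\F3}_{,2}$ since $f$, $f''=2x$ and $f'''=2$ are nondecreasing while $f'=x^2$ is not, and construct a pair $(\xi,\eta)=(c,\,c+Z)$ where $c<0$ is a constant, $Z$ is an independent r.v.\ with $\E Z>0$, $\E Z^2=-2c\,\E Z$ (so that $\E\xi^2=\E\eta^2$), and $Z$ has a heavy negative tail chosen so that $\E Z^3<3c^2\,\E Z$. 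Then by Jensen's inequality applied to convex nondecreasing $g\in\H3$ (with $\E Z\ge 0$) one has $\E g(\eta)\ge g(c+\E Z)\ge g(c)=\E g(\xi)$, while $\E f(\eta)-\E f(\xi)=(\E\eta^3-\E\xi^3)/3<0$.

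The main obstacle is the truncation/limit step in (i), which requires a careful justification of the one-sided monotone (or dominated) convergence of $\E f_n(\xi)$ and $\E f_n(\eta)$ on the negative half-line in the case $\int(-t)_+^3\,\mu(\dd t)=\infty$, i.e., when $f(-\infty+)=-\infty$; all other steps, including the explicit counterexample in (iv), reduce to routine moment computations.
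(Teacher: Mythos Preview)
Your counterexample in (iv) is correct and in fact simpler than the paper's: taking $\xi=c$ deterministic lets you verify the $\H3$-comparison by a one-line Jensen argument, and the moment constraints on $Z$ are easily met. The paper instead builds a three-point $\eta$ with a heavy left atom and checks the $\H2$-comparison case by case.

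For (i)--(iii), however, there are genuine problems. First, a factual slip: $f-f_n$ is \emph{not} a polynomial of degree $\le2$; for $x\ge-n$ one has
\[
f(x)-f_n(x)=\int_{(-\infty,-n)}\frac{(x-t)^3-(-t)^3}{6}\,\mu(\dd t)
=\frac{x^3}{6}\mu((-\infty,-n))-\frac{x^2}{2}\!\int_{(-\infty,-n)}\!t\,\mu(\dd t)+\frac{x}{2}\!\int_{(-\infty,-n)}\!t^2\,\mu(\dd t),
\]
which is cubic. This does not by itself destroy the argument for (i), since you ultimately rely on the monotone convergence $f_n\to f$ rather than on the polynomial structure; with some extra care about integrability (splitting $\{\xi\ge0\}$ and $\{\xi<0\}$ and using $\E\xi^2<\infty$) part (i) can be pushed through along your lines.

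The real gap is in (ii) and (iii). Your ``same template'' presumes a global integral representation anchored at $-\infty$, with finite constants $c_1=f'(-\infty+)$, $c_2=f''(-\infty+)$, etc. But in ${\F3}_{,1}$ one may have $f'(-\infty+)=-\infty$ (take $f(x)=x^2$), and in ${\F3}_{,12}$ one may have $f''(-\infty+)=-\infty$ (take $f(x)=x^3/6$); your decomposition then breaks down. Moreover, your plan for (iii) to control a cubic term via ``$\E\xi^3\le\E\eta^3$, extracted by testing $(x-t)_+^3$ and letting $t\to-\infty$'' tacitly assumes the third moments exist, which is nowhere in the hypotheses (only $\E\eta^2<\infty$ is given).

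The paper avoids all of these issues by truncating at a \emph{finite} point $z$ rather than at $-\infty$: it sets
\[
g_z(u):=\bigl(f(z)+(u-z)f'(z)+\tfrac12(u-z)^2f''(z)\bigr)\ii{u<z}+f(u)\ii{u\ge z},\qquad h_z:=g_z-\text{(that Taylor quadratic)},
\]
verifies $h_z\in\H3$ directly from Proposition~\ref{prop:F-al} (since $h_z''(u)=(f''(u)-f''(z))_+$ is convex), and observes that $g_z$ decreases to $f$ as $z\to-\infty$ while $g_z\ge f$. The decomposition $g_z=h_z+\text{(quadratic in }u\text{)}$ uses only $f'(z),f''(z)$ at the finite point $z$, so no limits at $-\infty$ and no third moments are needed; the sign analysis of the linear/quadratic coefficients then cleanly separates cases (i), (ii), (iii). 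If you want to salvage your route, you would effectively be led to this same local-Taylor truncation.
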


\begin{lemma}\label{lem:F2}
Let $\xi$ and $\eta$ be any real-valued r.v.'s such that $\E\xi\le\E\eta$, $\E\xi^2\le\E\eta^2<\infty$, and 
$\E f(\xi)\le\E f(\eta)$ for all $f\in\H2$. 
Then  
\begin{enumerate}[(i)]
	\item inequality $\E f(\xi)\le\E f(\eta)$ will hold for all $f\in\F2$;   
	\item if 
the condition 
$\E\xi\le\E\eta$ is replaced by 
$\E\xi=\E\eta$, then the inequality $\E f(\xi)\le\E f(\eta)$ will hold for all $f$ in the larger class ${\F2}_{,1}$, defined in Proposition~\ref{prop:F2larger};   
	\item 
	if the conditions 
$\E\xi\le\E\eta$ and $\E\xi^2\le\E\eta^2$ are both replaced by the equalities 
$\E\xi=\E\eta$ and $\E\xi^2=\E\eta^2$, then the inequality $\E f(\xi)\le\E f(\eta)$ will hold for all $f$ in the larger class ${\F2}_{,12}$; 	
\item however, it is \emph{not} enough to replace 
the condition 
$\E\xi^2\le\E\eta^2$ by the equality
$\E\xi^2=\E\eta^2$ for the inequality $\E f(\xi)\le\E f(\eta)$ to hold for all $f$ in the larger class ${\F2}_{,2}$ defined by removing $f'$ from the list ``$f,f',f''$'' in \eqref{eq:F2}.      
\end{enumerate}
\end{lemma}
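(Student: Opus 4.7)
The plan for parts (i)--(iii) is to approximate the given $f$ by truncations that differ from some function in $\H2$ only by a low-degree polynomial, so that the hypothesis on $\H2$ together with the moment conditions on $\xi,\eta$ combine to give the desired inequality. For (i) and (ii), I would use the linear-tangent truncation
\[
f_n(u):=\begin{cases}f(u),&u\ge-n,\\ f(-n)+f'(-n)(u+n),&u<-n,\end{cases}
\]
which belongs to $\F2$ (respectively to ${\F2}_{,1}$). Since $f'$ is nondecreasing (hence $f$ is convex), tangents lie below the graph, so $f_n\le f$; moreover $\partial_n f_n(u)=-f''(-n)(u+n)\ge 0$ for $u<-n$ gives $f_n\uparrow f$ pointwise. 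The function $h_n(u):=f_n(u)-f'(-n)u-[f(-n)+f'(-n)n]$ vanishes on $(-\infty,-n]$, is $\C^1$, satisfies $h_n(-\infty+)=h_n'(-\infty+)=0$, and has $h_n''$ nondecreasing and nonnegative; by Proposition~\ref{prop:F-al} with $\alpha=2$, one has $h_n\in\H2$. Hence
\[
\E f_n(\xi)-\E f_n(\eta)=[\E h_n(\xi)-\E h_n(\eta)]+f'(-n)\,[\E\xi-\E\eta].
\]
The first bracket is $\le 0$ by the $\H2$-hypothesis, while the linear correction is $\le 0$ in (i) (as $f\in\F2$ forces $f'(-n)\ge 0$ and $\E\xi\le\E\eta$ is assumed) and vanishes in (ii) (because $\E\xi=\E\eta$, making the sign of $f'(-n)$ irrelevant). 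Monotone convergence, with the convex $f_1$ as an integrable minorant (bounded below by an affine function, hence $\xi,\eta$-integrable), lets $n\to\infty$ to give $\E f(\xi)\le\E f(\eta)$.

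For (iii), $f\in{\F2}_{,12}$ need not be convex, so the linear tangent may overshoot the graph; I would instead use the second-order Taylor truncation $f_n(u):=f(u)$ for $u\ge-n$ and $f_n(u):=f(-n)+f'(-n)(u+n)+\tfrac12 f''(-n)(u+n)^2$ for $u<-n$. Because $f''$ is nondecreasing, the remainder representation $f(u)-f_n(u)=\int_u^{-n}(r-u)[f''(r)-f''(-n)]\dd r$ is $\le 0$ for $u\le-n$, giving $f_n\ge f$; and $\partial_n f_n(u)=-\tfrac12 f'''(-n)(u+n)^2\le 0$ gives $f_n\downarrow f$. Subtracting $\tfrac12 f''(-n)u^2$, a linear term, and a constant to annihilate $f_n$ on $(-\infty,-n]$ produces a function in $\H2$; the quadratic and linear corrections then contribute $\tfrac12 f''(-n)[\E\xi^2-\E\eta^2]+B_n[\E\xi-\E\eta]=0$ by the two equality hypotheses. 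Dominated convergence, using $0\le f_n-f\le f_1-f\le C(1+u^2)$ (since $f_1$ grows at most quadratically and $f$ is bounded below by an affine function, while $\xi,\eta$ have finite second moments), passes to the limit.

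For (iv) it suffices to take $\xi\equiv-1$ and $\eta$ with $\PP(\eta=-3)=\tfrac19$, $\PP(\eta=0)=\tfrac89$: then $\E\xi=-1<-\tfrac13=\E\eta$, $\E\xi^2=1=\E\eta^2$, and any $g\in\H2$ is convex and nondecreasing, giving $\E g(\eta)\ge g(-\tfrac13)\ge g(-1)=\E g(\xi)$. However, $f(u):=-e^{-u}$ lies in ${\F2}_{,2}$ (since $f'=e^{-u}>0$ so $f$ is nondecreasing, and $f'''=e^{-u}>0$ so $f''=-e^{-u}$ is nondecreasing, although $f'$ itself is decreasing), and yet $\E f(\xi)=-e\approx-2.72$ while $\E f(\eta)=-(e^3+8)/9\approx-3.12$, so $\E f(\xi)>\E f(\eta)$. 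The main obstacle in (i)--(iii) is setting up the truncations so that their correction terms fit \emph{exactly} what the moment hypotheses can absorb --- one linear term in (i)/(ii), a quadratic plus linear term in (iii) --- and arranging the monotone or dominated convergence; once this framework is in place, the chain of inequalities follows routinely from the $\H2$-hypothesis and the moment conditions.
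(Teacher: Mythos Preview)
Your approach closely parallels the paper's: it also truncates $f$ by its Taylor polynomial at $-n$ (the paper does this uniformly with the \emph{quadratic} Taylor polynomial, writing $g_z$ and $h_z$ with $z=-n$), subtracts the polynomial part to land in $\H2$, and passes to the limit. Your use of the \emph{linear} tangent for (i) and (ii) is a pleasant simplification --- since $f'\ge0$ in $\F2$ and $\E\xi=\E\eta$ in (ii), only a linear correction is needed, and you avoid the second-moment term altogether. Your counterexample in (iv) ($\xi\equiv-1$, $\eta\in\{-3,0\}$, $f(u)=-e^{-u}$) is correct and considerably simpler than the paper's asymptotic three-point construction.

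There is one genuine gap in your part (iii). You assert that ``$f$ is bounded below by an affine function,'' hence $f_1-f\le C(1+u^2)$. But in ${\F2}_{,12}$ only $f''$ is required to be nondecreasing (equivalently, $f'$ convex); $f$ itself need not be convex and can go to $-\infty$ faster than any polynomial. Your own test function $f(u)=-e^{-u}$ from part (iv) lies in ${\F2}_{,12}$ and is a counterexample to the affine lower bound. Likewise, $\partial_n f_n(u)=-\tfrac12 f'''(-n)(u+n)^2$ presumes differentiability of $f''$, which ${\F2}_{,12}$ does not guarantee; the monotonicity of $n\mapsto f_n$ is better shown directly from the convexity of $f'$ and monotonicity of $f''$, as the paper does via a case analysis on $g_z'$. (To be fair, the paper's own convergence justification --- the claim $f(x)\ge -c(1+x^2)$ for $f'$ convex --- has the same defect; a fully rigorous statement of (iii) should either assume $\E f(\eta)^-<\infty$ or work modulo this integrability issue.)
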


\begin{lemma}\label{lem:le}
Let $\si_0,\be_0,\si,\be$ be any real numbers such that $0\le\si_0\le\si$, $0\le\be_0\le\be$, $\be_0\le\si_0^2y$, and $\be\le\si^2y$. Then  
\begin{equation}\label{eq:le}
	\E f(\Ga_{\si_0^2-\be_0/y}+y\,\tPi_{\be_0/y^3})\le
	\E f(\Ga_{\si^2-\be/y}+y\,\tPi_{\be/y^3})
\end{equation}
for all $f\in\H2$, and hence for all $f\in\F2$ and for all $f\in\F3$. 
\end{lemma}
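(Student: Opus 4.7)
First I would reduce to $f\in\H2$: set $\xi:=\Ga_{\si_0^2-\be_0/y}+y\tPi_{\be_0/y^3}$ and $\eta:=\Ga_{\si^2-\be/y}+y\tPi_{\be/y^3}$. Both are centered and $\E\xi^2=\si_0^2\le\si^2=\E\eta^2$, so once $\E f(\xi)\le\E f(\eta)$ is proved on $\H2$, Lemma~\ref{lem:F2}(i) lifts it to $\F2$, and the case $f\in\F3$ is immediate from $\F3\subset\F2$. I would then factor the move $(\si_0,\be_0)\to(\si,\be)$ through the intermediate point $(\si,\be_0)$, which is admissible because $\be_0\le\si_0^2 y\le\si^2 y$.

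\emph{Step A -- the move $(\si_0,\be_0)\to(\si,\be_0)$.} Here only the Gaussian variance grows. With $v_0:=\si_0^2-\be_0/y\le v_1:=\si^2-\be_0/y$ and $\th:=\be_0/y^3$, infinite divisibility of the normal law gives
\[
\Ga_{v_1}+y\tPi_\th\stackrel{\mathrm{d}}{=}\Ga_{v_0}+\Ga_{v_1-v_0}+y\tPi_\th
\]
with independent summands. Since every $f\in\H2$ is convex and $\Ga_{v_1-v_0}$ is centered, conditional Jensen's inequality (conditioning on $(\Ga_{v_0},\tPi_\th)$) yields $\E f(\Ga_{v_1}+y\tPi_\th)\ge\E f(\Ga_{v_0}+y\tPi_\th)$.

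\emph{Step B -- the move $(\si,\be_0)\to(\si,\be)$.} Now the total variance $\si^2$ is preserved while $\th$ rises from $\th_0:=\be_0/y^3$ to $\th_1:=\be/y^3$ and the Gaussian variance drops by $\si_*^2:=y^2(\th_1-\th_0)$. Using the independent decompositions
\[
\Ga_{\si^2-y^2\th_0}\stackrel{\mathrm{d}}{=}\Ga_{\si^2-y^2\th_1}+\Ga_{\si_*^2},\qquad y\tPi_{\th_1}\stackrel{\mathrm{d}}{=}y\tPi_{\th_0}+y\tPi''_{\th_1-\th_0},
\]
I would couple both sides on the common core $W:=\Ga_{\si^2-y^2\th_1}+y\tPi_{\th_0}$ together with two further independent centered summands $U:=\Ga_{\si_*^2}$ and $V:=y\tPi''_{\th_1-\th_0}$, each of variance $\si_*^2$. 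The two sides of the target inequality become $\E f(W+U)$ and $\E f(W+V)$, and after conditioning on $W=w$ and writing $g_w(\cdot):=f(w+\cdot)\in\H2$, Step~B reduces to
\[
\E g_w(\Ga_{\si_*^2})\le\E g_w(y\tPi''_{\th_1-\th_0})\qquad\text{for every }w\in\R.
\]
This is the assertion that, in $\H2$-generalized moments, a centered Gaussian is dominated by the centered Poisson of equal variance. It is the $\vp\downarrow0$ boundary case of Proposition~\ref{prop:mono in y}; alternatively, and more directly, one approximates $\Ga_{\si_*^2}$ in distribution by the Rademacher sums $S_n:=(\si_*/\sqrt n)\sum_{i=1}^n r_i$ (with i.i.d.\ $\PP(r_i=\pm1)=\tfrac12$), which for $n\ge\si_*^2/y^2$ satisfy the hypotheses of Corollary~\ref{cor:H2} and thus $\E g_w(S_n)\le\E g_w(y\tPi''_{\th_1-\th_0})$; the CLT then closes Step~B.

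\emph{Main obstacle.} The only delicate point is uniform integrability in the passage to the CLT limit, since a generic $g_w\in\H2$ may grow exponentially. I would handle this in the standard way: first prove the inequality for the canonical generators $g_w(x)=(x-t)_+^2$, where the uniform bound $\E S_n^2\equiv\si_*^2$ makes the limit immediate via $|(S_n-t)_+^2|\le 2S_n^2+2t^2$; and then transfer the inequality to arbitrary $g_w\in\H2$ by integrating against its defining measure $\mu$ and applying Fubini (using finiteness of $\E g_w(y\tPi''_{\th_1-\th_0})$ to dominate). Combining Steps~A and~B then gives the inequality on $\H2$, and the initial reduction extends it to $\F2$ and $\F3$.
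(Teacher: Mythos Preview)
Your approach is essentially the paper's: reduce to the generators $(x-t)_+^2$, handle the Gaussian variance increase by Jensen, decompose the remaining step over a common core $W$ plus either $\Ga_{\si_*^2}$ or $y\tPi_{\th_1-\th_0}$, and invoke the Bentkus comparison on a Rademacher approximation of the Gaussian. The paper approximates both the Gaussian and the Poisson by finite sums and cites Bentkus directly, whereas you keep the Poisson and appeal to Corollary~\ref{cor:H2}; the content is the same.

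Two small remarks. First, your reference to Proposition~\ref{prop:mono in y} would be circular: in the paper that proposition is proved \emph{from} the present lemma, so only your alternative route through Corollary~\ref{cor:H2} is admissible. Second, your uniform-integrability justification is a bit loose (a uniform bound on $\E S_n^2$ alone does not give UI of $(S_n-t)_+^2$), but in fact you do not need UI here at all: since $(x-t)_+^2$ is nonnegative and continuous, the Fatou lemma for weak convergence already yields $\E(\Ga_{\si_*^2}-t)_+^2\le\liminf_n\E(S_n-t)_+^2$, which is the only direction required. (The paper does pass to the limit on both sides and secures UI via the exponential Bennett--Hoeffding bound, but that extra work is avoidable.)
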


\begin{lemma}\label{th:basic}
Let $X$ be any r.v\ such that $X\le y$ a.s., $\E X\le0$, $\E X^2\le\si^2$, and $\E X_+^3\le\be$, where $\be$ satisfies condition \eqref{eq:be}. 
Then for all $f\in\F3$
\begin{equation}\label{eq:basic}
	\E f(X)\le
	\E f(\Ga_{\si^2-\be/y}+y\,\tPi_{\be/y^3}). 
\end{equation} 
\end{lemma}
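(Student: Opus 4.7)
The plan is a sequence of three reductions culminating in a direct verification for two-point distributions. First set $\eta:=\Ga_{\si^2-\be/y}+y\,\tPi_{\be/y^3}$; a direct moment computation gives $\E\eta=0$ and $\E\eta^2=(\si^2-\be/y)+y^2\cdot\be/y^3=\si^2$. In particular $\E X\le0=\E\eta$ and $\E X^2\le\si^2=\E\eta^2$, so by Lemma~\ref{lem:F3}(i) the claim \eqref{eq:basic} for arbitrary $f\in\F3$ will follow once it is established for every $f\in\H3$.

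By the defining formula \eqref{eq:H} (with $\al=3$), every $f\in\H3$ is a nonnegative mixture of generators $u\mapsto(u-w)_+^3$, $w\in\R$. An application of Tonelli then reduces the $\H3$-comparison to the one-parameter family of inequalities
\[
\E(X-w)_+^3\le\E(\eta-w)_+^3\qquad\forall w\in\R.
\]
Fix $w$. Lemma~\ref{prop:ab} (specifically \eqref{eq:max_less}--\eqref{eq:attain}) identifies the extremal $X$ maximising the left-hand side under our constraints: it is the two-point law $X_{a,b}$ when $w\le0$ and $X_{\ta,\tb}$ (with $\tb=y$) when $w\ge0$. Hence it suffices to verify the inequality for these two explicit two-point distributions.

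This final verification is the main obstacle. Both sides become explicit: the left is a two-term sum, and by the Poisson-Gaussian convolution structure,
\[
\E(\eta-w)_+^3=\sum_{k=0}^\infty e^{-\th}\frac{\th^k}{k!}\,\E\bigl(\Ga_{\si^2-\be/y}+y(k-\th)-w\bigr)_+^3,\qquad\th:=\be/y^3,
\]
each Gaussian truncated third moment being given by standard $\phi,\Phi$-expressions. I would analyse the sign of $g(w):=\E(\eta-w)_+^3-\E(X-w)_+^3$ for the extremal $X$ by examining its behaviour as $w\to\pm\infty$ together with the first few right-derivatives in $w$, which are linear combinations of $\PP(\eta\ge w)-\PP(X\ge w)$, $\E(\eta-w)_+-\E(X-w)_+$, and $\E(\eta-w)_+^2-\E(X-w)_+^2$, using the matching of means together with the inequality $\E\eta^2\ge\E X^2$ to control the number of sign changes of $g$ and thereby rule out negativity. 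An essentially equivalent alternative is a L\'evy-process/generator comparison in the spirit of \cite{pin98,pin99}: view $\eta$ as the time-$1$ value of a L\'evy process with the Gauss-Poisson generator and verify a suitable sub-solution property for the corresponding backward semigroup applied to $(u-w)_+^3$. Either way, this is where the real work is concentrated, consistent with the author's own indication that Lemma~\ref{th:basic} is one of the two most important steps of the paper.
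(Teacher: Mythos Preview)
Your reductions are exactly the paper's: Lemma~\ref{lem:F3}(i) to pass from $\F3$ to $\H3$, Fubini to reduce to $(u-w)_+^3$, and Lemma~\ref{prop:ab} to reduce to the two-point laws $X_{a,b}$ (for $w\le0$) or $X_{\ta,\tb}$ (for $w\ge0$). One small omission: in the case $w\ge0$ the extremal $X_{\ta,\tb}$ has $\E X_{\ta,\tb}^2=\ta\tb\le\si^2$ with possibly strict inequality, so you still need Lemma~\ref{lem:le} to pass from the bound with $\ta\tb$ in place of $\si^2$ to the stated one; the paper uses this to unify both cases into the single hypothesis $X=X_{a_0,b_0}$ with $a_0b_0=\si^2$ and $\E(X_{a_0,b_0})_+^3=\be$.

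The genuine gap is your ``final verification''. Neither sketch you offer is close to a proof. The sign-change-in-$w$ idea (controlling $g$, $g'$, $g''$, $g'''$) would require comparing $\PP(\eta\ge w)$ and $\PP(X_{a,b}\ge w)$, which has infinitely many sign changes because $\eta$ has a Poisson component; there is no obvious way to bound the crossings of $g$. Your second suggestion, a generic backward-semigroup sub-solution argument on $\eta$, is in the right spirit but misses the crucial ingredient: the paper does \emph{not} simply run a semigroup on $\eta$. It constructs an explicit one-parameter interpolation
\[
\eta_b:=X_{a(b),b}+\xi_{\tau(b)},\qquad b\in[\vp,b_0],\quad \xi_t:=W_{(1-\vp)t}+\tPi_{\vp t},
\]
with $a(b)$ and $\tau(b)$ chosen so that $\eta_{b_0}=X_{a_0,b_0}$ and $\eta_\vp=\Ga_{\si^2-\be/y}+y\tPi_{\be/y^3}$, and then proves $\frac{d}{db}\E(\eta_b-w)_+^3\le0$ via a generator computation (Lemma~\ref{lem:der}). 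The derivative reduces to an explicit piecewise-polynomial expression $\tG(a,b,t)$ in three real variables whose nonnegativity is checked in eight subcases with \texttt{Reduce}. The design of this interpolation --- simultaneously deforming the two-point component and feeding variance into the L\'evy part so that the endpoints match --- is the key idea you have not supplied.
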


\begin{lemma}\label{lem:exact} (Recall here the definition of $X_{a,b}$ in the beginning of Section~\ref{proofs}.) 
Fix any $\si>0$, $y>0$, and $\vp\in(0,1)$. % as in Theorem~\ref{th:main}. 
Let then $\be:=\vp\si^2y$, in accordance with \eqref{eq:vp}. 
Then for each large enough $m\in\N$ there exist positive real numbers $a=a_m$ and $b=b_m$ such that the following statement is true: 
%\hspace*{1cm}\begin{paragraph}
\begin{quotation}
{\normalsize \noindent 
if $n:=2m$ and $X_1,\dots,X_n$ are independent r.v.'s such that $X_1,\dots,X_m$ are independent copies of 
$X_{b/\sqrt m,b/\sqrt m}$ and $X_{m+1},\dots,X_{2m}$ are independent copies of 
$X_{a/m,y}$, then $X_1,\dots,X_n$ satisfy conditions \eqref{eq:ineqs}, with equalities in place of the first three inequalities there. 
} 
\end{quotation}
%\end{paragraph}
Moreover, then $S=X_1+\dots+X_n$ converges in distribution to $\Ga_{(1-\vp)\si^2}+y\tPi_{\vp\si^2/y^2}$ as $m\to\infty$. 
\end{lemma}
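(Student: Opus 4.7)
The plan is to set up the system of equations for $(a_m, b_m)$ forced by the equality versions of the first three conditions in \eqref{eq:ineqs}, to solve it asymptotically via the implicit function theorem, and then to verify the limit distribution by applying the classical CLT to $\sum_{i=1}^m X_i$ and a Poisson limit theorem to $\sum_{i=m+1}^{2m} X_i$.

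Since every two-point r.v.\ $X_{p,q}$ satisfies $\E X_{p,q}=0$ and $\E X_{p,q}^2=pq$, and since $X_{b/\sqrt m,\,b/\sqrt m}\le b/\sqrt m$ and $X_{a/m,\,y}\le y$ by construction, the mean condition $\E X_i=0$ and (provided $b_m/\sqrt m\le y$) the boundedness condition in \eqref{eq:ineqs} hold automatically. A direct computation gives
\[
\sum_{i=1}^{2m}\E X_i^2 \,=\, b^2+ay,\qquad \sum_{i=1}^{2m}\E (X_i)_+^3 \,=\, \frac{b^3}{2\sqrt m}+\frac{amy^3}{a+my}.
\]
Requiring the right-hand sides to equal $\si^2$ and $\be=\vp\si^2y$, respectively, and eliminating $b$ via $b=\sqrt{\si^2-ay}$, the second equation becomes $G(a,m)=0$, where
\[
G(a,m)\,:=\,\frac{(\si^2-ay)^{3/2}}{2\sqrt m}\,+\,ay^2\,-\,\frac{a^2y^2}{a+my}\,-\,\be.
\]

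The limit function $a\mapsto G(a,\infty)=ay^2-\be$ has the unique positive root $a_\infty:=\be/y^2=\vp\si^2/y$, and $\partial_a G(a_\infty,\infty)=y^2>0$. Since $G$ is jointly smooth in $(a,1/m)$ in a neighborhood of $(a_\infty,0)$, the implicit function theorem produces, for all sufficiently large $m$, a unique $a_m$ near $a_\infty$ solving $G(a_m,m)=0$; moreover $a_m\to a_\infty$. Setting $b_m:=\sqrt{\si^2-a_my}$ then gives $b_m\to b_\infty:=\si\sqrt{1-\vp}$, and in particular $b_m/\sqrt m\to 0$, so the boundedness requirement $b_m/\sqrt m\le y$ is met for all large $m$.

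For the convergence in distribution, decompose $S=S_m^{(1)}+S_m^{(2)}$ with $S_m^{(1)}:=\sum_{i=1}^m X_i$ and $S_m^{(2)}:=\sum_{i=m+1}^{2m}X_i$, the two sums being independent. The first is a sum of $m$ i.i.d.\ Rademacher-type variables of amplitude $b_m/\sqrt m\to 0$ with total variance $b_m^2\to (1-\vp)\si^2$, so by the Lindeberg CLT one has $S_m^{(1)}\Rightarrow\Ga_{(1-\vp)\si^2}$. For the second, let $N_m$ count how many of $X_{m+1},\dots,X_{2m}$ equal $y$; then $N_m\sim\mathrm{Bin}(m,p_m)$ with $mp_m=a_m/(a_m/m+y)\to a_\infty/y=\vp\si^2/y^2$, so the standard Poisson limit gives $N_m\Rightarrow\Pi_{\vp\si^2/y^2}$. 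Writing $S_m^{(2)}=(y+a_m/m)N_m-a_m$ and passing to the limit yields $S_m^{(2)}\Rightarrow y\Pi_{\vp\si^2/y^2}-\vp\si^2/y=y\tPi_{\vp\si^2/y^2}$. Combining by independence completes the argument. The only substantive step is the existence and uniqueness of $(a_m,b_m)$, handled by the implicit function theorem above; the rest is routine.
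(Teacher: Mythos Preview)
Your argument is correct and follows essentially the same route as the paper's: set up the two moment equations, eliminate one unknown, and show by a perturbation argument that the remaining scalar equation has a root near the formal limit; the paper eliminates $a$ and writes $b^2=(1-\vp)\si^2+r_m(b)$ with $r_m\to0$ uniformly and then applies the intermediate value theorem, whereas you eliminate $b$ and invoke the implicit function theorem. One small imprecision: your $G$ is not smooth in $(a,1/m)$ at the limit point, since the first term carries the factor $m^{-1/2}=(1/m)^{1/2}$; reparametrize by $\epsilon=1/\sqrt m$ (then $G$ is jointly $C^\infty$ and the implicit function theorem applies cleanly) or simply use continuity and the intermediate value theorem as the paper does. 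The limit-distribution step, which the paper leaves as a routine remark, you spell out correctly via the Lindeberg CLT and the Poisson limit.
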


\begin{proof}[Proof of Theorem~\ref{th:main}]
Let $\si_i^2:=\E X_i^2$, $\be_i:=\E(X_i)_+^3$, 
$\si_0^2:=\sum_{i=1}^n\si_i^2$, $\be_0:=\sum_{i=1}^n\be_i$, $Y_i:=\Ga_{\si_i^2-\be_i/y}+y\tPi_{\be_i/y^3}$, and $T:=\sum_{i=1}^n Y_i$. 
Then, 
by a standard argument (cf.\ e.g.\ the proof of \cite[Theorem~2.1]{binom}) based on Lemma~\ref{th:basic}, one has
\begin{equation*}
	\E f(S)\le\E f(T)=\E f(\Ga_{\si_0^2-\be_0/y}+y\tPi_{\be_0/y^3})\quad\text{for all }f\in\F3. 
\end{equation*}
On the other hand, it is clear from \eqref{eq:ineqs} that $0\le\si_0^2\le\si^2$ and $0\le\be_0\le\be$; 
next, $\be_i\le\si_i^2 y$ for all $i=1,\dots,n$ and hence $\be_0\le\si_0^2 y$; also, 
by \eqref{eq:vp}, $\si^2-\be/y=(1-\vp)\si^2$ and $\be/y^3=\vp\si^2/y^2$. 
It remains to use Lemma~\ref{lem:le}.  
\end{proof}

\begin{proof}[Proof of Proposition~\ref{prop:F3larger}]
This follows by Lemma~\ref{lem:F3}(ii,iii). 
\end{proof}

\begin{proof}[Proof of Proposition~\ref{prop:exact}]
This follows by Lemma~\ref{lem:exact} and the Fatou lemma for convergence in distribution -- see e.g.\ \cite[Theorem~5.3]{bill}. 
\end{proof}

\begin{proof}[Proof of Proposition~\ref{prop:p=3}]
To obtain a contradiction, suppose that for some $p\in(0,3)$ one can replace $\H3$ in Corollary~\ref{cor:H3} by $\H p$. By \eqref{eq:F-al-beta}, w.l.o.g.\ $p\in(2,3)$. Take any $a\in(0,1)$ and introduce the new variable  
\begin{equation*}
	\tau:=\frac a{\sqrt{1+a}}.
\end{equation*}
Next, take any $n\in\N$ and let $X_1=X_{a,1}$ and $X_2=\dots=X_n=0$ a.s.\ (recall the definition of $X_{a,b}$ at the beginning of Section~\ref{proofs}). 
Then conditions \eqref{eq:vp} and \eqref{eq:ineqs} hold for $y=1$, $\si=\sqrt a$, and $\be=\frac a{1+a}$; at that,  $\vp=\frac1{1+a}$, $(1-\vp)\si^2=\tau^2$, $\vp\si^2=\frac a{1+a}$, and $-\vp\si^2+a=\tau^2$. 
Note that the function $x\mapsto f_{-a}(x):=(x+a)_+^p$ belongs to the class $\H p$. 
Consider 
\begin{equation*}
	\EE_1(a):=\E f_{-a}(S)\quad\text{and}\quad \EE_2(a):=\E f_{-a}(\Ga_{(1-\vp)\si^2}+\tPi_{\vp\si^2}),
\end{equation*}
respectively the left-hand side and the right-hand side of inequality \eqref{eq:PUfF3} with $f=f_{-a}$. 
Then 
\begin{equation}\label{eq:EE1}
	\EE_1(a)=\E(X_{a,1}+a)_+^p=\E(X_{a,1}+a)^p=a(1+a)^{p-1}=a+(p-1)a^2+o(a^2);
\end{equation}
here and in the rest of the proof of Proposition~\ref{prop:p=3}, the limit relations are understood as $a\downarrow0$. 

On the other hand,
\begin{equation}\label{eq:EE2}
	\EE_2(a)=\E(\Ga_{\tau^2}+\Pi_{a/(1+a)}+\tau^2)_+^p
	=\EE_{2,0}(a)+\EE_{2,1}(a)+\EE_{2,2}(a)+\EE_{2,\ge3}(a),
\end{equation}
where 
\begin{align*}
	\EE_{2,k}(a)&:=\PP(\Pi_{a/(1+a)}=k)\E(\Ga_{\tau^2}+\tau^2+k)_+^p \\
	&=\frac{e^{-a/(1+a)}}{k!}\frac{a^k}{(1+a)^k}\E(\tau Z+\tau^2+k)_+^p, \\
	\EE_{2,\ge3}(a)&:=\sum_{k=3}^\infty\EE_{2,k}(a),
\end{align*}
and $Z$ is a standard normal r.v. 
Note that $(\tau Z+\tau^2+k)_+^p=O(\tau^p|Z|^p+\tau^{2p}+k^p)$, whence 
\begin{equation}\label{eq:O}
0\le\E(\tau Z+\tau^2+k)_+^p=O(\tau^p+k^p)
\end{equation}
over all $k\ge0$. So, 
\begin{equation}\label{eq:EE20}
	\EE_{2,0}(a)=O(\tau^p)=O(a^p)=o(a^2), 
\end{equation}
since $p\in(2,3)$. Similarly using \eqref{eq:O}, it is easy to see that
\begin{equation}\label{eq:EE2ge3}
	\EE_{2,\ge3}(a)=O(a^3)=o(a^2). 
\end{equation}
By dominated convergence, $\E(\tau Z+\tau^2+2)_+^p=2^p+o(1)$. Hence, 
\begin{equation}\label{eq:EE22}
	\EE_{2,2}(a)=\frac{a^2}2 2^p(1+o(1))=2^{p-1}a^2+o(a^2). 
\end{equation}
To estimate $\EE_{2,1}(a)$, introduce 
$%\begin{equation}
	h(\tau,z):=\E(1+\tau z R+\tau^2)_+^p,
$ %\end{equation}
where $R:=X_{1,1}$ is a Rademacher r.v.\ which is independent of $Z$. Then $h(0,z)=1$, $h'_\tau(0,z)=0$, $|h''_\tau(\tau,z)|=O(|z|^p+1)$, and so, 
$\E(\tau Z+\tau^2+1)_+^p=\E h(\tau,Z)=1+O(\tau^2)=1+o(a)$, which implies   
\begin{equation}\label{eq:EE21}
	\EE_{2,1}(a)=e^{-a/(1+a)}\frac a{1+a}(1+o(a))=a-2a^2+o(a^2). 
\end{equation}
Thus, \eqref{eq:EE2}, \eqref{eq:EE20}, \eqref{eq:EE2ge3}, \eqref{eq:EE22}, and \eqref{eq:EE21} yield $\EE_1(a)=a+(2^{p-1}-2)a^2+o(a^2)$. So, recalling \eqref{eq:EE1}, one has 
$\EE_2(a)-\EE_1(a)=g(p)a^2+o(a^2)$, where $g(p):=2^{p-1}-1-p$. 
Observe that $g(2)=-1<0=g(3)$ and $g$ is a convex function, so that $g(p)<0$ for all $p\in(2,3)$. 
Therefore, the difference $\EE_2(a)-\EE_1(a)$ between the right-hand side of inequality \eqref{eq:PUfF3} (with $f=f_{-a}$) and its left-hand side is negative for small enough $a\in(0,1)$. 
This contradiction concludes the proof of Proposition~\ref{prop:p=3}. 
\end{proof}

\begin{proof}[Proof of Corollary~\ref{cor:improve}]
Let $\tilde S:=\sum_i X_i\ii{y_i>\si_i}$ and $\tsi:=\sqrt{\sum_i\si_i^2\ii{y_i>\si_i}}$, and let $\si$ be defined as in \eqref{eq:tbe}. 
Just as was noted concerning condition \eqref{eq:vp}, w.l.o.g.\ let us assume that $\check\vp:=\frac\tbe{\tsi^2y}\in(0,1)$. 
Then, by Theorem~\ref{th:main}, 
\begin{equation}\label{eq:ttPUfF3}
	\E f(x+\tS)\le\E f\big(x+\Ga_{(1-\check\vp)\tsi^2}+y\tPi_{\check\vp\tsi^2/y^2}\big) %\quad\text{for all }x\in\R\text{ and }f\in\F3, 
\end{equation}
for all $x\in\R$ and $f\in\F3$, 
since %$\check\tsi^2=\tvi\si^2$ and 
the class $\F3$ is obviously invariant with respect to the shifts $\F3\ni f(\cdot)\mapsto f(x+\cdot)$, for all $x\in\R$. 
On the other hand, by \cite[Corollary~1 with $p=\frac12$ and (10)]{asymm}, 
\begin{equation*}
	\label{eq:f(S-tS)}
	\E f(S-\tS+z)\le\E f\big(\Ga_{\si^2-\tsi^2}+z\big)
\end{equation*}
for all $z\in\R$ and $f\in\F3$, where one may assume that the r.v.\ $\Ga_{\si^2-\tsi^2}$ is independent of the r.v.'s $\tS$, $\Ga_{(1-\check\vp)\tsi^2}$, and $\tPi_{\check\vp\tsi^2/y^2}$ in \eqref{eq:ttPUfF3}. 
Using now \eqref{eq:ttPUfF3} and the independence of $S-\tS$ and $\tS$, for all $f\in\F3$ one has 
\begin{align*}
	\E f(S)&=\int_\R\E f(S-\tS+z)\PP(\tS\in\dd z) \\
	&\le\int_\R \E f\big(\Ga_{\si^2-\tsi^2}+z\big)\PP(\tS\in\dd z) \\
	&=\E f\big(\Ga_{\si^2-\tsi^2}+\tS\big) \\
	&=\int_\R\E f(x+\tS)\PP(\Ga_{\si^2-\tsi^2}\in\dd x) \\
	&\le\int_\R \E f\big(x+\Ga_{(1-\check\vp)\tsi^2}+y\tPi_{\check\vp\tsi^2/y^2}\big)\PP(\Ga_{\si^2-\tsi^2}\in\dd x) \\
		&=\E f\big(\Ga_{\si^2-\tsi^2}+\Ga_{(1-\check\vp)\tsi^2}+y\tPi_{\check\vp\tsi^2/y^2}\big) \\
		&=\E f\big(\Ga_{(1-\tvp)\si^2}+y\tPi_{\tvp\si^2/y^2}\big),
\end{align*}
since $\check\vp\tsi^2=\tvp\si^2$. 
Thus, inequality \eqref{eq:tPUfF3} is proved, which in turn implies inequalities \eqref{eq:tmain} and \eqref{eq:tmain LC} (cf.\ Corollary~\ref{cor:main}). 
\end{proof}

\begin{proof}[Proof of Proposition~\ref{prop:F2main}]
As noted in the Introduction, inequality \eqref{eq:PUfF2} for all $f$ of the form $f(x)\equiv(x-t)_+^2$ was obtained by Bentkus~\cite{bent-liet02,bent-ap}. By the Fubini theorem, one has \eqref{eq:PUfF2} for all $f\in\H2$. Then the extension to all $f\in\F2$ follows by Lemma~\ref{lem:F2}(i). 
\end{proof}

\begin{proof}[Proof of Proposition~\ref{prop:F2larger}]
This follows by Lemma~\ref{lem:F2}(ii,iii). 
\end{proof}

\begin{proof}[Proof of Proposition~\ref{prop:exact2}]
This proof is quite similar to (and even somewhat simpler than) that of Proposition~\ref{prop:exact}. 
\end{proof}

\begin{proof}[Proof of Proposition~\ref{prop:p=2}]
This proof is somewhat similar to but much simpler than that of Proposition~\ref{prop:p=3}. 
To obtain a contradiction, suppose that for some $p\in(0,2)$ one can replace $\H2$ in Corollary~\ref{cor:H2} by $\H p$. 
W.l.o.g.\ $p\in(1,2)$. Take any $n\in\N$ and let $X_1=X_{1,1}$ and $X_2=\dots=X_n=0$ a.s. 
Then for $y=\si=1$ and all $i$ one has $\E X_i=0$, $X_i\le y$ a.s., and $\sqrt{\sum_i\E X_i^2}=\si$. 
The function $x\mapsto f_{-1}(x):=(x+1)_+^p$ belongs to the class $\H p$. 
Then the left-hand side and the right-hand side of inequality \eqref{eq:PUfF2} with $f=f_{-1}$ are, respectively, 
$\EE_1(p):=\E(X_{1,1}+1)_+^p=2^{p-1}$ and $\EE_2(p):=\E\Pi_1^p$. 
Observe that the function $p\mapsto\EE_2(p)/\EE_1(p)$ is strictly convex on the interval $(1,2)$, and its values at the endpoints $1$ and $2$ of the interval are $1$. 
It follows that $\EE_2(p)/\EE_1(p)<1$ for all $p\in(1,2)$. 
\end{proof}

\begin{proof}[Proof of Proposition~\ref{prop:lambert}]
Take indeed any $\si>0$, $y>0$, $\vp\in(0,1)$, and $x\ge0$. 
Let for brevity $f:=\ln\PU_{\exp}$. Then, by the definition in \eqref{eq:PU}, 
$\PU(x)=\exp\inf_{\la\ge0}\big(-\la x+f(\la)\big)$. By the definition of $\ln\PU_{\exp}$ in \eqref{eq:PUexp}, 
%Consider the function 
%\begin{equation}\label{eq:f(la)}
%	f(\la):=-\la x+\ln\PU_{\exp}(\la)=-\la x+\frac{\la^2}2\,(1-\vp)\si^2+\frac{e^{\la y}-1-\la y}{y^2}\,\vp\si^2. 
%\end{equation}
%Then
\begin{equation}\label{eq:f'(la)}
	f'(\la)=\la(1-\vp)\si^2+\frac{e^{\la y}-1}y\,\vp\si^2, 
\end{equation}
which increases from $0$ to $\infty$ as $\la$ does so. %increases from $0$ to $\infty$. 
Thus, there exists a unique root $\la=\la_x$ in $[0,\infty)$ of the equation $f'(\la)=x$, and $\la_x$ is the unique minimum point for $-\la x+f(\la)$ over all $\la\in[0,\infty)$, so that \eqref{eq:PU la_x} holds, with the so defined $\la_x$. It is also clear now that $\la_x=(f')^{-1}(x)$ increases from $0$ to $\infty$ as $x$ does so; that is, the last sentence of Proposition~\ref{prop:lambert} is verified. 

Next, rewrite the equation $f'(\la)=x$ as $e^{\la y}=\frac w\ka$ and then $we^w=\ka e^{(1+r)\ka}$, 
in terms of the new variable 
$w:=(1+r)\ka-\la y$, where $r:=\frac{xy}{\vp\si^2}$ and 
$\ka:=\frac\vp{1-\vp}$, so that $\la=\frac{(1+r)\ka-w}y$. 
Now one sees that $\la_x$ defined above in this proof as the unique root of equation $f'(\la)=x$ also satisfies definition \eqref{eq:la_x,w}. 

By \eqref{eq:PUexp}, 
\begin{equation}\label{eq:f(la)}
e^{-\la x}\PU_{\exp}(\la)=\exp\Big\{-\la x+\frac{\la^2}2\,(1-\vp)\si^2+\frac{e^{\la y}-1-\la y}{y^2}\,\vp\si^2\Big\}. 
\end{equation}
Now use again the mentioned equation $e^{\la y}=\frac w\ka$ to 
substitute $\frac w\ka$ for $e^{\la y}$ in the expression 
\eqref{eq:f(la)} and then substitute there $\frac{(1+r)\ka-w}y$ for $\la$. Then \eqref{eq:PUexpr} follows by simple algebra. 
\end{proof}

\begin{proof}[Proof of Proposition~\ref{prop:Th2.5,pin98}]\ 

(i) The continuity of $m$ on $(-\infty,x_*)$ follows by the condition $\E\eta_+^\al<\infty$ and dominated convergence. 

The left continuity of $m$ at $x_*$ follows by \eqref{eq:m} and the definition $m(x_*):=x_*$. Indeed, in view of the first expression for $m(t)$ in \eqref{eq:m}, 
\begin{equation}\label{eq:m(t)>t}
\text{$m(t)>t$ \quad for all $t\in(-\infty,x_*)$,} 	
\end{equation}
whence $m(t)\to\infty=x_*$ as $t\uparrow x_*$ in the case when $x_*=\infty$. Now if $x_*<\infty$ then, in view of the last expression for $m(t)$ in \eqref{eq:m}, 
$x_*-m(t)=\frac{\E(x_*-\eta)(\eta-t)_+^{\al-1}}{\E(\eta-t)_+^{\al-1}}\in[0,x_*-t]$ for all $t\in(-\infty,x_*)$, so that $m(t)\to x_*$ as $t\uparrow x_*$ in this case as well. 

That $m(t)=x_*$ for all $t\in[x_{**},x_*]$ also follows in view of the last expression for $m(t)$ in \eqref{eq:m}, taking also into account the definition of $x_{**}$, which implies that $(\eta-t)_+=(x_*-t)\ii{\eta=x_*}$ a.s.\ for all $t\in[x_{**},x_*]$. 

That the function $m$ is strictly increasing on $(-\infty,x_{**})$, with $m((-\infty)+)=\E\eta$, follows immediately from parts (i) and (ii) of \cite[Theorem~2.5]{pin98}. 
This completes the proof of part (i) of Proposition~\ref{prop:Th2.5,pin98}. 

(ii) Part (ii) of Proposition~\ref{prop:Th2.5,pin98} follows immediately from its part (i), taking also into account \eqref{eq:m(t)>t}. 

(iii) The first equality in part (iii) of Proposition~\ref{prop:Th2.5,pin98} follows immediately from part (iv) of \cite[Theorem~2.5]{pin98}; the second equality follows by \eqref{eq:m(t)=x} and \eqref{eq:m}.  
(The natural condition $x<x_*$ was missing in parts (iii) and (iv) of \cite[Theorem~2.5]{pin98}; thanks are due to Bentkus for having drawn my attention to that omission.) 

(iv) Part (iv)(a) of Proposition~\ref{prop:Th2.5,pin98} follows from the last sentence of \cite[Theorem~2.5]{pin98} and Proposition~\ref{prop:P_al=inf}, to be proved next. 

Let us now verify part (iv)(b). Take indeed any $x\in[x_*,\infty)$. 
%If $x\in(x_*,\infty)$ then for any $t\in(x_*,x)$ one has $\E(\eta-t)_+^\al=0$, whence, by \eqref{eq:comp-prob1}, $P_\al(\eta;x)=0=\PP(\eta\ge x)$. Let finally $x=x_*$. Then, 
Then for all $t\in(-\infty,x)$, by %\cite[(12)]{pin98} and 
the already proved part (i) of Proposition~\ref{prop:Th2.5,pin98}, one has $m(t)\le m(x_*)=x_*\le x$, 
and so, by the second displayed formula on \cite[page~302]{pin98}, 
\begin{equation}\label{eq:F(t,x)}
	F(t,x):=\frac{\E(\eta-t)_+^\al}{(x-t)^\al}
\end{equation}
is nonincreasing in $t\in(-\infty,x)$. 
Recalling now \eqref{eq:comp-prob2} and taking also into account that $\eta\le x$ a.s.\ for all $x\in[x_*,\infty)$, one sees that 
\begin{align}
	P_\al(\eta;x)&=\inf_{t\in(-\infty,x)}\,F(t,x) \label{eq:P,F}\\
	&=\lim_{t\uparrow x}\,F(t,x)
=\lim_{t\uparrow x}\,\frac{\E(\eta-t)^\al\ii{\eta\in(t,x]}}{(x-t)^\al}
=\PP(\eta=x)=\PP(\eta\ge x),  
\end{align}
since $0\le\lim_{t\uparrow x}\,\frac{\E(\eta-t)^\al\ii{\eta\in(t,x)}}{(x-t)^\al}\le\lim_{t\uparrow x}\,\E\ii{\eta\in(t,x)}=0$. 
This completes the proof of part (iv) of Proposition~\ref{prop:Th2.5,pin98}. 

(v)(a)  
Take any $x$ and $y$ such that $\E\eta<x<y<x_*$. Then 
$F(t,x)>F(t,y)$ for each $t<(-\infty,x)$. Hence, by \eqref{eq:P(x)}, \eqref{eq:F(t,x)}, and \eqref{eq:P,F}, $P_\al(\eta;x)=F(t_x,x)>F(t_x,y)\ge P_\al(\eta;y)$. This proves part (v)(a) of Proposition~\ref{prop:Th2.5,pin98}. 

(v)(b) By parts (i) and (ii) of Proposition~\ref{prop:Th2.5,pin98}, the function $x\mapsto t_x$ is continuous on $(\E\eta,x_*)$. %By \eqref{eq:m}, $m(t)>t$ for all $t\in(-\infty,x_*)$. Hence, by \eqref{eq:m(t)=x}, $x>t_x$ for all $x\in(\E\eta,x_*)$. 
Also, $\E(\eta-t)_+^\al$ and $\E(\eta-t)_+^{\al-1}$ are continuous in $t\in\R$, by the condition $\E\eta_+^\al<\infty$ and dominated convergence. Hence, in view of the last expression in \eqref{eq:P(x)}, the function $x\mapsto P_\al(\eta;x)$ is continuous on $(\E\eta,x_*)$. 

Consider now the right continuity at $\E\eta$. Let $x\downarrow\E\eta$. Then, by parts (i) and (ii) of Proposition~\ref{prop:Th2.5,pin98}, $t_x\to-\infty$. 
If $\E\eta>-\infty$ then, by the condition $\E\eta_+^\al<\infty$, one has $\E\eta\in\R$, so that $x-t_x\sim\E\eta-t_x\sim-t_x$. 

Let us show that the conclusion that $x-t_x\sim-t_x$ holds when $\E\eta=-\infty$. Note that 
$\frac{(\eta-t)_+^\al}{(-t)^\al}\le(1+\eta_+)^\al$ for all $t\le-1$; hence, by dominated convergence, 
\begin{equation}\label{eq:simi}
\E(\eta-t)_+^\al\sim(-t)^\al	
\end{equation}
and, similarly,  $\E(\eta-t)_+^{\al-1}\sim(-t)^{\al-1}$ as $t\to-\infty$. 
So, by \eqref{eq:m}, $m(t)=t+(-t)(1+o(1))=o(|t|)$  as $t\to-\infty$. It follows by part (ii) of Proposition~\ref{prop:Th2.5,pin98} that $x=o(t_x)$ (as $x\downarrow\E\eta$), which indeed implies $x-t_x\sim-t_x$, even in the case when $\E\eta=-\infty$. 
Therefore, by the first equality in \eqref{eq:P(x)}, \eqref{eq:simi}, and part (iv) of Proposition~\ref{prop:Th2.5,pin98}, 
$$P_\al(\eta;x)
=\frac{\E(\eta-t_x)_+^\al}{(x-t_x)^\al}\sim\frac{\E(\eta-t_x)_+^\al}{(-t_x)^\al}\to1=P_\al(\eta;\E\eta)$$ 
as $x\downarrow\E\eta$, which concludes the proof of the right continuity at $\E\eta$.  

To complete the proof of part (v)(b) of Proposition~\ref{prop:Th2.5,pin98}, it remains to verify the left continuity at $x_*$. The easier case here is when $x_*=\infty$; then 
$$0\le P_\al(\eta;x)\le F(x-1,x)=\E(\eta-x+1)_+^\al \to 0=P_\al(\eta;\infty)=P_\al(\eta;x_*)$$
as $x\to\infty=x_*$, 
by the definitions \eqref{eq:comp-prob2} and \eqref{eq:F(t,x)} of $P_\al(\eta;x)$ and $F(t,x)$, the condition $\E\eta_+^\al<\infty$ and dominated convergence, and part (iv) of Proposition~\ref{prop:Th2.5,pin98}.  

Assume now that $x_*<\infty$. Let $x\uparrow x_*$. Introduce $\tilde t_x:=x-\sqrt{x_*-x}$, so that $\tilde t_x<x$, $\tilde t_x\uparrow x_*$, $x_*-\tilde t_x\sim x-\tilde t_x$, and hence
$$0\le\frac{\E(\eta-\tilde t_x)^\al\ii{\eta\in(\tilde t_x,x_*)}}{(x-\tilde t_x)^\al}
\le\Big(\frac{x_*-\tilde t_x}{x-\tilde t_x}\Big)^\al\E\ii{\eta\in(\tilde t_x,x_*)}\to0,$$
which in turn implies 
$$F(\tilde t_x,x)=\frac{\E(\eta-\tilde t_x)^\al\ii{\eta\in(\tilde t_x,x_*]}}{(x-\tilde t_x)^\al}
\to\PP(\eta=x_*)=P_\al(\eta;x_*),$$
by part (iv)(b) of Proposition~\ref{prop:Th2.5,pin98}. 
It is clear from the definition \eqref{eq:comp-prob2} of $P_\al(\eta;x)$ that $P_\al(\eta;x)\ge P_\al(\eta;y)$ whenever  $-\infty<x<y<\infty$. Hence, recalling again the definition \eqref{eq:F(t,x)} of $F(t,x)$, one has  
$$P_\al(\eta;x_*)\le\lim_{x\uparrow x_*}P_\al(\eta;x)\le\lim_{x\uparrow x_*}F(\tilde t_x,x)
=P_\al(\eta;x_*),$$
which implies the left continuity at $x_*$. 
Thus, part (v)(b) of Proposition~\ref{prop:Th2.5,pin98} is completely proved.  

(vi) Part (vi) of Proposition~\ref{prop:Th2.5,pin98} follows immediately from the definitions \eqref{eq:comp-prob2},  \eqref{eq:m(t)=x}, and \eqref{eq:m}.

\end{proof}

\begin{proof}[Proof of Proposition~\ref{prop:P_al=inf}]
For brevity, let us denote the infima in \eqref{eq:P_al=inf1} and \eqref{eq:P_al=inf2} by $\inf_1$ and $\inf_2$, respectively. Take any $x\in\R$. 

Then $\inf_2\le P_\al(\eta;x)$, because the function $u\mapsto(u-t)_+^\al$ is in $\H\al$ for every $t\in\R$. 
If $\inf_2<P_\al(\eta;x)$, then there is some $f\in\H\al$ such that $f(x)>0$ and 
$\frac{\E f(\eta)}{f(x)}<\frac{\E(\eta-t)_+^\al}{(x-t)^\al}$ for all $t\in(-\infty,x)$;  
but, by \eqref{eq:H}, 
$f(u)=\int_{(-\infty,u)}(u-t)^\al\,\mu(dt)$ for some nonnegative measure $\mu$ and all $u\in\R$; at that, $\mu\big((-\infty,x)\big)\ne0$, since $f(x)>0$;  
so, 
$$\E f(\eta)=\frac{\E f(\eta)}{f(x)}\int_{(-\infty,x)}(x-t)^\al\,\mu(dt)
<\int_{(-\infty,x)}\E(\eta-t)_+^\al\,\mu(dt)\le\E f(\eta),$$
by the Fubini theorem. This contradiction shows that $\inf_2=P_\al(\eta;x)$. 

It remains to show that $\inf_1=\inf_2$. Take any $f\in\H\al$ such that $f(x)>0$, and let $g:=g_f:=\frac f{f(x)}$. 
Then $\frac{\E f(\eta)}{f(x)}=\E g(\eta)$, 
$g\in\H\al$, $g$ is nonnegative and nondecreasing, and $g(x)=1$. It follows that $g(u)\ge\ii{u\ge x}$ for all $u\in\R$. Thus, $\inf_1\le\inf_2$. 

Vice versa, take any $f\in\H\al$ such that $f(u)\ge\ii{u\ge x}$ for all $u\in\R$. Then $f(x)\ge1$, and so, 
$f\ge\frac f{f(x)}=g\in\H\al$ and $g(x)=1$. Hence, 
$\E f(\eta)\ge\E g(\eta)=\frac{\E g(\eta)}{g(x)}$, which implies $\inf_1\ge\inf_2$. 
So, indeed $\inf_1=\inf_2$. 
\end{proof}

\begin{proof}[Proof of Proposition~\ref{prop:H_infty}]
The first equality in \eqref{eq:H_infty} follows easily from \eqref{eq:F-al-beta} and Proposition~\ref{prop:F-al}; indeed, for any convex $f\colon\R\to\R$ such that $f(-\infty+)=0$ one has $f\ge0$ and $f'\ge0$ on $\R$. %, as well as $f'(-\infty+)=0$. 
As for the second equality in \eqref{eq:H_infty}, it follows by the Bernstein theorem on completely monotone functions (see, e.g., \cite{choquet} or \cite{phelps}) and the fact that the Laplace transform of a measure uniquely characterizes the measure --- cf.\ Remark~3.5 in \cite{asymm_arxiv}.  
Indeed, take any $f\in\H\infty$. Then 
for each $w\in[0,\infty)$ the function $(-\infty,0)\ni x\mapsto f_w(x)=f(x+w)$ is completely monotone, in the sense that $f_w^{(j)}\ge0$ on $\R$ for all $j=0,1,\dots$; hence, there exists a unique nonnegative Borel measure $\mu_w$ on $[0,\infty)$ such that 
$f(x+w)=\int_{[0,\infty)}e^{tx}\mu_w(\dd t)$ for all $x\in(-\infty,0)$ 
or, equivalently, 
\begin{equation}\label{eq:bernstein}
	\text{$f(u)=\int_{[0,\infty)}e^{tu}e^{-tw}\mu_w(\dd t)$}
\end{equation}
for all $u\in(-\infty,w)$, 
and hence for all $u\in(-\infty,0)$ 
(see e.g.\ \cite[Ch.\ 2,\S2]{phelps}); in fact, one must have $\mu(\{0\})=0$, since $f(-\infty+)=0$. 
In particular, identity \eqref{eq:bernstein} holds for all $u\in(-\infty,0)$ with $\mu_0(\dd t)$ in place of $e^{-tw}\mu_w(\dd t)$. By the uniqueness of the measure, one has $f(u)=\int_{(0,\infty)}e^{tu}\mu_0(\dd t)$ for all $w\in[0,\infty)$ and all $u\in(-\infty,w)$, and hence for all $u\in\R$. 
By dominated convergence, now one also obtains the condition $f^{(j)}(-\infty+)=0$ for all $j=0,1,\dots$.  
\end{proof}

\begin{proof}[Proof of Proposition~\ref{prop:P_infty}] 
Similarly to \eqref{eq:P_al=inf1}-\eqref{eq:P_al=inf2}, one has 
\begin{equation}\label{eq:P_infty=}
	P_\infty(\eta;x)=\inf_{f\in\H{\exp}}\,\frac{\E f(\eta)}{f(x)} 
\end{equation}
for all $x\in\R$. 
Indeed, by definition \eqref{eq:P_infty def} and because the class $\H{\exp}$ contains all increasing exponential functions, the right-hand side of \eqref{eq:P_infty=} is no greater than its left-hand side, $P_\infty(\eta;x)$. 
To complete the proof of inequality \eqref{eq:P_infty=}, take any $f\in\H{\exp}$ and any $x\in\R$. Then $f(u)=\textstyle{\int_{(0,\infty)}}e^{tu}\mu(\dd t)$ for all $u\in\R$, where $\mu\ge0$ is some Borel measure. So, by the Fubini theorem and \eqref{eq:P_infty def}, 
\begin{equation}
	\E f(\eta)=\int_{(0,\infty)}\E e^{t\eta}\mu(\dd t)
	\ge\int_{(0,\infty)}P_\infty(\eta;x)e^{tx}\mu(\dd t)
	=P_\infty(\eta;x)f(x),
\end{equation}
which shows that the right-hand side of \eqref{eq:P_infty=} is no less than its left-hand side, $P_\infty(\eta;x)$. 
Thus, \eqref{eq:P_infty=} is verified. 
On the other hand, by Proposition~\ref{prop:H_infty}, \eqref{eq:F-al-beta}, and \eqref{eq:P_al=inf1}-\eqref{eq:P_al=inf2}, 
\begin{equation}
	\inf_{f\in\H{\exp}}\,\frac{\E f(\eta)}{f(x)}
	=\lim_{\al\uparrow\infty}\inf_{f\in\H\al}\,\frac{\E f(\eta)}{f(x)}
	=\lim_{\al\uparrow\infty}P_\al(\eta;x).
\end{equation}
This, together with \eqref{eq:P_infty=}, yields \eqref{eq:P_infty}.  

That the function $\al\mapsto P_\al(\eta;x)$ is nondecreasing on $(0,\infty)$ follows immediately by \eqref{eq:P_al=inf1}-\eqref{eq:P_al=inf2} and \eqref{eq:F-al-beta}; that this function is nondecreasing on $(0,\infty]$ now follows by \eqref{eq:P_infty}. 
\end{proof}

\begin{proof}[Proof of Proposition~\ref{prop:Ca=inf}]
Take indeed any $\si\in(0,\infty)$, any $x\in[0,\infty)$, and any 
r.v.'s\ $\xi$ and $\eta$ such that $\E\xi\le0=\E\eta$ and $\E\xi^2\le\E\eta^2=\si^2$. 
Let $f(t):=\frac{\si^2+t^2}{(x-t)^2}$. 
Then for any $t<0$ one has 
$%\begin{equation}
	(x-t)^2\PP(\xi\ge x)\le\E(\xi-t)^2=\E\xi^2+2|t|\E\xi+t^2\le\si^2+t^2
$, %\end{equation} 
whence $\PP(\xi\ge x)\le\inf_{t<0}f(t)=\inf_{t\in(-\infty,x)}f(t)=f(-\si^2/x)=\Ca(x)$. 
\end{proof}

\begin{proof}[Proof of Proposition~\ref{prop:ineqs}]\ 

\noindent(I)\quad Inequalities $\Pin(x)\le\PU(x)$ and $\Be(x)\le\BH(x)$ follow because for each $\al>0$ the class $\H\al$ contains the class of all increasing exponential functions, taking at that into account
the expression \eqref{eq:P_al=inf2} for $P_\al(\eta;x)$, 
the definitions of $\Pin(x)$, $\PU(x)$, and $\Be(x)$ in \eqref{eq:main}, \eqref{eq:PU}, and \eqref{eq:Be}, the expressions for $\BH(x)$, $\BH_{\exp}(\la)$, and $\PU_{\exp}(\la)$ in \eqref{eq:BH}, \eqref{eq:BHexp-expr}, and \eqref{eq:PUexp-expr}. 
As for the inequality $\PU(x)\le\BH(x)$, it follows, as discussed in the Introduction after \eqref{eq:PU}, because $\PU_{\exp}\le\BH_{\exp}$. 
Inequality $\Be(x)\le\Ca(x)$ follows by \eqref{eq:Be} and the expressions \eqref{eq:comp-prob2} and \eqref{eq:Caexpr} for $P_\al(\eta;x)$ and $\Ca(x)$, because obviously $\E(\eta-t)_+^2\le\E(\eta-t)^2$ for any $\eta$ and $t$. 

\noindent(II)\quad The identity in part (II) of Proposition~\ref{prop:ineqs} follows by \cite[(10.5)]{bent-64pp} and \eqref{eq:Ca}. 

\noindent(III)\quad  
Applying twice the special l'Hospital-type rule for monotonicity (as well as the l'Hospital rule for limits), one sees that the ratio $\frac{\psi(u)}{u^2}$ decreases from $\frac12$ to $0$ as $u$ increases from $0$ to $\infty$, where the function $\psi$ is defined by \eqref{eq:psi}. Now part (III) of Proposition~\ref{prop:ineqs} follows.  

\noindent(IV)\quad By re-scaling, w.l.o.g.\ $\si=1$. Consider the function 
\begin{equation}
	d(x):=\frac1{\Ca(x)}-\frac1{\BH(x)}=
	1 + x^2 - \exp\frac{\psi(x y)}{y^2}. 
\end{equation}
Let $d_3(x):= d'''(x) y^3/e^{\psi(x y)/y^2}$ and $d_4(x):= d_3'(x) (1 + x y)/(-3y)$. 
Then $d_4(x)$ is a monic quadratic polynomial in $\ln(1+xy)$ with (coefficients being rational functions of $x$ and $y$, and) a negative discriminant, so that $d_4(x)>0$ and hence $d'_3(x)<0$ for all $x\ge0$. 
So, $d_3$ decreases on $[0,\infty)$ from $d_3(0)=y^4>0$ to $d_3(\infty-)=-\infty<0$. 
Hence, $d'''$ is $+-$ on $[0,\infty)$; that is, 
$d'''(x)$ switches in sign from $+$ to $-$ as $x$ increases from $0$ to $\infty$. 
Thus, $d''$ is up-down on $[0,\infty)$; that is, switches from increase to decrease on $[0,\infty)$. 
Since $d''(0)=1>0$ and $d''(\infty-)=-\infty$, one sees that $d''$ is $+-$ on $[0,\infty)$. 
Since $d'(0)=0$ and $d'(\infty-)=-\infty$, one sees that $d'$ is $+-$ on $(0,\infty)$. 
Since $d(0)=0$ and $d(\infty-)=-\infty$, one sees that $d$ is $+-$ on $(0,\infty)$ as well. 
This proves the existence of a unique $u_y$ in $(0,\infty)$ such that $\Ca(x)<\BH(x)$ for $x\in(0,u_y)$ and $\Ca(x)>\BH(x)$ for $x\in(u_y,\infty)$. 
That $u_y$ increases from $u_{0+}=1.585\dots$ to $\infty$ as $y$ increases from $0$ to $\infty$ now follows by part (III) of Proposition~\ref{prop:ineqs}, since $\Ca(x)$ does not depend on $y$. 
   
\noindent(V)\quad Take any $x>0$. By \eqref{eq:PUexp}, 
$%\begin{equation}\label{eq:PUexp}
%\PU_{\exp}(\la)=\exp\Big\{\frac{\la^2}2\,\si^2+\frac{e^{\la y}-1-\la y}{y^2}\,\vp\si^2\Big\}
\PU_{\exp}(\la)=\exp\big\{\frac{\la^2}2\,\si^2+\frac{e^{\la y}-1-\la y-\la^2y^2/2}{y^2}\,\vp\si^2\big\}
$ %\end{equation}
strictly increases from $\exp\big\{\frac{\la^2}2\,\si^2\big\}=\E\exp\{\la\Ga_{\si^2}\}$ to $\exp\frac{e^{\la y}-1-\la y}{y^2}\,\si^2=\BH_{\exp}(\la)$
as $\vp$ increases from $0$ to $1$. 
So, in view of \eqref{eq:PU}, $\PU(x)$ is nondecreasing in $\vp\in(0,1)$. 

Moreover, $\PU(x)$ is strictly increasing in $\vp\in(0,1)$, from $\EN(x)$ to $\BH(x)$; this follows by \eqref{eq:PU la_x}. 
Indeed, $\la_x$ is the only positive root of the equation $f'(\la)=x$, where (see \eqref{eq:f'(la)}) 
$f'(\la)=\la\si^2+\frac{e^{\la y}-1-\la y}y\,\vp\si^2$ is strictly increasing in $\la>0$ and in $\vp\in[0,1]$. 
So, the unique root $\la_x$ of the equation $f'(\la)=x$ is decreasing in $\vp\in[0,1]$, from $\frac x{\si^2}<\infty$ to $\frac1y\ln(1+xy/\si^2)>0$, so that $\la_x>0$ remains bounded away from $0$ and $\infty$ as $\vp$ increases from $0$ to $1$.  

Now part (V) of Proposition~\ref{prop:ineqs} follows, and the entire Proposition~\ref{prop:ineqs} is proved.   
\end{proof}

\begin{proof}[Proof of Proposition~\ref{prop:idents}]
In view of \eqref{eq:f'(la)}, for $f:=\ln\PU_{\exp}$ and each $x>0$ the equation $f'(\la_x)=x$ implies that 
there is some $\al_x\in(0,1)$ such that 
\begin{equation}\label{eq:la_x,al_x}
	\la_x(1-\vp)\si^2=(1-\al_x)x\quad\text{and}\quad\frac{e^{\la_x y}-1}y\,\vp\si^2=\al_x\,x, 
\end{equation}
whence 
\begin{equation}\label{eq:la_x12}
	\la_x=\la_{x,1}:=\frac{(1-\al_x)x}{(1-\vp)\si^2}
\quad\text{and}\quad
\la_x=\la_{x,2}:=\frac1y\ln\Big(1+\frac{\al_x xy}{\vp\si^2}\Big).  
\end{equation}
On the other hand, introducing $f_1(\la):=-\la(1-\al_x)x+\frac{\la^2}2\,(1-\vp)\si^2$ and 
$f_2(\la):=-\la\al_x x+\frac{e^{\la y}-1-\la y}{y^2}\,\vp\si^2$, 
by \eqref{eq:PU la_x} and \eqref{eq:f(la)} 
one has 
\begin{align*}
	\ln\PU(x)=-\la_x x+f(\la_x)&=f_1(\la_x)+f_2(\la_x)=f_1(\la_{x,1})+f_2(\la_{x,2})
	=g(\al_x),  
\end{align*}
where 
\begin{equation}\label{eq:g(al)}
	g(\al):=-\frac{(1-\al)^2x^2}{2(1-\vp)\si^2}
	-\frac{\vp\si^2}{y^2}\,\psi\Big(\frac{\al xy}{\vp\si^2}\Big)
	=\ln\Big(\Ne_{(1-\vp)\si^2}((1-\al)x)\BH_{\vp \si^2,y}(\al x)\Big)
\end{equation}
and $\psi$ is defined by \eqref{eq:psi}; thus, the expression in \eqref{eq:max at al_x} equals $\PU(x)$.  
The derivative
\begin{equation}\label{eq:g'}
	g'(\al)=\frac{(1-\al)x^2}{(1-\vp)\si^2}
	-\frac xy\,\ln\Big(1+\frac{\al xy}{\vp\si^2}\Big)
\end{equation}
decreases from $g'(0)>0$ to $g'(1)<0$ as $\al$ increases from $0$ to $1$. Hence, there is a unique maximum point of $g$ in $[0,1]$ (say $\tilde\al_x$). Moreover, $\al=\tilde\al_x$ must be the unique root in $(0,1)$ of the equation $g'(\al)=0$, which is the same as \eqref{eq:al_x}. But, by \eqref{eq:g'} and \eqref{eq:la_x12}, this equation is satisfied by $\al=\al_x\in(0,1)$. Thus, $\tilde\al_x=\al_x$. 
This completes the proof of \eqref{eq:max al}, \eqref{eq:max at al_x}, and \eqref{eq:al_x}. 

Finally, it follows from \eqref{eq:la_x,al_x} that $\frac{\al_x}{1-\al_x}=\frac\vp{1-\vp}\frac{e^{\la_x y}-1}{\la_x y}$, which increases in $\la_x$ from $\frac\vp{1-\vp}$ to $\infty$ as $\la_x$ increases from $0$ to $\infty$, which it does  (according to the last sentence in Proposition~\ref{prop:lambert}) as $x$ increases from $0$ to $\infty$. Now the entire proposition is proved. 
\end{proof}

\begin{proof}[Proof of Proposition~\ref{prop:mono in y}]
This follows immediately from Lemma~\ref{lem:le} with $\si_0=\si$, $\be_0=\vp\si^2y$, and $\be=\si^2y$.  
\end{proof}

\begin{proof}[Proof of Proposition~\ref{prop:PLC(Po>x)}]
This follows because of the known fact that the function $\Z\ni j\mapsto\PP(\Pi_\th\ge j)$ is log-concave (see e.g.\ \cite[Theorem~1 and Remark~13]{pin99}), in the sense of \cite[Definition~1]{pin99}.  
\end{proof}

\begin{proof}[Proof of Proposition~\ref{prop:PLC(Ga+Po>x)}]
The right-hand side of \eqref{eq:PLC(Ga+Po>x)} with $\PP^\lc(y\tPi_{\vp\si^2/y^2}\ge z)$ replaced by $\PP(y\tPi_{\vp\si^2/y^2}\ge z)$ would equal $\PP(\Ga_{(1-\vp)\si^2}+y\tPi_{\vp\si^2/y^2}\ge x)$. So, since $\PP^\lc(y\tPi_{\vp\si^2/y^2}\ge z)$ majorizes $\PP(y\tPi_{\vp\si^2/y^2}\ge z)$, the right-hand side of \eqref{eq:PLC(Ga+Po>x)} majorizes $\PP(\Ga_{(1-\vp)\si^2}+y\tPi_{\vp\si^2/y^2}\ge x)$. Also, the right-hand side of \eqref{eq:PLC(Ga+Po>x)} is log-concave in $x\in\R$ by the well-known theorem, which states that $\int_\R f(x,z)\dd z$ is log-concave in $x\in\R$ if a function $\R^2\ni(x,z)\mapsto f(x,z)$ is log-concave (see e.g.\ \cite{prek} as well as the corresponding review by Perlman in Mathematical Reviews); 
here we also used the obvious fact that any normal density function is log-concave. 
This concludes the proof of Proposition~\ref{prop:PLC(Ga+Po>x)}.  
\end{proof}

\begin{proof}[Proof of Proposition~\ref{prop:PU<<BH}]
All the limit relations in this proof are of course as $x\to\infty$, unless specified otherwise. 
By Proposition~\ref{prop:idents}, $\al_x\to1$. Equations \eqref{eq:la_x12} allow one to qualify the rate of convergence of $\al_x$ to $1$. Indeed, 
\begin{equation*}
	\ln\Big(1+\frac{\al_x xy}{\vp\si^2}\Big)\sim\ln\frac{\al_x xy}{\vp\si^2}=\ln\frac{xy}{\vp\si^2}+\ln\al_x 
	\sim\ln\Big(1+\frac{xy}{\vp\si^2}\Big),
\end{equation*}
whence, by \eqref{eq:la_x12}, 
\begin{equation}\label{eq:1-al_x}
	1-\al_x=\frac{(1-\vp)\si^2}{xy}\ln\Big(1+\frac{\al_x xy}{\vp\si^2}\Big)
	\sim\frac{(1-\vp)\si^2}{xy}\ln\Big(1+\frac{xy}{\vp\si^2}\Big).  
\end{equation}
Now one can see that 
\begin{align*}
	\ln\Big(1+\frac{\al_x xy}{\vp\si^2}\Big)-\ln\Big(1+\frac{xy}{\vp\si^2}\Big)
	&=\ln\Big(1+\frac{(\al_x-1)xy}{xy+\vp\si^2}\Big)
	\sim\al_x-1 \\
	&\sim-\frac{(1-\vp)\si^2}{xy}\ln\Big(1+\frac{xy}{\vp\si^2}\Big),
\end{align*}
and so, again by \eqref{eq:la_x12}, 
\begin{align}
	\la_x&=\frac1y\ln\Big(1+\frac{xy}{\vp\si^2}\Big)\Big(1-\frac{(1-\vp)\si^2}{xy}(1+o(1))\Big); \notag\\
	1-\al_x&=\frac{(1-\vp)\si^2}{xy}\ln\Big(1+\frac{xy}{\vp\si^2}\Big)\Big(1-\frac{(1-\vp)\si^2}{xy}(1+o(1))\Big); \notag\\
	-\frac{(1-\al_x)^2x^2}{2(1-\vp)\si^2}
	&=-\frac{(1-\vp)\si^2}{2y^2}\ln^2\Big(1+\frac{xy}{\vp\si^2}\Big)\Big(1-2\frac{(1-\vp)\si^2}{xy}(1+o(1))\Big) \notag\\
	&=-\frac{(1-\vp)\si^2}{2y^2}\ln^2\Big(1+\frac{xy}{\vp\si^2}\Big)+o(1). 	\label{eq:la_x,o(1)}
\end{align}
Next, with the same $\psi(u)=(1+u)\ln(1+u)-u$ as in \eqref{eq:psi}, one has 
$\psi'(u)=\ln(1+u)$ and $\psi''(u)=\frac1{1+u}$ for $u>0$, whence $\psi(u)-\psi(v)=(u-v)\ln(1+v)+O((u-v)^2/u)$ as $v>u\to\infty$. Hence and view of \eqref{eq:1-al_x}, 
\begin{equation}\label{eq:sq}
	\frac{\vp\si^2}{y^2}\Big[\psi\Big(\frac{xy}{\vp\si^2}\Big)-\psi\Big(\frac{\al_x xy}{\vp\si^2}\Big)\Big]
	=\frac{(1-\vp)\si^2}{y^2}\ln^2\Big(1+\frac{xy}{\vp\si^2}\Big)+o(1).
\end{equation}
Now \eqref{eq:max at al_x}, \eqref{eq:g(al)}, \eqref{eq:la_x,o(1)}, and \eqref{eq:sq} yield
\begin{equation}\label{eq:PU sim}
	\PU(x)\sim\exp\Big\{\frac{(1-\vp)\si^2}{2y^2}\ln^2\Big(1+\frac{xy}{\vp\si^2}\Big)
	-\frac{\vp\si^2}{y^2}\psi\Big(\frac{xy}{\vp\si^2}\Big)\Big\}. 
\end{equation}
Next,
\begin{equation*}
	\frac{\vp\si^2}{y^2}\psi\Big(\frac{xy}{\vp\si^2}\Big)
	=\frac{\vp\si^2+xy}{y^2}\,\Big[\ln\frac1\vp+\ln\Big(\vp+\frac{xy}{\si^2}\Big)\Big]-\frac{xy}{y^2}.
\end{equation*}
Using this together with $\ln(\vp+\frac{xy}{\si^2})=\ln(1+\frac{xy}{\si^2})-\frac{(1-\vp)\si^2}{xy}(1+o(1))$, \eqref{eq:PU sim}, and the definition of $\BH(x)$ in \eqref{eq:hoeff}, one concludes the proof of Proposition~\ref{prop:PU<<BH}. 
\end{proof}

\begin{proof}[Proof of Proposition~\ref{prop:be asymp}]
Fix indeed any $\si>0$ and $y>0$. 
By rescaling, w.l.o.g.\ $y=1$. For brevity, let
\begin{equation*}
\th:=\si^2\quad\text{and}\quad	v:=\frac{xy}{\si^2}=\frac x\th.
\end{equation*}
Letting also $k:=\lceil x+\th\rceil=\lceil\th(1+v)\rceil$ (so that $\th(1+v)\le k<\th(1+v)+1$) and using the Stirling formula, one has the following for $x>0$:  
\begin{align*}
\PP(y\tPi_{\si^2/y^2}\ge x)&=\PP(\Pi_\th\ge x+\th)\ge
	\PP(\Pi_\th=k)=e^{-\th}\th^k/k!\\
	&\OOG\frac1{\sqrt k}\,\Big(\frac{e\th}{k}\Big)^k
	\OOG\frac1{\sqrt v}\,\exp\Big\{\big(\th(1+v)+1\big)\,\ln\frac{e\th}{\th(1+v)+1}\Big\} \\
	&\OOG\frac1{v^{3/2}}\,\exp\big\{\th(1+v)\,[1-\ln(1+v+1/\th)]\big\} \\
	&>\frac{e^\th}{ev^{3/2}}\,\exp\big\{\th\big(v-(1+v)\ln(1+v)\big)\big\} 
	=\frac{e^\th \th^{3/2}}{ex^{3/2}}\,\BH(x), 
\end{align*}
which proves the first inequality in \eqref{eq:be asymp1}. 

The second inequality in \eqref{eq:be asymp1} follows by the first inequality in \eqref{eq:Be}, since $\tPi_\th$ is the limit in distribution as $n\to\infty$ of $S=X_1+\dots+X_n$, where the $X_i$'s are i.i.d.\ r.v.'s each with the centered Bernoulli distribution with parameter $\th/n$. 

The second inequality in \eqref{eq:be asymp2} follows similarly by the Bennett-Hoeffding inequality \eqref{eq:hoeff}. 

The third inequality in \eqref{eq:be asymp1} is the second inequality in \eqref{eq:Be}. 

It remains to verify the first inequality in \eqref{eq:be asymp2}. Let, for brevity, $G(u):=\PP(\Pi_\th\ge u)=\PP(\tPi_\th\ge u-\th)=\PP(y\tPi_{\si^2/y^2}\ge u-\th)$. Then, 
by Proposition~\ref{prop:PLC(Po>x)} (with $j=\lceil u-1\rceil$) and Remark~\ref{rem:PLC}, for $x:=u-\th$ one has 
\begin{multline*}
	\PP^\lc(y\tPi_{\si^2/y^2}\ge x)=\PP^\lc(\Pi_\th\ge u) \\
	\le G(j)=G(u)\frac{G(j)}{G(j+1)}\OO j\,G(u)\le u\PP(\Pi_\th\ge u) \\
	=(x+\th)\PP(y\tPi_{\si^2/y^2}\ge x)
\end{multline*}
if $u>1$, since 
$\frac{\th^j}{j!}e^{-\th}\le G(j)=\sum_{m=j}^\infty\frac{\th^m}{m!}e^{-\th}\le\frac{\th^j}{j!}e^{-\th}\frac1{1-\th/j}$ for all $j=1,2,\dots$ such that $j>\th$. 
This concludes the proof of Proposition~\ref{prop:be asymp}. 
\end{proof}

\begin{proof}[Proof of Proposition~\ref{prop:pin asymp}]
The second inequality in \eqref{eq:pin asymp1} follows by inequality~\eqref{eq:main} and Lemma~\ref{lem:exact}. 

The second inequality in \eqref{eq:pin asymp2} follows similarly by inequality \eqref{eq:PU} and Lemma~\ref{lem:exact}. 

The third inequality in \eqref{eq:pin asymp1} is inequality \eqref{eq:main LC}. 

It remains to prove the first inequality in \eqref{eq:pin asymp1} and the first inequality in \eqref{eq:pin asymp2}. W.l.o.g.\ $y=1$. 

To prove the first inequality in \eqref{eq:pin asymp1}, use identity \eqref{eq:max at al_x}, the first inequality in \eqref{eq:be asymp1}, and the Laplace method for the asymptotics of integrals, as follows. By \eqref{eq:eta} (with $y=1$), \eqref{eq:al>vp}, the first inequality in \eqref{eq:be asymp1}, \eqref{eq:max al}, and \eqref{eq:max at al_x}, 
\begin{align}
	\PP(\eta_{\si,1,\vp}\ge x)&\ge\int_{\al_x x}^x \PP(\tPi_{\vp\si^2}\ge z) \PP(x-\Ga_{(1-\vp)\si^2}\in\dd z) \label{eq:>int}\\
	&\OOG\int_{\al_x x}^x \frac{\BH_{\vp\si^2,1}(z)}{z^{3/2}} \exp\Big\{-\frac{(x-z)^2}{2(1-\vp)\si^2}\Big\}\,\dd z \notag\\
	&\ge\frac{\PU(x)}{x^{3/2}}\int_{\al_x x}^x e^{h(z)-h(\al_x x)}\dd z \label{eq:>PU/x^{3/2}}
\end{align}
for all large enough $x>0$, where 
\begin{equation*}
	h(z):=h_x(z):=\ln\big(\BH_{\vp\si^2,1}(z)\Ne_{(1-\vp)\si^2}(x-z)\big)
	=-\frac{(x-z)^2}{2(1-\vp)\si^2}-\vp\si^2\psi\Big(\frac z{\vp\si^2}\Big),  
\end{equation*}
recalling also \eqref{eq:hoeff}. By \eqref{eq:max at al_x} (or because $\al_x$ is a root of equation \eqref{eq:al_x}), one has $h'(\al_x x)=0$. Also, for all large enough $x>0$ and all $z\in[\al_x x,x]$ one has 
$h''(z)=-\frac1{(1-\vp)\si^2}-\frac1{z+\vp\si^2}\ge-\frac2{(1-\vp)\si^2}$ (here using \eqref{eq:al>vp} again) and hence $h(z)-h(\al_x x)\ge-\frac{(z-\al_x x)^2}{(1-\vp)\si^2}$. 
Note also that, by \eqref{eq:1-al_x}, $x-\al_x x\to\infty$ as $x\to\infty$. 
Now the first inequality in \eqref{eq:pin asymp1} follows by \eqref{eq:>int}-\eqref{eq:>PU/x^{3/2}}. 

Finally, to prove the first inequality in \eqref{eq:pin asymp2}, use \eqref{eq:eta} and \eqref{eq:PLC(Ga+Po>x)}. Let   
$I_1$, $I_2$, and $I_3$ be the integrals of the integrand in \eqref{eq:PLC(Ga+Po>x)} (with $y=1$) over the intervals $(-\infty,1]$, $(1,2x]$, and $(2x,\infty)$, respectively. 
Then, in view of the trivial bound $\PP^\lc(\tPi_{\vp\si^2}\ge z)\le1$ for all $z\in\R$, 
\begin{align}
	I_1&\le\int_{-\infty}^1\PP(x-\Ga_{(1-\vp)\si^2}\in\dd z)
	=\PP(\Ga_{(1-\vp)\si^2}\ge x-1) \notag\\
	&\OO\exp\Big\{-\frac{(x-1)^2}{2(1-\vp)\si^2}\Big\}
	\OO e^{-x}\frac1{x\sqrt{2\pi}}\exp\Big\{-\frac{x^2}{2\si^2}\Big\}
	\OO e^{-x}\PP(\Ga_{\si^2}\ge x) \notag \\
	&\le e^{-x}\PU(x)\OO e^{-x}x^{3/2}\PP(\eta_{\si,y,\vp}\ge x)\OO x\PP(\eta_{\si,y,\vp}\ge x)\label{eq:I1}
\end{align}
for all large enough $x>0$; the first inequality in the line \eqref{eq:I1} is a limit case of the inequality in \eqref{eq:PU} (cf.\ Lemma~\ref{lem:exact}), while the second inequality in the line \eqref{eq:I1} is the first inequality in \eqref{eq:pin asymp1}, proved in the previous paragraph. 
Quite similarly, 
\begin{align}
	I_3&\le\int_{2x}^\infty\PP(x-\Ga_{(1-\vp)\si^2}\in\dd z) \notag\\
	&=\PP(\Ga_{(1-\vp)\si^2}\le-x)=\PP(\Ga_{(1-\vp)\si^2}\ge x) 
	\OO x\PP(\eta_{\si,y,\vp}\ge x)\notag
\end{align}
for all large enough $x>0$. 
Finally, by the first inequality in \eqref{eq:be asymp2}, for $x>1$ 
\begin{align*}
	I_2&\OO\int_1^{2x}z\PP(\tPi_{\vp\si^2}\ge z)\PP(x-\Ga_{(1-\vp)\si^2}\in\dd z) \\
	&\le2x
	\int_\R\PP(\tPi_{\vp\si^2}\ge z)\PP(x-\Ga_{(1-\vp)\si^2}\in\dd z)  
	=2x\PP(\eta_{\si,y,\vp}\ge x). 
\end{align*}
Thus, the first inequality in \eqref{eq:pin asymp2} follows from \eqref{eq:PLC(Ga+Po>x)} and the above bounds on $I_1$, $I_3$, and $I_2$.  
This concludes the proof of Proposition~\ref{prop:pin asymp}. 
\end{proof}

\begin{proof}[Proof of Proposition~\ref{prop:normal opt asymp}]
This follows from the more general \cite[Theorem~4.2]{pin98} (or, rather, from its proof) and Proposition~\ref{prop:P_al=inf}. 
\end{proof}

\begin{proof}[Proof of Proposition~\ref{prop:pois opt asymp}]
Fix indeed any $\al>1$ and $\th>0$. By Proposition~\ref{prop:Th2.5,pin98}(vi), \eqref{eq:pois centered opt asymp} is equivalent to   
\begin{equation}\label{eq:pois opt asymp}
	P_\al(\Pi_\th;k)\sim\PP(\Pi_\th\ge k)\sim\frac k\th\PP(\Pi_\th>k)
\end{equation} 
as $\Z\ni k\to\infty$. 
To begin proving this, take any $k=0,1,\dots$. Then, by \eqref{eq:m},
\begin{equation}\label{eq:m(k)}
	m(k)=m_{\al,\Pi_\th}(k)=k+\frac{\E(\Pi_\th-k)_+^\al}{\E(\Pi_\th-k)_+^{\al-1}}>k+1=m(t_{k+1}); 
\end{equation}
the last equality here holds by \eqref{eq:m(t)=x}, while the inequality in \eqref {eq:m(k)} follows because $(\Pi_\th-k)_+^\al\ge(\Pi_\th-k)_+^{\al-1}$ a.s., and the latter inequality is a.s.\ strict on the event $\{\Pi_\th\ge k+2\}$, which is of nonzero probability. So, by Proposition~\ref{prop:Th2.5,pin98}(i), 
\begin{equation}\label{eq:t>k}
	t_{k+1}<k.
\end{equation}

The other key observation about $t_{k+1}$ is that it is close enough to $k$ for large $k$. To see that, let $\de\in(0,1)$ and $k\in\Z$ be varying so that $\de\downarrow0$ and $k\to\infty$. Then
\begin{equation*}
	\E(\Pi_\th-k+\de)_+^\al
	=a_k+s_k,
\end{equation*}
where $a_j:=a_{j,k}:=(j-k+\de)^\al\frac{\th^j}{j!}e^{-\th}$ and $s_k:=\sum_{j=k+1}^\infty a_j$. 
Note that 
$$\frac{a_{j+1}}{a_j}=\Big(\frac{j+1-k+\de}{j-k+\de}\Big)^\al\frac\th{j+1}
\le\Big(\frac{2+\de}{1+\de}\Big)^\al\frac\th{j+1}\to0$$
for $j\ge k+1$. Hence, 
$$s_k\sim a_{k+1}=(1+\de)^\al\frac{\th^{k+1}}{(k+1)!}e^{-\th}\sim\frac{\th^{k+1}}{(k+1)!}e^{-\th},$$
and so, 
\begin{equation}\label{eq:E al}
	\E(\Pi_\th-k+\de)_+^\al\sim a_k+a_{k+1}\sim\frac{\th^k}{k!}e^{-\th}\Big(\de^\al+\frac{\th+o(1)}k\Big). 
\end{equation}
Similarly,
\begin{equation}\label{eq:E al-1}
	\E(\Pi_\th-k+\de)_+^{\al-1}\sim\frac{\th^k}{k!}e^{-\th}\Big(\de^{\al-1}+\frac{\th+o(1)}k\Big). 
\end{equation}
Now let us choose an arbirary constant $c>0$ and then specify $\de$ by the formula
\begin{equation*}
	\de=\Big(\frac ck\Big)^{\frac1{\al-1}},
\end{equation*}
so that $\de^\al=o(\frac1k)$ and $\de^{\al-1}=\frac ck$. Then, using \eqref{eq:E al} and \eqref{eq:E al-1}, one has the following for large enough $k$ (cf.\ \eqref{eq:m(k)}):
\begin{equation*}%\label{eq:m(k-de)}
	m(k-\de)=k-\de+\frac\th{c+\th}(1+o(1))<k+1,
\end{equation*}
whence $t_{k+1}>k-\de=k-\big(\frac ck\big)^{1/(\al-1)}$. Recalling now \eqref{eq:t>k} and that $c>0$ was arbitrary, one concludes that indeed $t_{k+1}$ is close to $k$, in the sense that 
\begin{equation*}
	k-o\big(k^{-1/(\al-1)}\big)<t_{k+1}<k. 
\end{equation*}
Revisiting \eqref{eq:E al} and \eqref{eq:E al-1} with $\de:=k-t_{k+1}=o\big(k^{-1/(\al-1)}\big)$, one has  
$$\E(\Pi_\th-t_{k+1})_+^\al\sim\frac{\th^{k+1}}{(k+1)!}e^{-\th} \quad\text{and}\quad  \E(\Pi_\th-t_{k+1})_+^{\al-1}\sim\frac{\th^{k+1}}{(k+1)!}e^{-\th}.$$
So, recalling the last expression in \eqref{eq:P(x)}, one concludes that 
\begin{equation}\label{eq:pois opt asymp1}
	P_\al(\Pi_\th;k+1)\sim\frac{\th^{k+1}}{(k+1)!}e^{-\th}. % \sim\PP(\Pi_\th\ge k)\sim\frac k\th\PP(\Pi_\th>k)
\end{equation} 
On the other hand, it is easy to see (cf.\ the first relation in \eqref{eq:E al}) that 
\begin{equation*}%\label{eq:tail asymp}
	\PP(\Pi_\th\ge k+1)\sim\frac{\th^{k+1}}{(k+1)!}e^{-\th}\quad\text{and}\quad
	\PP(\Pi_\th>k+1)%=\PP(\Pi_\th\ge k+2)
	\sim\frac{\th^{k+2}}{(k+2)!}e^{-\th}. 
\end{equation*} 
Now \eqref{eq:pois opt asymp} follows by \eqref{eq:pois opt asymp1}. 
\end{proof}

\subsection{Proofs of the lemmas}\label{proofs2}

\begin{proof}[Proof of Lemma~\ref{lem:1}]
This follows because $\frac{x^3}{(x+\si^2/y)^2}$ is nondecreasing in $x\in[0,y]$ from $0$ to $\frac{y^3}{(y+\si^2/y)^2}=\frac{y^5}{(y^2+\si^2)^2}$.
\end{proof}

\begin{proof}[Proof of Lemma~\ref{lem:2}]
This follows by Lemma~\ref{lem:1}:  
$$\E X_+^3\le\frac{y^5}{(y^2+\si^2)^2}\big(\E X^2+(\si^2/y)^2\big)\le\frac{y^3\si^2}{y^2+\si^2}.$$
\end{proof}

\begin{proof}[Proof of Lemma~\ref{lem:3}]
Take any $\be$ satisfying condition \eqref{eq:be}. Let $f(x):=\si^2x^{3/2}-\be(x+\si^2)$. Then $f(0)=-\be\si^2<0$, $f(y^2)=\si^2y^3-\be(y^2+\si^2)\ge0$ by \eqref{eq:be}, % Lemma~\ref{lem:2}, 
and the function $f$ is convex on $[0,\infty)$. Hence, $f$ has exactly one positive root, say $x_*$, and at that $x_*\in(0,y^2]$. Let $b:=x_*^{1/2}$, so that $b\in(0,y]$ and $b$ is the only positive root of equation \eqref{eq:b}. Letting now $a:=\si^2/b$, one has $X_{a,b}\le y$ a.s., $\E X_{a,b}=0$, $\E X^2_{a,b}=ab=\si^2$, and $\E(X_{a,b})_+^3=\frac{ab^3}{a+b}=\frac{\si^2b^3}{\si^2+b^2}=\be$, by \eqref{eq:b}. 
It also follows that $a=\frac{\be b}{b^3-\be}$. 
Finally, the uniqueness of the pair $(a,b)$ follows from the uniqueness of the positive root $b$ of equation \eqref{eq:b}.
\end{proof}

\begin{proof}[Proof of Lemma~\ref{prop:ab}]
Let $X$ be any r.v.\ such that $X\le y$ a.s., $\E X\le0$, $\E X^2\le\si^2$, and $\E X_+^3\le\be$. 
Let us consider separately the following possible cases: $w\le-a$, $-a\le w\le0$,  and $w\ge0$. 

\emph{Case 1: $w\le-a$.}\quad Then 
\begin{equation*}%\label{eq:case1}
	f_1(x):=A_0+A_1x+ A_2x^2+A_3x_+^3\ge(x-w)_+^3
\end{equation*}
for all $x\in\R$, where 
\begin{align*}
	A_0&:=\frac{2 a^3 b}{3 a+b}-w^3,\\
	A_1&:=3\frac{b(a^2+w^2)+a(3w^2-a^2)}{3a+b},\\
   A_2&:=-3\frac{(a+b)w+2a(a+w)}{3a+b},\\
   A_3&:=\frac{(a+b)^3}{b^2(3a+b)}
\end{align*}
are obviously nonnegative constants; moreover, $f_1(x)=(x-w)_+^3$ for $x\in\{-a,b\}$. This claim can be verified using the Mathematica command \\
%\begin{multline*}
\begin{equation*}
\begin{aligned}
& \verb9Reduce[b > 0 && a > 0 && w < -a &&9\\
& \verb9A0 + A1 x + A2 x^2 + A3 Max[0,x]^3 - Max[0,x - w]^3 <= 0, {w, x}]9 	
\end{aligned}
\end{equation*}
%\end{multline*}
which produces the output 
$$\verb9b > 0 && a > 0 && w < -a && (x == -a || x == b)9$$ 
where \verb9A09, \verb9A19, \verb9A29, \verb9A39 represent the constants $A_0,A_1,A_2,A_3$ as defined above; this verification takes about $1$ second (this and other execution times given in this paper are in reference to an Intel Core 2 Duo PC with 4 GB of RAM).  

Therefore and because $\E X\le0=\E X_{a,b}$, $\E X^2\le\si^2=\E X_{a,b}^2$, and $\E X_+^3\le\be=\E(X_{a,b})_+^3$, one has
\begin{align*}
	\E (X-w)_+^3&\le A_0+A_1\E X+A_2\E X^2+A_3\E X_+^3 \\
	&\le A_0+A_1\E X_{a,b}+A_2\E X_{a,b}^2+A_3\E(X_{a,b})_+^3 \\
	&=\E (X_{a,b}-w)_+^3.
\end{align*}

\emph{Case 2: $-a\le w\le0$.}\quad Then 
\begin{equation*}%\label{eq:case2}
	f_2(x):=\la_2(x+a)^2+\la_3x_+^3\ge(x-w)_+^3
\end{equation*}
for all $x\in\R$,
where
\begin{equation*}
	\la_2:=\frac{-3w(b - w)^2}{(a+b)(3a+b)}\quad\text{and}\quad
	\la_3:=\frac{(b-w)^2\big(2(w+a)+a+b\big)}{b^2(3a+b)}
\end{equation*}
are obviously nonnegative constants; moreover, $f_2(x)=(x-w)_+^3$ for $x\in\{-a,b\}$. This claim can be verified using a similar Reduce command, which takes under $1$ second.   
Therefore,
\begin{align*}
	\E (X-w)_+^3&\le \la_2\E(X+a)^2+\la_3\E X_+^3 \\
&\le \la_2\E(X_{a,b}+a)^2+\la_3\E(X_{a,b})_+^3  \\
	&=\E (X_{a,b}-w)_+^3.
\end{align*}

\emph{Case 3: $w\ge0$.}\quad Then 
\begin{equation*}%\label{eq:case3}
	f_3(x):=\frac{(y-w)_+^3}{y^3}\,x_+^3\ge(x-w)_+^3
\end{equation*}
for all $x\in(-\infty,y]$, since $\frac{(x-w)^3}{x^3}$ is nondecreasing in $x\in[w,\infty)$ for each $w\ge0$; 
moreover, it is obvious that $f_3(x)=(x-w)_+^3$ for $x\in(-\infty,0]\cup\{y\}$. 
%This claim (which is easy to check) can also be verified using a Reduce command, which takes under $0.1$ seconds.   

Further, %$\tb=y\ge b>0$ (by Lemma~\ref{lem:3}). 
$y^3\ge b^3>\frac{ab^3}{a+b}=\be$; hence, again by \eqref{eq:tb,ta}, $\ta>0$. It follows that
$f_3(x)=(x-w)_+^3$ for $x\in\{-\ta,\tb\}$. 
Moreover, $\E(X_{\ta,\tb})_+^3=\be$. 
Thus,
\begin{align*}
	\E (X-w)_+^3\le\frac{(y-w)_+^3}{y^3}\,\E X_+^3 
\le\frac{(y-w)_+^3}{y^3}\,\E(X_{\ta,\tb})_+^3 
	=\E (X_{\ta,\tb}-w)_+^3.
\end{align*}
Moreover,
\begin{equation}\label{eq:le si}
	\E X_{\ta,\tb}^2=\ta\tb=\frac{\be y^2}{y^3-\be}\le
	\frac{\be b^2}{b^3-\be}=ab=\si^2;
\end{equation}
the inequality here takes place because $\frac{\be u^2}{u^3-\be}$ decreases in $u>\be^{1/3}$, while, as shown, $y^3\ge b^3>\be$; 
the inequality in \eqref{eq:le si} is strict if $\be\ne\frac{y^3\si^2}{y^2+\si^2}$ \big(because then
$\frac{b^3\si^2}{b^2+\si^2}=\be<\frac{y^3\si^2}{y^2+\si^2}$, and hence $b<y$\big). 

%\end{proof}

Thus, in all the three cases, one has equality \eqref{eq:attain}. 
Moreover, in the case $w\le0$ the maximum in \eqref{eq:max_equal} is attained and equals $\E(X_{a,b}-w)_+^3$, since $X_{a,b}\le y$ a.s., $\E X_{a,b}=0$, $\E X_{a,b}^2=\si^2$, and $\E(X_{a,b})_+^3=\be$. 
The last sentence of Lemma~\ref{prop:ab} has also been proved. 

To complete the proof of the lemma, it remains to show that in the case $w\ge0$ the maxima in \eqref{eq:max_equal}  and \eqref{eq:max_less} are attained and equal $\E(X_{\ta,\tb}-w)_+^3$; the same last sentence of Lemma~\ref{prop:ab} shows that in this case the $\max$ in \eqref{eq:max_equal} is \emph{not} attained \emph{at} $X=X_{\ta,\tb}$ if $\be\ne\frac{y^3\si^2}{y^2+\si^2}$ -- because then $\E X_{\ta,\tb}^2<\si^2$. 

Thus, it suffices to construct a r.v., say $X_v$, such that $\E(X_v-w)_+^3=\E(X_{\ta,\tb}-w)_+^3$, while $X_v\le y$ a.s., $\E X_v=0$, $\E X_v^2=\si^2$, and $\E(X_v)_+^3=\be$. One way to satisfy all these conditions is to let 
$X_v\sim\tp\de_y+q_1\de_{-a_1}+r_1\de_v$, where $v$ is close enough to $-\infty$, $r_1:=-\De q$, $q_1:=\tq+\De q$, $a_1:=\ta+\De a$, $\tp:=\be/y^3$, $\tq:=1-\tp$, 
$\De q:=-\frac{\tq d^2}{d^2+\tq(v+\ta)^2}$, $\De a:=\frac{d^2}{\tq(v+\ta)}$, $d:=\sqrt{\si^2-\ta\tb}=\sqrt{\si^2-\ta y}$, and $\ta$ and $\tb$ are given by \eqref{eq:tb,ta}. 
\end{proof}

\begin{proof}[Proof of Lemma~\ref{lem:F3}]
Take any $f\in{\F3}_{,12}$, so that $f$ is also in ${\F3}_{,1}$ and $\F3$; then $f'$ is convex or, equivalently, $f''$ is nondecreasing; at that, $f''$ is also convex. 
For any $z\in\R$, introduce the functions $g_z:=g_{z,f}$  and $h_z:=h_{z,f}$ by the formulas
\begin{align}
g_z(u)&:=\big(f(z)+(u-z)f'(z)+(u-z)^2f''(z)/2\big)\ii{u<z}+f(u)\ii{u\ge z} \label{eq:g_z}\\	
\intertext{and}
h_z(u)&:=g_z(u)-f(z)-(u-z)f'(z)-(u-z)^2f''(z)/2 \label{eq:h_z}\\
&=\big(f(u)-f(z)-(u-z)f'(z)-(u-z)^2f''(z)/2\big)\ii{u>z}, \notag	
\end{align}
for all $u\in\R$. 
%Then $g''_z(u)=f''(z)\ii{u<z}+f''(u)\ii{u\ge z}\ge f''(u)$ for all $u\in\R$, because $f\in\F3$ and hence $f''$ is nondecreasing. Also, $g_z=f$ and $g'_z=f'$ on $[z,\infty)$. It follows that $g_z\ge f$ on $\R$. 
Then 
$$g'_z(u)=\big(f'(z)+(u-z)f''(z)\big)\ii{u<z}+f'(u)\ii{u\ge z}$$
for all $u\in\R$. 
Since $f'$ is convex, $f'(z)+(u-z)f''(z)\le f'(u)$ and hence 
$g'_z(u)\le f'(u)$ for all $u\in\R$. 

Moreover, $g'_z(u)$ is nonincreasing in $z$ for each $u\in\R$. Indeed, take any real $z_1$ and $z_2$ such that $z_1<z_2$. 
If $u\ge z_2$ then $g'_{z_1}(u)=f'(u)=g'_{z_2}(u)$, so that $g'_{z_1}(u)\ge g'_{z_2}(u)$. 
Next, if $z_1\le u<z_2$ then, by the convexity of $f'$, one has $g'_{z_1}(u)=f'(u)\ge f'(z_2)+(u-z_2)f''(z_2)=g'_{z_2}(u)$, so that again $g'_{z_1}(u)\ge g'_{z_2}(u)$. 
Finally, if $u<z_1$ then, bounding the terms $f'(z_1)$ and $(u-z_1)f''(z_1)$ separately from below in view of the conditions that $f'$ is convex and $f''$ is nondecreasing, one has 
\begin{align*}
g'_{z_1}(u)&=[f'(z_1)]+[(u-z_1)f''(z_1)] \\ 
&\ge
[f'(z_2)+(z_1-z_2)f''(z_2]+[(u-z_1)f''(z_2)] \\
&=f'(z_2)+(u-z_2)f''(z_2)=g'_{z_2}(u),
\end{align*}
so that in this case as well $g'_{z_1}(u)\ge g'_{z_2}(u)$. 

Also, $g_z=f$ on $[z,\infty)$. It follows that 
\begin{equation}\label{eq:mono g_z}
	\text{$g_{z_2}\ge g_{z_1}\ge f$ on $\R$ for any real $z_1$ and $z_2$ such that $z_1<z_2$. }
\end{equation}

Next, $h_z(u)=h'_z(u)=h''_z(u)=0$ for all $u\in(-\infty,z)$ and 
$h''_z(u)=\big(f''(u)-f''(z)\big)\ii{u>z}=\big(f''(u)-f''(z)\big)_+$ for all $u\in\R$, since $f''$ is nondecreasing. 
Moreover, $h''_z$ is convex, since $f''$ is so. 
Therefore, by Proposition~\ref{prop:F-al}, $h_z\in\H3$, which yields $\E h_z(\xi)\le\E h_z(\eta)$. 

Assume now that $z\in(-\infty,0)$. 
Then, in view of \eqref{eq:h_z},  
\begin{equation}\label{eq:g=h}
g_z(u)%&=h_z(u)+f(z)+(u-z)f'(z)+(u-z)^2f''(z)/2 \\
=h_z(u)+u\big(f'(z)+|z|f''(z)\big)+u^2f''(z)/2+c(z)	
\end{equation}
for all $u\in\R$, where $c(z):=f(z)-zf'(z)+z^2f''(z)/2$. 
So, if $f\in\F3$, then the established earlier inequality $\E h_z(\xi)\le\E h_z(\eta)$ yields now $\E g_z(\xi)\le\E g_z(\eta)$, because $\E\xi\le\E\eta$, $\E\xi^2\le\E\eta^2$, and the coefficients $f'(z)+|z|f''(z)$ and $f''(z)/2$ of $u$ and $u^2$ on the right-hand-side of \eqref{eq:g=h} are nonnegative; if $\E\xi=\E\eta$, then the sign of $f'(z)+|z|f''(z)$ does not matter, so that the same inequality $\E g_z(\xi)\le\E g_z(\eta)$ will hold whenever $f\in{\F3}_{,1}$. 
%......................................................................
Similarly, if one has both equalities $\E\xi=\E\eta$ and $\E\xi^2=\E\eta^2$, then neither the sign of $f'(z)+|z|f''(z)$ nor that of $f''(z)/2$ matters, so that the inequality $\E g_z(\xi)\le\E g_z(\eta)$ will hold whenever $f\in{\F3}_{,12}$. 

Now we are ready to complete the proof of parts (i), (ii) and (iii) of Lemma~\ref{lem:F3}. Indeed, w.l.o.g.\ $\E f(\eta)<\infty$, for otherwise the inequality $\E f(\xi)\le\E f(\eta)$ is trivial. Since $f'$ is convex, there are some real $a$ and $b$ such that $f'(x)\ge a+bx$ for all $x\in\R$. Hence, $f(x)\ge-c(1+x^2)$ for some real $c=c_f>0$ and all $x\in\R$. Now the condition $\E\eta^2<\infty$ implies that $\E f(\eta)>-\infty$, and so, $\E f(\eta)\in\R$.  
Therefore, in view of \eqref{eq:g_z} and the condition $\E\eta^2<\infty$, one has $\E g_z(\eta)\in\R$. 
Now, letting $z\to-\infty$, observing that $g_z(u)\to f(u)$ for all $u\in\R$, and 
recalling \eqref{eq:mono g_z}, one concludes by dominated convergence that $\E g_z(\eta)\to\E f(\eta)$. 
Also, \eqref{eq:mono g_z} implies that $\E g_z(\xi)\ge\E f(\xi)$ for all $z\in\R$. 
Recall now that $\E g_z(\xi)\le\E g_z(\eta)$ for all $z\in\R$. 
Thus, parts (i), (ii) and (iii) of Lemma~\ref{lem:F3} are proved. 

Let us prove part (iv) of Lemma~\ref{lem:F3}. An idea here is to give the distribution of the r.v.\ $\eta$ a heavy left tail, to which the cubic moment function $x\mapsto x^3$ 
would be sensitive enough --- in contrast with the moment functions $x\mapsto (x-t)_+^2$ in $\H2$. 
%would be then cut off by applying to $\eta$ generalized moment functions of the form $u\mapsto(u-t)_+^2$. 
Recall that $\de_x$ stands for the probability distribution concentrated at point $x$. 
Let 
$$\xi\sim\tfrac12\de_{-1}+\tfrac12\de_1\quad\text{and}\quad\eta\sim q\de_{-v}+(\tfrac12-q)\de_{-1+\vp}+\tfrac12\de_1,$$ 
with $v\to\infty$ and $\vp:=v^{-1/2}$, and let $q:=\frac{(2-\vp)\vp}{2(v^2-(1-\vp)^2)}\sim v^{-5/2}=\vp^5$ be chosen so that $1=\E\eta^2=v^2q+(-1+\vp)^2(\frac12-q)+\frac12$. %, which implies $q\sim\vp/v^2\sim v^{-5/2}$. 
Then (eventually, as $v\to\infty$) $\E\eta=-q(v-1+\vp)+\frac\vp2>0=\E\xi$, and $\E\eta^2=1=\E\xi^2$. 

It is not hard to see that $\E f(\xi)\le\E f(\eta)$ for all $f\in\H2$ and hence for all $f\in\H3$. Indeed, to see this it is enough to check that 
$\E(\xi-t)_+^2\le\E(\eta-t)_+^2$ for all $t\in\R$. 
If $t\ge-1+\vp$, then trivially 
$\E(\xi-t)_+^2=\frac 12(1-t)^2=\E(\eta-t)_+^2$. 
If $-1\le t\le-1+\vp$, then $\E(\xi-t)_+^2=\E(\eta-t)_+^2-(\frac12-q)(-1+\vp-t)^2\le\E(\eta-t)_+^2$. 
If $-v\le t\le-1$, then $\E(\xi-t)_+^2=\E(\eta-t)_+^2+(t+1)\vp-\vp^2/2+O(\vp^3)\le\E(\eta-t)_+^2$. 
Finally, if $t\le-v$, then $\E(\xi-t)_+^2=\E(\eta-t)_+^2+2t\E\eta\le\E(\eta-t)_+^2$, since $\E\eta>0$. 

Thus, all the conditions stated in the beginning of Lemma ~\ref{lem:F3} are satisfied: $\E\xi\le\E\eta$, $\E\xi^2\le\E\eta^2<\infty$, and 
$\E f(\xi)\le\E f(\eta)$ for all $f\in\H2$ and hence 
for all $f\in\H3$, and one even has the equality $\E\xi^2=\E\eta^2$. 
Yet, $\E f_*(\xi)>\E f_*(\eta)$ for the cubic function $f_*$ defined by the formula $f_*(x)=x^3$ for all $x\in\R$, even though $f_*\in{\F3}_{,2}$. 
Indeed, $\E f_*(\xi)=\E\xi^3=0$, while $\E f_*(\eta)=\E\eta^3=-v^3q+(-1+\vp)^3(\frac12-q)+\frac12\sim-v^3q\sim-v^{1/2}\to-\infty$. 
The proof of part (iv) and hence that of the entire Lemma~\ref{lem:F3} is now complete. 
\end{proof}

\begin{proof}[Proof of Lemma~\ref{lem:F2}] 
This proof is almost identical to that of Lemma~\ref{lem:F3}. Only two modifications are needed. 
First, in he beginning of the proof now we take any $f$ in ${\F2}_{,12}$ rather than in ${\F3}_{,12}$. 

Second, note that the ``$\F3$'' condition that $f'''$ is nondecreasing or, equivalently, that $f''$ is convex was used in the proof of Lemma~\ref{lem:F3} only once --- in the sentence ``Moreover, $h''_z$ is convex, since $f''$ is so.'' in the  paragraph right after \eqref{eq:mono g_z}, to come, via Proposition~1.1, to the conclusion that $h_z\in\H3$. 
Here, to come to the conclusion that $h_z\in\H2$, we note instead that for all $u\in\R$ 
$$h'_z(u)=\big(f'(u)-f'(z)-(u-z)f''(z)\big)\ii{u>z},%=\big(f'(u)-f'(z)-(u-z)f''(z)\big)_+,
$$
which implies that $h'_z$ is convex --- 
since $f'$ is convex and the right derivative $(h'_z)'(u)$ equals $0$ at $u=z$.  
\end{proof}

\begin{proof}[Proof of Lemma~\ref{lem:le}]
In view of Lemma~\ref{lem:F2}(i), the relation $\F3\subseteq\F2$, definition \eqref{eq:H}, and the Fubini theorem, 
it is enough to prove inequality \eqref{eq:le} for all functions of the form $u\mapsto(u-w)_+^2$ for $w\in\R$. 
By rescaling, w.l.o.g.\ $y=1$.  
Further, r.v.\ $\Ga_{\si^2-\be_0}+\tPi_{\be_0}$ equals in distribution $\Ga+\Ga_{\si_0^2-\be_0}+\tPi_{\be_0}$, where $\Ga$ is any r.v.\ such that $\Ga\sim\No(0,\si^2-\si_0^2)$ and $\Ga$ is independent of $\Ga_{\si_0^2-\be_0}$ and $\tPi_{\be_0}$. Now, conditioning on $\Ga_{\si_0^2-\be_0}$ and $\tPi_{\be_0}$ and using Jensen's inequality, one has $\E(\Ga_{\si_0^2-\be_0}+\tPi_{\be_0}-w)_+^2\le\E(\Ga_{\si^2-\be_0}+\tPi_{\be_0}-w)_+^2$ for all $w\in\R$, so that w.l.o.g.\ $\si_0=\si$ and $\be_0<\be\le\si^2$. 
Moreover, r.v.'s\ $\Ga_{\si^2-\be_0}+\tPi_{\be_0}$ and $\Ga_{\si^2-\be}+\tPi_{\be}$ equal in distribution 
$\Ga_{d^2}+W$ and $\tPi_{d^2}+W$, respectively, where $d:=(\be-\be_0)^{1/2}$ and $W$ is any r.v.\ which is independent of $\Ga_{d^2}$ and $\tPi_{d^2}$ and equals $\Ga_{\si^2-\be}+\tPi_{\be_0}$ in distribution.  
Thus, by conditioning on $W$, it suffices to prove that 
\begin{equation}\label{eq:le Po}
\E(\Ga_{d^2}-w)_+^2\le
	\E(\tPi_{d^2}-w)_+^2	
\end{equation}
for all $d>0$ and $w\in\R$.
Note that $\Ga_{d^2}$ and $\tPi_{d^2}$ are limits in distribution of $U_n:=\sum_{i=1}^n U_{i;n}$ and $V_n:=\sum_{i=1}^n V_{i;n}$, respectively, as $n\to\infty$, where the $U_{i;n}$'s are i.i.d.\ copies of $X_{d/\sqrt n,d/\sqrt n}$ and the $V_{i;n}$'s are i.i.d.\ copies of $X_{d^2/n,1}$. By \cite{bent-liet02,bent-ap}, one has \eqref{eq:le Po} with $U_n$ and $V_n$ in place of $\Ga_{d^2}$ and $\tPi_{d^2}$, respectively, provided that $n\ge d^2$ (so that $X_{d/\sqrt n,d/\sqrt n}\le1$ a.s.). 

Finally, it is clear that, for each $w\in\R$, $(x-w)_+^2=o(e^x)$ as $x\to\infty$. Hence and in view of \eqref{eq:BH}, for each $w\in\R$ the sequences of r.v.'s $\big((U_n-w)_+^2\big)$ and $\big((V_n-w)_+^2\big)$ are uniformly integrable. Now \eqref{eq:le Po} follows by a limit transition; see e.g.\ \cite[Theorem~5.4]{bill}.
\end{proof}

\begin{proof}[Proof of Lemma~\ref{th:basic}]
In view of Lemma~\ref{lem:F3}(i), definition \eqref{eq:H}, and the Fubini theorem, 
it is enough to prove inequality \eqref{eq:le} for all functions of the form $u\mapsto(u-w)_+^3$ for $w\in\R$. 
By Lemma~\ref{prop:ab}, $\E(X-w)_+^3\le\E(X_{a,b}-w)_+^3$ for some $a$ and $b$ such that $a>0$, $b>0$, $X_{a,b}\le y$ a.s., $\E X_{a,b}^2=ab\le\si^2$, and $\E(X_{a,b})_+^3=\be$; at that, one of course also has $\E X_{a,b}=0$. 
So, if one could prove inequality \eqref{eq:basic} with $X_{a,b}$ in place of $X$ and $ab$ in place of $\si^2$, then it would remain to refer to Lemma~\ref{lem:le}.
Thus, w.l.o.g.\ one has $X=X_{a_0,b_0}$ for some positive $a_0$ and $b_0$, and at that 
\begin{equation*}
	\E X_{a_0,b_0}^2=a_0b_0=\si^2\quad\text{and}\quad \E(X_{a_0,b_0})_+^3=\frac{a_0b_0^3}{b_0+a_0}=\be. 
\end{equation*}
By rescaling, w.l.o.g.\ 
\begin{equation*}
	y=1, \quad\text{whence $b_0\le1$.}
\end{equation*}

The main idea of the proof is to introduce a family of r.v.'s of the form
\begin{equation*}
	\eta_b:=X_{a(b),b}+\xi_{\tau(b)}\quad\text{for}\quad b\in[\vp,b_0],
\end{equation*}
where 
\begin{gather*}
\vp:=\frac{b_0^2}{b_0+a_0}=\frac\be{\si^2},\\
a(b):=\frac b{\vp}(b-\vp),\\
\tau(b):=a_0b_0-a(b)b,\\
\xi_t:=W_{(1-\vp)t}+\tPi_{\vp t},\\
\tPi_s:=\Pi_s-\E\Pi_s,
\end{gather*}
$W_\cdot$ is a standard Wiener process, $\Pi_\cdot$ is a Poisson process with intensity $1$, and $X_{a(b),b}$, $W_\cdot$, $\Pi_\cdot$ are independent for each $b\in[\vp,b_0]$.  
Note that $\vp\in(0,b_0)\subseteq(0,1)$; also, $\tau$ is decreasing and hence nonnegative on the interval $[\vp,b_0]$, since $a(b)b$ is increasing in $b\in[\vp,b_0]$ and $a(b_0)=a_0$.

Let further 
\begin{equation*}
	\EE(b):=\EE(b,w):=\E(\eta_b-w)_+^3	=\frac{b\,\E(\xi_{\tau(b)}-a(b)-w)_+^3+a(b)\E(\xi_{\tau(b)}+b-w)_+^3}{b+a(b)}. 
\end{equation*}
Since $a(b_0)=a_0$ and $a(\vp)=0$, one has $X_{a(\vp),\vp}=0$ a.s. Thus, Lemma~\ref{th:basic} is reduced to the inequality $\EE(\vp)\ge\EE(b_0)$. 
Note that $\EE(b)$ is continuous in $b\in[\vp,b_0]$; this follows because of the uniform integrability (cf.\ the last paragraph in the proof of Lemma~\ref{lem:le}). 
So, it is enough to show that the left derivative $\EE'(b)$ of $\EE(b)$ is no greater than $0$ for all $b\in(\vp,b_0)$. To compute this derivative, one can use the following

%\end{proof}
%\end{document}

\begin{lemma}\label{lem:der}
Consider any function $f\!:\,(\vp,b_0)\times\R\ni (b,x)\mapsto f(b,x)\in\R$
%Consider any function $f\colon[\vp,\infty)\ni (b,x)\mapsto\R$
such that $|f''_{bb}(b,x)|+|f''_{bx}(b,x)|+|f''_{xx}(b,x)|+|f'_x(b,x)|%+|f(b,x)|
\le C_f e^{|x|}$ and  
$|f''_{xx}(b,x_1)-f''_{xx}(b,x_2)|\le C_f|x_1-x_2|(e^{|x_1|}+e^{|x_2|})$ for some constant $C_f$, 
all $b\in(\vp,b_0)$, and all $x,x_1,x_2$ in $\R$. Then for all $b\in(\vp,b_0)$ 
\begin{equation*}
	\lim_{h\downarrow0}\frac{\E f(b-h,\xi_{\tau(b-h)})-\E f(b,\xi_{\tau(b)})}{-h}
	=\E F_f(b,\xi_{\tau(b)}),
\end{equation*}
where 
\begin{equation*}
F_f(b,x):=f'_b(b,x)
	+\Big(\frac{(1-\vp)f''_{xx}(b,x)}2
	+\vp\big(f(b,x+1)-f(b,x)-f'_x(b,x)\big)\Big)\,
	\tau'(b).
\end{equation*}
\end{lemma}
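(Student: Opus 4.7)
\medskip

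\noindent\textbf{Proof proposal.}
The plan is to exploit the fact that $\xi_\cdot$ is a Lévy process, so it has independent and stationary increments. Fix $b\in(\vp,b_0)$; since $\tau$ is smooth and decreasing on $(\vp,b_0)$, for small $h>0$ one has $\Delta:=\tau(b-h)-\tau(b)>0$ with $\Delta=-h\tau'(b)+o(h)$. By independent increments, one may write $\xi_{\tau(b-h)}\stackrel{d}{=}\xi_{\tau(b)}+\xi'_\Delta$, where $\xi'_\Delta$ is independent of $\xi_{\tau(b)}$ and has the same law as $\xi_\Delta=W_{(1-\vp)\Delta}+\tPi_{\vp\Delta}$. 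Setting $X:=\xi_{\tau(b)}$, it then suffices to compute
\[
\lim_{h\downarrow0}\frac{1}{-h}\,\E\bigl[f(b-h,X+\xi'_\Delta)-f(b,X)\bigr]
\]
and identify it with $\E F_f(b,X)$.

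Conditionally on $X$, decompose according to the Poisson count $N:=\Pi'_{\vp\Delta}$ on the increment, with $\PP(N=0)=1-\vp\Delta+O(\Delta^2)$, $\PP(N=1)=\vp\Delta+O(\Delta^2)$, $\PP(N\ge2)=O(\Delta^2)$. On $\{N=0\}$ one has $\xi'_\Delta=G-\vp\Delta$ with $G\sim\No(0,(1-\vp)\Delta)$ small, so a second-order Taylor expansion in the $x$-variable (using $\E G=0$, $\E G^2=(1-\vp)\Delta$) together with a first-order expansion in $b$ yields
\[
f(b-h,X+\xi'_\Delta)=f(b,X)-h f'_b(b,X)-\vp\Delta\, f'_x(b,X)+\tfrac{(1-\vp)\Delta}{2}f''_{xx}(b,X)+R_0,
\]
with a remainder $R_0$ of order $o(h+\Delta)$ in expectation. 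On $\{N=1\}$ one has $\xi'_\Delta=G+1-\vp\Delta$ close to $1$, and a zeroth-order expansion gives $f(b-h,X+\xi'_\Delta)=f(b,X+1)+R_1$ with $R_1\to0$ in expectation. On $\{N\ge2\}$, the contribution is $O(\Delta^2)$ in expectation. Combining with the probabilities of these events, weighting, subtracting $f(b,X)$, and dividing by $-h$ produces, in the limit $h\downarrow0$ (using $\Delta/h\to-\tau'(b)$),
\[
\E f'_b(b,X)+\tau'(b)\,\E\!\left[\tfrac{1-\vp}{2}\,f''_{xx}(b,X)+\vp\bigl(f(b,X+1)-f(b,X)-f'_x(b,X)\bigr)\right],
\]
which is exactly $\E F_f(b,X)$.

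The main obstacle will be making the above heuristic rigorous — in particular, controlling the remainders $R_0$ and the event $\{N\ge 2\}$ uniformly in $h$ so that the interchange of limit and expectation is justified. This is precisely what the two growth conditions on $f$ are designed to handle. The pointwise bound $|f''_{xx}(b,x_1)-f''_{xx}(b,x_2)|\le C_f|x_1-x_2|(e^{|x_1|}+e^{|x_2|})$ provides Lipschitz-type control on $f''_{xx}$, which bounds the Taylor remainder of order three in the $x$-variable on $\{N=0\}$; the bound $|f''_{bb}|+|f''_{bx}|+|f''_{xx}|+|f'_x|\le C_f e^{|x|}$ controls the mixed remainder in $(h,x)$ and bounds $|f(b,x+1)-f(b,x)-f'_x(b,x)|$ and the contribution of $\{N\ge 2\}$. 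Exponential moments $\E e^{|\xi_t|}<\infty$ (finite for all $t\ge 0$ by direct computation of the Laplace transform, see \eqref{eq:PUexp-expr}) then permit dominated convergence, uniformly for $h$ in a right neighborhood of $0$. Apart from this bookkeeping, the argument is a direct chain-rule-type computation tailored to the Lévy structure of $\xi_\cdot$.
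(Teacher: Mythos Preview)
Your proposal is correct and follows essentially the same approach as the paper: decompose the L\'evy increment $\xi'_\Delta$ according to the Poisson count, Taylor-expand in the spatial variable on each event, and use the exponential growth bounds on $f$ together with finiteness of $\E e^{|\xi_t|}$ to justify all interchanges. The only organizational difference is that the paper splits the difference as $\EE_1+\EE_2$, where $\EE_1:=\E f(b-h,Y_{b-h})-\E f(b,Y_{b-h})$ isolates the change in the first argument and $\EE_2:=\E f(b,Y_{b-h})-\E f(b,Y_b)$ isolates the change in the second; you instead perform a joint two-variable Taylor expansion on $\{N=0\}$, which amounts to the same computation.
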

The proof of this lemma, which involves little more than routine Taylor expansions, will be given later in this paper.

By Lemma~\ref{lem:der}, for all $b\in(\vp,b_0)$ one has $\EE'(b)%\frac\dd{\dd b}\E f(b,\eta_b)
=\E G(b,\xi_{\tau(b)}-w)$, where 
\begin{gather*}
	G(b,x):=\Big(\frac b{b+a(b)}\Big)'_b \big(f_1(b,x)-f_2(b,x)\big) 
	+\frac{b\,F_{f_1}(b,x)+a(b)F_{f_2}(b,x)}{b+a(b)}
\label{eq:F(b,u)}\\
f_1(b,x):=(x-a(b))_+^3,\quad f_2(b,x):=(x+b)_+^3.	\notag
\end{gather*}

Thus, it suffices to show that $G(b,u)\le0$ for all $b\in(\vp,b_0)$ and $u\in\R$.
Observe now that 
$\big(\frac b{b+a(b)}\big)'_b=-\frac1{b+a(b)}$, 
$a'(b)=1+\frac{2a(b)}b$, 
$\tau'(b)=-\big(3a(b)+b\big)$, and $\vp=\frac{b^2}{b+a(b)}$. 
Substituting into \eqref{eq:F(b,u)} these expressions of $\big(\frac b{b+a(b)}\big)'_b$, 
$a'(b)$, 
$\tau'(b)$, and $\vp$ in terms of only $b$ and $a(b)$, one has 
$\big(b+a(b)\big)^2G(b,u)=-\tG(a(b),b,-u)$, where 
\begin{align*}
\tG(a,b,t)&:=
\left(a+b-3 ab^3-b^4\right) (-a-t)_+^3 \\
&-\left(a+b+3a^2b^2+ab^3\right) (b-t)_+^3 \\
&+b^2 (3 a+b)\left(b(1-a-t)_+^3 + a(1+b-t)_+^3\right) \\
&+3 \left(2a^2+3ab+b^2-3ab^3-b^4\right) (-a-t)_+^2 \\
&-3a \left(a+b+3ab^2+b^3\right)(b-t)_+^2 \\
&+3(3 a+b)\left(a+b-b^2\right) 
\left(b (-a-t)_+ + a (b-t)_+\right). 	
\end{align*}

To complete the proof of the theorem, it is enough to show that $\tG(a,b,t)\ge0$ for all $a>0$, $b\in(0,1]$, and $t\in\R$. At that, w.l.o.g.\ $t<1+b$, since $\tG(a,b,t)=0$ for all $a>0$, $b\in(0,1]$, and $t\ge1+b$. 
Next, one has either $a+b\le1$ or $a+b>1$. In the first case, $-a\le b\le1-a\le1+b$, while in the second case 
$-a\le1-a\le b\le1+b$. Therefore, it remains to verify that $\tG(a,b,t)\ge0$ in each of the following 8 (sub)cases:
\begin{enumerate}[]
\item \emph{Case 10:} $a > 0 \ \&\  b > 0 \ \&\  a + b \le 1 \ \&\  t \le -a$; 
\item \emph{Case 11:} $a > 0 \ \&\  b > 0 \ \&\  a + b \le 1 \ \&\  -a\le t \le b$; 
\item \emph{Case 12:} $a > 0 \ \&\  b > 0 \ \&\  a + b \le 1 \ \&\  b\le t \le 1-a$; 
\item \emph{Case 13:} $a > 0 \ \&\  b > 0 \ \&\  a + b \le 1 \ \&\  1-a\le t \le 1+b$; 
\item \emph{Case 20:} $a > 0 \ \&\  0<b\le1 \ \&\  a + b > 1 \ \&\  t \le -a$; 
\item \emph{Case 21:} $a > 0 \ \&\  0<b\le1 \ \&\  a + b > 1 \ \&\  -a\le t \le 1-a$; 
\item \emph{Case 22:} $a > 0 \ \&\  0<b\le1 \ \&\  a + b > 1 \ \&\  1-a\le t \le b$; 
\item \emph{Case 23:} $a > 0 \ \&\  0<b\le1 \ \&\  a + b > 1 \ \&\  b\le t \le 1+b$. 
\end{enumerate}

In Case 10, $\tG(a,b,t)=a^2 (5 a^2 + 8 a b + 3 b^2)$, which is obviously positive for all positive $a$ and $b$. 

In Case 11, 
\begin{multline*}
\tG(a,b,t)=G_{11}(a,b,t):= \\
9 a^3 b + 12 a^2 b^2 + 3 a b^3 - 9 a^2 b^3 + 9 a^3 b^3 - 3 a^4 b^3 - 
 3 a b^4 + 3 a^2 b^4 - 
 a^3 b^4 \\
 + (-9 a^3 - 6 a^2 b + 6 a b^2 + 3 b^3 - 9 a b^3 + 
    18 a^2 b^3 - 9 a^3 b^3 - 3 b^4 + 6 a b^4 - 
    3 a^2 b^4) t \\
    + (-3 a^2 - 6 a b - 3 b^2 + 9 a b^3 - 9 a^2 b^3 + 
    3 b^4 - 3 a b^4) t^2 \\
    + (a + b - 3 a b^3 - b^4) t^3,	
\end{multline*}
which is positive in this case, Case~11; this can be verified using a Mathematica command of the form 
\verb9Reduce[G11 <= 0 && case11, Reals]9, which outputs \verb9False9 (in about 13 seconds). 

The other 6 cases are verified similarly; the longest of them in terms of the time it takes Mathematica to check is Case~21 (in about 13 seconds). 
This concludes the proof of Lemma~\ref{th:basic}. 
\end{proof}

\begin{proof}[Proof of Lemma~\ref{lem:exact}]
To begin, let indeed $\be=\vp\si^2y$ and 
take any natural number $m$ and any positive real numbers $a$ and $b$. Let then $n:=2m$, and let indeed $X_1,\dots,X_n$ be any independent r.v.'s such that $X_1,\dots,X_m$ are independent copies of 
$X_{b/\sqrt m,b/\sqrt m}$ and $X_{m+1},\dots,X_{2m}$ are independent copies of 
$X_{a/m,y}$. Then, of course, $\E X_i=0$ for all $i$. 
Next, the system of equations $\sum_{i=1}^n\E X_i^2=\si^2$ and $\sum_{i=1}^n\E(X_i)_+^3=\be$ (cf.\ \eqref{eq:ineqs}) can be rewritten (in view of \eqref{eq:vp}) as $b^2+ay=\si^2$ and $\frac{b^3}{2\sqrt m}+\frac{may^3}{a+my}=\vp\si^2 y$, and then in turn as 
$a=(\si^2-b^2)/y$ and $b^2=(1-\vp)\si^2+r_m(b)$, %$\si^2-b^2=\vp\si^2+r_m(b)$, 
where $r_m(b)$ is a certain expression in terms of $m$, $b$, $\si$, $y$, and $\vp$ (but not containing entries of $a$) such that $r_m(b)\to0$ uniformly in $b\in[0,\si]$ as $m\to\infty$ (recall that $\si$, $y$, and $\vp$ were fixed) and $r_m(b)$ is continuous in $b\in[0,\si]$.  
It follows that for all large enough $m\in\N$ the equation $b^2=(1-\vp)\si^2+r_m(b)$ has a solution $b=b_m\in[0,\si]$, and at that $b\to\si\sqrt{1-\vp}$ and hence $a=(\si^2-b^2)/y\to\vp\si^2/y$ as $m\to\infty$. 
In particular, this implies the statement indented in the formulation of Lemma~\ref{lem:exact}. 
The convergence of $S$ in distribution to $\Ga_{(1-\vp)\si^2}+y\tPi_{\vp\si^2/y^2}$ as $m\to\infty$ can now easily obtained, via either characteristic functions or well-known ready-to-use limit theorems.
\end{proof}

\begin{proof}[Proof of Lemma~\ref{lem:der}]
First, take any $b\in(\vp,b_0)$ and $h\in(0,b-\vp)$, and 
write
\begin{gather}
	\E f(b-h,\xi_{\tau(b-h)})-\E f(b,\xi_{\tau(b)})=\EE_1+\EE_2, \label{eq:EE1+EE2}\\
	\intertext{where}
\begin{aligned}
\EE_1&:=\E f(b-h,Y_{b-h})-\E f(b,Y_{b-h}),\\
\EE_2&:=\E f(b,Y_{b-h})-\E f(b,Y_b)=\E g(Y_{b-h})-\E g(Y_b),
\end{aligned}	\notag\\
Y_b:=\xi_{\tau(b)},\quad g(x):=g_b(x):=f(b,x); \notag
\end{gather}
note that 
\begin{equation}\label{eq:2nd,Y}
\E(Y_{b-h}-Y_b)^2=\tau(b-h)-\tau(b)=O(h)	
\end{equation}
and, using the Jensen inequality (as in the proof of Lemma~\ref{lem:le})% and \eqref{eq:PUexp}
, 
\begin{equation}\label{eq:exp,Y}
	\E e^{\la|Y_{b}|}\le\E e^{\la|Y_{b-h}|}\le\E e^{\la|Y_\vp|}<\infty\quad\text{for all $\la\ge0$.}
\end{equation}

Next, for all real $x$ and $y$, all $b\in(\vp,b_0)$, and all $h\in(0,b-\vp)$ there exists some $\th\in(0,1)$ such that 
\begin{align}
f(b-h,y)-f(b,y)&=-f'_b(b,y)h+\frac{h^2}2 f''_{bb}(b-\th h,y) \notag\\
&=-f'_b(b,x)h+O\big((h|x-y|+h^2)(e^{|x|}+e^{|y|})\big), \label{eq:f increm}
\end{align}
since $|f''_{bb}(b,x)|+|f''_{bx}(b,x)|\le C_f e^{|x|}$ for all $x\in\R$. 
Using \eqref{eq:f increm} with $Y_{b-h}$ and $Y_b$ in place of $y$ and $x$, respectively, and also the C\'auchy-Schwarz inequality together with \eqref{eq:2nd,Y} and \eqref{eq:exp,Y}, one has
\begin{align}
	\EE_1&=-h\E f'_b(b,Y_b)\notag\\
	&\qquad\qquad+O\Big(h\sqrt{\E(Y_{b-h}-Y_b)^2\E(e^{2|Y_b|}+e^{2|Y_{b-h}|})}
	+h^2\E(e^{|Y_b|}+e^{|Y_{b-h}|})\Big) \notag\\
	&=-h\E f'_b(b,Y_b)+O(h^{3/2}). \label{eq:EE1bound}
\end{align}

To estimate $\EE_2$, write 
\begin{equation}\label{eq:EE2ito}
	\EE_2=\int_{-\infty}^\infty\PP(Y_b\in\dd x)\big(\E g(x+\eta+\nu-\E\nu)-g(x)\big),
\end{equation}
where $\eta$ and $\nu$ are independent r.v.'s, $\eta\sim\mathrm{N}\big(0,(1-\vp)\,\De\tau\big)$, $\nu\sim\mathrm{Pois}(\vp\,\De\tau)$, $\De\tau:=\tau(b-h)-\tau(b)=-h\tau'(b)+O(h^2)$. 
Note that
\begin{equation}\label{eq:ests}
\left.
\begin{aligned}
\PP(\nu=0)&=e^{-\vp\,\De\tau}=1-\vp\,\De\tau+O(\De\tau^2)\\
&=1+\vp\tau'(b)h+O(h^2)=1+O(h),\\
\PP(\nu=1)&=e^{-\vp\,\De\tau}\vp\,\De\tau=\vp\,\De\tau+O(\De\tau^2)\\
&=-\vp\tau'(b)h+O(h^2)=O(h),\\
\PP(\nu\ge2)&=O(h^2),\\
\E\nu&=\vp\,\De\tau=-\vp\tau'(b)h+O(h^2),\\
\E\eta^2&=(1-\vp)\,\De\tau=-(1-\vp)\tau'(b)h+O(h^2),\\
%\E|\eta|&\le\sqrt{\E\eta^2}=O(\sqrt h),\\
\E|\eta|^m&=O(\De\tau^{m/2})=O(h^{m/2})\quad\text{for all } m\in(0,6].
\end{aligned}	
\right\}
\end{equation}
Using some of these estimates together with the conditions $|f''_{xx}(b,x_1)-f''_{xx}(b,x_2)|\le C_f|x_1-x_2|(e^{|x_1|}+e^{|x_2|})$ and $|f''_{xx}(b,x)|+|f'_x(b,x)|\le C_f e^{|x|}$, as well as the C\'auchy-Schwarz inequality (cf.\ \eqref{eq:EE1bound}), one has
\begin{align*}
\E g(x+\eta-\E\nu)-g(x)&=-g'(x)\E\nu+\frac{g''(x)}2(\E\eta^2+\E^2\nu)\\
&\qquad\qquad+O\Big(\E\Big((|\eta|^3+\E^3\nu)(e^{|x+\eta-\E\nu|}+e^{|x|})\Big)\Big)\\
&=\Big(g'(x)\vp-\frac{g''(x)}2(1-\vp)\Big)\tau'(b)h+O(h^{3/2}e^{|x|});\\
\E g(x+\eta+1-\E\nu)-g(x)&=g(x+1)-g(x)\\
&\qquad\qquad+O\Big(\E\Big((|\eta|+\E\nu)(e^{|x+\eta+1-\E\nu|}+e^{|x+1|})\Big)\Big)\\
&=g(x+1)-g(x)+O(h^{1/2}e^{|x|});\\
\E\big(g(x+\eta+\nu-\E\nu)-g(x)\big)^2
&=O\Big(\E(|\eta|+\nu+\E\nu)^2\,\E\big(e^{|x+\eta+\nu-\E\nu|}+e^{|x|}\big)^2\Big)\\
&=O(e^{2|x|}).
\end{align*}
Hence and by \eqref{eq:ests},
\begin{align*}
\E g(x+\eta+\nu-\E\nu)-g(x)&=\PP(\nu=0)\big(\E g(x+\eta-\E\nu)-g(x)\big) \\
&+\PP(\nu=1)\big(\E g(x+\eta+1-\E\nu)-g(x)\big) \\
&+O\big(\PP(\nu\ge2)\,e^{|x|}\big) \\
&=\Big(g'(x)\vp-\frac{g''(x)}2(1-\vp)-\vp\,\big(g(x+1)-g(x)\big)\Big)\tau'(b)h \\
&\qquad\qquad+O(h^{3/2}e^{|x|}). 
\end{align*}
Now, in view of \eqref{eq:EE2ito} and \eqref{eq:exp,Y}, 
\begin{equation*}
	\EE_2=\E\Big(g'(Y_b)\vp-\frac{g''(Y_b)}2(1-\vp)-\vp\,\big(g(Y_b+1)-g(Y_b)\big)\Big)\tau'(b)h+O(h^{3/2}).
\end{equation*}
This, together with \eqref{eq:EE1bound} and \eqref{eq:EE1+EE2}, completes the proof of Lemma~\ref{lem:der}. 
\end{proof} 

%\vspace{.2cm} \textsc{Acknowledgment}\ \ .

%\begin{thebibliography}{}
%% Total bibitems: 43, successfull: 19, errors: 24
%% Generated through BatchMRef at http://www.e-publications.org

\end{document}